\title{Twisted differential cohomology}
\author{Ulrich Bunke\thanks{NWF I - Mathematik,
Universit{\"a}t Regensburg 
93040 Regensburg,
{Germany}, ulrich.bunke@mathematik.uni-regensburg.de}  and  Thomas Nikolaus\thanks{NWF I - Mathematik,
Universit{\"a}t Regensburg 
93040 Regensburg,
{Germany}, thomas.nikolaus@mathematik.uni-regensburg.de}
}
\theoremstyle{plain}
\newtheorem{theorem}{Theorem}[section] 
\newtheorem{prop}[theorem]{Proposition}
\newtheorem{lem}[theorem]{Lemma}
\newtheorem{ddd}[theorem]{Definition}
\newtheorem{kor}[theorem]{Corollary}
\newtheorem{thm}[theorem]{Theorem}
\newtheorem{corollary}[theorem]{Corollary}
\newtheorem{proposition}[theorem]{Proposition}
\newtheorem{lemma}[theorem]{Lemma}
\newtheorem{example}[theorem]{Example}
\newtheorem{definition}[theorem]{Definition}
\theoremstyle{definition}
\newtheorem{rem2}[theorem]{Remark}
\newtheorem{exa2}[theorem]{Example}
\newcommand{\uli}[1]{\textcolor{red}{#1}}
\newcommand {\Tw}{\mathcal{T}\hspace{-1pt}\mathrm{w}}
\renewcommand{\curv}{\mathcal{C}\mathrm{urv}}
\newcommand{\bZ}{\mathbf{Z}}
\newcommand{\KU}{\mathrm{K}}
\newcommand{\K}{\mathrm{K}}
\newcommand{\KO}{\mathbf{KO}}
\newcommand{\BGL}{\mathrm{BGL}}
\newcommand{\gl}{\mathrm{gl}}
\newcommand{\bgl}{\mathrm{bgl}}
 \newcommand{\pic}{\mathrm{pic}}
\newcommand{\CAlg}{\mathrm{CAlg}}
\newcommand{\cN}{{\mathcal{N}}}
\newcommand{\cR}{{\mathcal{R}}}
\newcommand{\Rings}{{\mathrm{Rings}}}
\newcommand{\cZ}{{\mathcal{Z}}}
\newcommand{\PSh}{{\mathrm{PSh}}}
 \newcommand{\Cat}{{\mathcal{C}\mathrm{at}}}
\newcommand{\bKU}{\mathbf{KU}}
\newcommand{\PicLf}{\Pic^\textnormal{loc,fl}}
\newcommand{\Picf}{\Pic^\textnormal{wh,fl}}
\newcommand{\Picff}{\Pic^\textnormal{loc,fl}}
\newcommand{\Picfff}{ \Pic^{\textnormal{fl}}}
\renewcommand{\Mf}{\mathcal{M}\mathrm{f}}
\renewcommand{\Im}{\operatorname{Im}}
\newcommand{\GL}{\mathrm{GL}}
\newcommand{\Sing}{\Pi_\infty}%\textnormal{Sing}}
\newcommand{\iso}{\stackrel{\sim}{\to}}
\newcommand{\GCS}{\mathrm{Gr}\mathcal{R}\mathrm{ing}\Sp}
\renewcommand{\Sp}{\mathcal{S}\mathrm{p}}
\renewcommand{\Fun}{\mathrm{Fun}}
\renewcommand{\loc}{\mathrm{loc}}
\newcommand {\calc}{\mathcal{C}}
\newcommand {\cald}{\mathcal{D}}
\newcommand {\calf}{\mathcal{F}}
\newcommand {\ho}{\mathcal{H}\mathrm{o}}
\renewcommand{\Mod}{\mathrm{Mod}}
\newcommand{\Ho}{\mathcal{H}\text{o}}
\newcommand{\AbGr}{\mathrm{Ab}}
\renewcommand{\Ab}{\AbGr}
\renewcommand{\Ch}{\mathrm{Ch}}
\newcommand{\PicI}{\Pic^{h}}
\newcommand{\PicL}{\Pic^{\loc}}
\newcommand{\PicwLf}{\Pic^{\mathrm{wloc,fl}}}
\renewcommand{\Sh}{\mathrm{Sh}}
\newcommand{\grRing}{\mathrm{Gr}\mathcal{R}\mathrm{ing}}
\newcommand{\SymMon}{\mathcal{S}\mathrm{ym}\mathcal{M}\mathrm{on}_\infty}
 \newcommand{\symstr}{\mathcal{S}\mathrm{ym}\mathcal{M}\mathrm{on}}
 \newcommand{\CGrp}{\mathrm{CGrp}}
  \newcommand{\CMon}{\mathrm{CMon}}
\newcommand{\one}{\langle 1\rangle}
\newcommand{\zero}{\langle 0 \rangle}
\newcommand{\tho}[1]{\textcolor{blue}{#1}}
\newcommand{\tfoot}[1]{\tho{\footnote{\tho{#1}}}}
\begin{document}
\maketitle

\tableofcontents

\section{Introduction}

\subsection{On the contents of the paper}

The main goal of the present paper is the construction of twisted generalized differential cohomology theories and the comprehensive statement of its basic functorial properties. Technically it combines the homotopy theoretic approach to (untwisted) generalized differential cohomology developed in \cite{MR2192936}, \cite{bg}, \cite{skript} with the $\infty$-categorical treatement of twisted cohomology in \cite{2011arXiv1112.2203A}.  {One of the main motivations for twisted differential cohomology is the theory of integration (resp. pushforward) in non orientable situations. Our approach leads to such a theory, which will be discussed elsewhere.}

\bigskip

Following an idea of the second author a neat way to state  all the  properties of a twisted generalized differential cohomology is to say that it forms a sheaf of graded {ring}  spectra on the category of
smooth manifolds where the underlying sheaf of gradings is the Picard stack of differential twists.
This formulation incorporates the multiplicative structure of twisted cohomology (which adds twists), automorphisms of twists, Mayer-Vietoris sequences as well as the functoriality in the underlying manifold.

Given the approach of \cite{bg} the construction of twisted generalized differential cohomology 
is actually easy once the differential twists are understood. So the main emphasis of the present paper is on the investigation of the Picard groupoid of differential twists.  Our main results concern the existence of differential refinements of topological twists and the classification of such refinements. 
As an interesting aspect of the theory we show that for a commutative ring spectrum $R$
there is a map (anticipated e.g. in \cite{MR2742428}) from a version of differential $\bgl_{1}(R)$-cohomology to differential $R$-twists. 
 
 \bigskip

We now explain the structure of the paper in detail. It consists of three parts and a technical appendix. In the first part we develop twisted generalized differential cohomology in general. In the second part we  consider in detail the differential form part of differential twists and discuss the existence and uniqueness of differential refinements of topological twists. The third part contains more concrete material and examples.

In the first part we develop the general theory of twisted generalized differential cohomology starting with the notion of a graded ring spectrum %\tfoot{Sollen wir eigentlich irgendwo sagen, dass man im Prinzip auch f‚Äö√†√∂¬¨‚à´r weniger als $E_\infty$-Twists machen kann, wir uns aber auf $E_\infty$-einschr‚Äö√†√∂¬¨√ünken?}
 in  Section \ref{may0501}.    In Section \ref{ulimar2201} we review the homotopy theoretic version  of twisted cohomology using the language of graded ring spectra. In Section  \ref{sec_def_twisted} we introduce the Picard-$\infty$ stack of differential twists and construct the sheaf of graded ring spectra representing twisted generalized cohomology. The first part is concluded with Section \ref{sep0602} where we list explicitly some of the functorial structures of twisted differential cohomology.

In the second part we investigate differential twists in detail. While the definition of differential twists given in Section \ref{sec_def_twisted} is straightforward it is not at all obvious whether a given topological twist has a differential refinement at all, and how many of  such refinements exist. In the preparatory Section \ref{sep0602nnnn} we introduce a particular class of Rham models which can conveniently be used in the construction of differential twists.
  In Section   \ref{may0510} we  show under mild assumptions that the extension of a topological twist to a differential twist is unique up to non-canonical equivalence. The existence of differential twists is studied in Section \ref{sec:existence}.

The third part is devoted to special cases and examples.  While the general theory works for arbitrary commutative ring spectra all   {examples of interest}  satisfy a set of  particular assumptions  termed differentially simple in Section \ref{realization}. This assumption simplifies the choice of suitable de Rham models considerably. In Section \ref{may0511} we show that cycles for differential $\bgl_{1}(R)$-cohomology determine differential $R$-twists. This provides one explicit way to construct differential twists. In this framework  we discuss, in particular, the construction of differential twists from
higher geometric gerbes. In Section \ref{may0512} we show how one can determine the de Rham
 part of the differential twist refining a topological twist $E$ from the knowledge of differentials of the Atiyah-Hirzebruch spectral sequence associated to $E$.
 
\bigskip

In the second subsection of this introduction we explain {some} new contributions of this paper 
in the example of complex $K$-theory.
\subsection{The case of complex $K$-theory}\label{may0650}

The study of field theories involving higher-degree differential forms  on manifolds with $B$-field backgrounds lead to quantization conditions which could conveniently be explained by the requirement, that the field strength represents the Chern character of a twisted $K$-theory class.
Subsequently various models of twisted $K$-theory have been constructed. It was clear from the beginning, that twisted $K$-theory fixes the topological background, and the actual fields
%\tfoot{Sind das wirklich die Felder und nicht eher die Branes? Oder denkst du an Ramond-Ramond Felder?}
 are classified by twisted differential $K$-theory classes. It turned out to be important to understand the twists themselves in the framework of differential cohomology. For  {a} detailed description of the phyiscal motivation we refer to  \cite{MR2742428}.

 Twisted complex $K$-theory and its differential version have been investigated to some {extent} 
 \cite{MR0282363}, \cite{MR2119241}, \cite{MR2307274}, \cite{MR2172633}, \cite{MR3037783}, \cite{MR2860342}, \cite{MR2831111}, \cite{2013arXiv1304.0939B}, \cite{MR1911247}, \cite{2013arXiv1303.5159G}, \cite{2012arXiv1211.6761H}, \cite{MR2291795}, \cite{2012arXiv1208.4921H} (we refrain from listing paper dealing with non-twisted differential $K$-theory).
 
 \bigskip

 In most of the literature and applications one
 restricts to the subset of topological twists which are classified  by
third cohomology. In the present paper we consider the degree as part of the twist. Therefore given a manifold $M$, an integer $n\in \Z$, and a cohomology class $z\in H^{3}(M;\Z)$ one  has  a twisted $K$-theory group $\K^{n+z}(M)$.   A classical calculation of twisted $K$-theory is  (see Example \ref{may0601})
$$\K^{1+z}(S^{3})\cong \Z/k\Z , \quad \K^{0+z}(S^{3})=0\ ,$$ if $z\in H^{3}(S^{3};\Z)$ is the $k$th multiple of the {canonical} generator and $k\not=0$.  Twisted $K$-theory has a product (which is trivial in the $S^{3}$-example)
$$\cup :\K^{n+z}(M)\otimes \K^{n^{\prime}+z^{\prime}}(M)\to \K^{n+n^{\prime}+z+z^{\prime}}(M)\ .$$ 

So far this picture of twisted $K$-theory has the draw-back that the group $\K^{n+z}(M)$ is only defined up to non-canonical isomorphism.  In order to fix this group  itself
one must {choose} a cycle for the cohomology class $z$. In general, the automorphisms of cycles act nontrivially on the twisted $K$-theory groups. A detailed picture of this {phenomenom} has been explained in an axiomatic way in \cite[3.1]{MR2130624}.

The precise meaning of a cycle for a twist depends on 
 the particular construction of twisted $K$-theory. In the literature cycles  are taken as $U(1)$-gerbes,  principal $PU$-bundle,  bundles of Fredholm operators or maps to a model of the Eilenberg-MacLane space $K(\Z,3)$. 
In the language of the present paper  such a map would be considered as an object of the Picard-$\infty$ groupoid $\underline{K(\Z,3)}(M)$. 

\bigskip
In the present paper we work with 
the Picard-$\infty$ groupoid of {\em all} topological twists of complex $K$-theory on $M$ which  is denoted by $\PicL_{\underline{\KU}}(M)$. The specialization to third cohomology is obtained via a map of Picard-$\infty$ stacks
$\underline{K(\Z,3)}\to \PicL_{\underline{\KU}}$ which is {essentially} unique  by \cite{2011arXiv1106.5099A}. 

Our way to phrase the dependence of twisted $K$-theory on the cycle, the action of the automorphism groups of cycles, and the multiplicative structure is to say that the association
$${\K^{(\cdots)}(M)} : \quad \Ho  \PicL_{\underline{\KU}}(M)\to \Ab$$
is a {lax} symmetric monoidal functor. {Here $\Ho  \PicL_{\underline{\KU}}(M)$ is the homotopy category of twists on $M$ and $\Ab$ denotes the category of abelian groups. }After specialization to $\underline{K(\Z,3)}$
this realizes the axioms listed in \cite[3.1]{MR2130624} and includes in addition the multiplicative structure.
In the language of the present paper, the functor ${\K^{(\cdots)}(M)} $ is a 
 graded ring, graded by the homotopy category  $\Ho  \PicL_{\underline{\KU}}(M)$  of topological twists of $\KU$ on $M$. 

\bigskip

The real model for the   untwisted  complex $K$-theory of a manifold $M$ is the $2$-periodic de Rham complex 
 {$\Omega^*(M)[b,b^{-1}]$}, where $\deg(b)=-2$. It receives a multiplicative Chern character 
 transformation
$$\ch:\K^{*}(M) \to {H^{*}\big(\Omega^*(M)[b,b^{-1}]\big)\ .}$$
which becomes an isomorphism after tensoring the domain with $\R$ provided $M$ is compact.
If $V\to M$ is a complex vector bundle and $\nabla$ is a connection on $V$, then
$\ch([V])$ is represented by the closed form \begin{equation}\label{may0610}\Tr\exp\left(\frac{b}{2\pi i } R^{\nabla}\right)\in Z^{0}{\big(\Omega^*(M)[b,b^{-1}]\big)}\ .\end{equation}

{We now come back to the twisted case and consider an equivalence} class of twists coming from a class $z\in H^{3}(M;\Z)$. Then one considers a closed three form $\omega\in Z^{3}\big({\Omega^*(M)}\big)$ representing the image of the class $z$ in de Rham cohomology and forms the twisted de Rham complex
$\big({\Omega^*(M)[b,b^{-1}]},d+b\omega\big)$ by perturbing the  de Rham differential with $b\omega$. The twisted de Rham complex is the correct real model of $z$-twisted $K$-theory. This is justified by the existence of a twisted version of the Chern character
\begin{equation}\label{may0520}\ch:\K^{*+z}(M) \to H^{*}\big({\Omega^*(M)[b,b^{-1}]},d+b\omega\big)\end{equation}
which again becomes an   isomorphism after tensoring the domain with $\R$ for compact  $M$.
This twisted Chern character has been constructed in various models \cite{MR1977885}, \cite{MR2200848}, \cite{MR2271013}, \cite{MR2672802} and is in general (with the exception of \cite{MR2672802}) based on an infinite-dimensional generalization of  Chern-Weyl theory and the formula \eqref{may0610}. One consequence of the present paper is a homotopy theoretic construction of the twisted Chern character.
\bigskip

Since the twisted $K$-group is determined by the class $z$ only up to non-canonical isomorphism it is impossible to say that the twisted Chern character is natural in the twist or compatible with products etc.
Improvements in this direction  have been adressed in the literature. The present paper provides a solution of the problem  as a consequence of  the construction of a differential twisted complex $K$-theory.

As a start we fix the commutative differential graded algebra $\R[b,b^{-1}]$ (with trivial differential {and $\deg(b)=-2$})
as a real model of complex $K$-theory. In fact, there exists a map of commutative ring spectra $$c:\KU\to H\R[b,b^{-1}]$$ ({where $H$ is the Eilenberg-MacLane equivalence between chain complexes and $H\Z$-module spectra}) which becomes an equivalence after forming the product of $\KU$ with the Moore spectrum $M\R  \simeq H\R $. The fact that complex $K$-theory admits such a simple real model is a consequence of the property of being differentially simple (Definition \ref{good}). We call the triple $\widehat{\KU}:=(\KU,\R[b,b^{-1}],c)$ a differential refinement of $\KU$.

 In \cite{bg}, \cite{skript} the choice of such a differential refinement was the starting point for the construction of a multiplicative (untwisted) differential complex $K$-theory. In the present paper we show that this choice also suffices to construct twisted differential complex $K$-theory.

\bigskip 

In Definition \ref{may0620} we define, functorially in {the manifold  $M$}, a Picard-$\infty$ groupoid  $\Tw_{ \widehat{\KU}}(M)$   of differential twists of $\KU$ on $M$. A differential twist of $\KU$ on $M$ is essentially a triple $(E,\cM,d)$ {consisting }of 
\begin{itemize}
\item a topological $\KU$-twist $E\in \PicL_{\underline{\KU}}(M)$,
\item a sheaf of {real chain} complexes $\cM$ {over $M$ which is additionally a $DG$-module over the sheaf $\Omega \R[b,b^{-1}]$ (given by $U \mapsto \Omega^*(U)[b,b^{-1}]$}) ,
\item and an equivalence $d$ which identifies $\cM$ with {a real version $E_\R$ of $E$}. 
\end{itemize}

By Example \ref{may0220} the groupoid $\Tw_{\widehat{\KU}}(M)$ in particular contains twists of the form
\begin{equation}\label{may0521}
\hat E:=\big(E,{(\Omega \R[b,b^{-1}]},d+b\omega),d\big)\ ,
\end{equation} where $E$ is a topological $\KU$-twist classified by a class $z\in H^{3}(M;\Z)$ and $\omega\in Z^{3}(\Omega^*(M))$ represents the de Rham class of $z$.

\bigskip

Twisted differential complex $K$-theory on $M$ {is by Corollary \ref{cor_june26}} a {lax} symmetric monoidal functor (i.e. a graded ring)
$${\widehat{\K}^{(\dots)}}(M):\Ho\Tw_{\widehat{\KU}}(M)\to \Ab\ .$$

An important piece of the structure of twisted differential cohomology is the differential form contribution. 
More precisely the construction of twisted  differential $K$-theory  provides a morphism of abelian groups
$$\curv: \widehat{\K}^{(E,\cM,d)}(M) \to Z^{0}\cM(M) \ ,$$
where $Z^{0}\cM(M)$ are the zero-cycles in   the    $DG$-$\Omega^*(M)[b,b^{-1}]$-module 
%\tfoot{Das macht so keinen Sinn, entweder es ist ein DG-Modul, also insbesondere ein Kettenkomplex, oder nur ein Kettenkomplex}
 $\cM(M)$. The morphism $\curv$ maps a differential cohomology class to its underlying `curvature form'. 
% It is natural in the twist $(E,\cM,d)$, more precisely it is a morphism of graded rings. 
This transformation can be considered as a multiplicative differential form level refinement of the Chern character  \eqref{may0520} which is natural in the twist.

%\ntho{ More precisely,
%the evaluation on the second component of 
%differential twists produces a symmetric monoidal functor \tho{from  $\Ho\Tw_{ \widehat{\KU}}(M)$} to sheaves  of $DG$-$\Omega \R[b,b^{-1}]$-modules over $M$. If one further composes this functor with the evaluation at $M$ and the functor which takes zero cycles, then one obtains a symmetric monoidal functor 
%%$$\cZ^{0}:\Ho\Tw_{\widehat{\KU}}(M)   \to \Ab \qquad \tho{ (E,\cM,d) \mapsto Z^0\cM(M) }\ .$$
%The construction of twisted differential $K$-theory provides a natural transformation
%%$$\curv:\widehat{\KU}^{(\dots)}(M) \to \cZ^{0}(\dots) \ ,$$
%which maps a differential cohomology class to its underlying curvature form. }

\bigskip

Let us make this transparent for the twist \eqref{may0521}.  In this case we get a map
$$\curv: \widehat\K^{\hat E}(M)\to {Z^{0}\big(\Omega^*(M)[b,b^{-1}],d+b\omega\big)} \subset {\Omega^*(M)[b,b^{-1}]}\ .$$
For  $x\in \widehat\K^{\hat E}(M)$ the form $\curv(x)\in {\Omega^*(M)[b,b^{-1}]}$ satisfies the twisted cycle condition 
$(d +b\omega)\curv(x)=0$, and its cohomology class
 represents $\ch(I(x))$ in twisted de Rham cohomology, where $I(x)\in \K^{E}(M)$ is the underlying twisted $K$-theory class of $x$ and $\ch$ is the twisted Chern character map \eqref{may0520}.  

\bigskip

We continue this example and investigate the action of automorphisms of twists. As an application of the differential cohomology interpretation of differential twists explained in Section \ref{may0511}, in particular 
Remark \ref{may0201}, 8.,
we have a map ${\widehat{H}^{2}(M,\Z)}\to {\Aut_{\Ho \Tw_{\widehat{\KU}}(M)}(\hat E)}$.
If {a class $\phi\in \widehat{H}^{2}(M,\Z)$ in second ordinary differential cohomology} gives rise to such an automorphism of $\hat E$, then    it in particular acts on the complex $\Omega^*(M, \R[b,b^{-1}])$. {In this case the action} is given by multiplication by $\exp(-b \curv(\phi))$ {with $\curv(\phi) \in \Omega^2(M)$. Moreover the diagram}
$$\xymatrix{
\widehat{\K}^{\hat E}(M)\ar[d]^{\curv}\ar[rrr]^{\phi} &&&\widehat{\K}^{\hat E}(M)\ar[d]^{\curv}\\
{Z^{0}\big(\Omega^*(M)[b,b^{-1}],d+b\omega\big)} \ar[rrr]^-{\exp(-b\curv(\phi))} &&& {Z^{0}\big(\Omega^*(M)[b,b^{-1}],d+b\omega + b \curv(\phi) \big)}
}$$

commutes. This can be interpreted as  part of {the} statement of  naturality of the twisted Chern character forms.

Given a second differential twist $\hat E^{\prime}$ of the form \eqref{may0521} we can form the product
 of differential twists $$\hat E+\hat E^{\prime}=(E+E^{\prime},\big(\Omega \R[b,b^{-1}],d+b(\omega+\omega^{\prime})\big) ,d+d^{\prime})\ .$$  The fact that $\curv$ is a symmetric monoidal transformation  in particular says that the diagram
$${\scriptsize \xymatrix{
\widehat{\K}^{\hat E}(M)\otimes \widehat {\K}^{\hat E^{\prime}}(M)\ar[d]^{\curv\otimes \curv}\ar[rr]^{\cup}&&\widehat{\K}^{\hat E+\hat E^{\prime}}(M)\ar[d]^{\curv}\\
%\tho{Z^{0}\big(\Omega^*\big(M, \R[b,b^{-1}]\big),d+b\omega\big)} \otimes \tho{Z^{0}\big(\Omega^*\big(M, \R[b,b^{-1}]\big),d+b\omega\prime\big)}
Z^{0}\big(\Omega^*(M)[b,b^{-1}],d+b\omega\big) \otimes Z^{0}\big(\Omega^*(M)[b,b^{-1}],d+b\omega'\big)
\ar[rr]^-{\wedge } &&  
Z^{0}\big(\Omega^*(M)[b,b^{-1}],d+b(\omega+\omega')\big)
%\tho{Z^{0}\big(\Omega^*\big(M, \R[b,b^{-1}]\big),d+b(\omega + \omega \prime)\big)}
}}$$
commutes.
This is a differential form level statement of the fact that the twisted Chern character is compatible with products.

\bigskip

So far we  {have} discussed consequences of the construction of twisted differential $K$-theory for {the} Chern character. A major mathematical motivation to consider differential cohomology (and its twisted versions) is the fact that it receives secondary {invariants}. The application (see the examples discussed in \cite[Sec. 4]{skript}) of differential cohomology to the construction of secondary invariants is usually related with the exact sequences stated in general in Proposition \ref{may0630}. In the literature, a major part of the effort is invested in the construction of these sequences. In fact, their verification is important in order to state that the respective construction really produces a version of (twisted) differential cohomology. In our approach, similar as in  \cite{MR2192936}, these sequences are immediate consequences of the construction.

\bigskip

Observe that the theory developed in the present paper does not completely provide the mathematical background for the constructions envisaged in \cite{MR2742428}. For this it would be necessary {to} include group actions, orbifolds and {equivariant twists}. 

While the advantage of our homotopy theoretic construction of twisted differential $K$-theory is that it is easy to verify functorial properties and construct maps out of twisted differential $K$-theory, a  shortcome is the difficulty to construct maps into differential twisted $K$-theory from
differential geometry or analytic objects (see 
\cite{bg} for a more detailed discussion).  \\

\noindent
\textbf{Acknowledgements.}
We would like to thank David Gepner and Michael V\"olkl for helpful discussions.  {The authors are supported by the Collaborative Research Centre 1085 ``Higher Invariants''. }

\part{Twisted cohomology via graded ring spectra}
%%%%%%%%%%%%%%%%%%%%%%%%%%%%%%%%%%%%%%%%%%%%%%%%%%%%%%%%%%%%%%%%%%

%\section{Notation, Conventions}

%\tho{We should say that we follow Lurie and Joyal in the use of $\infty$-categories. Also we should say that commutative ring spectrum means the version of $E_\infty$} \\
%\uli{Wir brauchen einheitliche Notation f\"ur
%\begin{enumerate}
%\item Shifts: Ich bef\"uhrworte $E[n]$ (f\"ur Komplexe wie f\"ur Spektren).
%%\item $\underline{E}$ oder $\Sm(E)$.
%%Letzteres hat den Vorteil, da{\ss} klar ist, da{\ss} $\Sm(E)(M)\cong \Map(M,E)$.
%\item $\hat \cR$ und $\cR$ f\"ur das graduierte Ringspektrum. 
%%\item Das Symbol $\iota$ generell f\"ur Lokalisierung verwenden.
%Also z.B. $\Omega A_{\infty}$ durch $\iota \Omega A$ ersetzen $gl_{1}$ roman oder bf machen.
%%\item $gl_{1}$ und $bgl_{1}$ roman oder bf machen.
%\item Auswertung von Objetion $E\in \Sh(M)$ auf $M$ ist $E(M\to M)$ statt $E(M)$,. Mu{\ss} durchgecheckt werden.
%\item generelle Konvention: Kettenkomplex und Kokettenkomplexe dasselbe $A_{i}=A^{-i}$ und $(A[n])^{i}=A^{n+i}$. Dann $H(A[n])=(HA)[n]$.
%\item $\hat R$ und $\widehat R$ vereinheitlichen! Zu $\widehat{R}$!
%\item Notation f\"ur Spektren vereinheitlichen ($MSpin$ (preferiere $\mathbf{MSpin}$ Boldface?), $\KU$
%\item Notations $R$ und $\curv$ vereinheitlichen zu $\curv$
%\item Check usage of $\Diff$ for the differential function spectrum and $\widehat R^{\dots}$ for the cohomology
%\end{enumerate}
%}
 
%%%%%%%%%%%%%
\section{Graded ring spectra}\label{may0501}
%%%%%%%%%%%%%%%%%%%%%%%%%%%%%%%%%%%%%%%%%%%%%%%%%%%%%%%%%%%%%%%%%%%%%%%%%%%%%%%

\newcommand{\sym}{\SymMon}
\newcommand{\UFib}{(\Cat_\infty)_\star}

In this section we will, {generalizing   commutative {(more precisely $E_\infty$)} ring spectra,  define the notion of a commutative graded ring spectrum. As motivation we first explain the simpler case of the generalization from commutative rings to commutative graded rings.}

\bigskip

If $A$ is an abelian monoid, then a commutative $A$-graded ring 
is given by  the datum of an abelian group $R_a$ for every 
$a \in A$ together with morphisms $R_a \otimes R_b \to R_{ab}$ which must satisfy  
associativity and commutative conditions. 
We let $\underline{A}$ be the discrete    category
 on the underlying set of $A${, i.e. the objects {of $\underline{A}$} are the elements of $A$ and there are only identity morphisms}. The monoid structure of $A$ turns $\underline{A}$
into a symmetric monoidal category.  {With this in mind, a}  commutative $A$-graded ring {can   be considered as the   datum  of a} lax symmetric monoidal
functor from $\underline{A}\to \AbGr$, where $\AbGr$ denotes the symmetric monoidal category 
of abelian groups.

It   makes sense to replace $\underline{A}$ by an arbitrary symmetric
monoidal category $\calc $ and define the category of 
commutative $\calc$-graded rings to be the category of lax symmetric monoidal
functors from $\calc$ to abelian groups:
\begin{equation*}
\grRing_\calc := \Fun^{\otimes}_{\mathrm{lax}}(\calc,\AbGr)\ .
\end{equation*}

Lax symmetric monoidal here means that the monoidal coherence morphisms are possibly non-invertible. The direction is such 
that {a lax monoidal functor} sends  algebra objects in $\calc$ to algebra objects in $\AbGr$.
The assignment \begin{equation}\label{apr2401}  {\symstr\to \Cat^{op}}\ , \quad  \calc \mapsto \grRing_\calc \end{equation} defines a contravariant functor
from the 2-category of symmetric monoidal categories {and lax symmetric monoidal functors} $\symstr$ to the 2-category of
categories. 
\begin{ddd}\label{june27}
The $2$-category $\grRing$ is defined as the Grothendieck construction of the functor \eqref{apr2401}. \end{ddd}
   Thus
objects in $\grRing$ are   pairs
$(\calc,R_\bullet)$, where $\calc$ is a symmetric monoidal category and $R_\bullet$ is a
$\calc$-graded ring. {Moreover} there is a functor $U: \grRing \to \symstr$   given by $(\calc,R_\bullet)\mapsto \calc$.

\bigskip

We now want to generalize the notion of graded rings to graded ring spectra. 
Since spectra form an $\infty$-category it is most natural to {allow} indexing categories which are themselves
symmetric monoidal 
$\infty$-{categories} instead of ordinary symmetric monoidal categories.
In the present paper we consider the $\infty$-category of symmetric monoidal $\infty$-catgeories {and lax monoidal functors} $\sym$\footnote{This is defined a the {full} subcategory of the $\infty$-category of $\infty$-operads {spanned by symmetric monoidal $\infty$-categories}, see \cite{HA}.}
as the home of  the following web of full reflective $\infty$-subcategories
\begin{equation}\label{apr2930}\xymatrix{{\Ab} \ar[r]\ar[d]&\CGrp(\cS)\ar[d] & \symstr\ar[d]\\ 
\Ab\mathrm{Mon}\ar[r]&\CMon(\cS)\ar[r]&\sym
}\end{equation}
of commutative groups and monoids in the category of sets and the $\infty$-category $\cS$ of spaces. {We will not write these inclusions explicitly and hope that this abuse will not lead to confusion.} 
Commutative groups in spaces are also called Picard-$\infty$ groupoids. Thus a
 Picard-$\infty$-groupoid  is a symmetric monoidal
$\infty$-category such that the underlying $\infty$-category is an
$\infty$-groupoid  with the property that every object has a tensor inverse.  Note that the $
\infty$-loop space functor $\Omega^{\infty}$ provides an equivalence of the $\infty$-category of connective spectra $\Sp_{\ge 0}$ with the $\infty$-category of Picard-$\infty$ groupoids.

 \bigskip
Recall that the $\infty$-category $\Sp$ of spectra is a symmetric monoidal $\infty$-category
with respect to the smash product (see {\cite[Section 6.3.2]{HA}}).
 
\begin{definition} 
 Let $\calc$ be a symmetric monoidal $\infty$-category. 
A commutative $\calc$-graded ring spectrum is a lax symmetric monoidal functor $\calc
\to \Sp$. 
 \end{definition}

In the following we illustrate this definition by relating some special cases with more classical notions.

\begin{exa2}
 \begin{enumerate}
 \item Let $\calc$ be the trivial symmetric monoidal
$\infty$-category, i.e. a point considered as a {group-object in spaces}.  Then a $\calc$-graded ring spectrum is just  a commutative ring spectrum.
\item Let $R^\bullet$ be an ordinary commutative $\calc$-graded ring as in Definition \ref{june27}, where $\calc$ is an ordinary symmetric monoidal category. Then the collection of Eilenberg-Mac Lane spectra $HR^\bullet$ has the structure of a $\calc$-graded-ring {spectrum}\footnote{ {Here we should strictly speaking write $N\calc$ for the symmetric monoidal $\infty$-category associated to $\calc$, but we suppress this by our general language abuse}.}. 
%\item \ntho{Let $\cS $ be the symmetric monoidal $\infty$-category of spaces. The functor
%$\Sigma^{\infty}_{+}:\cS\to \Sp$ which associates to a space its suspension spectrum is symmetric monoidal. Hence it can be considered as an $\cS$-graded ring spectrum.}
%\tfoot{Das Beispiel brauchen wir doch gar nicht, und das bringt uns nur in Size-Probleme die ich an der Stelle nicht diskutieren moechte}
%\item  
%\ntho{We consider the symmetric monoidal $\infty$-category of chain complexes $\Ch_{\infty} $ obtained from the category $\Ch$ of chain complexes by inverting the quasi-isomorphisms. 
%We have a lax symmetric monoidal functor $H:\Ch_{\infty}\to \Sp$, the Eilenberg-MacLane
%correspondence. It can be considered as a commutative $\Ch_{\infty}$-graded ring spectrum.}
\item If $\calc$  is an  ordinary abelian monoid considered as a discrete symmetric monoidal  {$\infty$-}category, then a $\calc$-graded ring spectrum  is given by the  following set of data: \begin{enumerate}\item  for every element $a
\in \calc$ a spectrum ${R}_a$, \item for every pair of elements $a,b \in \calc$ a morphism
${R}_a \wedge {R}_b \to {R}_{ab}$, \item for triples $a,b,c \in \calc$ coherence cells
filling the obvious diagram. \item There is moreover an infinite list of higher coherence cells which is made precise by the above definition.\end{enumerate}
\item Let us consider a  $\calc$-graded ring spectrum $R $. If we apply the functor $\pi_0 : \Sp \to \AbGr$, then
get a lax monoidal functor $\calc \to \AbGr$. {It has a canonical factorization over}
 a lax monoidal functor $\ho \calc \to \AbGr$, where $\ho \calc$ is
the homotopy category of $\calc$. Thus the homotopy of a $\calc$-graded ring spectrum  $R$   is a   $\ho \calc$-graded ring $\pi_{0}(R)$. 
%\item Of course the above definition makes sense in any symmetric monoidal
%$\infty$-category $D$ (even in an $\infty$-operad) replacing the category of \marginpar{\uli{Do we need this?}}
%spectra. One might even let the source be an arbitrary $\infty$-operad $C$. 
%Then the definition just gives back $C$-algebras in $D$. 
\end{enumerate}
\end{exa2}
 
 In the following we define an $\infty$-category of commutative graded spectra.
 We let   $\Cat_\infty $ denote the $\infty$-category of   $\infty$-categories.  We  consider the  functor   
\begin{equation}\label{ulimar1810}
 \SymMon^{op} \to \Cat_\infty ,\quad  \calc \mapsto \Fun^{\otimes}_\mathrm{lax}(\calc, \Sp)
\end{equation}
 For the Grothendieck construction in $\infty$-categories
see \cite[Chapter 3]{HTT}.
\begin{definition}\label{def1aug13}
The $\infty$-category  of graded ring spectra $\GCS$ is defined to be the Grothendieck construction of the
functor  \eqref{ulimar1810}.  
\end{definition}
Thus {roughly speaking} an object in $\GCS$ is a  pair $(\calc, R)$
  of a symmetric monoidal $\infty$-category $\calc$ and a 
 $\calc$-graded ring spectrum $R$. A morphism  $(u,f):(\calc, R)\to  (\calc', R')$  consists of 
  a {lax} symmetric monoidal functor $f: \calc \to \calc'$  and a morphisms of graded ring spectra 
$u:R  \to f^{*}R' $. Moreover, we have a canonical forgetful functor $\GCS \to \SymMon$. 

\bigskip

 The following lemma is an immediate consequence of the definitions.
\begin{lemma}
There is a functor 
\begin{equation*}
\GCS \to \grRing \qquad (\calc,R) \mapsto (\ho \calc, \pi_0(R)).
\end{equation*}
\end{lemma}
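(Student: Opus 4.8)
The statement asserts the existence of a functor $\GCS \to \grRing$ with the indicated effect on objects, so the task is to produce it from the material already assembled. The plan is to exhibit it as the Grothendieck construction of a natural transformation between the two defining fibrations. Recall that $\GCS$ is the Grothendieck construction of $\SymMon^{op}\to\Cat_\infty$, $\calc\mapsto \Fun^{\otimes}_{\mathrm{lax}}(\calc,\Sp)$, while $\grRing$ is the Grothendieck construction of $\symstr^{op}\to\Cat$, $\calc\mapsto \Fun^{\otimes}_{\mathrm{lax}}(\calc,\Ab)$. Both are (op-)fibrations, and a morphism of fibrations over a base functor is exactly what a functor between the total spaces of compatible Grothendieck constructions amounts to.

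\textbf{Key steps.} First I would recall the homotopy-category functor $\ho\colon \SymMon \to \symstr$ (a symmetric monoidal $\infty$-category has an underlying ordinary symmetric monoidal homotopy category, and lax symmetric monoidal functors descend), giving a functor $\ho^{op}\colon \SymMon^{op}\to\symstr^{op}$ on the bases. Second, I would use $\pi_0\colon \Sp\to\Ab$, which is lax symmetric monoidal (it is even the composite of $\Omega^\infty\colon \Sp\to\cS$ with $\pi_0\colon\cS\to\mathrm{Set}$, restricted appropriately, and preserves products up to the usual $\pi_0$-of-a-product map), hence induces $\Fun^{\otimes}_{\mathrm{lax}}(\calc,\Sp)\to\Fun^{\otimes}_{\mathrm{lax}}(\calc,\Ab)$ by postcomposition. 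Third, and this is the real content, I would observe that for a lax symmetric monoidal functor $R\colon\calc\to\Sp$ the composite $\pi_0\circ R\colon\calc\to\Ab$ factors through $\ho\calc$: indeed $\Ab$ is a $1$-category, so any functor from $\calc$ to $\Ab$ inverts all equivalences and is therefore constant on homotopy classes of morphisms, giving the canonical factorization $\calc\to\ho\calc\to\Ab$ — this is precisely the factorization already noted in Example~(4) of the preceding discussion, and its lax symmetric monoidal structure is inherited. Assembling these: the assignment $\calc\mapsto\bigl(\Fun^{\otimes}_{\mathrm{lax}}(\calc,\Sp)\to\Fun^{\otimes}_{\mathrm{lax}}(\ho\calc,\Ab)\bigr)$ is a natural transformation from \eqref{ulimar1810} to the composite of $\ho^{op}$ with \eqref{apr2401}, and applying the (functorial) Grothendieck construction to this natural transformation over the base functor $\ho^{op}$ yields the desired functor $\GCS\to\grRing$, which on objects sends $(\calc,R)$ to $(\ho\calc,\pi_0(R))$ as claimed.

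\textbf{Main obstacle.} The only genuinely nontrivial point is the compatibility of the lax symmetric monoidal structures: one must check that postcomposition with the lax symmetric monoidal functor $\pi_0$ (together with the factorization through $\ho\calc$) induces a functor on lax symmetric monoidal functor categories and that this is natural in $\calc$, i.e. strictly compatible with pullback along lax symmetric monoidal functors $\calc\to\calc'$. Once one has set up $\pi_0\colon\Sp\to\Ab$ as a lax symmetric monoidal functor of symmetric monoidal $\infty$-categories (viewing $\Ab$ as a $1$-category, hence a symmetric monoidal $\infty$-category via its nerve), postcomposition is manifestly functorial and strictly natural, so this is a formal verification rather than a substantive difficulty — which is why the lemma is correctly billed as ``an immediate consequence of the definitions.'' I would spend one or two sentences noting that the Grothendieck construction is functorial in morphisms of Cartesian fibrations over a varying base (see \cite[Chapter 3]{HTT}), so that a natural transformation of the relevant fibration-classifying functors, lying over $\ho^{op}$, produces the functor on total categories, and then conclude.
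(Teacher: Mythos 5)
Your construction is correct and is exactly the argument the paper has in mind: the paper offers no proof, declaring the lemma an immediate consequence of the definitions, and your write-up (postcomposition with the lax symmetric monoidal $\pi_0\colon \Sp\to\Ab$, the factorization through $\ho\calc$ since $\Ab$ is a $1$-category, and functoriality of the Grothendieck construction over $\ho^{op}$) is the standard way to make that precise. The only quibble is your parenthetical justification that $\pi_0$ is lax symmetric monoidal via "preserving products": the relevant structure is the canonical map $\pi_0(X)\otimes\pi_0(Y)\to\pi_0(X\wedge Y)$ for the smash product, not a statement about products, though the conclusion you need is of course true.
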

We have collected some technical facts about graded ring spectra in the {Appendix \ref{aapend}}. 

%%%%%%%%%%%%%%%%%%%%%%%%%%%%%%%%%%%%%%%%%%%%%%%%%%%%%%%%%%%%%%%%%%%%%%%%%%%%%%%%%%%%%%%%%%%%%%%%%%%%%%%55
\section{Twisted cohomology}\label{ulimar2201}
%%%%%%%%%%%%%%%%%%%%%%%%%%%%%%%%%%%%%%%%%%%%%%%%%%%%%%%%%%%%%%%%%%%%%%%%%%%%%%%%%%%%%%%%%%%%%

A classical instance of twisted cohomology is the cohomology of a space with coefficients in a locally constant sheaf,  for example the  orientation sheaf. Twisted $K$-theory has been introduced in \cite{MR0282363}. 
Much later, motivated by applications in physics it became an intensively studied object, see Section \ref{may0650} {of the introduction}.
 General approaches to twisted cohomology are given in \cite{MR2271789} and  \cite{MR2681757}, \cite{2011arXiv1112.2203A}.  This last reference can be considered as a starting point for the approach taken in the present paper.

In this section we review twisted cohomology and phrase it in the framework of graded rings and graded ring spectra.  The construction serves as a model for the slightly more involved case of twisted differential cohomology. 

\bigskip

 We first need some sheaf-theoretic notation. In the present paper we consider sheaves
with values in a presentable $\infty$-category $\cC$. An ordinary category is considered as an $\infty$-category via its  nerve.
 By $\Mf$ we denote the site of smooth manifolds with the pretopology of open coverings. By $\Sh_{\cC}$  %\tfoot{Sollen wir vielleicht doch lieber die Mannigfaltigkeiten mit notieren, finde des etwas verwirrend sonst.}
  we denote the $\infty$-category {of} sheaves on the site $\Mf$ with target $\cC$.   We use the symbol $\Sh_{\cC}(M)$  in order to denote  the category of  sheaves on the restricted site $\Mf/M$.  
 For a sheaf $A\in \Sh_{\cC}$ we let  $A_{|M}\in \Sh_{\cC}(M)$ denote the restriction
of $A$ to $\Mf/M$.  
For presheaves we will use similar conventions and the symbol $\PSh$.
{A reference for sheaves with values in $\infty$-categories is $\cite{HTT}$. See also the note $\cite{diffsheaf}$ for some sheaf theory in this setting.}

The diagram of reflective subcategories \eqref{apr2930} induces a corresponding diagram of reflective subcategories of sheaves on $\Mf$ or any other site {with values in the respective $\infty$-categories}. We will use these inclusions without further notice. {Sheaves} of groupoids will also be {referred} to as stacks. In particular, a sheaf of Picard-$\infty$ groupoids is called a Picard-$\infty$ stack.

An object $X\in \cC$ gives rise to a constant presheaf $C_{X}\in \PSh_{\cC}$ defined by
$ C_{X}(N):=X$ for all objects $N\in \Mf$. By $ \hat C_{X}\in \Sh_{\cC}$ we denote its sheafification.  

\begin{ddd}
 Let $M$ be a smooth manifold. 
A sheaf $\cF\in \Sh_{\cC}(M)$ is called constant if there exists an object $X\in\cC$
and an equivalence $ (\hat C_{X})_{|M}\cong \cF$.

A sheaf $\cF\in \Sh_{\cC}(M)$ is called locally constant if every point $m\in M$ has a neighbourhood $U\subseteq M$ such that $\cF_{|U}\in \Sh_{\cC}(U)$ is constant.
We write
$$\Sh^{\loc}_{\cC}(M)\subseteq \Sh_{\cC}(M)$$ for the full subcategory of locally constant sheaves.
\end{ddd}

\begin{ddd}\label{apr2630}
For an object $X\in \cC$ of  a presentable $\infty$-category $\cC$  we define the sheaf   $\underline{X}\in \Sh_{\cC}$
such that its evaluation on the manifold $M$ is given by $ \underline{X}(M):=X^{M^{top}}$.
\end{ddd}
In this definition,  $M^{top}$ is the   underlying space of $M$, and we use that a presentable $\infty$-category   is cotensored over spaces.  We refer to the appendix of \cite{bg} for  the verification of the the following facts. 
\begin{prop}\phantomsection \label{propju22} 
\begin{enumerate}
\item The presheaf $M {\mapsto}  X^{M^{top}}$ is indeed a sheaf.
%\item \ntho{The sheaf $\underline{X}$ is homotopy invariant.}
\item The sheaf $\underline{X}$ is constant. In fact, we have an 
  equivalence $\underline{X}\cong \hat C_{X}$.
\end{enumerate}
\end{prop}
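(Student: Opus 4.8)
The plan is to prove the two statements separately, both by reducing to the analogous assertions about the representable sheaf on the level of spaces and then transporting them along the cotensoring. For statement (1), I would first recall that for a presentable $\infty$-category $\cC$, the cotensor $X^{(-)}\colon \cS^{op}\to \cC$ sends colimits of spaces to limits in $\cC$; concretely, $X^{\colim_i S_i}\simeq \lim_i X^{S_i}$. Now an open cover $\{U_i\to M\}$ gives, after applying the functor $(-)^{top}$, a cover of the underlying space, and since $(-)^{top}$ preserves the relevant pushouts/coproducts, the \v{C}ech nerve of $\coprod_i U_i^{top}\to M^{top}$ computes $M^{top}$ as a colimit in $\cS$ (this is the statement that the topology of $M^{top}$ is detected by the cover, i.e.\ $\cS$-valued presheaves of the form $\mathrm{Map}(-, Y)$ satisfy descent for open covers of topological spaces — or one can cite the appendix of \cite{bg} directly). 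Applying $X^{(-)}$ turns this colimit into the limit defining the descent/sheaf condition for $\underline{X}$, which is exactly what must be checked.

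For statement (2), the key is that a constant sheaf is by definition the sheafification $\hat C_X$ of the constant presheaf $C_X$, and sheafification is the left adjoint to the inclusion $\Sh_\cC\hookrightarrow \PSh_\cC$; so to produce the equivalence $\underline{X}\simeq \hat C_X$ it suffices to exhibit a morphism of presheaves $C_X\to \underline{X}$ and show that it is a local equivalence (i.e.\ becomes an equivalence after sheafification), and that $\underline X$ is already a sheaf (which is part (1)). The map $C_X\to \underline X$ is induced by the projections $M^{top}\to \mathrm{pt}$, using $X=X^{\mathrm{pt}}\to X^{M^{top}}$. To see it is a local equivalence, note that every manifold $M$ admits an open cover by manifolds $U$ that are contractible (e.g.\ diffeomorphic to convex open subsets of Euclidean space), hence $U^{top}$ is a contractible space and $X^{U^{top}}\simeq X^{\mathrm{pt}}=X=C_X(U)$; thus $C_X\to \underline X$ is an equivalence on a basis of the topology, which implies it induces an equivalence on sheafifications. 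Alternatively, one invokes the universal property: for any sheaf $\cF$, $\mathrm{Map}_{\Sh}(\underline X,\cF)\simeq \mathrm{Map}_\cC(X,\cF(\mathrm{pt}))\simeq \mathrm{Map}_{\PSh}(C_X,\cF)$, exhibiting $\underline X$ as the sheafification of $C_X$.

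The main obstacle is the purely point-set/homotopical input needed in both parts: that the functor $M\mapsto M^{top}$ carries open covers of manifolds (and their iterated intersections, forming the \v{C}ech nerve) to a colimit diagram in $\cS$ presenting $M^{top}$. Phrased differently, one needs that singular-homotopy-type-valued presheaves on topological spaces satisfy open-cover descent, together with the compatibility that the \v{C}ech nerve of a cover in $\Mf$ maps to the \v{C}ech nerve of the induced cover in $\mathbf{Top}$. This is standard but is the only non-formal ingredient; everything else is adjunction-chasing. Since the paper explicitly says ``We refer to the appendix of \cite{bg} for the verification,'' I would organize the argument so that this descent statement is the single black box being cited, and present the reduction to it cleanly.
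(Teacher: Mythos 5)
The paper does not prove Proposition \ref{propju22} in the text at all: both statements are deferred to the appendix of \cite{bg}, so there is no internal proof to compare against step by step. For part (1) your outline is the standard (and surely intended) argument: the only non-formal input is that the \v{C}ech nerve of (the underlying spaces of) an open cover computes the underlying homotopy type of $M$ as a colimit in $\cS$, and once that is granted, the cotensor $X^{(-)}\colon \cS^{op}\to\cC$ turns this colimit into exactly the limit expressing descent (plus the trivial compatibility with disjoint unions). Isolating that descent statement as the single black box, with \cite{bg} as the citation, is the right organization.

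Part (2) has a genuine soft spot. Your main justification — ``$C_X\to\underline{X}$ is an equivalence on a basis of contractible opens, hence induces an equivalence on sheafifications'' — is not a formal fact about $\infty$-sheafification: contractible opens are not closed under intersections, so the \v{C}ech/plus construction computing the sheafification of $C_X$ on a manifold involves values on non-contractible opens, and an objectwise equivalence on a basis does not by itself control the sheafified map. To make this step honest you need a real (standard, citable) ingredient, e.g.\ the comparison/dense-subsite lemma identifying $\Sh_{\cC}(\Mf)$ with sheaves on the sub-site of contractible (Cartesian) manifolds, or a good-cover argument using hypercompleteness of these sheaf categories (manifolds have finite covering dimension). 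Your proposed alternative is worse than the main route: the claimed equivalence $\Map_{\Sh}(\underline{X},\cF)\simeq \Map_{\cC}(X,\cF(\pt))$ is precisely the assertion that $\underline{(-)}$ has the universal property of the constant-sheaf functor (left adjoint to evaluation at the point), i.e.\ it is the statement $\underline{X}\simeq \hat C_{X}$ in adjoint form, so it cannot simply be ``invoked'' — as written it is circular. In summary: part (1) is correct; part (2) has the right shape, but its key step needs an explicit input you have not supplied, and the fallback you offer does not repair it.
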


If $\cC$ is a (closed) symmetric monoidal $\infty$-category, then the categories $\PSh_\cC(M)$, $\Sh_{\cC}(M)$ and 
$\Sh^{\loc}_{\cC}(M)$ also carry (closed) symmetric monoidal structures {given by the objectwise tensor product}. Thus it makes sense to speak of {(commutative)} algebra and module objects in these categories. It turns out that algebras in (locally constant) sheaves are the same as (locally constant) sheaves of algebras. 
\begin{prop} 
Let $\calc$ be a (closed) presentable symmetric monoidal ${\infty}$-category.
Then we have equivalences
$$ \CAlg(\Sh_\cC(M)) \simeq \Sh_{\CAlg(\cC)} \qquad \text{and} \qquad  \CAlg(\Sh^\loc_\cC(M)) \simeq \Sh^\loc_{\CAlg(\cC)}(M).  $$
If $X \in \cC$ is a commutative algebra object, the sheaf $\underline{X}$ is a locally constant sheaf of {commutative} algebras and we have similar equivalences
$$ \Mod_{\underline{X}}(M) \simeq \Sh_{\Mod_X}(M) \qquad \text{and} \qquad \Mod^\loc_{\underline{X}}(M) \simeq \Sh^\loc_{\Mod_X}(M).  $$
\end{prop}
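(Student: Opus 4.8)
The plan is to reduce all four equivalences to the corresponding statements for \emph{presheaves}—where they hold for purely formal reasons—and then transport them along sheafification, using that the latter is a symmetric monoidal Bousfield localization. Concretely, the argument has three ingredients: (i) the presheaf-level versions; (ii) the fact that $L\colon\PSh_{\cC}(M)\to\Sh_{\cC}(M)$ is a symmetric monoidal localization; and (iii) the compatibility of the formation of commutative algebras and of module categories with such localizations.

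For (i), I would use that $\PSh_{\cC}(M)=\Fun((\Mf/M)^{op},\cC)$ carries the objectwise symmetric monoidal structure, so that there are canonical equivalences $\CAlg(\PSh_{\cC}(M))\simeq\PSh_{\CAlg(\cC)}(M)$ and, for $X\in\CAlg(\cC)$ with associated constant presheaf of algebras $C_{X}$, $\Mod_{C_{X}}(\PSh_{\cC}(M))\simeq\PSh_{\Mod_{X}(\cC)}(M)$; these are instances of the general principle that commutative algebras, and modules over a constant algebra, in a functor $\infty$-category with pointwise tensor product are computed objectwise \cite{HA}. For (ii), I would use the identification $\Sh_{\cC}(M)\simeq\Sh_{\cS}(M)\otimes\cC$ in presentable $\infty$-categories \cite{bg} together with the fact that $\Sh_{\cS}(M)\subseteq\PSh_{\cS}(M)$ is a symmetric monoidal localization (the covering sieves are inverted, and that class is closed under products); since $\cC$ is presentable and closed symmetric monoidal, $-\otimes-$ preserves colimits separately in each variable, hence $-\otimes Y$ commutes with both colimits and $L$, so the $L$-equivalences in $\PSh_{\cC}(M)$ are stable under tensoring and $L$ is symmetric monoidal. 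For (iii), I would invoke that for a symmetric monoidal localization $L\colon\cD\to\cE$ with fully faithful right adjoint, $\CAlg(\cE)$ is the full subcategory of $\CAlg(\cD)$ on algebras with underlying object in $\cE$, and $\Mod_{LA}(\cE)$ the full subcategory of $\Mod_{A}(\cD)$ on modules with underlying object in $\cE$ \cite{HA}.

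Putting these together: an object of $\CAlg(\PSh_{\cC}(M))\simeq\PSh_{\CAlg(\cC)}(M)$ lies in $\CAlg(\Sh_{\cC}(M))$ iff its underlying $\cC$-valued presheaf is a sheaf, and since $\CAlg(\cC)\to\cC$ preserves and detects small limits, a $\CAlg(\cC)$-valued presheaf is a sheaf iff its underlying $\cC$-valued presheaf is; hence $\CAlg(\Sh_{\cC}(M))\simeq\Sh_{\CAlg(\cC)}(M)$. The module case is identical, using $L(C_{X})\simeq\underline{X}=\hat C_{X}$ (Proposition~\ref{propju22}) and that $\Mod_{X}(\cC)\to\cC$ preserves and detects limits. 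Restriction along $\Mf/U\hookrightarrow\Mf/M$ shows these equivalences are compatible with passing to open subsets, and a sheaf of algebras (resp.\ of modules) is locally constant iff its underlying $\cC$-valued sheaf is: for the nontrivial direction, if $\cF_{|U}\simeq\hat C_{Y}$ then the algebra (resp.\ module) structure is carried by one on the fibre $Y$, because $\hat C_{(-)}\colon\cC\to\Sh_{\cC}(U)$—being the composite of $(U\to\pt)^{*}$ with the symmetric monoidal sheafification—is strong symmetric monoidal and fully faithful onto the constant sheaves. Alternatively, and perhaps more cleanly, one could invoke the monoidal monodromy equivalence $\Sh^{\loc}_{\cC}(M)\simeq\Fun(\Sing(M^{top}),\cC)$ and repeat the objectwise argument over it. This yields the two locally constant equivalences.

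I expect the main obstacle to be ingredient (ii) together with its module refinement in (iii): checking that sheafification is a symmetric monoidal localization and that it is compatible with module categories—this is exactly where the hypotheses that $\cC$ be presentable and closed symmetric monoidal enter, via the fact that $-\otimes-$ preserves colimits in each variable so that $-\otimes Y$ descends through sheafification. Everything else is formal. A minor further point is the monoidal compatibility of the monodromy equivalence, which can be bypassed entirely by the elementary argument with $\hat C_{(-)}$ indicated above.
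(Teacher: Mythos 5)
Your proposal is correct, and its skeleton is the same as the paper's: pass to the objectwise equivalence $\CAlg(\PSh_\cC(M))\simeq\PSh_{\CAlg(\cC)}(M)$ (and its module analogue), then observe that the sheaf condition is a limit condition detected by the limit-preserving, conservative forgetful functor, and finally handle the locally constant statements by compatibility with restriction and the constant-sheaf case. Where you genuinely diverge is in how you justify restricting from presheaves to sheaves: the paper simply works with the objectwise tensor product on $\Sh_\cC(M)$ and checks the sheaf condition directly, while you realize $\Sh_\cC(M)$ as a symmetric monoidal Bousfield localization of $\PSh_\cC(M)$ (via $\Sh_\cC(M)\simeq\Sh_{\cS}(M)\otimes\cC$) and invoke Lurie's compatibility of $\CAlg$ and $\Mod$ with monoidal localizations. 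Your route is heavier but arguably more robust: it works with the standard sheafified tensor product (where the objectwise tensor of two sheaves need not be a sheaf, a point the paper glosses over) and it delivers the module-category statements with the same formal argument; the paper's route is shorter and avoids the localization machinery entirely by reducing everything to limit-preservation of the forgetful functor, including the constant-sheaf case via the cotensoring $X^{M^{top}}$. You are also more explicit than the paper about the nontrivial direction of the locally constant comparison (an algebra/module whose underlying sheaf is constant comes from an algebra/module structure on the fibre); just note that full faithfulness of $\hat C_{(-)}\colon\cC\to\Sh_\cC(U)$ onto constant sheaves requires $U$ to be (weakly) contractible, since $\Map(\hat C_X,\hat C_Y)\simeq\Map_\cC(X,Y^{U^{top}})$ --- harmless here, because in the definition of local constancy one may always shrink to a contractible neighbourhood.
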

\begin{proof}
{
We give the proof for the first equivalences. The argument for the   second assertion is   similar. 
For categories of presheaves we have an evident equivalence
$\CAlg(\PSh_\cC(M)) \simeq \PSh_{\CAlg(\cC)}$
which directly follows from the definition resp. universal property of the point wise tensor product on functor categories  {\cite[Section 2.2.5.]{HA}}. Thus it is enough to check that under this equivalence sheaves and locally constant sheaves are preserved. For the case of sheaves this follows since the sheaf condition is a limit condition and the forgetful functor which forgets algebra structures commutes with limits in any closed symmetric monoidal $\infty$-category  {\cite[Section 3.2.2.]{HA}}. For the second equivalence we use that the first obviously commutes with restriction. {It therefore suffices to show the equivalence}  for constant sheaves.  {Here we use again} that the forgetful functor that forgets the algebra structure commutes with limits, {so} in particular with the tensoring used in \ref{propju22} {in order to define} constant sheaves.
}
\end{proof}

For a commutative ring spectrum $R$ the $\infty$-category of module spectra over $R$ will be denoted $\Mod_R$. By  $\Pic_{R}$  we denote the maximal Picard-$\infty$-groupoid inside of $\Mod_R$, i.e. the objects are $R$-module spectra which admit a tensor inverse, and the morphisms are the invertible  {$R$-module maps}. Similarly by $\Pic^\loc_{\underline{R}}(M)$ we denote the maximal Picard-$\infty$-groupoid inside the $\infty$-category $\Sh^\loc_{\Mod_R}(M)$ of locally constant sheaves of $R$-modules on $M$.   
%\uli{For $\underline{R}\in \CAlg(\Sh_{\cC})$ we define the Picard stack $\Pic_{\underline{R}}$ in a similar manner.}

\begin{ddd} The objects of $\PicL_{\underline{R}}(M)$ are called $R$-twists on $M$.
\end{ddd}

Let us list a few properties of $R$-twists that we will need later. The proofs are given in Appendix \ref{bapend}.
\begin{prop}\label{prop_jul2}
\begin{enumerate}
\item 
The assignment $M \mapsto \Pic^\loc_R(M)$   {itself}  forms a Picard-$\infty$-stack on $\Mf$, i.e.
\begin{equation}\label{ulimar2002}\PicL_{\underline{R}}\in \Sh_{\CGrp(\cS)}\ .\end{equation}
\item For every manifold $M$ there
 is a canonical equivalence of Picard-$\infty$-groupoids
\begin{equation*}
\PicL_{\underline{R}}(M) \cong (\Pic_R)^{M^{top}} = \underline{\Pic_R}(M)\ .
\end{equation*}
\item Every $R$-twist $E \in \PicL_{\underline{R}}(M)$ on $M$ is homotopy invariant. That means that  for every  map $N \to N'$  in $\Mf/M$ which is a homotopy equivalence   (not necessarily over $M$)  the induced map $E(N' \to M) \to E(N \to M)$ is an equivalence of $R$-modules. 
\end{enumerate}
\end{prop}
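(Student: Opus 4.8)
The plan is to prove the three items in order, using the identification of locally constant sheaves with the cotensor $(-)^{M^{top}}$ as the central tool; indeed item (2) is really the workhorse and items (1) and (3) are formal consequences of it together with basic facts about cotensoring.

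For item (2), the key input is the general statement, recalled in the excerpt around Proposition \ref{propju22} and in \cite{bg}, that for a presentable $\infty$-category $\cC$ the functor $X \mapsto \underline{X}$ (with $\underline{X}(M) = X^{M^{top}}$) identifies $\cC$ with the constant sheaves, and more relevantly that $\Sh^{\loc}_{\cC}(M) \simeq \cC^{M^{top}}$ — i.e. locally constant $\cC$-valued sheaves on $M$ are the same as local systems on the underlying space $M^{top}$, which by the presentability/cotensoring yoga are functors from the fundamental $\infty$-groupoid $\Pi_\infty(M^{top})$ to $\cC$, or equivalently objects of $\cC^{M^{top}}$. Applying this with $\cC = \Mod_R$ gives $\Sh^{\loc}_{\Mod_R}(M) \simeq (\Mod_R)^{M^{top}}$. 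Now I would pass to maximal Picard-$\infty$-groupoids on both sides: the functor $\Pic\colon \mathrm{Cat}_\infty^{\mathrm{symmon}} \to \CGrp(\cS)$ (taking the maximal subgroupoid of invertible objects) is a right adjoint, hence commutes with limits, and the cotensor $(-)^{M^{top}}$ is a limit (an inverse limit over the space $M^{top}$, or concretely $\lim$ over a simplicial resolution). Therefore $\Pic\big((\Mod_R)^{M^{top}}\big) \simeq (\Pic(\Mod_R))^{M^{top}} = (\Pic_R)^{M^{top}} = \underline{\Pic_R}(M)$, which is exactly the claim. One should check that ``maximal Picard groupoid inside a sheaf of symmetric monoidal categories'' is computed objectwise and is compatible with the equivalence of the previous displayed proposition — this is where a small amount of care is needed, but it again reduces to $\Pic$ commuting with the relevant limits.

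For item (1), once (2) is known it suffices to observe that $M \mapsto \underline{\Pic_R}(M) = (\Pic_R)^{M^{top}}$ is a sheaf: this is precisely Proposition \ref{propju22}(1) applied to the presentable $\infty$-category $\CGrp(\cS)$ (Picard-$\infty$-groupoids are presentable, being connective spectra via $\Omega^\infty$), so $\underline{\Pic_R} = \hat C_{\Pic_R} \in \Sh_{\CGrp(\cS)}$. Alternatively, and perhaps more honestly if one wants the statement intrinsically in terms of $\PicL_{\underline{R}}$, one argues directly that $\Sh^{\loc}_{\Mod_R}(-)$ satisfies descent and that taking maximal Picard subgroupoids preserves descent because it is a limit-preserving functor; the equivalence of (2) then transports the sheaf condition. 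Either route is short. For item (3), homotopy invariance is immediate from (2): if $f\colon N \to N'$ is a homotopy equivalence of manifolds over $M$, then $f^{top}$ is an equivalence of spaces, so for any $E \in \underline{\Pic_R}(M) = (\Pic_R)^{M^{top}}$ the restriction map is obtained by precomposing a functor $M^{top} \to \Pic_R$ (thought of via $\Pi_\infty$) with the equivalence induced by $f$, hence is an equivalence; unwinding, $E(N' \to M) \to E(N \to M)$ is the value of this functor on an equivalence and is therefore an equivalence of $R$-modules.

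The main obstacle I anticipate is not any single hard step but rather the bookkeeping in item (2): precisely pinning down that ``the maximal Picard-$\infty$-groupoid inside a locally constant sheaf of $\Mod_R$-algebras/modules'' corresponds under the equivalence $\Sh^{\loc}_{\Mod_R}(M) \simeq (\Mod_R)^{M^{top}}$ to the pointwise maximal Picard subgroupoid, i.e. that the operation $\Pic(-)$ and the cotensoring $(-)^{M^{top}}$ genuinely commute as $\infty$-functors (not just on homotopy categories). This is a ``$\Pic$ is a right adjoint, right adjoints preserve limits, cotensors are limits'' argument, but making it airtight requires knowing that $\Pic\colon \symstr \to \CGrp(\cS)$ is the right adjoint to the inclusion of Picard-$\infty$-groupoids into symmetric monoidal $\infty$-categories — which is standard (see e.g. \cite{HA}) — and then checking the cotensor in $\symstr$ restricts correctly. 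Since the paper defers these proofs to the appendices, I would state the adjointness and limit-preservation explicitly as the two lemmas doing all the work and then assemble (1)--(3) in a few lines each.
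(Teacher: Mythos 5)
Your overall route is the same as the paper's: classify locally constant sheaves as local systems, identify $\PicL_{\underline{R}}(M)$ with $(\Pic_R)^{M^{top}}$, and read off the sheaf property and homotopy invariance. But there is a genuine gap at the central step. The input you lean on --- that $\Sh^{\loc}_{\cC}(M)\simeq \Fun(\Sing(M),\cC)\simeq \cC^{M^{top}}$ --- is \emph{not} ``recalled around Proposition \ref{propju22}'' nor contained in the appendix of \cite{bg}: those results only identify \emph{constant} sheaves with $\underline{X}\cong \hat C_X$. The locally constant classification is precisely Theorem \ref{thmrep}, which this paper proves in Appendix \ref{bapend}, and that proof is essentially the entire content of the paper's proof of Proposition \ref{prop_jul2}. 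In particular you would still have to show: that $\Sh^{\loc}_{\cC}(-)$ is a sheaf of $\infty$-categories; that the comparison functor $\Fun(\Sing(M),\cC)\to\Sh^{\loc}_{\cC}(M)$ exists naturally in $M$ (Lemma \ref{lemmai17}); that it is fully faithful, which is not formal --- the paper reduces to contractible charts via a good cover and needs the technical statement that a limit of fully faithful functors is fully faithful with locally detected essential image (Lemma \ref{lem2mai17}, Corollary \ref{cor3mai17}); and that essential surjectivity can be checked locally. Citing the classification as known, with references that only cover constant sheaves, leaves the hard part of the proposition unproved.

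Granting Theorem \ref{thmrep}, the rest of your argument is sound and close to the paper's. Your observation that the maximal Picard groupoid commutes with the cotensor $(-)^{M^{top}}$ is a sensible way to nail down item (2) (the paper leaves this step implicit); note only that the adjunction you invoke, between Picard-$\infty$-groupoids and symmetric monoidal $\infty$-categories, holds for \emph{strong} monoidal functors, whereas the paper's $\SymMon$ has lax morphisms, so you should restrict to the strong-monoidal subcategory before applying ``right adjoints preserve limits''. For item (3) your conclusion is right, but the mechanism is slightly misstated: $E(N\to M)$ is not ``the value of the functor on an equivalence'' but the limit over $\Sing(N)$ of the restricted local system, and the point is that $\Sing(f)$ is an equivalence of indexing $\infty$-groupoids, hence induces an equivalence of limits; the paper instead deduces this from the classification by an adjoint-functor argument (the corollary closing Appendix \ref{bapend}). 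Either way, (1) and (3) are formal once the classification is actually established.
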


\bigskip

For any manifold $M$ the evaluation  provides   a lax symmetric monoidal transformation  $$\Gamma: \PicL_{\underline{R}}(M) \to  \Sp  \qquad E\mapsto \Gamma(M,E):=E(M) . $$ 
{Hence the} pair $(\PicL_{\underline{R}}(M),\Gamma)$ is a graded ring spectrum  which we denote $\cR(M)$.  
\begin{prop} Let $R \in \CAlg(\Sp)$ be a commutative ring spectrum. 
Then the assignment $M \mapsto \cR(M)$ canonically refines to a sheaf of graded ring spectra $\cR\in \Sh_{{\GCS}}$.

\end{prop}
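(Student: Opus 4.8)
The plan is to exhibit $\cR$ as a composition of sheaf-valued functors and then invoke the fact that a sheaf of graded ring spectra is the same datum as a sheaf valued in $\GCS$ together with the Grothendieck-construction description of $\GCS$ from Definition \ref{def1aug13}. First I would recall that by Proposition \ref{prop_jul2}(1) the assignment $M \mapsto \PicL_{\underline{R}}(M)$ is a sheaf $\PicL_{\underline{R}} \in \Sh_{\CGrp(\cS)}$, hence in particular a sheaf valued in $\SymMon$ via the inclusions in \eqref{apr2930} (a Picard-$\infty$ groupoid is a symmetric monoidal $\infty$-category). Next I would observe that the evaluation transformation $\Gamma$ assembles, as $M$ varies, into a morphism of sheaves: concretely, for each $M$ we have the lax symmetric monoidal functor $\Gamma_M : \PicL_{\underline{R}}(M) \to \Sp$, $E \mapsto E(M)$, and for a smooth map $f : M \to M'$ the naturality square comparing $\Gamma_{M'}$ and $\Gamma_M \circ f^*$ is filled by the structure maps $E(M') \to E(M)$ of the sheaf $E$. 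This is exactly the data of a section of the sheaf $M \mapsto \Fun^\otimes_{\mathrm{lax}}(\PicL_{\underline{R}}(M), \Sp)$ lying over the section $\PicL_{\underline{R}}$ of $\SymMon$; unwinding the Grothendieck construction in \eqref{ulimar1810}, this is precisely a sheaf $\cR \in \Sh_{\GCS}$ whose image under $\GCS \to \SymMon$ is $\PicL_{\underline{R}}$ and which objectwise recovers $(\PicL_{\underline{R}}(M), \Gamma)$.

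The key steps, in order, are: (i) produce $\cR$ as a presheaf $\Mf^{op} \to \GCS$, using that $M \mapsto \PicL_{\underline{R}}(M)$ is a presheaf of symmetric monoidal $\infty$-categories and that objectwise evaluation $E \mapsto E(M)$ is functorial in $M$ via the restriction maps of the $R$-twists, so the pair $(\PicL_{\underline{R}}(-), \Gamma)$ is a well-defined functor to the Grothendieck construction $\GCS$; (ii) check the sheaf condition for $\cR$. For (ii) the natural move is to use that the forgetful functor $\GCS \to \SymMon$ together with, over a fixed symmetric monoidal $\infty$-category $\calc$, the forgetful functor $\Fun^\otimes_{\mathrm{lax}}(\calc, \Sp) \to \Fun(\calc, \Sp)$, jointly detect limits (limits in $\GCS$ are computed as in the fibers, which are functor categories, over the limit in $\SymMon$, and lax symmetric monoidal structures are a limit-type/algebra-type condition preserved under limits — cf. the analogous argument in the proof of the proposition identifying $\CAlg(\Sh_\cC(M))$ with $\Sh_{\CAlg(\cC)}$). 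Thus descent for $\cR$ reduces to descent for $\PicL_{\underline{R}}$, which holds by Proposition \ref{prop_jul2}(1), and to the pointwise statement that for each twist $E$ the spectrum-valued presheaf $M \supseteq U \mapsto E(U)$ is a sheaf, which holds because $E$ is itself a sheaf of $R$-modules on $\Mf/M$ and the forgetful functor $\Mod_R \to \Sp$ preserves limits.

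The main obstacle I anticipate is purely $\infty$-categorical bookkeeping: making precise that ``a sheaf valued in the Grothendieck construction $\GCS$'' is the same as ``a sheaf $\calc_{(-)}$ valued in $\SymMon$ together with a compatible system of lax symmetric monoidal functors $\calc_{(-)} \to \Sp$ satisfying descent'', and in particular that the lax monoidal coherence data glues correctly. This is where one must be careful that $\Fun^\otimes_{\mathrm{lax}}(-,\Sp)$ is contravariantly functorial via honest pullback (not merely via some lax transfer), which is guaranteed because restriction of twists along $U \hookrightarrow M$ is strictly compatible with the symmetric monoidal structures; and that the Grothendieck construction of \eqref{ulimar1810} commutes with the relevant limits. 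All of this is routine given the technical facts collected in Appendix \ref{aapend}, and the argument is formally parallel to the treatment of $\cR(M)$ as a graded ring spectrum already given just before the statement — the content of the proposition is just the upgrade of that objectwise construction to a sheaf, which by the detection-of-limits principle above costs nothing beyond the descent already established for the grading sheaf $\PicL_{\underline{R}}$.
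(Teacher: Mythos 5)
Your step (ii) — the descent check — is essentially the paper's argument: limits in $\GCS$ are detected by the projection to $\sym$ together with objectwise evaluation in the fibre (this is Proposition \ref{limits}), so the sheaf condition reduces to descent for the grading $\PicL_{\underline{R}}$ (Proposition \ref{prop_jul2}) plus the fact that each twist $E$ is itself a sheaf of spectra; this is exactly how Proposition \ref{changesheaf} is proved. The genuine gap is in step (i). In the $\infty$-categorical setting, writing down the objectwise lax symmetric monoidal functors $\Gamma_M:\PicL_{\underline{R}}(M)\to\Sp$ and "filling the naturality squares by the structure maps of $E$" does not produce a functor $\Mf^{op}\to\GCS$: one needs a coherent hierarchy of compatibilities (for compositions of smooth maps, for the lax monoidal structure, and for their interaction), and phrases like "strictly compatible with the symmetric monoidal structures" have no content here. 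The appendix does not supply a lemma of the form "a compatible family of lax monoidal functors assembles into a section of $\GCS\to\sym$"; indeed the paper explicitly declines to prove the universal-property statement that would legitimize such a direct description, calling it technically involved.

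The paper's proof is organized precisely to avoid this coherence problem. The only genuinely new datum it needs is a morphism of sheaves of symmetric monoidal $\infty$-categories $\eta:\PicL_{\underline{R}}\to\Sh_{\Sp}$ (forget invertibility and the $R$-module structure), which is unproblematic to specify. The coherent evaluation data is constructed once and for all on full (pre)sheaf categories: Lemmas \ref{lem_eins} and \ref{lem_zwei} produce the functor $(C,c)\mapsto(\Fun(C^{op},\Sp),\mathrm{ev}_c)$ via the universal cocartesian fibration, giving the sheaf $F(M)=(\Sh_{\Sp}(M),\Gamma(M,\dots))$ of \eqref{ulimar1401} (Corollary \ref{presheafun} and Proposition \ref{changesheaf}(1)); then $\cR:=\eta^{*}F$ is obtained by the change-of-grading Proposition \ref{lem_drei}, and its sheaf property is Proposition \ref{changesheaf}(2) — whose proof is the limit-detection argument you gave. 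So to repair your proof you should replace the hand-made construction in (i) by this regrading of the universal example; as written, (i) asserts rather than constructs the object whose existence is the content of the proposition.
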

\begin{proof}
By {Proposition   \ref{changesheaf}} 
 % to $ \Mod_{\underline{R}}\cong \Sh_{\Mod_{R}}$ in order to get
 we have an object 
 \begin{equation}\label{ulimar1401}F\in \Sh_{\GCS}\ , \quad \Mf\ni M\mapsto F(M):=( \Sh_{\Sp}(M), \Gamma(M,\dots))\ .\end{equation}
 We now want to change the grading component of $F$ to $\PicL_{\underline{R}}$. Therefore we apply the change of grading Proposition \ref{lem_drei} to $F$, the sheaf $G:=\PicL_{\underline{R}}\in \Sh_{\sym}${,} 
 and the transformation $\eta:\PicL_{\underline{R}}\to \Sh_{\Sp}$ which forgets invertibility of objects and morphisms and the $R$-module structure  in order to get a new sheaf of graded ring spectra
 $\cR\in \Sh_{\Sp}.$ 
\end{proof}

\bigskip

The construction of the graded ring spectrum $\cR$ from $R$ is {even} functorial in $R$.  
More precisely we have:
\begin{prop}\label{prop_jun2ee}
The construction $R\mapsto \cR$ naturally refines to a functor
$$\CAlg(\Sp)\to \Sh_{\GCS}\ .$$
\end{prop}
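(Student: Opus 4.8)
The plan is to upgrade the construction $R \mapsto \cR$ to a functor out of $\CAlg(\Sp)$ by producing, step by step, functorial versions of each ingredient used in the construction of $\cR$ for a single ring spectrum $R$. The target is $\Sh_{\GCS}$, so it suffices to build a functor $\CAlg(\Sp) \to \Sh_{\GCS}$ whose value on $R$ recovers $\cR$. I would organize this around the two moves in the previous proof: first the "constant" sheaf of graded ring spectra $F$ of \eqref{ulimar1401} (which does not depend on $R$), and then the change-of-grading along the forgetful transformation $\eta \colon \PicL_{\underline{R}} \to \Sh_\Sp$; only the second move sees $R$, so that is where functoriality has to be installed.

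First I would recall that $\Mod_{(-)} \colon \CAlg(\Sp) \to \Cat_\infty$ (or better, into $\mathrm{Pr}^L$, or into $\SymMon$) is a functor: a map of commutative ring spectra $R \to R'$ induces the base-change symmetric monoidal functor $\Mod_R \to \Mod_{R'}$, and this is natural. Passing to maximal Picard-$\infty$-groupoids, $R \mapsto \Pic_R$ is functorial $\CAlg(\Sp) \to \CGrp(\cS)$, and composing with the cotensor $(-)^{M^{top}}$ and invoking Proposition \ref{prop_jul2}(2) gives that $R \mapsto \PicL_{\underline{R}}$ is a functor $\CAlg(\Sp) \to \Sh_{\CGrp(\cS)}$ — equivalently, using the equivalence $\Sh^\loc_{\Mod_R}(M) \simeq \Mod^\loc_{\underline{X}}(M)$ of the preceding proposition, one sees $M \mapsto \Sh^\loc_{\Mod_R}(M)$ is functorial in $R$, and the maximal-Picard-groupoid functor $\Mod \to \CGrp(\cS)$ (which is itself a natural transformation of functors on $\Cat_\infty$) does the rest. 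Next, the forgetful transformation $\eta$ — forgetting invertibility, then the sheaf-of-$R$-modules structure down to $\Sh_\Sp$ — is natural in $R$: the only $R$-dependence is in the forgetful functor $\Mod_R \to \Sp$, and these assemble into a natural transformation from $R \mapsto \Mod_R$ to the constant functor at $\Sp$ (this is a standard fact about the module functor; cf.\ the cocartesian fibration $\Mod(\Sp) \to \CAlg(\Sp)$ with its map to $\Sp$). Finally the change-of-grading construction of Proposition \ref{lem_drei}, applied fibrewise to the data $(F, G = \PicL_{\underline{R}}, \eta)$, is plainly functorial in the triple; since $F$ is fixed and $(G,\eta)$ now depend functorially on $R$, I get the desired functor $\CAlg(\Sp) \to \Sh_{\GCS}$, $R \mapsto \cR$.

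The way I would actually present this is to make everything a statement about a single diagram of $\infty$-categories rather than an object-by-object check. Concretely: consider the functor $\CAlg(\Sp) \to \Fun(\Mf^{op}, \sym)$ sending $R$ to $M \mapsto \Sh^\loc_{\Mod_R}(M)$ (functoriality in $R$ by base change, functoriality in $M$ by restriction), postcompose with the maximal-Picard-$\infty$-groupoid functor to land in $\Sh_{\CGrp(\cS)} \subseteq \Fun(\Mf^{op}, \sym)$, and observe that together with the fixed $F$ and the natural $\eta$ the hypotheses of Proposition \ref{lem_drei} hold parametrised over $\CAlg(\Sp)$. Then the conclusion of that proposition is likewise parametrised, yielding the functor and identifying its values with the $\cR$ produced in the previous proof (the identification on objects is by construction). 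I would also remark that the forgetful functor $\Sh_{\GCS} \to \Sh_{\sym}$ composed with our functor recovers $R \mapsto \PicL_{\underline{R}}$, which is a convenient sanity check.

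The main obstacle is purely the bookkeeping of naturality: all the individual ingredients ($\Mod_{(-)}$, the maximal Picard groupoid, cotensoring, the forgetful functor to $\Sp$, restriction of sheaves, and the change-of-grading construction) are known to be functorial, but one must be careful that the change-of-grading Proposition \ref{lem_drei} is genuinely a functor of its input data and not merely an object-level construction — i.e.\ that it comes as a functor out of an appropriate $\infty$-category of triples $(F,G,\eta)$, or at least is natural enough that a morphism $R \to R'$ induces a coherent morphism of the outputs. Granting the appendix results in the form "the change-of-grading is functorial in the input data" (which is how Proposition \ref{lem_drei} and the surrounding material in the appendix are set up, e.g.\ Proposition \ref{changesheaf}), the proof is a short composition of functors with no further content; the only real work, if the appendix statements are weaker, would be to strengthen Proposition \ref{lem_drei} to a functorial statement, which is again routine given the Grothendieck-construction formalism.
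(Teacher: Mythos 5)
Your proposal is essentially correct and follows the same strategy as the paper: functoriality of the grading stack $\PicL_{\underline{R}}$ in $R$ (via base change on modules and the maximal Picard-groupoid construction), naturality of the forgetful transformation $\eta$ to $\Sh_{\Sp}$, and then the change-of-grading Proposition \ref{lem_drei} applied against the fixed sheaf $F$ of \eqref{ulimar1401}. The one place where you diverge --- and where you flag your ``main obstacle'' --- is the worry that Proposition \ref{lem_drei} might need to be strengthened to a statement functorial in the triple $(F,G,\eta)$. The paper's proof avoids this entirely by the adjoint-functor trick you gesture at but do not quite exploit: a functor $\CAlg(\Sp)\to\Sh_{\GCS}$ is adjoint to a functor $\CAlg(\Sp)\times\Mf^{op}\to\GCS$, and since Proposition \ref{lem_drei} is stated for an \emph{arbitrary} indexing $\infty$-category $\cald$, one simply takes $\cald:=\CAlg(\Sp)\times\Mf^{op}$, $F:=\mathrm{pr}_2$ followed by \eqref{ulimar1401}, $G(R,M):=\PicL_{\underline{R}}(M)$, and the forgetful $\eta$; the sheaf condition in the $\Mf$-variable is then supplied by Proposition \ref{changesheaf}. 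So no strengthening of the appendix results is required --- the parametrisation by $R$ is absorbed into the indexing category rather than treated as an external family. What remains in either formulation is the point you already address with your base-change argument: exhibiting $G$ as a functor on the product category and $\eta$ as a transformation, which the paper asserts with the same brevity.
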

\begin{proof}
We describe the adjoint functor $ \CAlg(\Sp) \times \Mf^{op} \to \GCS .$
{We apply the  change of grading statement (Proposition \ref{lem_drei}) together with the sheaf-refinement given in Proposition \ref{changesheaf} to the $\infty$-category   $\cald:=    \CAlg(\Sp) \times \Mf^{op} ,$ the functors }
$$ F:  \CAlg(\Sp) \times \Mf^{op} \xrightarrow{\text{pr}_2} \Mf^{op} \xrightarrow{\eqref{ulimar1401}} \GCS$$
and   
 $$G:  \CAlg(\Sp) \times \Mf^{op} \to \sym \ , \quad (R,M) \mapsto \PicI_{\underline{R}}(M)\ ,$$ 
 and the  lax symmetric monoidal transformation $$\eta: G\to  \Sh_{\Sp}\ , \quad   \PicI_{\underline{R}} (M)\to \Sh_{\Sp}(M)\ .$$ 
\end{proof}

{We can now define  twisted cohomology. In order to simplify the notation we write $E(M)$ for the evaluation of a sheaf {$E$} on $\Mf/M$ on $M\stackrel{\id}{\to}M$. }

\begin{ddd}
Given a manifold $M$ with $R$-twist $E \in \ho \PicL_{\underline{R}}(M)$  %\footnote{\tho{Homotopie Kategorie gestricken}}  
we define the $E$-twisted
$R$-cohomology of $M$ by
 {$$R^{E}(M):=\pi_0(E(M))\ .$$}\end{ddd}

\begin{rem2}
The reader might wonder why there is no degree in the twisted cohomology groups $R^E(M)$. He might also know that cohomology groups are usually twisted by classes in $[M, \BGL_1(R)]$ where $\GL_1(R)$ is the space of units of $R$. The reason is that $\PicL_{\underline{R}}(M)$ {encodes both, the degree and  the twist.}
 
More precisely homotopy classes of maps $[M, \BGL_1(R)]$ classify those  {$R$-twists} $E \in {\ho}{\PicL_{\underline{R}}(M)}$ which are trivial when restricted to points. {For such a twists $E$ and integer  $n\in \Z$  the twisted cohomology of degree $n$ can be expressed  in the present language in the form
$R^{E[n]}(M)$, where  $E[n]{\in \ho \PicL_{\underline{R}}(M)}$  is the shift of $n$.} \\

In general, $\PicL_{\underline{R}}(M)$ contains more  then just locally trivial twists ({do not confuse with locally constant}) and shifts.  {For example the $E(1)$-local sphere contains exotic elements in its Picard group \cite{MR1263713}. Another example is given by the Eilenberg-MacLane spectrum $HR$ for $R$ a Dedekind ring with nontrivial class group, such as  $\Z[\sqrt{-5}]$.}
%\footnote{\uli{Koennen wir das belegen?} \tho{Ja klar koennen wir das, wir brauchen nur einen gewoehnlichen Ring der nichtriviale picard gruppe hat. Zum Beispiel einen Dedekindring mit nichttivialer Klassengruppe (Beispiel: $\Pic({\Z[\sqrt{-5}]}) = \Z/2$)} }
 \end{rem2}

\begin{kor}\label{kor_gradedring}
For each manifold $M$ the twisted cohomology $R^*(M)$ forms a graded
ring which is graded over the homotopy category $\ho \PicL_{\underline{R}}(M)$. 
Moreover the assignment 
\begin{equation*}
M \mapsto \big(\ho\PicL_R(M), R^*(M)\big) 
\end{equation*}
forms a functor $\Mf^{op} \to \mathrm{GrRing}$.
\end{kor}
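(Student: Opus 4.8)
The plan is to deduce everything from the preceding proposition that $\cR$ refines to a sheaf of graded ring spectra $\cR \in \Sh_{\GCS}$, together with the Lemma that produces a functor $\GCS \to \grRing$ by $(\calc,R) \mapsto (\ho\calc, \pi_0(R))$. First I would recall that, by construction, for each manifold $M$ the object $\cR(M) \in \GCS$ is the pair $(\PicL_{\underline{R}}(M), \Gamma)$, where $\Gamma\colon \PicL_{\underline{R}}(M) \to \Sp$ is the (lax symmetric monoidal) evaluation functor $E \mapsto E(M)$. Applying the functor $\GCS \to \grRing$ of the Lemma objectwise then yields the pair $(\ho\PicL_{\underline{R}}(M), \pi_0(\Gamma))$, and by part (4) of the Example the lax symmetric monoidal functor $\pi_0 \circ \Gamma\colon \PicL_{\underline{R}}(M) \to \AbGr$ factors canonically through $\ho\PicL_{\underline{R}}(M)$. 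By the Definition of $R^E(M) := \pi_0(E(M))$ this factored functor is exactly $E \mapsto R^E(M)$, so the pair is precisely $(\ho\PicL_{\underline{R}}(M), R^*(M))$; being an object of $\grRing$ is the assertion that $R^*(M)$ is an $\ho\PicL_{\underline{R}}(M)$-graded ring.

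For the functoriality in $M$, I would simply post-compose: the sheaf $\cR \in \Sh_{\GCS}$ is in particular a functor $\Mf^{op} \to \GCS$, and composing with $\GCS \to \grRing$ gives a functor $\Mf^{op} \to \grRing$. Unwinding the identifications above, on objects this functor sends $M$ to $(\ho\PicL_{\underline{R}}(M), R^*(M))$, which is the claimed assignment. On morphisms, a smooth map $f\colon M \to M'$ induces the restriction of twists $f^*\colon \PicL_{\underline{R}}(M') \to \PicL_{\underline{R}}(M)$ (compatibly with the module/monoidal structure, since $\PicL_{\underline{R}}$ is a sheaf of Picard-$\infty$ stacks, Proposition \ref{prop_jul2}) together with the pullback on sections, and passing to $\ho(-)$ and $\pi_0(-)$ yields the corresponding morphism in $\grRing$.

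Since the whole argument is assembling already-established pieces, there is no genuine obstacle; the only point requiring a word of care is the compatibility of the two separate factorizations — the factorization of $\pi_0\circ\Gamma$ through $\ho\PicL_{\underline{R}}(M)$ and the identification of its value with $R^E(M)$ — with the naturality in $M$. This is handled by observing that the factorization through the homotopy category is itself natural (it is the counit of the localization $\PicL_{\underline{R}}(M) \to \ho\PicL_{\underline{R}}(M)$ applied to a natural transformation of functors), so it is compatible with the pullback maps $f^*$. Everything else is bookkeeping internal to the Grothendieck construction defining $\GCS$ and $\grRing$, and the statement follows.
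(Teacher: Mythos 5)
Your argument is exactly the one the paper intends: the corollary is stated as an immediate consequence of the proposition that $\cR$ is a sheaf of graded ring spectra combined with the lemma giving the functor $\GCS \to \grRing$, $(\calc,R)\mapsto(\ho\calc,\pi_0(R))$, and your identification of the resulting pair with $\big(\ho\PicL_{\underline{R}}(M), R^{*}(M)\big)$ via the definition $R^{E}(M)=\pi_0(E(M))$ is precisely the intended unwinding. Correct, and essentially the same route as the paper.
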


Corollary \ref{kor_gradedring} says more concretely that we have the following structures:
\begin{enumerate}
\item For every {$R$-twist} $E \in  {\ho}\PicL_{\underline{R}}(M)$ {on $M$ we have} an abelian group $R^E(M)$.
\item For every pair $E,E'$ of {$R$-twists} on $M$ we have a multiplication morphism 
 \begin{equation*}
 R^E(M) \otimes R^{E'}(M) \to R^{E + E'}(M)\ , 
 \end{equation*}
 where $E + E'$ denotes the symmetric monoidal pairing in $ {\ho}\PicL_{\underline{R}}(M)$. The multiplication is associative, commutative{,} and has a unit in the obious sense.
\item For every     morphisms $E \to E'$ {between $R$-twists on $M$}   we have  {an iso}morphism 
 \begin{equation*}
 R^E(M) \iso R^{E'}(M) 
 \end{equation*}
 {of abelian groups}.
 This assignment is functorial and compatible with the product in the obvious sense.
\item For every smooth map $f: M \to N$ and {$R$-twist $E$ on $M$} we have a morphism
 \begin{equation*}
 R^E(N) \to R^{f^*E}(M)
 \end{equation*}
 which is compatible with all the other {structures}.
 \end{enumerate}
 Further features of twisted cohomology are:
 \begin{enumerate}\setcounter{enumi}{4}
 \item For every map of commutative ring spectra $u:R\to R^{\prime}$ and {$R$-twist} $E\in \Ho\PicL_{\underline{R}}(M)$ we get an {$R$-twist} $u_{*}E\in  \Ho\PicL_{\underline{R'}}(M)$ and a map
 $u_{*}:R^{E}(M)\to R^{\prime,u_{*}E}(M)$.
 %\item All these structures are natural with respect to smooth maps $M\to M^{\prime}$.
 \item {If $E$ is an $R$-twist on $M$ and $U,V\subseteq M $ are open subsets such that $M= U\cup V$, then we have a long exact Mayer-Vietoris sequence
 \begin{equation}\label{apr2410}\dots \to R^{E}(M)\to R^{E_{|U}}(U)\oplus R^{E_{|V}}(V)\to R^{E_{|U\cap V}}(U\cap V)\to   R^{E[1]}(M)\to \dots \end{equation} which is compatible with the various other structures. This follows from the fact that $\cR$ is a sheaf of graded ring spectra.} 
 %\tfoot{Das ist eine schoene Bemerkung, allerdings folgt das natuerlich strikt genommen nicht aus dem Korollar \uli{pa{\ss}t aber dennoch hierein?}}
 \end{enumerate}

The idea to calculate  $R^{E}(M)$ using a Mayer-Vietoris sequence \eqref{apr2410} 
is to cover the manifold $M$ by open subspaces on which the twist can be trivialized.  The allows to express the twisted cohomology of these subspaces in terms of untwisted cohomology. 
It remains to determine  the maps in this sequence.

\begin{exa2}

In the following we consider this problem when $M$ is a sphere $S^{k}$ for some $k\in \nat$. In this case we can determine the maps explicitly in terms of an invariant of the twist defined in Definition \ref{ulimar2101}. This invariant will also play an important role in our general theory later, e.g. in the proof of Lemma \ref{apr2420}.

\bigskip

We identify  $S^{k}\subset S^{k+1}$ with the equator and choose   a base point $p\in S^{k}$.
We further let $D_{\pm}\subset S^{k+1}$ be the complements of the north- and south poles.

Assume that  $E\in \PicL_{\underline{R}}(S^{k{+1}})$ is a twist which  {locally is a} shift of $R$. Then we 
 can choose an identification
$E_{|p}\cong R[n]$ of $R$-modules for some $n\in \Z$. 

{Since $E$ is homotopy
invariant by Proposition \ref{prop_jul2} we can} extend this identification  to  trivializations $$\phi_{\pm}:E_{|D_{\pm}}\iso \underline{R[n]}_{|D_{\pm}}\ .$$
 
\begin{ddd}\label{ulimar2101}For $k\ge 0$ we define the class  $$\nu(E)\in   \left\{\begin{array}{cc}\pi_{k}(R)&k\ge 1\\\pi_{0}(R)^{\times}&k=0 \end{array}\right.   $$   such that $1\oplus\nu(E)\in R^{0}(S^{k})\cong \pi_{0}(R)\oplus\pi_{k}(\R)$ is the 
image of $1\in R^{0}(D_{+})$ under the map
$$R^{0}(D_{+})\stackrel{\phi_{+}^{-1}}{\iso} \pi_{0}(E[-n]_{|D_{+}}(D_{+}))\stackrel{restr.}{\to} \pi_{0}(E[-n]_{|S^{k}}(S^{k}))\cong \pi_{0}(E[-n]_{|D^{-}}(S^{k}))\stackrel{\phi_{-}}{\iso} R^{0}(S^{k})\ .$$
\end{ddd}

One easily checks that $\nu(E)$ does  not depend on the choices. In fact, the trivializations $\phi_{\pm}$ are  {uniquely determined up to equivalence} by the choice of the trivialization at the point $p$. In the construction of $\nu(E)$ we compose
$\phi_{+}^{-1}$ with $\phi_{-}$ so that this choice drops out. \\

The twisted cohomology
$R^{E}(S^{k{+1}})$ can be calculated using the Mayer-Vietoris sequence \eqref{apr2410}. 
Using the class $\nu(E)$ this sequence can be identified with the explicit sequence
$$\dots \to\begin{array}{c}\pi_{1-n}(R)\\\oplus\\ \pi_{k+1-n}(R)\end{array}\to R^{E}(S^{k})\to \begin{array}{c}\pi_{-n}(R)\\\oplus\\ \pi_{-n}(R)\end{array}\xrightarrow{\left(\begin{array}{c}x\\ y \end{array}\right)\mapsto \left(\begin{array}{c}x-y\\ \nu(E)x\end{array}\right) } \begin{array}{c}\pi_{-n}(R)\\\oplus\\ \pi_{k-n}(R)\end{array}\to  \dots\ .$$
\end{exa2}
 
 \begin{exa2}\label{may0601}
 {The reader may have seen the example of twisted complex $K$-theory. Let $\KU$ denote the complex $K$-theory spectrum.
 Locally trivial unshifted $\KU$-twists $E$ of $S^{3}$ are classified up to equivalence by the invariant $\nu(E)\in  \pi_{2}(\KU)\cong \pi_{3}(\BGL_{1}(\KU))$. Let us identify $\pi_{2i}(\KU)\cong \Z$  for all $i\in \Z$ using  powers of the Bott element. If $\nu(E)$ corresponds to the integer $n\in \Z$, then
$\KU^{E[1]}(S^{3})$ fits into the sequence
$$0\to  \begin{array}{c} \Z\\\oplus\\ \Z\end{array}\xrightarrow{\left(\begin{array}{c}x\\ y \end{array}\right)\mapsto \left(\begin{array}{c}x-y\\ n x\end{array}\right) } \begin{array}{c}\Z\\\oplus\\ \Z \end{array}\to \KU^{E[1]}(S^{3})\to 0\ .$$
This implies the usual calculation: $\KU^{E[1]}(S^{3})\cong \Z/n\Z$, and by a similar argument, $\KU^{E}(S^{3})=0$.
}\end{exa2}

%\begin{exa2}{\nuli{
%One can capture naive equivariant cohomology in terms of twisted cohomology.
%Let $G$ be a topological group which acts on a spectrum $R$. Then we can consider the naive  equivariant cohomology $R_{G}^{*}(M)$ of a $G$-space $M$. }}

%\nuli{For this example we  replace the category of manifolds by the category of spaces in order to include objects like $BG$.
%The action of $G$ on $R$ can be considered as a twist $E\in \Pic_{\underline{R}}^{loc}(BG)$. A $G$-space $M$ gives rise to a space $M_{G}:=EG\times_{G}M\to BG$ over $BG$ and we have a natural isomorphism
%$$R_{G}^{*}(M)\cong R^{E}(M_{G})\ .$$}
%\tho{Brauchen wir das eigentlich?}
%\uli{Nicht unbedingt. Man kann sogar noch erweitern und eine Abbildung
%$J:K^{0}(B)\to \Pic^{loc}_{{\underline{S}}}(B)$ definieren,
%welche wir sp‚Äö√†√∂¬¨√üter auf das differentielle Level heben k\"onnten.}
%}}

%\end{exa2}

 % \begin{example}
% ordinary cohomology
% \end{example}
% 
% \begin{example}
%  K-theory (and gerbes)
% \end{example}

%%%%%%%%%%%%%%%%%%%%%%%%%%%%%%%%%%%%%%%%%%%%%%%%%%%%%%%%%%%%%%%%%%%%%%%%%%%%%%%%
%%%%%%%%%%%%%%%%%%%%%%%%%%%%%%%%%%%%
\section{Twisted differential cohomology}\label{sec_def_twisted}
%%%%%%%%%%%%%%%%%%%%%%%%%%%%%%%%%%%%%%%%%%%%%%%%%%%%%%%%%%%%%%%%%%%%%%%%%%%%%%%%
%%%%%%%%%%%%%%%%%%%%%%%%%%%%%%%%%%%%

In this section we introduce twisted differential cohomology. {The idea of differential cohomology is to combine cohomology classes on manifolds with corresponding differential form data in a homotopy theoretic way. Using differential cohomology one can encode local geometric information.
For example, while the $\K$-theory class of a vector bundle only encodes   homotopy theoretic data, the differential $K$-theory class of a vector bundle with connection contains the information on Chern character forms, see  \cite{MR2192936}, \cite{MR2664467}, \cite{MR2732065}  for a detailed account. 
 Differential extensions of arbitrary generalized cohomology theories were first defined in \cite{MR2192936}. We refer to \cite{bg} and \cite{skript} for the approach to differential cohomology on which the present paper is based. The main goal of the present section is to develop a general theory of {\em twisted} and {\em multiplicative} differential cohomology based on the notion of graded ring spectra. }

\bigskip

A  commutative ring spectrum $R$ represents a multiplicative cohomology theory. As explained in \cite{bg}, \cite{skript},  in order to define the  multiplicative differential $R$-cohomology on manifolds we have to choose a differential refinement of $R$. In Section \ref{ulimar2201}
we have seen that $R$ gives rise to a twisted cohomology theory which is naturally encoded in the sheaf of graded ring spectra $\cR$.     We shall see in the present section that the choice of a differential refinement  of $R$ naturally determines  a sheaf of graded ring spectra (Proposition \ref{ulimar2402}) which encodes the twisted differential $R$-cohomology.  In order to be able to study transformations between different differential cohomology theories we introduce the $\infty$-category of differential  {ring spectra}. 

\bigskip

Let $\Ch$ denote the  {ordinary} symmetric monoidal category of chain complexes.  
%\tfoot{Ich verstehe nicht, dass wir erst alles ueber $\Z$ machen und nicht direkt ueber $\R$. Wir brauchen $\Z$ doch nie in dem ganzen Papier!?}
{If we formally invert the class of quasi-isomorphisms in $\Ch$,} then we obtain an $\infty$-category 
$\Ch_{\infty}$.
The {natural} map  $\iota:\Ch\to \Ch_{\infty}$ is a  lax symmetric monoidal functor. Furthermore, the Eilenberg-MacLane equivalence gives an equivalences   (see \cite[Theorem 8.1.2.13]{HA} for details)  of symmetric monoidal $\infty$-categories
$$H:\Ch_{\infty}\stackrel{\sim}{\to} \Mod_{H\Z} ,$$
{where  $\Mod_{H\Z}$ denotes module spectra over the Eilenberg-Mac Lane spectrum $H\Z$.} 
 By abuse of notation, for $C\in \Ch$ we write
$HC:=H\iota(C)$.
Note that {the category of commutative {algebras} $\CAlg(\Ch)$} in $\Ch$ is the category of commutative  differential graded algebras (CDGAs).
Hence for
a CDGA $A$ we get a commutative {algebra spectrum} $HA\in \CAlg(\Mod_{H\Z})$. We consider $\R$ as a CDGA concentrated in degree $0$ and get the commutative algebra $H\R$. We write $\Ch_{\R}:=\Mod_{\R}$ for the category of chain complexes of real vector spaces. Note that $\CAlg(\Ch_\R)$ is the category of real commutative  differential graded algebras (CDGAs).
The composition of the localization and the Eilenberg-Mac Lane equivalence restricts to a functor
$\CAlg(\Ch_{\R})\to \CAlg(\Mod_{H\R})$.

\begin{ddd}\label{ulimar1501} 
A differential  refinement   of  {a commutative  ring  spectrum} $R$ is a triple $(R,A,c)$
 consisting of a CDGA  $A$ over $\R$ together with an equivalence
\begin{equation*}
 c: R\wedge H\R \iso HA.
\end{equation*}
in $\CAlg(\Mod_{H\R})$.  
\end{ddd}

\newcommand{\hRings}{\widehat{\mathrm{Rings}}}

It has been shown in  \cite{MR2306038} that one can model every $H\R$-algebra by a CDGA. 
In our language this means that the functor
$$\CAlg(\Ch_{\R})\to \CAlg(\Mod_{H\R})\ ,  \quad A\mapsto HA$$
is essentially surjective. %\cite{MR2306038}.\tfoot{In der Quelle wird gezeigt, dass CDGAs alle H$\R$-Moduln modelieren, also ist die Logik hier etwas komisch} 
In particular,  every commutative ring spectrum admits
a differential refinement.

\begin{ddd}\label{may0101}
We define the $\infty$-category of {commutative} differential ring spectra as the pullback
$$\xymatrix{\hRings\ar[rr]\ar[d]&&\CAlg(\Ch_{\R})\ar[d]^{H}\\
 \CAlg(\Sp)\ar[rr]^{\wedge H\R}&&\CAlg(\Mod_{H\R})}$$
\end{ddd}
By construction, the objects of $\hRings$ are differential refinements of commutative ring spectra.

\begin{rem2}\label{apr3045}
 In    many relevant cases
 there exist a {differential refinement of $R$} whose underlying  CDGA   is the graded ring    $\pi_{*}(R)\otimes \R$ with trivial differentials.   {In this case}  
$R \wedge H\R$ is called  formal. For example, if    
  $\pi_{*}(R)\otimes \R$  is free as a commutative $\R$-algebra or arises from a free algebra by inverting elements, then $R \wedge H\R$ is formal. 
  In this case there is an equivalence $c$ which is {uniquely determined  up to homotopy by the property that it induces the canonical identification on homotopy groups {\cite[Sec. 4.6]{skript}.} }
  %\tfoot{Was zitierst du denn hier?}
 \begin{exa2}
\begin{enumerate}
\item
For the {Eilenberg-MacLane spectrum  $H\Z$ of $\Z$  we choose   a real model whose underlying}   CDGA  is      $\R$    concentrated in degree $0$.  \item
For the complex $K$-theory  spectrum  $\KU$   we have a  real model  with underlying  CDGA    given by  $\R[b,b^{-1}]$ with trivial  differentials, where $\deg(b) = -2$. The isomorphism
$c$ is the Chern character.
\item
For the connective spectrum $tmf$ of  topological modular forms  we  can choose a real model with underlying  CDGA  given by $ \R [c_4, c_6]$  with trivial  differentials, where  $\deg(c_4) = 8$ and $\deg(c_6) = 12$. Again, the map $c$ is unique up to equivalence. 
%\nuli{Similarly for the connective version Tmf.}
\end{enumerate}
%\marginpar{\uli{All examples are formal and the  is the canonical one. Should we present a non-canonical example? }\tho{Do we have one?}}
\end{exa2}
Note that there exist non-formal examples. 
% \footnote{\uli{We definitly should give an example. Start from a non-formal $CDGA$.} \tho{Should we really? Is this important?}}
\end{rem2}

The sheaf of smooth real differential forms with the de Rham differential is  a {sheaf of CDGA's on the site of smooth manifolds, i.e. } an object 
${\Omega^*} \in \Sh_{\CAlg(\Ch_{\R})}$  
which associates to every smooth manifold $M$ the smooth real de Rham complex ${(\Omega^*(M),d)}$. For a  CDGA  $A$ over  $\R$  we define
 {the} sheaf of CDGA's of    differential forms with values in $A$  by $$\Omega A : = \Omega\otimes_{ \R } \underline{A}\in \Sh_{\CAlg(\Ch_{\R})}\ .$$ where $\underline{A}$ is the constant sheaf  of CDGA's associated to $A$ (see Definition \ref{apr2630} and Proposition \ref{propju22})
 %\footnote{\uli{Verstehe nicht, warum der Sprung nicht funktioniert!}}.  }
 
In general, a presheaf {with values in the 1-category of} chain complexes $\cM\in \PSh_{\Ch} $ is a sheaf {precisely} if for every {$n\in \Z$} its degree-$n$ component $\cM_{n}$ is a sheaf of abelian  groups. This property can easily be checked in the case of $\Omega$ or $\Omega A$. 

{
We now extend the localization $\iota:\Ch\to \Ch_{\infty}$ to presheaves by post-composition.
Let us consider a sheaf $\cM\in \Sh_{\Ch}$. Then in general 
$\iota(\cM) $ is only a presheaf with values in $\Ch_{\infty}$.   But if $\cM$ is a complex of modules over the sheaf of  rings of smooth real-valued functions on $\Mf$, then it is a sheaf. The argument
 employes the existence of smooth partitions of unity. In particular, $\iota(\Omega)$ and $\iota(\Omega A)$ are sheaves. More generally, the objectwise localization
$\Mod_{\Omega A}\to \Mod_{\iota(\Omega A)}$ preserves sheaves.}

\begin{exa2}
On the other hand,  consider for example the sheaf of complexes $\underline{\R}$ {concentrated  degree zero  {given by locally constant $\R$-valued functions}}.
%\footnote{\tho{Warum $S^0$?}} 
  Then $\iota(\underline{\R})$ is not a sheaf but   $\iota (\underline{\R})\to \iota(\Omega)$ is the sheafification.
\end{exa2}

We now fix a smooth manifold $M$.
% Recall that $\Mod_{\Omega A}(M)$ the symmetric monoidal category of sheaves of sheaves of  $DG$-$\Omega A$-modules on the site $\Mf/M$. 
%Recall further that
%$\iota(\Omega A)\in \Sh_{\CAlg(\Ch_{\R,\infty})}$ denotes the image under localization, and
%that we have a localization $\iota:\Mod_{\Omega A} (M)\to \Mod_{\iota(\Omega A)}(M)$ on the level of modules.
A morphism between objects in $\Mod_{\Omega A} (M)$ becomes an equivalence under the localization
$\iota:\Mod_{\Omega A}(M)\to \Mod_{\iota(\Omega A)}(M)$ if and only if it is a quasi-isomorphism between  complexes of sheaves.
\begin{ddd}
\begin{enumerate}
\item
A sheaf $\cM \in  \Mod_{\Omega A} (M)$   is called $K$-flat if the   functor 
\begin{equation*}
\cM \otimes_{\Omega A|_{{M}}} \dots :  \Mod_{\Omega A}(M)\to
\Mod_{\Omega A}(M)
\end{equation*}
preserves quasi-isomorphisms \cite{MR1465117}.
\item A sheaf $\cM \in  \Mod_{\Omega A} (M)$ 
 is called invertible, if there is an object $\cN \in \Mod_{\Omega A}(M)$
such that $\cM \otimes_{\Omega A|_{{M}}} \cN$ {is isomorphic to}  $\Omega A$. 
\item  {A sheaf $\cM \in  \Mod_{\Omega A} (M)$ is called weakly locally constant if $\iota(\cM)$ is locally constant.}
%\item 
%\uli{By $\Mod^{h}_{\iota(\Omega A)}\subset \Mod_{\iota(\Omega A)}$ we denote the full subcategory of $\iota(\Omega A)$-modules
%which are homotopy invariant. 
%\uli{ A sheaf $\cM\in \Mod_{\Omega A}(M)$  is called weakly homotopy invariant if $\iota(\cM)$ is homotopy invariant.}
% \in \Mod^{h}_{\iota(\Omega A)}(M)$. }
%\item \uli{A sheaf $\cM\in \Mod_{\Omega A}(M)$ is called weakly cartesian, if $\iota(M)$ is cartesian.}
\item
{By $\PicwLf_{\Omega A}\in \Sh_{\CGrp(\cS)}$ we denote}
%By $$\Picfcc_{\Omega A}\subseteq \Picfc_{\Omega A}\subset  \Picf_{\Omega A}\in \Sh_{\sym}$$}we denote
 %\marginpar{\uli{Actung: Das stimmt nicht. Es gilt nur $\Picf_{\Omega A}\in \PSh_{\sym}$}} 
 the Picard-$\infty$-stack (actually a Picard-$1$-stack)    which associates to every  manifold $M$ the {Picard-1-groupoid} $\PicwLf_{\Omega A}(M)$   of invertible, $K$-flat and weakly locally constant objects in $\Mod_{\Omega A}(M)$.   %HOMOLOGICAL ALGEBRA OF HOMOTOPY ALGEBRAS
%VLADIMIR HINICH
\end{enumerate}
\end{ddd}

\begin{exa2}
{Note that $\Omega A$ is weakly  locally constant. In fact, it is weakly constant since we have an equivalence
$\underline{\iota(A)}\iso \iota(\Omega A)$
}
%r and clearly weakly cartesian and cartesian.}
 The groupoid ${\PicwLf_{\Omega A}(M)}$ {is therefore well-defined and} contains all modules
that are shifts of $\Omega A_{ | M} $, i.e. modules of the form $\Omega A[n]_{ | M }$ for $n \in \mathbb{Z}$. 
But $\Omega A[n]_{  |M }$ and $\Omega A[m]_{ | M }$ can be isomorphic if $A$ is periodic, i.e. contains a unit in degree $m-n$. 
\end{exa2}

\begin{rem2}
  Because of the additional conditions of $K$-flatness and {weakly locally constantness} in general the inclusion
$\Picf_{\Omega A}\subseteq  \Pic_{\Omega A}$ 
 is proper as the following example shows. %\footnote{\uli{I do not have an example of a of a $K$-flat homotopy invariant but not weakly cartesian sheaf.}}

We shall give an example of a weakly locally constant   non $K$-flat object in $\Pic_{\Omega A}$. 
Let $L_{\lambda}\to S^{1}$ be the flat one-dimensional real vector bundle with connection $\nabla^{\lambda}$ with holonomy $\lambda\in \R^{{>0}}$ and $\Omega(S^{1},L_{\lambda})$ be the complex
of  smooth forms with coefficients in $L$.  It is a module over the de Rham complex $\Omega(S^{1})\cong \Omega(S^{1},L_{1})$, and we have the rule
$$\Omega(S^{1},L_{\lambda})\otimes_{\Omega(S^{1})} \Omega(S^{1},L_{\lambda^{\prime}})\cong \Omega(S^{1},L_{\lambda\lambda^{\prime}})\ .$$
In particular, $\Omega(S^{1},L_{\lambda})\in \Pic_{\Omega(S^{1})}${,} and its tensor inverse is {given by}
$\Omega(S^{1},L_{\lambda^{-1}})$.
 The complex $\Omega(S^{1},L_{\lambda})$ can be identified with
$$C^{\infty}(S^{1})\xrightarrow{f\mapsto f^{\prime}-\log(\lambda)f} C^{\infty}(S^{1})\ .$$
We have
$$H^{*}(\Omega(S^{1},L_{\lambda}))\cong \left\{\begin{array}{cc} \R\oplus \R[1]&\lambda=1\\
0 &\lambda\not=1\end{array}\right. \ .$$

If $\lambda\in \R^{{>0}}\setminus \{1\}$, then the morphism $0\to \Omega(S^{1},L_{\lambda})$ is a quasi-isomorphism.
Its tensor product with $\Omega(S^{1},L_{\lambda^{-1}})$
is $0\to \Omega(S^{1})$ which is not a quasi-isomorphism. 
It follows that $\Omega(S^{1},L_{\lambda^{-1}})$ is not $K$-flat.
{{Thus} if we set $A:=\Omega(S^{1})$, {then}
$\cM:=\Omega\otimes_{\R} \underline{\Omega(S^{1},L_{2})}\in \Pic_{\Omega A}$ is weakly locally constant but not $K$-flat. %\footnote{\uli{I am not sure that we can make an example of a not weakly homotopy invariant sheaf. Also I do not have an example of a non-cartesian invertible sheaf.}}
}
\end{rem2}
  %Note that the category of CDGAs is the same as the category $\CAlg(\Ch)$ of commutative algebras in chain complexes. We let $\Ch_{\infty}$ denote the symmetric monoidal $\infty$-category
%obtained by localization of $\Ch$ at quasi-isomorphisms. We get an induced map
%$\CAlg(\Ch)\to \CAlg(\Ch_{\infty})$, $A\mapsto A_{\infty}$. We use a similar notation for sheaves with values in these categories.\\
% We let $\Pic_{H\Omega A}\in \Sh_{\sym}(\Mf)$ denote the symmetric monoidal stack of $\Mf$ which associates to each manifold $M\in \Mf$ the symmetric monoidal groupoid $\Pic_{H\Omega A}(M)$  of invertible objects in symmetric monoidal $\infty$-category $ \Mod_{H\Omega A}( M)$   
We observe that  for every manifold $M$
 the transformation
 \begin{equation}\label{ulimar1820}\PicwLf_{\Omega A}(M)\to \Mod^{\loc}_{\iota(\Omega A)}(M)  \ ,    \end{equation}
is symmetric monoidal (and not just lax symmetric monoidal). 
More concretely, the condition of $K$-flatness ensures that the tensor product of sheaves of complexes is equivalent to the derived tensor product.
 Hence it preserves invertible objects.
If we further compose \eqref{ulimar1820} with the symmetric monoidal  Eilenberg Mac-Lane equivalence $ \Mod^{\loc}_{ \iota(\Omega A)  } \stackrel{\sim}{\to}  \Mod^{\loc}_{H \Omega A} $
then we obtain a {symmetric monoidal} transformation 
\begin{equation}\label{apr2701}
 H : \PicwLf_{\Omega A} \to \PicL_{H \Omega A}
\end{equation}
%o the $\infty$-groupoid of invertible sheaves of $H \Omega A$-module spectra over $M$. 
%\nuli{Here we abuse notation and identify 
% the 1-category $\Picf_{\Omega A}(M)$ with the associated $\infty$-category given by its nerve $N\Picf_{\Omega A}(M)$.}\marginpar{\uli{Generelle conventions festlegen und dann konsequent durchziehen.}}

%\uli{By construction, the map \eqref{apr2701} restricts to a map of stacks of cartesian objects 
%$$\Picfc_{\Omega A}(M)\to \Pic^{h,\cart}_{H\Omega A}(M)\ . $$ }

\bigskip

 The de Rham equivalence is a canonical equivalence of sheaves
 \begin{equation}\label{ulimar2001}  H\Omega A \stackrel{\sim}{\to}   \underline{HA} \end{equation}
 in $\CAlg(\Mod_{H\R})$, {see \cite[Sec. 4.6]{skript}.}
 %The de Rham lemma primplies that the sheaf $H\Omega A$ is equivalent to the
%constant sheaf $\underline{HA}$ of $H\R$-algebra spectra induced from $HA$. 

\bigskip
{
Let us fix a commutative ring spectrum $R$ and a differential refinement $\hat R=(R,A,c)$.}
 If we compose the  inverse of the de Rham equivalence {\eqref{ulimar2001}} with the sheafification of the 
equivalence  $c: R \wedge H \R \to HA$ we obtain the  equivalence  \begin{equation*}
  \underline{R} \wedge H\R \iso H\Omega A. 
\end{equation*}
in $\CAlg(\Mod_{H\R})$. {Using this equivalence we will consider
$\underline{R}\wedge H\R$ as an object in $\PicL_{H\Omega A}$.}
 %\ntho{We get a  canonical filler  
%$\kappa:E\to \underline{R}\wedge H\R$ of the diagram
%$$\xymatrix{\PicL_{\underline{R}}\ar[r]^{\dots\wedge H\R}\ar[d]&\PicL_{H\Omega A}\ar[d]\\
%\Sh_{\Sp}\ar@{=}[r]&\Sh_{\Sp}}\ .$$} \marginpar{\uli{Nicht streichen, sondern Nummer geben und im Beweis von 5.12 darauf verweisen.}}

 \begin{ddd}\label{uliapr0701}
A  differential $R$-twist  over a manifold $M$ is  a triple $(E,\cM,d)$ consisting of a topological $R$-twist 
$E \in \PicL_{\underline{R}}(M)$, an  object $\cM \in \PicwLf_{\Omega A}(M)${,} 
and an equivalence
\begin{equation*}
{d: E\wedge H\R \iso H\cM} 
\end{equation*}
{in $\PicL_{H\Omega A}(M)$.} We call  the triple $(E,\cM,d)$   a     differential refinement  of the   topological twist $E$.
\end{ddd}

 \begin{ddd}\label{may0620}
 The $\infty$-stack of differential $R$-twists is defined  by the  pullback  in $\Sh_{\CGrp(\cS)}$ 
\begin{equation}\label{ulimar1704}
\xymatrix{\Tw_{\hat R}\ar[d]\ar[rr]&& \PicwLf_{\Omega A}\ar[d]\\ \PicL_{\underline{R}}\ar[rr]^{{ \wedge H\R}} &&\PicL_{H\Omega A}\ .  }\end{equation}
\end{ddd}
  By construction, the objects of ${\Tw_{\hat R}(M)}$  are differential $R$-twists over $M$.

  %\uli{The definition of twisted differential cohomology theory makes sense without the cartesian assumption. But for cartesian differential twists we have good classification and  existence results.}

\bigskip 
 
% \begin{remark}
%We will provide geometric examples of differential structures on twists in the next sections. We will also show that (almost) all topological twists \marginpar{\uli{zu vage}}
%$E$ admit differential structures and that these structures are unique up to isomorphism.
%\end{remark}
% \begin{lem}
% Every $R$-twist $E \in \Pic_E(M)$ admits a refinement to an abstract differential $R$-twist $(E,\cM,d) \in \Pic_{\hat E}(M)$.
% \end{lem}
% \begin{proof}
% Need to use model theories and results from Shipley or Keller.
% \end{proof}

Now let $(E,\cM,d)$ be a differential twist of a differential spectrum $(R,A,c)$ over $M$. 
 By $\cM^{\ge 0}$ we denote the naive truncation of the sheaf of complexes  to non-negative degrees. We have a  canonical inclusion $i: \cM^{\geq 0} \to \cM$.  {Furthermore, let $\kappa:E\to E\wedge H\R$ be the canonical map.}
\begin{ddd}\label{ulimar1901}
The twisted differential function spectrum {for a differential twist  $(E,\cM,d)$ }is {the} sheaf of spectra defined by the pull-back in $\Sh_{\Sp}(M)$
$$\xymatrix{\Diff(E,\cM,d)\ar[r]\ar[d]&H\cM^{\ge 0}\ar[d]^{i}\\
E\ar[r]^{d\circ \kappa}&H\cM}\ .$$
\end{ddd}
In the square above all objects are considered as sheaves of spectra, i.e.
we omitted to write the {forgetful}  functors 
$\Mod_{\underline{R}}(M)\to \Sh_{\Sp}(M)$ and $\Mod_{H \Omega A}\to \Sh_{\Sp}(M)$  explicitly.\\

 {Note that pullbacks in sheaves of spectra are computed objectwise, i.e. we get a similar pullback after evaluation at $M$ or more generally every manifold over $M$.}  
In order to properly describe the functorial properties of the twisted differential function spectrum we use the same procedure as in the case of the classical twisted cohomology. {Roughly speaking we consider the functor that assigns to a differential twist $(E,\cM,d)$ over $M$ the spectrum $\Diff(E,\cM,d)(M)$ thus defining  a graded ring spectrum.
{
\begin{prop}\label{ulimar2402}
The  construction \ref{ulimar1901} of the twisted differential function spectrum refines to a transformation
$${\hat \cR}\in \Sh_{\GCS}\ , \quad M \mapsto ({\Tw}_{\hat{R}}(M), \Diff(\dots)(M))\ .$$ \end{prop}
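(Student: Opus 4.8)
The plan is to produce $\hat\cR$ by the same two-step mechanism that was used to build $\cR$ in Section \ref{ulimar2201}: first construct an auxiliary sheaf of graded ring spectra whose grading component is some large, manifestly sheaf-valued symmetric monoidal $\infty$-category, and then apply the change-of-grading Proposition \ref{lem_drei} (together with the sheaf-refinement Proposition \ref{changesheaf}) to replace the grading by $\Tw_{\hat R}$. Concretely, first I would observe that Definition \ref{ulimar1901} constructs $\Diff(E,\cM,d)$ as a pullback in $\Sh_{\Sp}(M)$, and that this pullback makes sense not only for an individual differential twist but as a functor of the triple. So the first step is to exhibit a lax symmetric monoidal functor out of a suitable grading category $\calc_{M}$ — the natural candidate being (a category built from) the pullback diagram
$$\xymatrix{\calc_M\ar[d]\ar[rr]&& \PicwLf_{\Omega A}(M)\ar[d]\\ \Sh^{\loc}_{\Mod_R}(M)\ar[rr]^{\wedge H\R} &&\PicL_{H\Omega A}(M)}$$
i.e. the non-invertible analogue of \eqref{ulimar1704} — into $\Sp$, sending an object to the pullback spectrum of Definition \ref{ulimar1901}. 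That this assignment is lax symmetric monoidal follows because finite limits in $\Sh_{\Sp}(M)$ commute with the smash product in the lax sense (the diagram defining $\Diff$ is a diagram of $E_\infty$-objects, the maps $i$ and $d\circ\kappa$ are maps of such, and $\cM^{\ge 0}\mapsto \cM$, $E\mapsto E\wedge H\R$ are lax monoidal), so pulling back preserves the multiplicative structure.

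The second step is to make this functorial in $M$: one assembles the grading categories $\calc_M$ into a sheaf $G\in\Sh_{\sym}$ and the functors $\calc_M\to\Sp$ into an object of $\Sh_{\GCS}$. Here I would lean on Proposition \ref{changesheaf} exactly as in the proof that $\cR\in\Sh_{\GCS}$: the assignment $M\mapsto(\Sh_{\Sp}(M),\Gamma(M,-))$ is already known to be a sheaf of graded ring spectra by \eqref{ulimar1401}, and the pullback construction of $\Diff$ is natural in $M$ because pullbacks in sheaves of spectra are computed objectwise (this is explicitly noted after Definition \ref{ulimar1901}), so the $M$-functoriality of $\Diff$ is inherited from that of its four corners $E$, $H\cM^{\ge 0}$, $H\cM$ and the structure maps. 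Finally I would invoke the change-of-grading Proposition \ref{lem_drei} with the sheaf $G$ being $\Tw_{\hat R}$ (which lies in $\Sh_{\CGrp(\cS)}\subseteq\Sh_{\sym}$ by Definition \ref{may0620}) and the lax symmetric monoidal transformation $\eta\colon \Tw_{\hat R}\to \calc_{(-)}$ forgetting invertibility; composing, one obtains $\hat\cR\in\Sh_{\GCS}$ with $\hat\cR(M)=(\Tw_{\hat R}(M),\Diff(-)(M))$, which is the claim.

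I expect the main obstacle to be the verification that the pullback-of-spectra construction is genuinely \emph{lax symmetric monoidal} at the $\infty$-categorical level, rather than just compatible with products on the nose — i.e. producing the coherence data for $\Diff(E,\cM,d)\wedge\Diff(E',\cM',d')\to\Diff(E+E',\cM\otimes\cM',d\otimes d')$ together with all higher cells. The honest way to handle this is to realize $\Diff$ as the value of a limit-preserving functor on a diagram category: the square in Definition \ref{ulimar1901} is the limit over the cospan poset $\Lambda$, so $\Diff$ is the composite of the lax monoidal ``read off the cospan'' functor $\calc_{(-)}\to \Fun(\Lambda,\Sh_{\Sp})$ with $\lim_\Lambda$, and one then cites that $\lim_\Lambda\colon \Fun(\Lambda,\Sh_{\Sp})\to\Sh_{\Sp}$ is lax symmetric monoidal (indeed any limit functor over a diagram is, since the smash product of two cones maps to a cone on the smash product). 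With that packaging, the remaining points — that $\eta$ is lax symmetric monoidal and that everything sheafifies correctly — are routine applications of the appendix results and require no further argument beyond what was already needed for $\cR$. The other place that deserves a sentence of care is checking that $\Tw_{\hat R}$ really is the correct grading object to feed into Proposition \ref{lem_drei}, i.e. that the forgetful map $\Tw_{\hat R}\to\calc_{(-)}$ lands in the subcategory over which $\Diff$ restricts to the desired functor and that it is compatible with the symmetric monoidal structure coming from the pullback \eqref{ulimar1704}; this is immediate from the construction of $\Tw_{\hat R}$ as a pullback of Picard stacks.
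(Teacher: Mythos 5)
Your proposal is correct and follows essentially the same route as the paper: there, too, $\Diff$ is realized as the lax symmetric monoidal limit $\lim_{J}$ over the cospan shape $J$ applied to a symmetric monoidal transformation $\Tw_{\hat R}\to \Sh_{\Sp}^{J}$ assembled, via the pullback decompositions of both sides, from the legs $\cM\mapsto (H\cM^{\ge 0}\to H\cM)$ and $E\mapsto(\kappa:E\to E\wedge H\R)$, after which Propositions \ref{lem_drei} and \ref{changesheaf} give the sheaf of graded ring spectra exactly as you say. The only deviation is your auxiliary non-invertible grading category $\calc_M$ (whose square as drawn is slightly off, since $-\wedge H\R$ on non-invertible locally constant $R$-modules need not land in $\PicL_{H\Omega A}$); the paper dispenses with this intermediate and defines the lax monoidal transformation directly on $\Tw_{\hat R}$, which is a packaging difference rather than a different argument.
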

}
\begin{proof}
We again consider the functor
$F$ introduced in \eqref{ulimar1401}. We set $$G:={\Tw_{\hat R}}\in \Sh_{\sym}\ .$$

 In order to apply Proposition  \ref{lem_drei} we must construct a lax symmetric monoidal transformation
\begin{equation}\label{ulimar1402}\eta: {\Tw_{\hat R}}\to \Sh_{\Sp}\end{equation}
such that $\eta(M)(E,\cM,d)\cong \Diff(E,\cM,d)\ .$ 
%Therefore we want to show that there 
%is a (lax) \thomas{hinten verbessern} functor 
%\begin{equation}\label{toconstruct}
 %\Pic_{\hat{R}} \to \PSh_{\Mod_{r}}(\Mf/\dots)
%\end{equation}
%which assigns the sheaf of differential sections. 
Therefore let 
$J$ be the indexing category for pullback diagrams, i.e. $J = \Delta[1]
\cup_{\Delta[0]} \Delta[1]$. The limit  {functor}  
\begin{equation*}
\lim_{J}: \Sh_{\Sp}^J \to \Sh_{\Sp}
\end{equation*}
 admits a  lax symmetric monoidal refinement.  {This refinement is constructed using that $\lim_{J}$ is right adjoint to the diagonal functor which admits a canonical monoidal structure. Then we employ the fact that the right adjoint to a monoidal functor inherits a lax monoidal structure as shown in \cite[Corollary 8.3.2.7]{HA}.} Thus in order to construct the transformation \eqref{ulimar1402} it suffices to construct a symmetric monoidal transformation
\begin{equation*}
{\Tw_{\hat{R}}} \to \Sh_{\Sp}^J.
\end{equation*}

By definition we have the following pullback decompositions:
\begin{eqnarray*}
{\Tw_{\hat{R}}} &{\cong}&  \PicwLf_{\Omega A} \times_{\PicL_{H\Omega A}} \PicL_{{\underline{R}}} \\
\Sh_{\Sp}^J &{\cong}& \Sh_{\Sp}^{\Delta[1]} \times_{\Sh_{\Sp}^{\Delta[0]}} \Sh_{\Sp}^{\Delta[1]}
\end{eqnarray*}
Therefore it suffices to write down a transformation {between these} diagrams, i.e. lax symmetric monoidal transformations  
$$f_1: \PicwLf_{\Omega A} \to \Sh_{\Sp}^{\Delta[1]}\ , \quad  f_2: \PicL_{{\underline{R}}} \to  \Sh_{\Sp}^{\Delta[1]}\ ,  \quad 
f_0: \PicI_{H\Omega A} \to \Sh_{\Sp}^{\Delta[0]}$$ and fillers.
The transformation $$f_{0}: \PicL_{H\Omega A} \to \Mod_{H\Omega A}\to \Sh_{\Sp}$$
is a composition of lax symmetric monoidal   transformations.
$$ f_1(\cM) := (H\cM_{\geq 0} \to H\cM) \ , \quad  \cM\in \PicwLf_{\Omega A}(M)$$
and
 $$f_2(E) := (\kappa:E\to {E\wedge H\R})   \ , \quad  E\in {\PicL_{\underline{R}}}(M)\ ,$$
 where we again omit two write various forgetful functors to $\Sh_{\Sp}$.
\end{proof}

\begin{ddd}
Given a manifold $M$ with a differential twist $(E,\cM,d)$ we  define the  twisted differential cohomology 
group  by 
%\marginpar{\uli{Klammern bevorzugt!}}

\begin{equation*}
 \hat R^{(E,\cM,d)}(M) := \pi_0(\Diff(E,\cM,d)(M))  \end{equation*}
\end{ddd}

\begin{kor}\label{cor_june26} For each manifold $M$ the twisted differential cohomology $\hat R^*(M)$ forms a graded
ring, which is graded over the homotopy category $\ho {\Tw_{\hat{R}}}(M)$. 
Moreover the assignment  
%\marginpar{\uli{Hier bemerken, da{\ss} das insbesondere bedeutet, da{\ss} automorphisms von twists wirken!}}
\begin{equation*}
M \mapsto \big(\ho{\Tw_{\hat R}}(M), \hat R^*(M)\big) 
\end{equation*}
forms a functor $\Mf^{op} \to \mathrm{GrRing}$.
\end{kor}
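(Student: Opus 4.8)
The plan is to obtain Corollary \ref{cor_june26} as a formal consequence of Proposition \ref{ulimar2402} together with the functor $\GCS \to \grRing$ from the Lemma immediately following Definition \ref{def1aug13}. First I would recall that Proposition \ref{ulimar2402} produces the sheaf of graded ring spectra $\hat\cR \in \Sh_{\GCS}$, i.e.\ a functor $\Mf^{op}\to \GCS$ sending $M$ to the pair $({\Tw}_{\hat R}(M),\Diff(\dots)(M))$. Composing this with the forgetful functor $\GCS\to\grRing$, $(\calc,R)\mapsto(\ho\calc,\pi_0(R))$, already gives a functor $\Mf^{op}\to\grRing$. It remains only to identify the value of this composite at $M$ with the pair $\big(\ho{\Tw_{\hat R}}(M),\hat R^*(M)\big)$: the first component is $\ho{\Tw_{\hat R}}(M)$ by definition, and the second is the $\ho{\Tw_{\hat R}}(M)$-graded ring $\pi_0\big(\Diff(\dots)(M)\big)$, which at a twist $(E,\cM,d)$ takes the value $\pi_0(\Diff(E,\cM,d)(M)) = \hat R^{(E,\cM,d)}(M)$ by the definition of twisted differential cohomology. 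This establishes the ``moreover'' clause.

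For the first assertion of the corollary I would then simply read off what it means for $\hat R^*(M)$ to be an object of $\grRing$ over the symmetric monoidal category $\ho{\Tw_{\hat R}}(M)$: by Definition \ref{june27} and the discussion of $\grRing_\calc$, this is precisely a lax symmetric monoidal functor $\ho{\Tw_{\hat R}}(M)\to\AbGr$, i.e.\ the datum of an abelian group $\hat R^{(E,\cM,d)}(M)$ for each twist, product maps $\hat R^E(M)\otimes\hat R^{E'}(M)\to\hat R^{E+E'}(M)$ associated to the monoidal pairing on twists, compatibility cells, and a unit. One should note that $\ho{\Tw_{\hat R}}(M)$ is genuinely symmetric monoidal: $\Tw_{\hat R}$ is a Picard-$\infty$ stack by Definition \ref{may0620}, so $\Tw_{\hat R}(M)$ is a symmetric monoidal $\infty$-category and its homotopy category inherits a symmetric monoidal structure.

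The only mild subtlety — and the step I expect to require the most care — is checking that passing to $\pi_0$ of the graded ring spectrum $({\Tw_{\hat R}}(M),\Diff(\dots)(M))$ indeed factors through the homotopy category $\ho{\Tw_{\hat R}}(M)$, rather than merely giving a lax monoidal functor out of the $\infty$-category ${\Tw_{\hat R}}(M)$ itself. This is exactly the content of item 4 of Example \ref{exa2} (the one following the definition of $\calc$-graded ring spectra): applying $\pi_0:\Sp\to\AbGr$ to a $\calc$-graded ring spectrum yields a lax monoidal functor $\calc\to\AbGr$ which canonically factors over $\ho\calc$, because $\pi_0$ inverts equivalences of spectra. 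Granting this, functoriality in $M$ is automatic: a smooth map $f:M\to N$ induces, via the functor $\Mf^{op}\to\GCS$ underlying $\hat\cR$, a morphism in $\GCS$ and hence in $\grRing$, which unwinds to the pullback functor $\ho{\Tw_{\hat R}}(N)\to\ho{\Tw_{\hat R}}(M)$ on gradings together with the natural transformation $\hat R^E(N)\to\hat R^{f^*E}(M)$ compatible with products. No genuinely new computation is needed beyond these bookkeeping identifications.
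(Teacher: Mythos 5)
Your argument is exactly the intended one: the paper treats the corollary as an immediate consequence of Proposition \ref{ulimar2402} (the sheaf $\hat\cR\in\Sh_{\GCS}$) composed with the functor $\GCS\to\grRing$, $(\calc,R)\mapsto(\ho\calc,\pi_0(R))$, just as you do, and your identification of the value at $M$ and the factorization of $\pi_0$ through $\ho\,\Tw_{\hat R}(M)$ is the same bookkeeping the paper leaves implicit. The proposal is correct and matches the paper's approach.
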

{\begin{theorem}
The construction of the twisted differential function spectrum refines to  
a functor
$$\hRings\to \Sh_{\GCS}\ .$$
\end{theorem}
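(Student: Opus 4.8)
The plan is to imitate the proof of Proposition~\ref{prop_jun2ee}, enlarging the domain from $\CAlg(\Sp)$ to $\hRings$. As there, I would not construct the functor $\hRings \to \Sh_{\GCS}$ directly but rather its adjoint
$$\hRings \times \Mf^{op} \longrightarrow \GCS$$
and then invoke the sheafification Proposition~\ref{changesheaf}. Concretely I would apply the change--of--grading Proposition~\ref{lem_drei} to the $\infty$-category $\cald := \hRings \times \Mf^{op}$ equipped with: the functor
$$F\colon \hRings \times \Mf^{op} \xrightarrow{\text{pr}_2} \Mf^{op} \xrightarrow{\eqref{ulimar1401}} \GCS\ ;$$
a grading $G\colon \hRings \times \Mf^{op} \to \sym$ refining $(\hat R=(R,A,c),M)\mapsto \Tw_{\hat R}(M)$; and a lax symmetric monoidal transformation $\eta\colon G \to \Sh_{\Sp}$ refining $(\hat R, M, (E,\cM,d))\mapsto \Diff(E,\cM,d)$. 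For each fixed $\hat R$ this reproduces the sheaf $\hat\cR$ of Proposition~\ref{ulimar2402}, so the real content of the theorem is that every construction entering that proposition is natural in $\hat R$.

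First I would make $G$ functorial in $\hat R$. By Definition~\ref{may0620}, $\Tw_{\hat R}$ is the pullback~\eqref{ulimar1704} of $\PicwLf_{\Omega A}$ and $\PicL_{\underline{R}}$ over $\PicL_{H\Omega A}$, viewed inside $\sym$ via the inclusion of Picard stacks into symmetric monoidal stacks. Each corner is functorial in $\hat R$: the projection $\hRings \to \CAlg(\Sp)$ followed by $R \mapsto \PicL_{\underline{R}}$ handles the lower left (as in Proposition~\ref{prop_jun2ee}), while the projection $\hRings \to \CAlg(\Ch_{\R})$ of Definition~\ref{may0101} followed by $A \mapsto \Omega A = \Omega\otimes_{\R}\underline{A}$ and then by $\PicwLf_{\Omega(-)}$ resp. $\PicL_{H\Omega(-)}$ handles the other two, the functoriality of $\PicwLf_{\Omega(-)}$ being induced by base change $\cM \mapsto \cM\otimes_{\Omega A}\Omega A'$ along a morphism $A \to A'$ of real CDGAs. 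The structure map $\wedge H\R$ of~\eqref{ulimar1704} is natural because the identification $\underline{R}\wedge H\R \simeq H\Omega A$ is assembled from the de Rham equivalence~\eqref{ulimar2001} and $c$, and $c$ is part of the datum of $\hat R$; the right-hand structure map is the natural transformation~\eqref{apr2701}. Hence the pullback assembles to a functor $G\colon \hRings \to \Sh_{\sym}$. Along the way one must check that base change $-\otimes_{\Omega A}\Omega A'$ carries $\PicwLf_{\Omega A}(M)$ to $\PicwLf_{\Omega A'}(M)$: invertibility is preserved by any symmetric monoidal functor, $K$-flatness is preserved since $\cM\otimes_{\Omega A}\Omega A'$ is again $K$-flat over $\Omega A'$, and weak local constancy is preserved because, using $K$-flatness, $\iota(\cM\otimes_{\Omega A}\Omega A')\simeq \iota(\cM)\otimes_{\iota(\Omega A)}\iota(\Omega A')$ with $\iota(\Omega A)\simeq\underline{\iota(A)}$ and $\iota(\Omega A')\simeq\underline{\iota(A')}$ constant.

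Next I would produce $\eta$ exactly along the lines of the proof of Proposition~\ref{ulimar2402}. Since $\lim_{J}\colon \Sh_{\Sp}^{J}\to\Sh_{\Sp}$ for $J = \Delta[1]\cup_{\Delta[0]}\Delta[1]$ carries a lax symmetric monoidal refinement, it suffices to lift $G$ to a symmetric monoidal transformation $G \to \Sh_{\Sp}^{J}$, and using the pullback decomposition of $\Sh_{\Sp}^{J}$ this amounts to giving the transformations $f_0, f_1, f_2$ of that proof, now required to be natural in $\hat R$. The transformation $f_2(E) = (\kappa\colon E \to E\wedge H\R)$ is natural in $R$, with target read inside $\PicL_{H\Omega A}$ via $c$; $f_0\colon \PicL_{H\Omega A}\to\Mod_{H\Omega A}\to\Sh_{\Sp}$ is a composite of natural lax symmetric monoidal transformations; and $f_1(\cM) = (H\cM^{\ge 0}\to H\cM)$ is natural because a morphism $\hat R \to \hat R'$ is in particular a degree-preserving CDGA map $A \to A'$, hence induces a degree-preserving map of graded sheaves $\cM \to \cM' := \cM\otimes_{\Omega A}\Omega A'$, which restricts to $\cM^{\ge 0}\to (\cM')^{\ge 0}$ on naive truncations. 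Feeding $F$, $G$ and $\eta$ into Proposition~\ref{lem_drei} and then into Proposition~\ref{changesheaf} yields the functor $\hRings \to \Sh_{\GCS}$ whose value at $\hat R$ is $\hat\cR$.

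The main obstacle is the handful of non-formal compatibilities in the CDGA variable invoked above: that base change along a morphism of real CDGAs preserves $K$-flatness, invertibility and weak local constancy (so that $\PicwLf_{\Omega(-)}$ is genuinely a functor), and that naive truncation $(-)^{\ge 0}$ is compatible with such base change (so that $f_1$ is natural). Both rely on morphisms in $\CAlg(\Ch_{\R})$ --- hence in $\hRings$ --- being strict degree-preserving homomorphisms of CDGAs rather than merely $E_\infty$-maps of $H\R$-algebras; this is precisely the rigidity that motivated defining $\hRings$ through the strict model $\CAlg(\Ch_{\R})$ in Definition~\ref{may0101}. Granting these points, the remainder is the same bookkeeping with Grothendieck constructions, change of grading and sheafification as in the proofs of Propositions~\ref{prop_jun2ee} and~\ref{ulimar2402}.
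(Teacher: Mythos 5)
Your proposal follows the paper's own proof essentially verbatim: the paper likewise constructs the adjoint $\hRings\times\Mf^{op}\to\GCS$ by applying the change-of-grading Proposition \ref{lem_drei} and the sheafification Proposition \ref{changesheaf} to the same $F$, the grading $G(\hat R,M)=\Tw_{\hat R}(M)$, and a lax symmetric monoidal $\eta$ built as in the proof of Proposition \ref{ulimar2402}. Your additional checks on functoriality of $\PicwLf_{\Omega(-)}$ and of the truncation map in the CDGA variable only flesh out details the paper leaves implicit, so the argument is correct and matches the paper's route.
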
 
}
{
\begin{proof}
 The proof works similar to the proof of Proposition \ref{prop_jun2ee}. We consider the functors 
$$ F:  \hRings\times \Mf^{op} \xrightarrow{\text{pr}_2} \Mf^{op} \xrightarrow{\eqref{ulimar1401}} \GCS$$
and   
$$G: \hRings \times \Mf^{op} \to \sym \ , \quad (\hat R ,M) \mapsto  \Tw_{\hat R}(M)\ ,$$ 
 and the  lax symmetric monoidal transformation $\eta: G\to  \Sh_{\Sp} $ constructed similar to the one in the proof of Propositon \ref{ulimar2402} \end{proof} }

%\begin{remark} \marginpar{\uli{Ist mir wieder zu vage bzw macht die sch\"one Pr\"azision weiter oben kaputt. W\"urde ich hier nicht so sagen.
%}}
%The Proposition in particular shows that for equivalent differential twists $(E,\cM,d)$ and $(E',\cM',d')$ the groups 
%$\hat R^{E,\cM,d}(M)$ and $\hat R^{E',\cM',d'}(M)$ are abstractly isomorphic.
%An isomorphism is induced by every isomorphism of differential twists. 
%Therefore is suffices to consider the twisted 
%differential cohomology groups for one representative in every equivalence class of twists. 
%We will construct nice representatives of differential twists in the next section. 
%We will even show under mild conditions that two pairs $(E,\cM,d)$ and $(E',\cM',d')$ are equivalent precisely if $E$ and $E'$ are equivalent.
%\end{remark}

%%%%%%%%%%%%%%%%%%%%%%%%%%%%%%%%%%%%%%%%%%%%%%%%%%%%%%%%%%%%%%%%%%%%%%%%%%%%%%%%
%%%%%%%%%%%%%%%%%%%%%%%%%%%%%%%%%%%%%%
\section{Properties of twisted differential cohomology}\label{sep0602}
%%%%%%%%%%%%%%%%%%%%%%%%%%%%%%%%%%%%%%%%%%%%%%%%%%%%%%%%%%%%%%%%%%%%%%%%%%%%%%%%
%%%%%%%%%%%%%%%%%%%%%%%%%%%%%%%%%%%%%%%%
%\thomas{Das Kapitel kann auch weg oder in den Appendix, habs jetzt nur mal schnell geschrieben.}
In this section we list the basic properties of differential cohomology. 
The more advanced properties, such as orientation and integration theory will be discussed elsewhere.\\

The following list of structures just decodes the fact that $\Diff$ is a sheaf of graded ring spectra
and correspond to similar structures in the topological case listed in Section \ref{ulimar2201}. We fix a commutative ring spectrum $R$ and {a} differential refinement $\hat R$.

\begin{enumerate}
\item For every differential twist $(E,\cM,d) \in  {\ho}\Tw_{\hat R}(M)$ we have an abelian group $\hat R^{(E,\cM,d)}(M)$.
\item For every pair $(E,\cM,d),(E',\cM^{\prime},d^{\prime}) \in  {\ho} \Tw_{\hat R}(M)$ {of differential twists we have} a multiplication morphism 
 \begin{equation*}
\cup: \hat R^{(E,\cM,d)}(M) \otimes \hat R^{(E',\cM',d')}(M) \to \hat R^{(E,\cM,d)+(E',\cM',d')}(M) ,
 \end{equation*}
 where $(E,\cM,d)+(E',\cM',d')$ denotes the symmetric monoidal pairing in $ {\ho}\Pic_{ \hat R}(M)$. The multiplication is associative, commutative, and has a unit in the obvious sense.
\item For every     morphisms $(E,\cM,d)\to (E',\cM',d')$ in $ {\ho}\Tw_{\hat{R}}(M)$ we have  {an iso}morphism 
 \begin{equation*}
 R^{(E,\cM,d)}(M) \iso R^{(E',\cM',d')}(M) .
 \end{equation*}
 This assignment is compatible with composition and   the product in the obvious sense.
\item For every smooth map $f: M \to N$ and $(E,\cM,d) \in  {\ho}\Pic_{\hat{R}}(N)$ we have a morphism
 \begin{equation*}
 R^{(E,\cM,d)}(N) \to R^{f^*(E,\cM,d)}(M)
 \end{equation*}
 which is compatible with all the other maps.
 %\item All these structures are natural with respect to smooth maps $M\to M^{\prime}$.
 \end{enumerate}
 
 The following structures are typical for differential cohomology and immediately follow from the definition of the twisted differential function spectrum by a pull-back.  In the following proposition, if not said differently, $M$ is a smooth manifold and $(E,\cM,d)\in \Tw_{\hat R}(M)$ is a differential twist.

 \begin{proposition}\label{may0630}
\begin{enumerate}
\item We have a map of sheaves of graded rings spectra
$$I:\widehat \cR\to \cR\ .$$ It induces  a map of abelian groups
$$\widehat R^{(E,\cM,d)}\to R^{E}$$
which is compatible with the products, the action of morphisms between differential  twists and smooth maps between manifolds   in the natural sense. 
 \item
 We have a natural transformation
 $$\curv:\widehat{R}\to \cZ\ ,$$
 where the presheaf of graded rings $\cZ\in \PSh_{\mathrm{GrRing}}$ is given by $$M\mapsto (\PicwLf_{\Omega A}(M)\in \cM\mapsto \cZ^{\cM}(M):=Z^{0}(\cM(M)))\ .$$ It induces   a map of abelian groups
$$\curv: \widehat R^{(E,\cM,d)}(M)\to Z^{0}(\cM(M))$$
which is compatible with the product, the action of morphisms between differential twists and smooth maps between manifolds. 
\item   There is a canonical homomorphism
 \begin{equation*}
 a:   \cM^{-1}(M)/\Im(d) \to  \widehat R^{(E,\cM,d)}(M)
 \end{equation*}
 which is
natural in $M$ and such that it becomes a homomorphism of $\Tw_{\hat R}(M)$-graded groups (not rings!). Moreover we have the rule
$$a(\omega)\cup x=a(\omega\cup \curv(x))$$
in $\hat R^{(E,\cM,d)+(E',\cM',d')}(M) $
for $\omega \in   \cM^{-1}(M)/\Im(d)$ and $x\in \widehat R^{(E',\cM',d')}(M)$.
 \item   The following diagram commutes:
 \begin{equation*}
\xymatrix{
\widehat R^{(E,\cM,d)}(M) \ar[d]^I \ar[r]^{\curv} & Z^{0}(\cM(M)) \ar[d]  \\ R^{E}(M)
 \ar[r] &  H^{0}(\cM(M)) 
} ,
\end{equation*} where the lower horizontal map is induced by  the map
$E{\stackrel{\kappa}{\to}} E\wedge H\R\stackrel{d}{\to} H\cM$.
%Here $H^\cM_{dR}(M,A) :=  H^0\Gamma(\cM; M) = \pi_0\Gamma(H\cM; M)$ and $d_*$ is the morphism induced from $d: E \wedge H\R \to H\cM$.
\item We have the equality $$\curv \circ a = d$$ 
of maps $ \cM^{-1}(M)/\im(d)  \to Z^{0}(\cM(M))$ .
\item The following sequences are exact:
\begin{align*}
R^{E-1}(M) \to \cM^{-1}(M)/\Im(d) \to  \widehat R^{(E,\cM,d)}(M) \to R^E(M) \to 0 \\[0.4cm]
R^{E-1}(M) \to  H^{-1}(\cM(M)) 
\to  \widehat R^{(E,\cM,d)}(M) \to Z^{0}(\cM(M)) \times_{ H^{0}(\cM(M))  } R^{E}(M) \to 0
\end{align*}
\end{enumerate}

 \end{proposition}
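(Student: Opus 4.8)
The plan is to extract all six items from the single structural fact that $\Diff(E,\cM,d)$ is defined as a homotopy pullback of sheaves of spectra, together with the long exact sequence on homotopy groups that such a pullback produces. I would begin by unwinding the pullback square of Definition \ref{ulimar1901}. Evaluating at $M$ and taking $\pi_*$ yields a Mayer–Vietoris type long exact sequence relating $\pi_*\Diff(E,\cM,d)(M)$, $\pi_*E(M)$, $\pi_*(H\cM^{\ge 0})(M)$ and $\pi_*(H\cM)(M)$. Since $\pi_k(H\cM)(M)\cong H^{-k}(\cM(M))$ and $\pi_k(H\cM^{\ge 0})(M)$ computes the cohomology of the truncation $\cM(M)^{\ge 0}$ (so it agrees with $H^{-k}(\cM(M))$ for $k\le 0$ and is built from cycles for $k$ at the truncation edge), the relevant portion of the sequence reads, schematically,
$$ \cdots \to \pi_0 E(M) \oplus H^0(\cM(M)^{\ge 0}) \to H^0(\cM(M)) \to \pi_{-1}\Diff(M) \to \pi_{-1}E(M)\oplus H^{1}(\cM(M)^{\ge 0}) \to \cdots, $$
and one reads off $I$ (projection of the pullback onto $E$) and $\curv$ (projection onto $H\cM^{\ge 0}$, post-composed with the inclusion $Z^0(\cM(M))\hookrightarrow (\cM(M)^{\ge 0})_0$ — note $H^0$ of a complex truncated to nonnegative degrees is precisely $Z^0$, which is the key small computation). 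The naturality of $I$ and $\curv$ in $M$, in the twist, and the multiplicativity are formal consequences of the fact that $\Diff$ was produced as a morphism of sheaves of graded ring spectra: $I$ comes from the canonical map of pullback diagrams down to the $E$-corner, and $\curv$ from the map to the $H\cM^{\ge 0}$-corner, and both corners carry the ambient ring structure.

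For items (3) and (5), the map $a$ is the connecting-type map in the above long exact sequence: the fiber term. Concretely the homotopy fiber of $i\colon H\cM^{\ge 0}\to H\cM$ over $M$ is $H$ of the complex $\cM(M)^{<0}$ suitably shifted, and chasing through the pullback identifies $\pi_{-1}$ of that fiber, modulo the image of the differential coming in from $\cM^{-1}\to \cM^{0}$, with $\cM^{-1}(M)/\Im(d)$; the induced map into $\pi_0\Diff$ is $a$. The relation $\curv\circ a = d$ then holds because $a$ lands in the part of $\Diff$ coming from the lower corner and $\curv$ reads off the $\cM^{\ge 0}$-component, which on the image of $a$ is exactly $d\omega\in Z^0$. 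The module rule $a(\omega)\cup x = a(\omega\cup\curv(x))$ follows from the fact that $a$ is, up to the identification above, the inclusion of the homotopy fiber of $i$, and $i$ is a map of module spectra, so $a$ is a map of modules over $\Diff$, whence the projection formula; the compatibility of $\cup$ with the pullback description is what Proposition \ref{ulimar2402} and the lax monoidal refinement of $\lim_J$ give us. Item (4) (commutativity of the square) is immediate once one observes both composites $\widehat R^{(E,\cM,d)}(M)\to H^0(\cM(M))$ are induced by the two evident maps out of the pullback into $H\cM$, which agree by construction of the pullback square (the two legs $E\xrightarrow{d\kappa} H\cM$ and $H\cM^{\ge 0}\xrightarrow{i} H\cM$ are identified there).

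Finally, item (6): both exact sequences are truncations/reorganizations of the long exact sequence of the pullback. The first is the standard "differential cohomology hexagon" sequence — surjectivity onto $R^E(M)$ holds because $\pi_0(H\cM^{\ge 0})(M)\to\pi_0(H\cM)(M)$ hits the whole image needed, i.e. the obstruction in $\pi_{-1}$ of the fiber, which is killed; and exactness at $\widehat R^{(E,\cM,d)}(M)$ and at $\cM^{-1}(M)/\Im(d)$ is read directly from the sequence with the identifications above, the term $R^{E-1}(M)$ appearing as $\pi_1 E(M)$ mapped via $d\kappa$ then the connecting map. The second sequence is obtained by instead taking the pullback description literally: $\widehat R^{(E,\cM,d)}(M) = \pi_0$ of $E(M)\times_{H\cM(M)} H\cM^{\ge 0}(M)$, so it surjects onto the actual fiber product $Z^0(\cM(M))\times_{H^0(\cM(M))}R^E(M)$ of $\pi_0$'s, with kernel described by the $\pi_1/\pi_0$ overlap, namely $H^{-1}(\cM(M))$ modulo the image of $R^{E-1}(M)$. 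I expect the main obstacle to be bookkeeping: correctly identifying $\pi_k$ of the truncated complex $H\cM^{\ge 0}(M)$ (especially the edge case $k=0$ giving $Z^0$ rather than $H^0$) and tracking which map in the long exact sequence is $a$, $I$, $\curv$ respectively, together with carefully invoking the lax symmetric monoidal structure on $\lim_J$ to justify that all of these are maps of graded rings / modules and hence satisfy the stated multiplicative compatibilities. None of this is deep, but the indexing must be done with care to get the degree conventions ($\pi_k\leftrightarrow H^{-k}$) consistent throughout.
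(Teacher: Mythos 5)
Your proposal is correct and takes essentially the same approach as the paper, whose entire proof is the remark that all assertions follow from the pullback construction of $\Diff(E,\cM,d)$ (with details deferred to \cite{bg}, \cite{skript}); the long-exact-sequence and lax-monoidal bookkeeping you describe is exactly what is meant there. Only one index slip worth fixing: $\cM^{-1}(M)/\Im(d)$ is $\pi_{0}$ (not $\pi_{-1}$) of the fibre of $i\colon H\cM^{\ge 0}\to H\cM$ evaluated at $M$, i.e. $H^{-1}$ of the quotient complex $\cM(M)^{<0}$, with $\Im(d)$ the image of $d\colon\cM^{-2}(M)\to\cM^{-1}(M)$.
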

\begin{proof}
This follows as in the untwisted case from the construction of $\widehat{R}^{(E,\cM,d)}$ as a pull-back.
For more details we refer to the corresponding results in \cite{bg}, \cite{skript}.
\end{proof}
 
The fact that $\widehat{R}^{(E,\cM,d)}$ is a sheaf implies a Mayer-Vietoris sequence for twisted differential cohomology.   By abuse of notation we denote $\pi_{0}$ of  the evaluation of the sheaf
$E\wedge M\R/\Z$ (which is not a twist) on $M$ by $R^{E\wedge M\R/\Z}(M)$, where $M\R/\Z$ is the Moore spectrum of the abelian group $\R/\Z$.}

\begin{proposition}[Mayer-Vietoris]
Let $M=U\cup V$ be a decomposition of a smooth manifold as a union of two open subsets and $(E,\cM,d)\in \Tw_{\hat R}(M)$ be a differential twist. Then we have the long exact sequence 
%\marginpar{\uli{ Ich w\"urde die Einschr\"ankungen in den Exponenten streichen. So gehts aber auch.}}
{
\begin{equation*}
\xymatrix{
  &
 ... \ar[r] & 
{  R^{(E-{2})\wedge M\R/\Z|_{U}}(U)\oplus    R^{(E-{2})\wedge M\R/\Z|_{V}}(V)}  \ar[r]& 
  R^{(E{-2})\wedge M\R/\Z}({U\cap V})\ar`r[d]`[l]`[llld]`[dll][dll]  \\
  &\hat R^{(E,\cM,d)}(M) \ar[r] & 
  \hat R^{(E,\cM,d)_{|U}}(U)\oplus \hat R^{(E,\cM,d)_{|V}}(V) \ar[r]& 
\hat R^{(E,\cM,d)_{|U\cap V}}(U\cap V) \ar`r[d]`[l]`[llld]`[dll][dll] \\ 
 & R^{E+1}(M) \ar[r] &   R^{(E+1)|_U}(U) \oplus R^{(E+1)|_V}(V)  \ar[r]& ...
 }
\end{equation*}
}
%
%$$\hspace{-1cm}
%%\begin{align*}
%\dots \to R^{E\wedge M\R/\Z}(M)  \to \hat R^{(E,\cM,d)}(M)\to \hat R^{(E,\cM,d)_{|U}}(U)\oplus \hat R^{(E,\cM,d)_{|V}}(V)\to \hat R^{(E,\cM,d)_{|U\cap V}}(U\cap V)\to  R^{E+1}(M)\to\dots 
%$$
%%\end{align*}
 \end{proposition}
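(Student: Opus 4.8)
The plan is to derive the Mayer--Vietoris sequence directly from the sheaf property of the twisted differential function spectrum. Recall from Proposition~\ref{ulimar2402} that $\hat\cR \in \Sh_{\GCS}$; in particular, after forgetting grading and evaluating, the underlying sheaf of spectra $\Diff$ is a sheaf on $\Mf$. For the cover $M = U \cup V$, the standard \v{C}ech/descent argument for sheaves of spectra (the square associated to $U, V, U\cap V$ is both a pushout of the underlying manifolds' representables and hence a pullback of sections) yields a homotopy pullback square
$$\xymatrix{
\Diff(E,\cM,d)(M) \ar[r]\ar[d] & \Diff((E,\cM,d)_{|U})(U) \ar[d] \\
\Diff((E,\cM,d)_{|V})(V) \ar[r] & \Diff((E,\cM,d)_{|U\cap V})(U\cap V)
}$$
of spectra. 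A homotopy pullback square of spectra is also a homotopy pushout, so it gives rise to a long exact sequence on homotopy groups, i.e.\ a Mayer--Vietoris sequence in $\pi_*$. The first step, then, is simply to invoke the sheaf property and extract this square.

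The second step is to identify the homotopy groups that appear. By definition $\hat R^{(E,\cM,d)}(M) = \pi_0(\Diff(E,\cM,d)(M))$, and the negative homotopy groups $\pi_{-n}\Diff(E,\cM,d)(M)$ are the degree-shifted twisted differential cohomology groups, which one writes as $\hat R^{(E,\cM,d)[n]}(M) = \hat R^{(E+n,\cM[n],d)}(M)$ using the shift in $\Tw_{\hat R}(M)$. So the ``middle'' rows of the displayed sequence are immediate. The remaining point is to identify the positive homotopy groups $\pi_n \Diff(E,\cM,d)(M)$ for $n \geq 1$. Here one uses the defining pullback square of Definition~\ref{ulimar1901}: since $\cM^{\geq 0} \to \cM$ is $(-1)$-connected as a map of sheaves of complexes (the cofiber is concentrated in negative degrees), the long exact sequence of the pullback shows that for $n \geq 1$ the group $\pi_n\Diff(E,\cM,d)(M)$ agrees with $\pi_{n+1}$ of the cofiber of $E \to H\cM$, which is the relative term; chasing this, as in the untwisted case treated in \cite{bg}, \cite{skript}, identifies it with $R^{E\wedge M\R/\Z}$ in the appropriate degree. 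Concretely, the positive homotopy of the differential function spectrum is the $\R/\Z$-Moore-spectrum-twisted cohomology shifted by one, which accounts for the top and bottom rows with the degree shifts $(E-2)\wedge M\R/\Z$ and $R^{E+1}$ appearing in the statement.

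The third step is bookkeeping: assemble the three families of homotopy groups into one long exact sequence and check that the connecting maps land in the right places and that the various shifts match the indices written in the displayed diagram. This is purely formal once the identifications above are in hand; it is the same degree-counting as in the Mayer--Vietoris sequence \eqref{apr2410} for topological twisted cohomology, combined with the exact sequences of Proposition~\ref{may0630}. I would phrase it by first writing the long exact sequence abstractly in terms of $\pi_*\Diff(\dots)$ over the four spaces $M, U, V, U\cap V$, and then substituting the three identifications (middle $=$ $\hat R$, negative $=$ shifted $\hat R$, positive $=$ $\R/\Z$-twisted), being careful that the restriction maps $(E,\cM,d)_{|U}$ etc.\ are the ones induced by the inclusions as open submanifolds.

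The main obstacle is the identification of the positive homotopy groups of $\Diff(E,\cM,d)(M)$ with $\R/\Z$-Moore-twisted cohomology and, relatedly, the fact that the object $E \wedge M\R/\Z$ is not itself a twist (its evaluation is not invertible), so one must be a little careful about what ``$R^{(E-2)\wedge M\R/\Z}(M)$'' means as a notational shorthand: it is $\pi_0$ of the evaluation of the sheaf of spectra $(E\wedge M\R/\Z)[2]$, not an instance of the graded-ring formalism. Once one grants the connectivity estimate for $\cM^{\geq 0} \to \cM$ and the standard computation of the fiber of $\Diff \to E$, which are exactly the inputs already used to prove Proposition~\ref{may0630}, the rest is routine, and I would simply cite \cite{bg}, \cite{skript} for these computations rather than reproduce them.
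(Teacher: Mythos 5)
Your overall strategy is the right one, and it is the same as the paper's: the twisted differential function spectrum is a sheaf of spectra, so the cover $M=U\cup V$ gives a homotopy pullback square of sections, whose long exact sequence of homotopy groups is the Mayer--Vietoris sequence, and the remaining work is to identify the homotopy groups of $\Diff(E,\cM,d)(M)$ in each degree. However, there is a genuine error in that identification. You claim that the negative homotopy groups $\pi_{-n}\Diff(E,\cM,d)(M)$ are shifted twisted \emph{differential} cohomology groups $\hat R^{(E+n,\cM[n],d)}(M)$. This is false: the differential function spectrum does not commute with shifts of the twist, precisely because the truncation in Definition \ref{ulimar1901} satisfies $(\cM[n])^{\geq 0}\neq(\cM^{\geq 0})[n]$. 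The correct statement, which is the twisted analogue of \eqref{apr0505}, is that $\pi_{-n}\Diff(E,\cM,d)(M)\cong R^{E+n}(M)$ for $n\geq 1$, i.e.\ the underlying \emph{topological} twisted cohomology: in the defining pullback the map $H\cM^{\geq 0}\to H\cM$ induces isomorphisms on $\pi_i$ for $i\leq -1$ and a surjection on $\pi_0$, so in negative degrees the pullback agrees with $E$. This matters for the conclusion: the bottom row of the stated sequence is $R^{E+1}(M)\to R^{(E+1)|_U}(U)\oplus R^{(E+1)|_V}(V)\to\cdots$, not a row of shifted $\hat R$-groups, and if you substitute your identification ``negative $=$ shifted $\hat R$'' you obtain a sequence that is not exact (differential cohomology does not itself satisfy Mayer--Vietoris; the extra terms $\cM^{-1}(M)/\Im(d)$ from the exact sequences of Proposition \ref{may0630} obstruct it). Relatedly, your remark that the positive homotopy groups ``account for the top and bottom rows'' conflates the two: the positive homotopy groups give only the $R^{(E-2)\wedge M\R/\Z}$ row, while the bottom row comes from the negative homotopy groups as just described.

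Your identification of the positive homotopy groups is fine (for $n\geq 1$ one has $\pi_n(H\cM^{\geq 0}(M))=0$, so $\pi_n\Diff(E,\cM,d)(M)$ agrees with $\pi_n$ of the fibre of $E(M)\to H\cM(M)$, which is the $\R/\Z$-Moore term, and your caveat that $E\wedge M\R/\Z$ is not a twist and the notation is only shorthand matches the paper's own convention). So the repair is local: replace the claim about negative homotopy groups by the computation $\pi_{-n}\Diff(E,\cM,d)(M)\cong R^{E+n}(M)$ from the defining pullback, and then the bookkeeping in your third step produces exactly the displayed sequence.
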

Finally we explain the consequence of the fact that
the construction $$\widehat{\Rings}\ni \hat R\mapsto \widehat{\cR}\in \Sh_{\GCS}$$ is functorial.
Let $f:R\to R^{\prime}$ be a morphism of commutative ring spectra. Assume that we have chosen a lift of this morphisms to differential refinements $$\hat f:\hat R=(R,A,c)\to  (R^{\prime},A^{\prime},c^{\prime})= \hat R^{\prime}\ .$$ Then we have a morphism of sheaves {of} graded commutative ring spectra $\widehat{f}:\widehat{\cR}\to \widehat{\cR}^{\prime}$. Let us spell out some pieces of this structure.
If $(E,\cM,d)\in \Tw_{\hat R}$ is a differential twist of $\hat R$, then we get a differential twist
$\widehat{f}_{*}(E,\cM,d)=:(E^{\prime},\cM^{\prime},d^{\prime})$. In greater detail,
$$E^{\prime}\cong E\wedge_{\underline{R}}\underline{R^{\prime}} \ ,\quad 
\cM^{\prime} \cong \cM\otimes_{\Omega A} \Omega A^{\prime}\ .$$
We further get an induced map
$$\widehat R^{(E,\cM,d)}(M)\to \widehat{R}^{\prime (E^{\prime},\cM^{\prime},d^{\prime})}(M)$$
which is compatible with corresponding maps on the level of forms and underlying twisted cohomology classes. Furthermore, these maps preserve the products and are functorial and natural with respect to 
morphisms of differential twists and smooth manifolds. We refrain from writing out these details.

\begin{exa2}
Here is a typical example. We consider the Atiyah-Bott Shapiro orientation $f: MSpin^{c}\to \KU$.
Since both spectra are formal we can take canonical differential refinements. We can further choose a differential ABS-orientation  $\hat{f}:\widehat{MSpin^{c}}\to \widehat{\KU}$.   In this way
we obtain a transformation
$$\widehat{A}:\widehat{\mathcal{MS}pin^{c}}\to \widehat{\mathcal{{K}}}$$
of sheaves of graded ring spectra.
 \end{exa2}

\part{De Rham models}

%%%%%%%%%%%%%%%%%%%%%%%%%%%%%%%%%%%%%%%%%%%%%%%%%%%%%%%%%%%%%%%%%%%%%%%%%%%%%%%%
%%%%%%%%%%%%%%%%%%%%%%%%%%%%%%%%%%%%%%
\section{Twisted de Rham complexes}\label{sep0602nnnn}
%%%%%%%%%%%%%%%%%%%%%%%%%%%%%%%%%%%%%%%%%%%%%%%%%%%%%%%%%%%%%%%%%%%%%%%%%%%%%%%%
%%%%%%%%%%%%%%%%%%%%%%%%%%%%%%%%%%%%%%%%
 
In Definition \ref{uliapr0701} we have introduced differential twists {on a smooth manifold $M$} as triples $(E,\cM,d)$ consisting of
a topological twist $E$ together with a sheaf $\cM$ of $K$-flat, invertible, and weakly  locally constant DG-$\Omega A$-modules and an   {equivalence $d:E\wedge H\R\iso H\cM $
of $H\Omega A$-modules.} In this section
we show how examples of such sheaves $\cM$  can be constructed from differential geometric data on $M$.

%\nuli{This is a first step towards the existence of differential refinements for a given topological twist.}

\bigskip

We consider a  CDGA $A$.
The following condition on  $A$ will play an important role below.

\begin{ddd}The CDGA $A$ is called split
 if the differential $d:A^{-1}\to A^{0}$ vanishes.
\end{ddd}

By $$\PicLf_{\underline{ A}}\in \Sh_{\CGrp(\cS)}$$ we denote {the Picard-{1} stack}
%\tfoot{Wir sind nicht konsistent mit den Bindestrichen bei Picard-stacks, das sollten wir am Ende noch vereinheitlichen. \uli{Habe kein anderes Vorkommen gefunden.}} 
which associates to a manifold $M$ the groupoid $\Picff_{\underline{ A}}(M)$
of  $K$-flat, {locally constant} sheaves of {invertible} $A$-modules on $\Mf/M$.  Our basic example of such a sheaf
is  $\underline{A}_{|M} $.

%\bigskip

%\uli{\begin{lem}\label{apr2730}
%A homotopy invariant sheaf of $A$-modules is
%automatically cartesian.
%\end{lem}
%\proof
%Let $X$ be a homotopy invariant sheaf of $A$-modules. We must show that
%$\tilde X\to X$ is an isomorphism (see Definition \ref{apr2720}). Since we are in a $1$-categorial setting
%we can check this on the level of stalks. Let $f:N\to M$ be an object in $\Mf/M$ and $n\in N$ be a point. 
%Then the stalk of $\tilde X$ at $n$ is given by
%$$\tilde X_{n}\cong \colim_{U} X(U)\ ,$$
%where $U$ runs over the poset of open subsets containing $f(n)$. 
%We can express the stalk of $X$ at $n$ as
%$$X_{n}\cong  X(n)\cong  X(f(n)) \cong   \colim_{U} X(U)\ ,$$
%where we use the homotopy invariance at the first isomorphism.
%The natural map $\tilde X_{n}\to X_{n}$ translates to the identity on the right-hand sides of these isomorphisms.\hB}

%\uli{
%We let $$\Picffc_{\underline{A}}\in \Sh_{\sym}$$ be the subsheaf $\Picff_{\underline{ A}}$ of  such that
%$\Picffc_{\underline{A}}(M)\subseteq \Picff_{\underline{ A}}(M)$ is a subgroupoid of cartesian objects. 
%}

%\uli{
%\begin{rem2}
%Here is an example of a non-cartesian object in 
%$\Picff_{\underline{A}}(M)$. \footnote{\uli{Hab noch keins}  }
 %\end{rem2}}

Note that ${\PicLf_{\underline{A}}(M)}$ is an ordinary groupoid.
In the following we calculate {the homotopy groups} $\pi_{i}({\PicLf_{\underline{A}}(M))}$ for $i=0,1$. For a manifold $M$
let $\Pi_1(M)$ denote the fundamental groupoid of $M$. The objects are points in
$M$, and the morphisms are homotopy classes of smooth path{s}%\footnote{\uli{Mehrzahl von path?}Das W√∂rterbuch sagt paths} 
in $M$. 
{By  definition the Picard groupoid  %\tfoot{Hier sieht man natuerlich die schwaeche unserere Notation, dass wir nicht $\Picfff_A(\Mf)$ schreiben}
$${\Picfff_{A}}:={\PicLf_{ \underline{A}}(*)}$$
is the groupoid of $K$-flat invertible $A$-modules.}
%\footnote{\uli{Das ist eigentlich eine zu beweisende Aussage. $\Picff_{\uli{\underline{A}}}(*)$ k\"onnte gr\"o{\ss}er sein.  
{Covering theory provides a} natural identification
$${\PicLf_{\underline{A}}(M)}\cong \Fun(\Pi_{1}(M),{{\Picfff_A}})\ .$$

\bigskip

  For every $X\in \Picfff_A$ we have a canonical identification of abelian groups $$\Aut_{\Picfff_A
  }(X)\cong Z^{0}(A)^{\times}\ .$$

  {To see this  first of all note that the action of $A$ on $X$ provides an identification  $A^{0}\cong \End_{A^{\sharp}}(X^{\sharp})$, where $(\cdots)^{\sharp}$ indicates the operation of forgetting the differential.
  The condition that $a\in A$ preserves the differential translates to $a\in Z^{0}(A)$. Finally, if $a$ induces an isomorphism, then 
     $a\in   Z^{0}(A)^{\times}$.
 }
  %{  
%For an abelian group $G$ let $BG$ denote the {Picard groupoid} with one object and morphisms $G$. } 
 %{The discussion above shows that \nuli{for a {split} CDGA $A$}
%\footnote{\uli{Verstehe ich jetzt gar nicht mehr. Split als Groupoid ja, aber nicht als Picardgr. Sehe nicht, wie das Differential $d:A^{-1}\to A^{0}$ √ºberhaupt eine Rolle spielt. Scheint ein ganz anderes Problem zu sein. $Ext(\pi_{0}(...),Z^{0}(A))=0$. }} 
 %   $$\Picfff_A\cong  \pi_{0}(\Picfff_A)\times B(Z^{0}(A)^{\times})\ .$$
%This yields the following assertion:

\begin{prop}\label{ulimar2160}
If $M$ is {a connected smooth manifold}  with a given base point $m \in M$, then we have {natural
isomorphisms}
\begin{equation*}
 \pi_0({\PicLf_{\underline{A}}(M))} \cong \pi_0({\Picfff_A}) \oplus {H^1(M;Z^{0}(A)^{\times})}% H(Z^{0}(A)^{\times})^{1} (M)  
\end{equation*} and $$\pi_{1}({\PicLf_{\underline{A}}(M)} ,X)\cong Z^{0}(A)^{\times}$$
for all $X\in {\PicLf_{\underline{A}}(M)}$.

\end{prop}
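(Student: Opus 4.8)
The plan is to reduce the statement to the identification $\PicLf_{\underline{A}}(M)\cong\Fun(\Pi_1(M),\Picfff_A)$ recorded above and to exploit that the target $\Picfff_A$ is a \emph{Picard} groupoid. First I would use that $M$ is connected: the choice of base point $m$ makes the inclusion of the full subgroupoid of $\Pi_1(M)$ on the single object $m$ an equivalence, so $\Pi_1(M)$ is equivalent to the one-object groupoid $B\pi_1(M,m)$ with automorphism group $\pi_1(M,m)$. Hence
$$\PicLf_{\underline{A}}(M)\;\simeq\;\Fun(B\pi_1(M,m),\Picfff_A)\ ,$$
the groupoid whose objects are pairs $(P,\rho)$ with $P\in\Picfff_A$ and $\rho\colon\pi_1(M,m)\to\Aut_{\Picfff_A}(P)$ a homomorphism, and whose morphisms $(P,\rho)\to(P',\rho')$ are isomorphisms $\phi\colon P\iso P'$ with $\phi\,\rho(\gamma)=\rho'(\gamma)\,\phi$ for all $\gamma\in\pi_1(M,m)$. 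Under this equivalence the evaluation functor $\PicLf_{\underline{A}}(M)\to\Picfff_A$ at $m$ corresponds to $(P,\rho)\mapsto P$ and is split by the constant-sheaf functor; all of this is natural in the pointed manifold $(M,m)$.

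The one point that needs care is the behaviour of the canonical identification $\Aut_{\Picfff_A}(P)\cong Z^{0}(A)^{\times}$ from above with respect to morphisms. Since an isomorphism $\phi\colon P\iso P'$ of $A$-modules intertwines the $A$-actions, conjugation by $\phi$ carries $a\in Z^{0}(A)^{\times}=\Aut_{\Picfff_A}(P)$ to the same element $a\in Z^{0}(A)^{\times}=\Aut_{\Picfff_A}(P')$; in particular these automorphism groups are abelian and their identifications with $Z^{0}(A)^{\times}$ are compatible with all morphisms of $\Picfff_A$. Consequently the conjugation action of $\pi_1(M,m)$ on $\Aut_{\Picfff_A}(P)$ induced through any $\rho$ is trivial, and the intertwining condition on a morphism $(P,\rho)\to(P',\rho')$ becomes, after identifying both automorphism groups with $Z^{0}(A)^{\times}$, simply the equation $\rho=\rho'$. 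I expect this compatibility to be the only genuine obstacle; everything else is bookkeeping inside the functor category.

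Granting this, the two computations fall out. For $\pi_1$: a natural automorphism of an object $(P,\rho)$ is an automorphism of $P$ commuting with every $\rho(\gamma)$, hence by abelianness an arbitrary element of $\Aut_{\Picfff_A}(P)\cong Z^{0}(A)^{\times}$, and this identification is natural in $(P,\rho)$, i.e.\ in $X\in\PicLf_{\underline{A}}(M)$. For $\pi_0$: by the previous paragraph, isomorphism classes of objects are pairs consisting of an isomorphism class of $P\in\Picfff_A$ and a homomorphism $\rho\colon\pi_1(M,m)\to Z^{0}(A)^{\times}$. Since the target is abelian, $\operatorname{Hom}(\pi_1(M,m),Z^{0}(A)^{\times})=\operatorname{Hom}(H_1(M;\Z),Z^{0}(A)^{\times})$, which is canonically $H^{1}(M;Z^{0}(A)^{\times})$ by the classical identification of first cohomology with coefficients in an abelian group (no $\mathrm{Ext}$-contribution, as $H_0(M;\Z)$ is free). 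Finally the evaluation functor and its constant-sheaf section, together with the compatibility of tensor product with these identifications (tensoring automorphisms corresponds to the product in $Z^{0}(A)^{\times}$), exhibit
$$0\to H^{1}(M;Z^{0}(A)^{\times})\to\pi_0(\PicLf_{\underline{A}}(M))\to\pi_0(\Picfff_A)\to 0$$
as a split short exact sequence of abelian groups, yielding the asserted direct-sum decomposition, naturally in $(M,m)$.
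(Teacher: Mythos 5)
Your argument is correct and follows essentially the same route the paper intends: the covering-theoretic identification $\PicLf_{\underline{A}}(M)\cong\Fun(\Pi_1(M),\Picfff_A)$ together with the canonical, morphism-compatible identification $\Aut_{\Picfff_A}(X)\cong Z^0(A)^\times$, from which the $\pi_0$- and $\pi_1$-computations follow by the standard monodromy bookkeeping. Your explicit verification that conjugation acts trivially on the automorphism groups (so the monodromy homomorphism is an invariant of the isomorphism class) and that the tensor product matches the product in $Z^0(A)^\times$ is exactly the content the paper leaves implicit.
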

%Concretly this means that in order to write down a locally constant sheaf of $A$ modules one has to pick a K-flat $A$-module 
%$N$ over each connected component of $M$ and a representation of the fundamental group on $N$.
%
%\begin{corollary} \label{map}
%The functor $\cM$ induces a homomorphism
%\begin{equation*}
%\pi_0(\cM) : \pi_0({\Picf_A}) \oplus H^1(M,Z^0(A)^\times) \oplus F^2H^1(M;A)  \to \pi_0(\Picf_{\Omega A}(M)).
%\end{equation*} 
%\end{corollary}
%\begin{proposition}\label{prop_inj}
%The functor
%\begin{equation*}
%\cM: \Picf_\cA(M) \times \mathbb{Z}^1_{dR}(M,A) \to \Picf_{\Omega A}(M)
%\end{equation*}
%induces an isomorphism on $\pi_1$.\\
%\end{proposition}
%

{For { a sheaf}  $\cA\in \PicLf_{\underline{A}}(M)$ we define the sheaf of DG-$\Omega A_{|M}$-modules}
%a symmetric monoidal transformation
%$$\Picff_{\underline{A}}\to \Sh_{\sym}$$ by
$$
%\Picf_{\underline{A}}(M)\ni \cA\mapsto
 \cM(\cA):=
\Omega_{|M}\otimes_{\R} \cA \in \Mod_{\Omega A}(M)\ .$$
\begin{lem}\label{apr2601}
We have $\cM(\cA) {\in \PicwLf_{\Omega A}(M)}$.    \end{lem}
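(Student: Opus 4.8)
The plan is to recognise $\cM(\cA)=\Omega_{|M}\otimes_{\R}\cA$ as a base change. Since $\Omega A=\Omega\otimes_{\R}\underline{A}$ by definition, there is a natural isomorphism $\cM(\cA)\cong\Omega A_{|M}\otimes_{\underline{A}_{|M}}\cA$ exhibiting $\cM(\cA)$ as the base change of $\cA$ along the morphism of sheaves of CDGAs $\underline{A}_{|M}\to\Omega A_{|M}$. The three conditions defining membership in $\PicwLf_{\Omega A}(M)$ — invertible, $K$-flat, weakly locally constant — then correspond exactly to the three conditions defining membership in $\PicLf_{\underline{A}}(M)$ — invertible, $K$-flat, locally constant — and I would transport each property across this base change.

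First I would treat invertibility and $K$-flatness, which are formal. Base change along a morphism of commutative algebra objects is symmetric monoidal and sends the unit $\underline{A}_{|M}$ to the unit $\Omega A_{|M}$, hence preserves tensor-invertible objects; explicitly, if $\cN$ is a tensor inverse of $\cA$ over $\underline{A}_{|M}$, then $\cM(\cN)$ is a tensor inverse of $\cM(\cA)$ over $\Omega A_{|M}$ by associativity of the relative tensor product. For $K$-flatness, associativity again gives $\cM(\cA)\otimes_{\Omega A_{|M}}\cN\cong\cA\otimes_{\underline{A}_{|M}}\cN$ for every $\cN\in\Mod_{\Omega A}(M)$; since a quasi-isomorphism of $\Omega A_{|M}$-modules is a fortiori one of $\underline{A}_{|M}$-modules and $\cA$ is $K$-flat over $\underline{A}_{|M}$, the functor $\cM(\cA)\otimes_{\Omega A_{|M}}(-)$ preserves quasi-isomorphisms.

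The remaining point, weak local constancy, is the step I expect to require the most care. Using that $\cA$ is locally constant I would pick an open cover $\{U\}$ of $M$ and, on each $U$, an object $X\in\Picfff_A$ with $\cA_{|U}\cong\underline{X}_{|U}$, so that $\cM(\cA)_{|U}\cong\Omega_{|U}\otimes_{\R}\underline{X}_{|U}$. The claim to establish is $\iota(\Omega_{|U}\otimes_{\R}\underline{X}_{|U})\simeq\underline{\iota(X)}_{|U}$: the inclusion of the constant subsheaf $\underline{X}_{|U}\hookrightarrow\Omega_{|U}\otimes_{\R}\underline{X}_{|U}$ becomes, after applying $\iota$, the $\Ch_{\infty}$-sheafification, by the Poincar\'e lemma with coefficients in the complex $X$ — this is precisely the argument behind the equivalence $\underline{\iota(A)}\iso\iota(\Omega A)$ recalled in the example above, applied with $X$ in place of $A$. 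Thus $\iota(\cM(\cA))$ is locally equivalent to a constant sheaf, hence locally constant, and along the way one sees that $\cM(\cA)$ is locally a complex of modules over the sheaf of smooth functions, so that $\iota(\cM(\cA))$ is genuinely a sheaf. Combining the three items yields $\cM(\cA)\in\PicwLf_{\Omega A}(M)$, and the base-change description makes the assignment $\cA\mapsto\cM(\cA)$ manifestly natural and symmetric monoidal, as will be needed later.
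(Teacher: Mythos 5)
Your proposal is correct and follows essentially the same route as the paper's proof: the identification $\cM(\cA)\cong \Omega A_{|M}\otimes_{\underline{A}_{|M}}\cA$ is used to transport invertibility and $K$-flatness formally, and weak local constancy is checked on an open cover trivializing $\cA$ via the equivalence $\underline{\iota(X)}_{|U}\iso \iota(\Omega_{|U}\otimes_{\R}\underline{X}_{|U})$ induced from $\underline{\iota(\R)}\iso\iota(\Omega)$. No substantive differences.
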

\begin{proof}
{We must show that $\cM(\cA)$ is \begin{enumerate} \item $K$-flat, \item invertible,  
\item and weakly locally constant.
%
 %\item weakly homotopy invariant, \item weakly cartesian, and \item{cartesian.} 
 \end{enumerate}}

\bigskip

{1.} We have a canonical isomorphism $\cM(\cA)\cong \Omega A_{|M}\otimes_{A}\cA$.
If $X\to Y$ is a quasi-isomorphism in  $ \Mod_{\Omega A}(M)$, then we can identify the induced {morphism}
$\cM(\cA)\otimes_{\Omega A_{|M}} X\to \cM(\cA)\otimes_{\Omega A_{|M}} Y$ with  
$\cA\otimes _{A}X\to \cA\otimes_{A}Y
$. Here we consider $X$ and $Y$ as sheaves of $A$-modules by restriction along the inclusion $\underline{A}\mapsto \Omega A_{|M}$ This shows that $K$-flatness of $\cA$ implies $K$-flatness of $\cM(\cA)$.  

\bigskip

{2.} If $\cA\in \PicLf_{\underline{ A}}(M)$, then there exists $\cB\in  \PicLf_{ \underline{ A}}(M)$ such that   $\cA\otimes_{ A}\cB \cong \underline{A}_{|M}$.  We see that then  $\cM(\cA)\otimes_{\Omega A_{|M}}\cM(\cB)\cong \Omega A_{{|M}}$. {Hence    $\cM(\cA)$ is invertible}. \\

{3.} Let $U\subseteq M$ be such that $\cA_{|U}\cong \underline{X}_{|U}$ for some $K$-flat invertible $A$-module $X$. Then we have $\cM(\cA)_{|U}\cong \Omega A_{|U}\otimes_{A}{\underline{X}_{|U}}\cong \Omega_{{|U}}\otimes_{\R} {\underline{X}_{|U}}$. The equivalence $\underline{\iota(\R)}\iso \iota(\Omega)$ induces the equivalence
${\underline{\iota(X)}_{|U}}\iso \iota(\Omega_{{|U}}\otimes_{\R} {\underline{X}_{|U}})$.
\end{proof}

We thus get a   symmetric monoidal transformation {of Picard stacks}
$$\cM:\PicLf_{\underline{A}}\to \PicwLf_{\Omega A}   \ .$$
%TThe tensor product over $\R$ is an exact functor.
%he decreasing filtration 
%$$\dots\subseteq F^{\ge 2}\Omega \subseteq F^{\ge 1} \Omega\subseteq F^{\ge 0}\Omega=\Omega $$
%by form degree induces a decreasing filtration of $\cM(\cA)$ by $K$-flat
%$\Omega A$-modules 
%$$\dots\subseteq F^{\ge 2}\cM(\cA) \subseteq F^{\ge 1} \cM(\cA)\subseteq F^{\ge 0}\cM(\cA)=\cM(\cA)\ . $$\\
We consider a manifold $M$ and an object {$\cA\in  \PicLf_{\underline{A}}(M)$}.   We use the symbol $d$ in order to denote the differentials of $\Omega A$ and of $\cM(\cA)$. {An} element  $\omega\in \Omega A(M)$  induces {a} multiplication  map $\omega:\cM(\cA)^{\sharp}\to \cM(\cA)^{\sharp}$ of modules over the sheaf of graded commutative algebra $\Omega A^{\sharp}_{|M} $ ({recall that    $\sharp$ stands for forgetting}  differentials). Furthermore we have the identity
$d\omega =[d,\omega]$. If $\omega \in \Omega A^{1} (M)$, then $\omega^{2}=0$, and if $\omega$ is in addition closed, then 
$(d+\omega)^{2}=d^{2}+[d,\omega]+\omega^{2}=0$.
Hence 
a cycle $$\omega\in Z^1(\Omega A(M))$$ can be 
used to deform this differential $d$ to $d+\omega$. 
\begin{ddd}\label{may0301}
We let $\cM(\cA,\omega)\in  \Mod_{\Omega A}( M)$ {denote} the sheaf of $\Omega A_{|M}$-modules  $\cM(\cA)$   with new differential $d+\omega$.\\
\end{ddd}
More precisely, if $f:N\to M$ is an object of $\Mf/M$, then
$\cM(\cA,\omega)(N\to M)$ is the complex $\cM(\cA)(N\to M)$ with the new differential $d+f^{*}\omega$.

\bigskip

In the following we analyse when $\cM(\cA,\omega)$ is invertible, weakly locally constant{,}  and $K$-flat.

\begin{lem}\label{apr2602}
Let  $\cA\in \PicLf_{\underline{A}}(M)$ and $\omega\in Z^1(\Omega A(M))$. {Then}
%\uli{Then   the following assertions hold true.}
%\begin{enumerate}
%\item 
% 
$\cM(\cA,\omega)$ is invertible.
%\item $\cM(\cA,\omega)$ is weakly locally constant.
%\item  $\cM(\cA,\omega)$ is weakly cartesian and cartesian.
%\end{enumerate}
\end{lem}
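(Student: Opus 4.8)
The key observation is that the deformation of the differential by a $1$-cycle $\omega$ does not change the underlying graded module $\cM(\cA)^\sharp$, and that invertibility of a module over $\Omega A_{|M}$ is detected after forgetting differentials together with a compatibility of the tensor-product differential. So the plan is: first, produce a candidate tensor inverse, and then check that it works on the nose. Since $\cA$ is invertible in $\PicLf_{\underline A}(M)$ by hypothesis, choose $\cB\in\PicLf_{\underline A}(M)$ with $\cA\otimes_A\cB\cong\underline A_{|M}$, and form the module $\cM(\cB,-\omega)\in\Mod_{\Omega A}(M)$, i.e. $\cM(\cB)$ with differential $d-\omega$. I claim $\cM(\cA,\omega)\otimes_{\Omega A_{|M}}\cM(\cB,-\omega)\cong\Omega A_{|M}$.

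The main step is the computation of this tensor product. On underlying graded modules we have, by Lemma \ref{apr2601} and its proof, $\cM(\cA,\omega)^\sharp\otimes_{\Omega A^\sharp_{|M}}\cM(\cB,-\omega)^\sharp\cong \cM(\cA)^\sharp\otimes_{\Omega A^\sharp_{|M}}\cM(\cB)^\sharp\cong(\Omega A_{|M})^\sharp$, using the isomorphism $\cM(\cA\otimes_A\cB)\cong\Omega A_{|M}$ coming from $\cA\otimes_A\cB\cong\underline A_{|M}$. It then remains to identify the differential on the tensor product. On a tensor product $X\otimes Y$ of DG-modules the differential is $d_X\otimes 1\pm 1\otimes d_Y$; here $d_X=d+\omega$ and $d_Y=d-\omega$, so the two $\omega$-contributions, being multiplication operators by the same degree-$1$ element $\omega$ acting through the $\Omega A_{|M}$-module structure, cancel under the identification with $\Omega A_{|M}$ (this is exactly the standard fact that twisting by $\omega$ and by $-\omega$ are inverse operations, and it is where one uses that $\omega$ is central, i.e. multiplication by a form commutes appropriately with the $\Omega A^\sharp$-module structure). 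Hence the differential is just $d$, and the tensor product is $\Omega A_{|M}$ as a DG-$\Omega A_{|M}$-module.

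For this identification of tensor products to be legitimate one must know that the underived tensor product $\otimes_{\Omega A_{|M}}$ computes the honest module-level tensor product, which is fine since we are not claiming anything homotopical here — we only need an actual isomorphism of sheaves of DG-modules, and $\cM(\cA)$, $\cM(\cB)$ are honest invertible sheaves of $\Omega A_{|M}$-modules by Lemma \ref{apr2601}. (In particular $K$-flatness, which is needed for the groupoid $\PicwLf_{\Omega A}(M)$ and will be treated in a separate lemma, is not required for the bare statement of invertibility.)

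The only mild subtlety, and the part I expect to need the most care, is bookkeeping of signs in the Koszul sign rule when writing the tensor-product differential and checking that the $+\omega$ and $-\omega$ terms genuinely cancel rather than add; this is a routine but sign-sensitive verification. One clean way to organize it is to note that multiplication by $\omega$ is, on each of $\cM(\cA)$ and $\cM(\cB)$, a derivation-free $\Omega A^\sharp_{|M}$-linear endomorphism of degree $1$, so on the tensor product it acts as $\omega\otimes 1 = 1\otimes\omega$ (both equal to multiplication by $\omega$ via the module structure), whence $(d+\omega)\otimes 1 + (\text{sign})\,1\otimes(d-\omega) = d_{\otimes} + \omega\otimes 1 - 1\otimes\omega = d_\otimes$ after the identification; the sign works out because $\omega$ has odd degree and the ambient differential on $\Omega A_{|M}$ already incorporates the Koszul convention. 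This finishes the proof.
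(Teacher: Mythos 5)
Your proof is correct and follows essentially the same route as the paper: the paper records the rule $\cM(\cA,\omega)\otimes_{\Omega A_{|M}}\cM(\cA',\omega')\cong\cM(\cA\otimes_A\cA',\omega+\omega')$ and then tensors with $\cM(\cB,-\omega)$ for $\cB$ an inverse of $\cA$, exactly as you do. Your explicit Koszul-sign verification of the cancellation of the $\omega$-terms is just the spelled-out content of that rule, which the paper states without proof.
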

\proof 
For $\cA,\cA^{\prime}\in  \PicLf_{A}(M)$ and    $\omega,\omega^{\prime}\in Z^1(\Omega A(M))$
we have the rule  \begin{equation}\label{sep0604}\cM(\cA,\omega)\otimes_{\Omega
A_{|M}} \cM(\cA^{\prime},\omega^{\prime})=
\cM(\cA\otimes_A \cA^{\prime},\omega+\omega^{\prime})\ . \end{equation}
Hence if $\cB\in \PicLf_{\underline{A}}(M)$ is such that $\cA\otimes_{A}\cB\cong \underline{A}_{|M}$, then by  \eqref{sep0604} we get $$\cM(\cA,\omega)\otimes_{\Omega A_{|M}}  \cM(\cB,-\omega)\cong \Omega A_{|M}\ .$$ \hB 

%\uli{2. We now show that $\cM(\cA,\omega)$ is weakly locally constant.}

%

%\uli{2.} We now show that $\cM(\cA,\omega)$ is weakly homotopy invariant.   
%\uli{By Remark \ref{apr2501} it suffices to show that  the natural map $\cM(\cA,\omega)\to I\cM(\cA,\omega)$ is a quasi-isomorphism.} 
%  \uli{Since $\cA$ is locally constant we have the lower horizontal isomorphism in the diagram.
% $$ \xymatrix{\cM(\cA,\omega)\ar[r]^{\cong} \ar[d]&\cM(\underline{A},\omega)\otimes_{A}\cA\ar[d]\\I\cM(\cA,\omega)\ar[r]^{\cong} &I\cM(\underline{A},\omega)\otimes_{A}\cA
% }$$ Since $\cA$ is $K$-flat it suffices to show that $\pr^{*}:\cM(\underline{A},\omega)\to I\cM(\underline{A},\omega)$ is a quasi-isomorphism. We are going to show that $i^{*}:I\cM(\underline{A},\omega)\to \cM(\underline{A},\omega)$ is a homotopy inverse of $\pr^{*}$, where $i^{*} $ denotes the map induced by the inclusion of $0$ into the unit interval $I:=[0,1]$. 
% }
%  
%\uli{ We first define a  map $h:I\Omega^{*}\to I\Omega^{*}[-1]$ by $$h(\alpha):= \int_{I\times I/_{\pr_{2}}I} \mu^{*} \alpha\ ,$$
%where $\mu:I\times I\to I$ is given by $\mu(s,t):= st$. 
%One can check that  $dh-hd=\id_{I\Omega^{*}}-\pr^{*}i^{*}$.
%}   

%\uli{Note that $I\cM(\underline{A},\omega)$ is the sheaf $I\Omega A=I\Omega^{*}\otimes_{\R} A$ with differential $d+\pr^{*}\omega$. We now observe that
%$h\otimes_{\R}\id_{A}$ provides the chain homotopy from $\pr^{*}i^{*}$  to $\id_{I\cM(\underline{A},\omega})$.}

%\bigskip 

%\uli{3. This is the same argument as in Lemma \ref{apr2601}.} 
%\hB 
Let $\beta\in F^{1}\Omega A{^0}(M)$, {where $F^{p}\Omega A(M) {\subseteq} \Omega A(M)$ denotes the subcomplex of forms with differential {form} degree $\geq p$.}  Then $\beta$ is nilpotent and {the}
multiplication by $\exp(\beta)$ provides an isomorphism
\begin{equation}\label{may0310}
\exp(\beta):\cM(\cA,\omega+d\beta)\iso \cM(\cA,\omega)\ .
\end{equation}
 \begin{lem}\label{ulimar2130}  
  We assume that $A$ is split.  Let $\cA\in \PicLf_{\underline{A}}(M)$  and  $\omega\in Z^{1}(F^2\Omega A(M))$. Then  the $\Omega A_{{|M}}$-module $\cM(\cA,\omega)$ is $K$-flat {and weakly locally constant.}
\end{lem}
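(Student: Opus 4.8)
\textbf{Proof plan for Lemma \ref{ulimar2130}.}

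The plan is to reduce both assertions — $K$-flatness and weak local constancy — to the already-established Lemma \ref{apr2601} together with the deformation isomorphism \eqref{may0310}, exploiting that the splitness of $A$ forces the perturbation $\omega$ to be \emph{locally} of the form $d\beta$ with $\beta\in F^{1}\Omega A^{0}$. First I would work locally: fix a point $m\in M$ and a contractible coordinate ball $U\subseteq M$ small enough that $\cA_{|U}\cong \underline{X}_{|U}$ for some $K$-flat invertible $A$-module $X$ (possible since $\cA$ is locally constant). On such a $U$ the complex $\bigl(F^{1}\Omega A^{0}(U),d\bigr)$ is acyclic in the relevant range: here is where splitness of $A$ enters, since it guarantees that the total differential on $\Omega A^{0}(U)=\bigl(\Omega^{*}(U)\otimes A\bigr)^{0}$ restricted to filtration $\geq 1$ is, up to the contractibility of $U$ (Poincar\'e lemma) and the vanishing $d\colon A^{-1}\to A^{0}$, surjective onto $Z^{1}(F^{2}\Omega A(U))$ modulo the appropriate lower pieces. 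Concretely, because $\omega\in Z^{1}(F^{2}\Omega A(M))$ has form-degree $\geq 2$ and $U$ is contractible, one finds $\beta\in F^{1}\Omega A^{0}(U)$ with $d\beta=\omega_{|U}$. (One should double-check that $\beta$ can be taken in $F^{1}$ and not merely in filtration $0$: this is exactly the point where we need $\omega$ to sit in $F^{2}$, so that a primitive gains one form-degree and lands in $F^{1}$, and where splitness prevents any degree-$0$ obstruction in $A$.)

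Granting such a local primitive $\beta$, the isomorphism \eqref{may0310} gives $\cM(\cA,\omega)_{|U}\cong \cM(\cA,\omega - d\beta)_{|U}=\cM(\cA,0)_{|U}=\cM(\cA)_{|U}$ as $\Omega A_{|U}$-modules. By Lemma \ref{apr2601} the module $\cM(\cA)$ is $K$-flat and weakly locally constant, and both properties are local on $M$ (for $K$-flatness this is because detecting quasi-isomorphisms of sheaves of complexes is a stalkwise/local condition, and the tensor functor commutes with restriction; for weak local constancy it is immediate from the definition, which is already phrased locally). Hence $\cM(\cA,\omega)_{|U}$ is $K$-flat and weakly locally constant for every $U$ in a cover of $M$, and therefore $\cM(\cA,\omega)$ is $K$-flat and weakly locally constant on all of $M$. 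This completes the argument.

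The main obstacle I anticipate is the local-primitive step: verifying carefully that splitness of $A$ together with $\omega\in Z^{1}(F^{2}\Omega A)$ really yields a primitive $\beta$ lying in $F^{1}\Omega A^{0}$, rather than one whose form-degree-$0$ component is a genuinely nonzero element of $A^{-1}$ that cannot be removed. If $A$ were not split, a closed $\omega$ could pair a form in $\Omega^{2}(U)$ with an element of $A^{-1}$ whose $A$-differential is needed to cancel something, and no $F^{1}$-primitive would exist; splitness ($d\colon A^{-1}\to A^{0}$ is zero) is precisely what rules this out and lets the form-degree bookkeeping close. Everything after that — invoking \eqref{may0310} and Lemma \ref{apr2601}, and checking that $K$-flatness and weak local constancy are local — is routine. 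Note also that invertibility of $\cM(\cA,\omega)$ is not needed here, as it was already handled by Lemma \ref{apr2602}.
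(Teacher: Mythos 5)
Your plan coincides with the paper's proof: both check $K$-flatness and weak local constancy locally, produce on a contractible neighbourhood $U$ a primitive $\beta\in F^{1}\Omega A^{0}(U)$ of $\omega_{|U}$, and then use the isomorphism \eqref{may0310} (multiplication by $\exp(-\beta)$) to identify $\cM(\cA,\omega)_{|U}$ with $\cM(\cA)_{|U}$ and conclude via Lemma \ref{apr2601}. The one step you flag as delicate is settled in the paper exactly along the lines you suspect: one takes an arbitrary primitive $\beta\in\Omega A^{0}(U)$ (it exists by homotopy invariance, since $\omega\in F^{2}$ restricts to zero at a point) and discards its form-degree-zero component, splitness of $A$ guaranteeing that the truncation $\beta^{\ge 1}$ still satisfies $d\beta^{\ge 1}=\omega_{|U}$.
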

\proof
$K$-flatness {and the condition of being weakly locally constant} can be checked locally. {
Let $[\omega]$ denote the cohomology class of the cycle $\omega$.}
%Note that $\underline{A}\to \Omega A$ is a quasi-isomorphism. 
If $m\in M$, then
$[\omega]_{|m}=[\omega_{|m}]=0$ {by the filtration condition}. Let $U\subseteq M $ be  a contractible  neighbourhood of $m$. {Since {the cohomology of} $\Omega A$ is homotopy invariant, }we {get} that $\omega_{|U}=d\beta$ for some $\beta\in \Omega A^{0}(U)$. 
Let us write $\beta=\sum_{i\in \nat} \beta^{i}$, where $\beta^{i}\in (\Omega^{i}\otimes A^{-i})(U)$
and set
$\beta^{\ge {1}}:=  \sum_{i={1}}^{\infty} \beta^{i}$. {Since $A$ is split} we still have
$d\beta^{\ge {1}}=\omega_{|U}$.We get an isomorphism $\exp(-\beta):\cM(\cA,\omega)_{|U}\cong \cM(\cA)_{|U}$ in $\Mod_{\Omega A}(U)$.
Since $\cM(\cA)_{|U}$ is $K$-flat and {weakly locally constant by Lemma \ref{apr2601}}, so is $\cM(\cA,\omega)_{|U}$.    \hB

%The sheaf of CDGAs  $\Omega A$ has a decreasing natural filtration 
%$$\dots\subseteq F^{2}\Omega A\subseteq F^{1}\Omega A\subset F^{0}\Omega A=\Omega A$$
%by form degree.
%Let us decompose $\omega\in Z^{1}(\Omega A(M))$ as
%$$\omega=\omega_{0}+\omega_{1}+\omega_{\ge 2}\ ,$$
% where $\omega_{0}\in (\Omega^{0}\otimes \cA^{1})(M)$\ , $\omega_{1}\in (\Omega^{1}\otimes \cA^{0})(M)$ and $\omega_{2}\in F^{2}\Omega A(M)$. Assume that $M$ is connected. Then there exists $\eta\in Z(A)^{1}$ such that $\omega_{0}=1\otimes \eta$. If we add
% $\eta$ to the differential of $\cA$, then we get $\cA^{\prime}\in \Picf_{A}(M)$ such that
% $\cM(\cA,\omega_{0})$  
% 
% 

%One easily checks that this
%again defines a new sheaf of complexes, which is an invertible $\Omega A$-module. We can
%restrict to the filtration $F^{2}$ since elements in $\Omega^{1}(M, A)$ can
%be incorporated in the structure of $\cA$.

%\begin{definition}
%We let $\cM(\cA,\omega)$ be the invertible sheaf of 
%$\Omega A$-modules $\cM(\cA)$ over $M$ with new differential $d+\omega$.
%\end{definition}

%We have the rule  \begin{equation}\label{sep0604}\cM(\cA,\omega)\otimes_{\Omega
%A} \cM(\cA^{\prime},\omega^{\prime})=
%\cM(\cA\otimes_A \cA^{\prime},\omega+\omega^{\prime})\ . \end{equation}
%If $\beta\in (F^{1}\Omega A)^0(M)$ is such that $d\beta\in F^{2}\Omega A(M)$,
%then $\beta$ is nilpotent and 
%multiplication by $\exp(\beta)$ provides an isomorphism
%\begin{equation*}
%\exp(\beta):\cM(\cA,\omega+d\beta)\to \cM(\cA,\omega)\ .
%\end{equation*}
A homomorphism of abelian groups $d:X\to Y$ defines a {Picard groupoid} with objects $Y$ and morphisms $x:y\to y^{\prime}$ for $y,y^{\prime}\in Y$ and $x\in X$ with  $y+dx=y^{\prime}$. 
\begin{rem2}\label{apr3001}{As Picard-$\infty$-groupoid it is equivalent to $\Omega^{\infty}H(X\to Y)$, where we consider $X\to Y$ as a chain complex with $Y$ in degree $0$.}
\end{rem2}

In order to make the following definition we assume that $A$ is split.
\begin{ddd}\label{apr2901}We let $\mathbf{Z}^1_{A}\in \Sh_{\CGrp(\cS)}$ 
%\marginpar{\uli{Achtung: $\mathbf{Z}^1_{
% kein Stack}}
denote the {Picard-$\infty$ stack induced} by the map 
   \begin{equation}\label{apr0530}F^{1}\Omega A^{0} \stackrel{d}{\to} Z^{1}(F^{2}\Omega A) \end{equation}
 of sheaves of abelian groups.
 \end{ddd}
 Indeed, the condition that $A$ is split ensures that $d$ increases the filtration.
\begin{prop}\label{msymmetricmonoidal}
Assume that $A$ is split. Then
the assignment $(\cA, \omega) {\mapsto} \cM(\cA,\omega)$ refines to a   faithful transformation \begin{equation}\label{ulimar2161n}
 \cM:{\PicLf_{\underline{A}}} \times \mathbf{Z}_{A}^1 {\to \PicwLf_{\Omega A} }\end{equation}
 %\marginpar{\uli{man sollte beide Seiten stackifizieren}}

 between symmetric monoidal stacks
%Here $\mathbb{Z}^1_{dR}(M,A)$ denotes the symmetric monoidal category with objects given by elements in $Z^1(F^2\Omega A)(M)$ and morphisms given be elements of $(F^{1}\Omega A)^0(M)$  seen as a category in the obvious way.
\end{prop}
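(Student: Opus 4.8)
The plan is to assemble the desired transformation from the pieces already constructed and to check compatibility of differentials and symmetric monoidal structures. First I would recall that on objects the assignment is $(\cA,\omega)\mapsto\cM(\cA,\omega)$, which by Lemma \ref{apr2602}, Lemma \ref{ulimar2130} and Lemma \ref{apr2601} indeed lands in $\PicwLf_{\Omega A}(M)$ for every manifold $M$, provided $A$ is split. (Note that the filtration condition $\omega\in Z^1(F^2\Omega A)$ is exactly what is recorded in the domain Picard stack $\mathbf{Z}^1_A$ of Definition \ref{apr2901}, and that $\PicLf_{\underline A}$ feeds the data $\cA$.) So the content of the proposition is to promote this objectwise map to a morphism of stacks, to make it symmetric monoidal, and to verify faithfulness.

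The key step is to treat the two factors of the source separately and use that $\cM$ is, in effect, a twisted tensor product. For the first factor I would invoke the already-established symmetric monoidal transformation $\cM:\PicLf_{\underline A}\to\PicwLf_{\Omega A}$ (constructed just above Definition \ref{may0301}) which covers the case $\omega=0$. For the second factor, the assignment $\omega\mapsto(\text{differential }d+\omega\text{ on }\Omega A)$ is exactly the statement that $\mathbf{Z}^1_A$ acts on $\Omega A$ by perturbing the differential; the isomorphism \eqref{may0310} given by $\exp(\beta)$ is precisely the datum of a $1$-morphism covering a $1$-morphism in $\mathbf{Z}^1_A$, so one gets a well-defined functor $\mathbf{Z}^1_A\to\PicwLf_{\Omega A}$, $\omega\mapsto\cM(\underline A,\omega)$, and this is where splitness of $A$ is used again (to keep $\exp(\beta^{\ge 1})$ meaningful and to ensure $d$ increases the filtration, as noted after Definition \ref{apr2901}). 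Then I would combine the two via the multiplication rule \eqref{sep0604}, $\cM(\cA,\omega)\otimes_{\Omega A}\cM(\cA',\omega')\cong\cM(\cA\otimes_A\cA',\omega+\omega')$, which literally says that $(\cA,\omega)\mapsto\cM(\cA,\omega)$ intertwines the product on $\PicLf_{\underline A}\times\mathbf{Z}^1_A$ with the tensor product on $\PicwLf_{\Omega A}$; unpacking this at the level of coherence cells (associativity, unit, symmetry) gives the symmetric monoidal refinement. Sheaf-functoriality in $M$ is immediate since everything is defined by pullback of forms.

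Faithfulness amounts to showing that the induced map on automorphism groups is injective: on the source, $\Aut(\cA,\omega)=\Aut_{\PicLf_{\underline A}(M)}(\cA)\times\Aut_{\mathbf{Z}^1_A(M)}(\omega)$, and by Proposition \ref{ulimar2160} together with the identification $\Aut_{\Picfff_A}(X)\cong Z^0(A)^\times$, the first factor is $H^0(M;Z^0(A)^\times)$ (or $Z^0(A)^\times$ in the connected pointed case), while the second factor is $F^1\Omega A^0(M)\cap\ker d$ modulo nothing, i.e. closed elements of the filtration-$1$ part of $\Omega A^0(M)$. Under $\cM$ an automorphism of $\cA$ acts by the corresponding scalar on $\cM(\cA,\omega)$ and a filtration-$1$ closed $\beta$ acts by $\exp(\beta)$; since $\exp$ is injective on the nilpotent ideal $F^1\Omega A^0(M)$ and scalars from $Z^0(A)^\times$ act faithfully on the invertible module, the combined action is faithful. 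The main obstacle I expect is not any single computation but the bookkeeping of the symmetric monoidal coherence: one must check that the isomorphisms \eqref{sep0604} and \eqref{may0310} are compatible so that the composite is a genuine (strong, not merely lax) symmetric monoidal functor between Picard-$1$-stacks, and that the comparison with the already-constructed $\cM$ on $\PicLf_{\underline A}$ is the restriction along $\mathbf{Z}^1_A\hookleftarrow\{0\}$. Since both source and target are $1$-truncated, this coherence reduces to a finite diagram chase rather than an infinite tower, so I would present it by exhibiting the structure isomorphisms explicitly and leaving the pentagon/hexagon identities as a routine verification.
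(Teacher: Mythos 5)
Your construction is essentially the paper's: on morphisms the functor is $(\phi,\beta)\mapsto \exp(\beta)\circ(\id\otimes\phi)$ (your tensor-decomposition of the two factors combined via \eqref{sep0604} amounts to exactly this formula), the symmetric monoidal structure comes from the canonical isomorphisms \eqref{sep0604}, and reducing faithfulness to injectivity on automorphism groups is legitimate since both source and target are (Picard) groupoids.

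The one step that does not hold as written is the final inference in the faithfulness argument. You conclude that the "combined action is faithful" from the two facts that $\exp$ is injective on $F^{1}\Omega A^{0}(M)$ and that the scalars $Z^{0}(A)^{\times}$ act faithfully. But the relevant map is a homomorphism from the product group $\Aut_{\PicLf_{\underline{A}}(M)}(\cA)\times\Aut_{\mathbf{Z}^{1}_{A}(M)}(\omega)$ into the abelian group of units $Z^{0}(\Omega A(M))^{\times}$, and injectivity on each factor does not imply injectivity of the product homomorphism: you must additionally rule out that a nontrivial locally constant unit coincides with $\exp(\beta)$ for some nonzero closed $\beta\in F^{1}\Omega A^{0}(M)$. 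This disentangling is precisely what the paper's proof supplies: since $\beta$ has form degree $\geq 1$, its restriction to any point $m\in M$ vanishes, so $\exp(\beta)_{|m}=1$; hence from $\cM(\phi,\beta)=\cM(\phi',\beta')$ one first recovers $\phi=\phi'$ by restricting to all points, and only afterwards divides and applies $\log$ (well defined on $1+F^{1}\Omega A^{0}(M)$) to recover $\beta=\beta'$. Equivalently, compare form-degree-zero components: a locally constant unit is its own degree-zero component, while $\exp(\beta)$ has degree-zero component $1$. With this step inserted your argument is complete and coincides with the paper's proof.
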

\proof
The formula in the statement of the proposition describe the action of the functor on objects.
Let $(\phi,\beta):(\cA,\omega)\to (\cA^{\prime},\omega^{\prime})$ be a morphism in $({\PicLf_{\underline{A}} }\times \mathbf{Z}_{A}^1)(M)$. Then
$\cM(\phi,\beta):\cM(\cA,\omega)\to \cM(\cA^{\prime},\omega^{\prime})$ is given by
$$\cM(\cA,\omega)\stackrel{\id_{\Omega A_{|M}}\otimes \phi}{\longrightarrow} \cM(\cA^{\prime},\omega)\stackrel{\exp(\beta)}{\longrightarrow} \cM(\cA^{\prime},\omega^{\prime})\ ,$$
where $\omega^{\prime}=\omega+{d}\beta$.
The symmetric monoidal structure on $\cM$ {given by the canonical maps.}
\bigskip

Finally we must show that $\cM$ is faithful. Assume that 
\begin{equation}\label{ulimar2170n}\cM(\phi,\beta)=\cM(\phi^{\prime},\beta^{\prime})\ .\end{equation} We conclude that
 $\phi=\phi^\prime$ by considering the restriction of the equality  \eqref{ulimar2170n} to all points $m\in M$. It remains to show that $\beta=\beta^{\prime}$.

 The ring of endomorphisms of an object in $ {\PicLf_{\Omega A}(M)}$  {is naturally isomorphic to the ring  $Z^{0}(\Omega A(M))$}.
 %\footnote{\uli{Hier scheinen wir die Annahme ``cartesisch'' zu brauchen.}}
The automorphism $\cM(\phi,\beta^{\prime})^{-1}\circ \cM(\phi ,\beta)=\cM(\id,\beta-\beta^{\prime})$  is given by multiplication with the cycle $\exp(\beta-\beta^{\prime})\in 1+ F^{1}\Omega A^{0}(M)$. The inverse of the exponential is given by the logarithm
$$\log:1+F^{1}\Omega A^{0}(M)\to F^{1}\Omega A^{0}(M)\ ,\quad \log(\gamma){:=}-\sum_{n=1}^{\infty} \frac{(\gamma-1)^{n}}{n}\ .$$
Hence we can recover the difference $\beta-\beta^{\prime}$ from the composition $\cM(\phi,\beta^{\prime})^{-1}\circ \cM(\phi,\beta)$.  If $\cM(\phi,\beta^{\prime})^{-1}\circ \cM(\phi,\beta)=\id$, then $
\beta=\beta^{\prime}$. \hB

Let $k\in \nat$ and $\omega\in  Z^{1}(F^{1}\Omega A(S^{k+1}))$. The sheaf $\cM(\underline{A},\omega)\in {\PicwLf_{\Omega A}(S^{k+1})}$ gives rise to an {object} $H\cM(\underline{A},\omega)\in {\PicL_{\underline{HA}}(S^{k+1})}$.
In the following we calculate the invariant $$\nu(H\cM(\underline{A},\omega))\in {\pi_{k} (HA)\cong}\widetilde{HA}^{0}(S^{k})$$ defined in Definition \ref{ulimar2101}. {Here and below  $\tilde{\textcolor{white}{-}}$ indicates reduced cohomology.}\\

The composition of the de Rham equivalence with the suspension isomorphism provides an isomorphism
$$\sigma:\tilde H^{1}(\Omega A(S^{k+1}))\iso \widetilde{HA}^{1}(S^{k+1})\iso \widetilde{HA}^{0}(S^{k})\ .$$

\begin{lem}\label{ulimar2132}
We assume that $A$ is split.
Then we have
$\nu(H\cM(\underline{A},\omega))=\sigma([\omega])$.
\end{lem}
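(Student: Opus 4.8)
The plan is to compute $\nu(H\cM(\underline{A},\omega))$ directly from its defining formula in Definition \ref{ulimar2101}, making the abstract clutching construction completely explicit in the de Rham model. First I would cover $S^{k+1}$ by the two disks $D_{\pm}$ (complements of the poles) with $D_{+}\cap D_{-}$ homotopy equivalent to the equator $S^{k}$, and record that $\omega\in Z^{1}(F^{1}\Omega A(S^{k+1}))$ has vanishing restriction to any point, hence (since $\Omega A$ is homotopy invariant and $D_{\pm}$ are contractible) we may write $\omega_{|D_{\pm}}=d\beta_{\pm}$ for some $\beta_{\pm}\in F^{1}\Omega A^{0}(D_{\pm})$. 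By \eqref{may0310} multiplication by $\exp(\beta_{\pm})$ gives trivializations $\cM(\underline{A},\omega)_{|D_{\pm}}\cong \cM(\underline{A})_{|D_{\pm}}\cong \Omega A_{|D_{\pm}}$ of $\Omega A$-modules, which after applying $H$ and the de Rham equivalence \eqref{ulimar2001} become the trivializations $\phi_{\pm}$ of $H\cM(\underline{A},\omega)_{|D_{\pm}}\cong \underline{HA}_{|D_{\pm}}$ needed to evaluate $\nu$ (after a shift, but here the local model is unshifted, so $n=0$).

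Next I would chase the composite defining $\nu$. Unwinding the definition, $\nu(H\cM(\underline{A},\omega))$ is the image of $1$ under the "clutching" map $\phi_{-}\circ\phi_{+}^{-1}$ restricted to the overlap, which in the de Rham model is simply multiplication by $\exp(\beta_{-}-\beta_{+})$ on $\Omega A_{|D_{+}\cap D_{-}}$. Since $d(\beta_{-}-\beta_{+})=0$ on the overlap, the class of $\exp(\beta_{-}-\beta_{+})$ in $H^{0}$ of the overlap equals $1+[\beta_{-}-\beta_{+}]$ (the higher terms $(\beta_{-}-\beta_{+})^{2}/2+\dots$ lie in $F^{2}$ and contribute to higher form degree; on a point they vanish, and more relevantly they are exact once one expands — in any case the relevant component landing in $\widetilde{HA}^{0}(S^{k})$ is the linear term). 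So $\nu(H\cM(\underline{A},\omega))$ is represented by the closed element $\beta_{-}-\beta_{+}\in \Omega A^{0}(D_{+}\cap D_{-})$, restricted to the equator. The point is now to identify this with $\sigma([\omega])$.

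Then I would make the Mayer--Vietoris/suspension isomorphism $\sigma$ explicit in exactly the same cover: the connecting map $\tilde H^{0}(\Omega A(S^{k}))\xleftarrow{\sim}\tilde H^{1}(\Omega A(S^{k+1}))$ of the cover $\{D_{+},D_{-}\}$ sends a class $[\omega]$ to the class of $\beta_{+}-\beta_{-}$ (or its negative, depending on sign conventions for the connecting homomorphism) on the overlap, precisely because $\omega$ is globally closed and $\beta_{\pm}$ are local primitives. Composed with the de Rham equivalence this is the map $\sigma$ up to a sign that I would fix by bookkeeping against the conventions set in Definition \ref{ulimar2101} and in \eqref{ulimar2001}. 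Comparing with the previous paragraph yields $\nu(H\cM(\underline{A},\omega))=\sigma([\omega])$, after checking that the two sign conventions (the one hidden in $\nu$ via $\phi_{+}^{-1}$ then $\phi_{-}$, and the one in the connecting map of Mayer--Vietoris) are compatible.

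The main obstacle I anticipate is not conceptual but bookkeeping: keeping track of signs and of the role of the filtration. Concretely, one must check (i) that using $F^{1}$ primitives $\beta_{\pm}$ (possible because $\omega\in F^{1}$ and $A$ split guarantees the primitive can be taken in $F^{1}\Omega A^{0}$ exactly as in the proof of Lemma \ref{ulimar2130}), the higher-order terms in $\exp(\beta_{-}-\beta_{+})$ do not interfere with the identification in $\widetilde{HA}^{0}(S^{k})\cong \pi_{0}(HA)\oplus\pi_{k}(HA)$ of the $\pi_{k}$-component; (ii) that $\exp$ versus its linearization agree on cohomology in the relevant degree; and (iii) that the orientation/sign in $\sigma$ (suspension isomorphism composed with de Rham) matches the direction $\phi_{+}^{-1}$ then $\phi_{-}$ chosen in Definition \ref{ulimar2101}. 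Since the split hypothesis on $A$ is used precisely to control the primitive's filtration degree (so that $\cM(\underline{A},\omega)$ lies in $\PicwLf$ and the trivializations exist), I would flag where it enters and otherwise the argument is a direct diagram chase.
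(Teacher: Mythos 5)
Your proposal is correct and follows essentially the same route as the paper's proof: choose primitives $\beta_{\pm}\in F^{1}\Omega A^{0}(D_{\pm})$ of $\omega$ (using that $A$ is split, as in Lemma \ref{ulimar2130}), trivialize $\cM(\underline{A},\omega)$ over $D_{\pm}$ by the exponentials, and identify the resulting clutching class $[\beta_{+,|S^{k}}-\beta_{-,|S^{k}}]$ with $\sigma([\omega])$ via the standard two-disk description of the suspension isomorphism, the higher terms of $\exp$ dropping out in reduced cohomology. The only differences from the paper are sign/convention bookkeeping in the trivializations and connecting map, which you correctly flag as needing to be fixed consistently.
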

\begin{proof}
We use the notation introduced in order to state Definition \ref{ulimar2101}.
We can find $\beta_\pm \in
\Omega A^0(D_{\pm})$ with $d\beta_\pm = \omega_{|D_\pm}$.  We can normalize $\beta$ such that
$\beta_{\pm|p}=0$. Since $A$ is split, we then, as in the proof of Lemma \ref{ulimar2130} or Lemma \ref{pseudo_surjective}, {can} modify $\beta$ further such that $\beta_{\pm}\in F^{1}\Omega A^{0}(D_{\pm})$.
Then by the usual description of the suspension isomorphism in  de Rham cohomology we have
$$\sigma([\omega])=[\beta_{+,|S^{k}}-\beta_{-,|S^{k}}]\in \widetilde{H}^{0}(\Omega A(S^{k}))\cong \widetilde{HA}^{0}(S^{k})\ .$$
%$[\beta_0|_X - \beta_1|_X] \in H^0(X,A)$ represents the class $S[\omega]$.\\

The trivializations $\phi_{\pm}:\cM(\underline{A},\omega)_{|D_{\pm}}\iso \Omega A_{|D_{\pm}}$ are now given by $\phi_{\pm}:=\exp(-\beta_{\pm})$. According to Definition \ref{ulimar2101} we have
$$\nu(H\cM(\underline{A},\omega))=[\exp(\beta_{+,|S^{k}}-\beta_{-,|S^{k}})]-1=[\beta_{+,|S^{k}}-\beta_{-,|S^{k}}]  \ .$$ 
\end{proof}

%%%%%%%%%%%%%%%%%%%%%%%%%%%%%%%%%%%%%%%%%%%%%%%%%%%%%%%%%%%%%%%%%%%%%%%%%%%%%5
\section{Uniqueness of differential refinements of twists}\label{may0510}

Let $(R,A,c)$ be a {differential refinement (Definition \ref{ulimar1501}) of a commutative ring spectrum $R$.}
In this section we show the following theorem.

\begin{thm}\label{unqiue} Let $M$ be a smooth manifold and
assume that $A$ is split. Then the {forgetful functor which  {projects to }the underlying topological twist induces an injective homomorphism} 
\begin{equation*}
\pi_{0}(\Tw_{ \hat R}(M)) \to  \pi_{0}({\PicL_{\underline{R}}}(M)) \ .
\end{equation*}
 In particular, if {a  $R$-twist on $M$} admits a  differential refinement, then this refinement is unique up to {(non-canonical)} equivalence.
\end{thm}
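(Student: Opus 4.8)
The plan is to analyze the pullback square \eqref{ulimar1704} defining $\Tw_{\hat R}$ at the level of homotopy groups. Evaluating at $M$ and passing to the long exact sequence associated to the pullback of sheaves of Picard-$\infty$ groupoids, injectivity of $\pi_0(\Tw_{\hat R}(M)) \to \pi_0(\PicL_{\underline{R}}(M))$ will follow once we show that every automorphism of the image topological twist $E \wedge H\R$ in $\PicL_{H\Omega A}(M)$ lifts to an automorphism of $\cM$ in $\PicwLf_{\Omega A}(M)$; equivalently, that the map $\pi_1(\PicwLf_{\Omega A}(M)) \to \pi_1(\PicL_{H\Omega A}(M))$ is surjective (together with the observation that the relevant $\pi_0$ overlap is then forced). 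So the real content is a surjectivity statement on automorphism groups of the de Rham/Picard objects.

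First I would identify both sides explicitly. On the de Rham side, by the discussion preceding Proposition \ref{ulimar2160} and the splitness hypothesis, automorphisms of an object of $\PicwLf_{\Omega A}(M)$ are given by $Z^0(\Omega A(M))^\times$; more precisely, using the transformation \eqref{ulimar2161n} and Proposition \ref{msymmetricmonoidal}, an object $\cM$ that refines $E$ is (locally, hence by the classification in Proposition \ref{ulimar2160}) of the form $\cM(\cA,\omega)$, and its automorphism group is computed from $Z^0(\Omega A(M))^\times = (1 + F^1\Omega A^0(M)) \cdot Z^0(\underline{A})^\times$-type data together with the flat part. On the topological side, $\pi_1(\PicL_{H\Omega A}(M))$ is $\widetilde{H\Omega A}^{0}(M)^\times$-style data, i.e. controlled by $H^0$ and $H^{-1}$ of $\Omega A(M)$ via the de Rham equivalence \eqref{ulimar2001}. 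The key point is that the map $Z^0(\Omega A(M)) \to H^0(\Omega A(M))$ is surjective, and that the exponential/logarithm correspondence (as used in the proof of Proposition \ref{msymmetricmonoidal}) lets one realize any prescribed change of the underlying $H\Omega A$-module automorphism by an honest DG-module automorphism, because the $F^1$-filtered part $1 + F^1\Omega A^0(M)$ maps onto the "unipotent" part and the locally constant units handle the rest.

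Concretely, the key steps in order: (1) evaluate \eqref{ulimar1704} at $M$ and write the Mayer-Vietoris-type long exact sequence of homotopy groups of the pullback of Picard-$\infty$ groupoids, reducing the claim to surjectivity of $\pi_1(\PicwLf_{\Omega A}(M)) \to \pi_1(\PicL_{H\Omega A}(M))$; (2) identify $\pi_1(\PicwLf_{\Omega A}(M)) \cong Z^0(\Omega A(M))^\times$ as automorphisms of any object, using $K$-flatness so that the tensor product is derived and invertibility is detected on $\pi_0$; (3) identify $\pi_1(\PicL_{H\Omega A}(M))$ via the de Rham equivalence with the group of units of $H^0$ of the twisted complex, fitting into an extension with $H^{-1}$-coming-from-$H\R$-contributions; (4) check the map is surjective: a unit in $H^0(\Omega A(M))$ is represented by a cycle in $Z^0(\Omega A(M))$ which is automatically invertible (being a unit in cohomology of a module that is locally $\Omega A$), and split-ness is what guarantees the filtered corrections needed to promote an arbitrary cochain automorphism to a DG one, exactly as in the $\exp(\beta)$ argument of Proposition \ref{msymmetricmonoidal} and Lemma \ref{ulimar2130}; (5) conclude injectivity on $\pi_0$, and then read off the "uniqueness up to non-canonical equivalence" statement as the special case that two refinements of the same $E$ have the same image in $\pi_0(\PicL_{\underline{R}}(M))$ hence lie in the same fiber, which by the surjectivity just proven is connected.

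\textbf{Main obstacle.} The delicate point is step (4), i.e. verifying that \emph{every} automorphism of the topological twist $E \wedge H\R$ in $\PicL_{H\Omega A}(M)$ is hit by a DG-module automorphism of $\cM$, and not merely those in the identity component. This is exactly where the split hypothesis on $A$ is indispensable: without it, the differential $d\colon A^{-1}\to A^0$ would obstruct modifying a local primitive $\beta$ into the filtered piece $F^1\Omega A^0$, and the exponential trick would fail to produce a genuine chain map. I would handle this by a partition-of-unity / \v{C}ech argument over a good cover, exactly mirroring the local-to-global step in the proofs of Lemma \ref{ulimar2130} and Lemma \ref{apr2601}, reducing to the contractible case where $H^{\geq 1}(\Omega A)$ vanishes and the splitness lets one absorb the degree-zero form-part of any primitive. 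The rest of the argument is bookkeeping with long exact sequences and the identifications of automorphism groups already established in the excerpt.
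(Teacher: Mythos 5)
Your overall architecture (analyze the defining pullback \eqref{ulimar1704}, reduce to properties of the comparison map $\PicwLf_{\Omega A}(M)\to \PicL_{H\Omega A}(M)$, and feed in the splitness of $A$ through the statement that a degree-zero cycle whose class is a unit is itself a unit) is in the right neighborhood, and your step (4) on automorphisms is essentially the paper's Lemma \ref{pseudo_surjective}. But there is a genuine gap in step (1): you reduce injectivity of $\pi_{0}(\Tw_{\hat R}(M))\to\pi_{0}(\PicL_{\underline{R}}(M))$ to surjectivity of $\pi_{1}(\PicwLf_{\Omega A}(M))\to\pi_{1}(\PicL_{H\Omega A}(M))$ and claim the ``$\pi_0$ overlap is then forced.'' It is not. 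Connectivity of the fibre of $\PicwLf_{\Omega A}(M)\to\PicL_{H\Omega A}(M)$ (equivalently, of $\Tw_{\hat R}(M)\to\PicL_{\underline R}(M)$, since fibres are preserved by the pullback) requires, in addition to $\pi_1$-surjectivity, that two strict objects $\cM,\cM'\in\PicwLf_{\Omega A}(M)$ which become equivalent after localization, i.e.\ as sheaves of $H\Omega A$-modules, are already isomorphic as DG-$\Omega A$-modules. This $\pi_0$-statement is independent of the $\pi_1$-statement: a map of Picard groupoids can be surjective on $\pi_1$ and still fail to be injective on $\pi_0$ (e.g.\ a discrete nontrivial group mapping to a point), and then the pullback would have non-injective $\pi_0$-projection. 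In the Mayer--Vietoris sequence for the pullback, the class of $\cM-\cM'$ lands in the kernel of $\pi_0(\PicwLf_{\Omega A}(M))\to\pi_0(\PicL_{H\Omega A}(M))$, and nothing in your argument kills it.

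This missing rectification step is in fact the substantive content of the paper's proof: one must lift an equivalence $\iota(\cM)\to\iota(\cM')$ in the homotopy category of sheaves of $\iota(\Omega A)$-modules to an honest chain map of sheaves and then show the lift is an isomorphism. The paper does this via Lemma \ref{ulimar1702} (computing $\pi_0$ of the mapping space as $H^{0}$), Lemma \ref{lem:eins} (surjectivity of strict Homs onto homotopy classes, using invertibility and $K$-flatness to reduce to maps out of $\Omega A_{|M}$, where it becomes the surjectivity of $Z^{0}(Y(M))\to H^{0}(Y(M))$, plus detection of homotopies), and then Lemma \ref{pseudo_surjective} (split $A$ implies a cycle that is a unit in cohomology is a unit) to upgrade the strict lift of an equivalence to a strict isomorphism; this is packaged as fullness of $\iota$ in Proposition \ref{ulimar1703}, which yields both the $\pi_0$-injectivity and the $\pi_1$-surjectivity at once. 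Your proposal only addresses the automorphism ($\pi_1$) half, and also slightly misplaces where splitness enters for this theorem: its essential use here is in Lemma \ref{pseudo_surjective} (the form-degree-zero component lies in $Z^{0}(A)\cong H^{0}(A)$, so it is a unit and the rest is nilpotent), not in an $\exp(\beta)$-type filtered correction, and $\pi_1$ of $\PicL_{H\Omega A}(M)$ carries no $H^{-1}$-contribution (that appears only in $\pi_2$). To repair the proof you would need to add the strictification argument for morphisms between two different objects, i.e.\ the analogue of Lemma \ref{lem:eins} and Proposition \ref{ulimar1703}.
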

{The proof of this theorem will be finished  after some preparations at the end of this section.}
%As a preparation we need the following result on invertible  and $K$-flat $\Omega A$-modules.
%We start with some algebraic preparations.
%%%%%%%%%%%%%%%%%%%%%%%%%%%%%%%%%%%%%%%%%%%%%%%%%%%%%%%%%%%%%%%%%%%%%%%%%%%%%5
%In this section we want to show that under mild conditions on the CDGA A, the differential refinement of a topological twist $E$ is, if it exists, unique in the appropriate sense.
%Let $A$ be a CDGA over $\R$ and $\Omega A$ be the associated sheaf of CDGAs over $\Mf$.

%\begin{definition}\label{defrigid} 
%An element $X\in \Picf_{\Omega A}(M)$ is called rigid the following implication
%hold true:\\

%If $Y\in \Picf_{\Omega A}(M)$ and $f:X\to Y$ is a morphism in $\Mod_{\Omega A}(M)$ such that
%$\iota(f):\iota(X)\to \iota(Y)$ is an equivalence, then $f:X\to Y$ is an isomorphism.\\
%\end{definition}
% 
% 
%\tho{In order to prove the Theorem we need some Lemmas.}
\begin{lemma}\label{pseudo_surjective}
If $A$ is split, then   \begin{equation*}
\xymatrix{
 Z^0\big(\Omega A(M)\big)^\times  \ar[r]\ar[d] & H^0\big(\Omega A(M)\big)^\times \ar[d] \\
Z^0\big(\Omega A(M)\big) \ar[r] &  H^0\big(\Omega A(M)\big)
}
\end{equation*} is a pullback diagram. 
\end{lemma}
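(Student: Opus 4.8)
The plan is to describe the kernel of the cohomology map in degree zero explicitly and to observe that, under the split hypothesis, it is a nilpotent ideal, so that invertibility can be detected modulo it. Write $B := Z^0\big(\Omega A(M)\big)$; this is a commutative ring (the degree-zero, hence even, part of the graded-commutative algebra $\Omega A(M)$), and $H^0\big(\Omega A(M)\big) = B/I$ with $I := d\big(\Omega A^{-1}(M)\big)$. Note that $I \subseteq B$ automatically and that $I$ is an ideal of $B$, since $d\alpha \cdot z = d(\alpha z)$ for $z \in B$. The square in the statement commutes because $B \twoheadrightarrow B/I$ is a ring map and hence carries units to units, and the induced map $Z^0\big(\Omega A(M)\big)^\times = B^\times \to B \times_{B/I} (B/I)^\times$ is visibly injective. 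So the entire content of the lemma is the following surjectivity statement: if $z \in B$ has invertible image in $B/I$, then $z$ is already a unit in $B$.

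The one place where the hypothesis that $A$ is split enters is the inclusion $I \subseteq F^1\Omega A^0(M)$. To prove it, decompose $\Omega A^{-1}(M) = \bigoplus_{i\ge 0}\Omega^i(M)\otimes_\R A^{-1-i}$ and compute $d = d_{\mathrm{dR}}\otimes 1 \pm 1\otimes d_A$ termwise. The de Rham part raises the form degree by $1$ and thus takes values in $F^1\Omega A^0(M)$. The part $1\otimes d_A$ preserves the form degree, so the only summand it could map into the form-degree-zero component $\Omega^0(M)\otimes A^0$ of $\Omega A^0(M)$ is $\Omega^0(M)\otimes A^{-1}$, via $d_A\colon A^{-1}\to A^0$; but this map vanishes precisely because $A$ is split. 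Hence $d\alpha \in F^1\Omega A^0(M)$ for all $\alpha$, i.e. $I \subseteq F^1\Omega A^0(M)$.

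It then remains to invoke the standard fact that a nilpotent ideal cannot obstruct invertibility. Since $M$ is a smooth manifold, a product of more than $\dim M$ forms of positive de Rham degree vanishes, so $F^1\Omega A^0(M)$ is a nilpotent ideal of $\Omega A^0(M)$, and therefore $I$ is nilpotent in $B$. If $z \in B$ and $zw \equiv 1 \pmod{I}$ for some $w \in B$, then $zw = 1 - \nu$ with $\nu \in I$ nilpotent, so $zw$ is invertible with inverse $\sum_{k\ge 0}\nu^k$, and hence $z$ is a unit in $B$ with inverse $w\sum_{k\ge 0}\nu^k$ (using commutativity of $B$). This establishes the surjectivity claimed above and proves the lemma.

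The main obstacle is the middle paragraph: isolating the precise role of the split condition, namely that it forces every degree-zero coboundary in $\Omega A(M)$ to have strictly positive de Rham form degree. Once that is in place, the rest is pure formal commutative algebra.
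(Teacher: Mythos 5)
Your proof is correct, and it takes a genuinely different route from the paper's. The paper argues element-by-element: it reduces to connected $M$, picks a basepoint $m$, decomposes a cycle $\omega=\sum_i\omega^i$ with $\omega^i\in(\Omega^i\otimes A^{-i})(M)$, and uses the split hypothesis twice --- once to deduce $d_{dR}\omega^0=0$ from the cycle condition (so $\omega^0=1\otimes x$ is constant), and once via $Z^0(A)\cong H^0(A)$ to see that $x$, representing the unit $[\omega]_{|m}\in H^0(A)^{\times}$, is itself a unit --- before concluding exactly as you do that adding the nilpotent tail $\omega-\omega^0$ preserves invertibility. You instead prove the single containment $d\big(\Omega A^{-1}(M)\big)\subseteq F^1\Omega A^0(M)$ (the one place splitness enters for you), observe that $F^1\Omega A^0(M)$ is nilpotent because $M$ is finite-dimensional, and then invoke the standard fact that units lift along a surjection of commutative rings with nilpotent kernel. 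Your version avoids the reduction to connected $M$ and the basepoint evaluation altogether, isolates the role of the split condition in one clean statement about coboundaries, and reuses only formal commutative algebra; the paper's version gives slightly more explicit information (the form-degree-zero term of an invertible cycle is a constant unit of $Z^0(A)$), which is in the spirit of how such leading-term arguments reappear elsewhere in the paper (e.g.\ Lemma \ref{ulimar2130}). Both arguments ultimately rest on the same nilpotency of positive-form-degree elements, so the two proofs are close in substance but organized differently, and yours is, if anything, a bit shorter.
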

\proof 
We first observe that it suffices to show the assertion under the additional assumption that   $M$ is connected.

\bigskip

 We consider a cycle $\omega\in Z^{0}(\Omega A(M))$ such that $[\omega]\in H^{0}(\Omega A(M))^{\times}$. We then must show that $\omega\in Z^{0}(\Omega A(M))^{\times}$.
 
 \bigskip

We pick a point $m \in M$. Then $[\omega]_{|m}\in H^{0}(A)^{\times}$.   We write
$\omega=\sum_{i\in \nat}\omega^{i}$, where $\omega^{i}\in (\Omega^{i}\otimes A^{-i})(M)$.
Since $A$ is split we have $d_{dR}\omega^{0}=0$ and therefore $\omega^{0}=1\otimes x$, where
$x\in Z^{0}(A)$ represents $[\omega]_{|m}\in H^{0}(A)$. Since $A$ is split we have $Z^{0}(A)\cong H^{0}(A)$. Since $[\omega]_{|m}$ is a unit we conclude that $x$ is a unit. This implies that
 $\omega$ is a unit, since $\omega-\omega^{0}$ is nilpotent.
\hB

  \begin{lemma}\label{ulimar1702}
{We consider an object}  $Y\in\PicwLf_{\Omega A}(M)$. {Then we}
 have an isomorphism
$$\pi_{0}\Big(\Map_{  \Mod_{\iota{(}\Omega A{)}}(M)}\big( \iota(\Omega A)_{|M},\iota(Y)\big)\Big)\cong  H^{0}\big(Y(M)\big)\ .$$
\end{lemma}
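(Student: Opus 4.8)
The plan is to reduce the computation of $\pi_0$ of the mapping space in $\Mod_{\iota(\Omega A)}(M)$ to a computation of hyper-derived homomorphisms in the derived category of sheaves of $\Omega A_{|M}$-modules, and then to identify the relevant $\mathrm{Hom}$ group with $H^0(Y(M))$. First I would observe that $\Mod_{\iota(\Omega A)}(M)$ is the $\infty$-categorical localization of the $1$-category $\Mod_{\Omega A}(M)$ of sheaves of DG-modules at the quasi-isomorphisms, so that for any two objects $P, Q$ with $P$ $K$-flat (more precisely, cofibrant in a suitable model structure, but $K$-flatness suffices here since $\Omega A_{|M}$ itself is $K$-flat and we only map out of it) one has $\pi_0 \Map(P, Q) \cong \mathrm{Hom}_{\mathcal{D}}(P, Q)$, the hom in the derived category of sheaves of $\Omega A_{|M}$-modules. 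Since $Y \in \PicwLf_{\Omega A}(M)$, and in particular $\Omega A_{|M}$ is $K$-flat as a module over itself, $\pi_0\Map_{\Mod_{\iota(\Omega A)}(M)}(\iota(\Omega A)_{|M}, \iota(Y)) \cong \mathrm{Hom}_{\mathcal{D}(\Omega A_{|M})}(\Omega A_{|M}, Y)$.

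Next I would compute this derived hom. Because $\Omega A_{|M}$ is the unit, mapping out of it is evaluation-type: in the unbounded derived category of sheaves of $\Omega A_{|M}$-modules, $\mathrm{Hom}_{\mathcal{D}}(\Omega A_{|M}, Y) \cong \mathbb{H}^0(M; Y)$, the zeroth hypercohomology of $M$ with coefficients in the complex of sheaves $Y$. The key point now is to show that this hypercohomology collapses to the ordinary cohomology $H^0(Y(M))$ of the complex of global sections, i.e. that the descent/hypercohomology spectral sequence degenerates because the sheaves involved are fine. This is exactly where the structure of $\Omega A_{|M}$-modules enters: any sheaf of $\Omega A_{|M}$-modules is in particular a sheaf of modules over the sheaf $\Omega^0 = C^\infty(-;\mathbb{R})$ of smooth functions, hence admits smooth partitions of unity, hence each $Y_n$ is a soft (acyclic for the global-sections functor) sheaf of abelian groups. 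Therefore the global-sections functor is exact on the relevant complexes and $\mathbb{H}^0(M;Y) \cong H^0(\Gamma(M, Y)) = H^0(Y(M))$. This softness argument is precisely the one already invoked in the excerpt (``The argument employs the existence of smooth partitions of unity'') to show $\iota(\Omega)$ and $\iota(\Omega A)$ are sheaves and that objectwise localization preserves sheaves.

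The main obstacle I anticipate is making the first identification fully rigorous: namely, that $\pi_0$ of the $\infty$-categorical mapping space in the localized category $\Mod_{\iota(\Omega A)}(M)$ really is the naive derived hom, and in particular that no cofibrant replacement of the source is needed beyond $K$-flatness. One clean way to handle this is to equip $\Mod_{\Omega A}(M)$ with the projective (or a suitable flat) model structure — in which $\Omega A_{|M}$, being $K$-flat, can be taken as a cofibrant model of the unit — and use that $\Map$ in the localization is computed by $\mathrm{Hom}$ in the homotopy category with a cofibrant source and fibrant target, together with the fact that $Y$ is already a complex of sheaves so no fibrant replacement changes its global cohomology (again by softness). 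Alternatively, since $Y$ is invertible and weakly locally constant, one could first reduce to $Y = \Omega A[n]_{|M}$ locally, but the global statement is cleanest via the hypercohomology/softness route above. Once the identification with $\mathbb{H}^0(M;Y)$ is in place, the degeneration via softness is routine and the lemma follows.
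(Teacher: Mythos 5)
Your argument is essentially correct, but it takes a different route from the paper. The paper's proof is a two-line adjunction argument: evaluation at $M$ is right adjoint to a functor $\cF:\Ch_{\infty}\to \Mod_{\iota(\Omega A)}(M)$ with $\cF(\iota(\Z))\cong\iota(\Omega A)_{|M}$, so the mapping space out of the unit is identified with $\Map_{\Ch_{\infty}}(\iota(\Z),\iota(Y(M)))$, whose $\pi_{0}$ is $H^{0}(Y(M))$; the fact that one may replace $\iota(Y)(M)$ by $\iota(Y(M))$ is exactly the partition-of-unity observation made earlier in the paper (the objectwise localization of such module sheaves is already a sheaf). You instead pass through the classical derived category of sheaves of $\Omega A_{|M}$-modules, identify $\Hom$ out of the unit with hypercohomology $\mathbb{H}^{0}(M;Y)$, and collapse it to $H^{0}(Y(M))$ by softness of $C^{\infty}$-module sheaves --- which is the same partition-of-unity input in classical garb, so the mathematical content agrees. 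Two small corrections and caveats: $K$-flatness of the source is a red herring here (it governs derived tensor products, not mapping out; $\pi_{0}\Map$ equals $\Hom$ in the homotopy category of the localization by definition, and the genuine issue is precisely that deriving the global sections functor is needed, which you handle via hypercohomology), and since $Y$ is an unbounded complex the comparison $\mathbb{H}^{0}(M;Y)\cong H^{0}(Y(M))$ via termwise softness needs either the finite cohomological dimension of $M$ or a Spaltenstein-type argument; finally, your translation between the paper's $\Mod_{\iota(\Omega A)}(M)$ and the localization of the $1$-category of sheaves of DG-modules at sheaf-level quasi-isomorphisms is the kind of identification that should be stated and justified (or cited) rather than assumed. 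With those points addressed, your route works; the paper's adjunction argument just avoids the classical sheaf-cohomology machinery by quoting the sheaf property of $\iota(Y)$ directly.
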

\proof The evaluation at $M$ fits into an adjunction
$$\cF:\Ch_{\infty} \leftrightarrows \Mod_{\iota(\Omega A){(M)}}:\mbox{evaluation at $M$}$$
{such that  $\cF(\iota(\Z))=\iota(\Omega A)_{{|M}} $}. This gives the equivalence
$$\Map_{  \Mod_{\iota(\Omega A)}(M)}(\iota(\Omega A)_{|M},\iota(Y))\cong 
   \Map_{\Ch_{\infty}}(\iota(\Z),\iota(Y(M)))\ .$$
%\marginpar{\uli{F\"ur die zweite \"Aquivalenz w\"urde ich gerne ein Argument sehen.}}
We now use the fact that
$\pi_{0}(\Map_{\Ch_{\infty}}(\iota(\Z),\iota(Y(M))))\cong H^{0}(Y(M))$.
\hB

  In order to formulate the following Lemma we use the fact that $\Mod_{\Omega A}(M)$ has a natural enrichment to a  $DG$-category.  {In particular the notion of chain homotpy is well defined in $\Mod_{\Omega A}(M)$}.
%  
%   \ntho{In particular, \uli{for 
%  $Y,X\in \Mod_{\Omega A}(M)$}  we can  consider the chain complex
%  $\underline{\Map}_{\Mod_{\Omega A}(M)}(X,Y)$ so that  $\Hom_{\Mod_{\Omega A}(M)}(X,Y)\cong Z^{0}(\underline{\Map}_{\Mod_{\Omega A}(M)}(X,Y))$. 
%  Two morphisms $f,f^{\prime}\in  \Hom_{\Mod_{\Omega A}(M)}(X,Y) $ are called chain homotopic if they represent the same cohomology class in 
%  $H^{0}(\underline{\Map}_{\Mod_{\Omega A}(M)}(X,Y))$.}
%
\begin{lemma}\label{lem:eins}
Assume that {$X,Y \in \PicwLf_{\Omega A}(M)$.} Then {the following assertions hold true:}  \begin{enumerate}
\item
The map 
%\tfoot{Hab ein paar Klammern bei $\iota(...)$ weggemacht, damit das besser auf die Seite passt und nicht immer blau
%gemacht. 
%\uli{Hab sie wieder hingemacht, wenns kein Unheil gibt. Einheitlichkeit der Notation wegen Wiedererkkenung!}}
\begin{equation}\label{ulimar1910}
 \Hom_{\Mod_{\Omega A}(M)}(X,Y) \to \pi_0\big(\Map_{\Mod_{\iota(\Omega A)}(M)}(\iota( X),\iota (Y) )\big) 
\end{equation} 
is surjective.
\item
If $f,f^{\prime} \in \Hom_{\Mod_{\Omega A}(M)}(X,Y)$ {are equivalent in $\Mod_{\iota(\Omega A)}(M)$}, i.e. they induce the same {point} in  $ \pi_0\big(\Map_{\Mod_{\iota(\Omega A)}(M)}(\iota (X),\iota (Y))\big)$,
then $f$ and $f^{\prime}$ are chain homotopic.
\end{enumerate}
\end{lemma}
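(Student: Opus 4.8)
The plan is to reduce both statements to standard homological algebra over the DG-category $\Mod_{\Omega A}(M)$, using the fact that $\iota$ inverts precisely the quasi-isomorphisms of sheaves of complexes together with the local structure (invertibility, $K$-flatness, weak local constancy) of objects in $\PicwLf_{\Omega A}(M)$. The crucial observation is that for $X\in \PicwLf_{\Omega A}(M)$ the object $X$ is invertible, so after choosing a tensor inverse $X^{-1}$ we have $\Map_{\Mod_{\iota(\Omega A)}(M)}(\iota(X),\iota(Y))\simeq \Map_{\Mod_{\iota(\Omega A)}(M)}(\iota(\Omega A)_{|M},\iota(X^{-1}\otimes Y))$, and similarly $\Hom_{\Mod_{\Omega A}(M)}(X,Y)\cong \Hom_{\Mod_{\Omega A}(M)}(\Omega A_{|M},X^{-1}\otimes Y)=Z^0\big((X^{-1}\otimes Y)(M)\big)$, where the last identification uses that $K$-flatness makes the underived tensor product compute the derived one, so $X^{-1}\otimes Y$ again lies in $\PicwLf_{\Omega A}(M)$. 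Thus, setting $Z:=X^{-1}\otimes Y$, both assertions become statements about maps out of the unit, and part (1) is exactly the surjectivity of
\begin{equation*}
Z^0\big(Z(M)\big)=\Hom_{\Mod_{\Omega A}(M)}\big(\Omega A_{|M},Z\big)\longrightarrow \pi_0\Big(\Map_{\Mod_{\iota(\Omega A)}(M)}\big(\iota(\Omega A)_{|M},\iota(Z)\big)\Big)\cong H^0\big(Z(M)\big),
\end{equation*}
where the last isomorphism is Lemma \ref{ulimar1702}. This map is just $Z^0(Z(M))\to H^0(Z(M))$, the passage from a cycle to its cohomology class, which is visibly surjective.

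For part (2), again after the reduction to the unit it says: if $f,f'\in Z^0(Z(M))$ represent the same class in $H^0(Z(M))$, then $f-f'=d\gamma$ for some $\gamma\in Z^{-1}(M)$; but $d\gamma$ is precisely the statement that $f$ and $f'$ are chain homotopic as morphisms $\Omega A_{|M}\to Z$ in the DG-category $\Mod_{\Omega A}(M)$ (the chain homotopy being multiplication by $\gamma$, using the $\Omega A_{|M}$-linearity). Here one needs to know that the $\Hom$-complex $\underline{\Hom}_{\Mod_{\Omega A}(M)}(\Omega A_{|M},Z)$ is just $Z(M)$ with its differential, so that $H^0$ of this complex is $H^0(Z(M))$ and $H^{-1}$-worth of homotopies account for the kernel of $Z^0\to H^0$. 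Transporting back along the tensor-inverse equivalence, one concludes that the original $f,f'\colon X\to Y$ differ by a chain homotopy, since tensoring with the fixed $K$-flat invertible object $X$ takes homotopies to homotopies.

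The only genuine subtlety — and the step I would treat most carefully — is the identification of mapping spaces in $\Mod_{\iota(\Omega A)}(M)$ with the naive $\Hom$-complex computed in $\Mod_{\Omega A}(M)$, i.e.\ showing that the DG-model computes the correct derived mapping space. This is where $K$-flatness of $X$ (equivalently, of $Z$, or rather of the source object) is essential: it guarantees that no cofibrant replacement is needed in the first variable, so that $\underline{\Hom}_{\Mod_{\Omega A}(M)}(X,Y)$ already has the homotopy type of $\Map_{\Mod_{\iota(\Omega A)}(M)}(\iota(X),\iota(Y))$; one should also invoke that $\Map$ in the localization is a homotopy limit over the site and that partitions of unity make $Z(M)$ compute this on the nose (as used earlier for $\iota(\Omega)$ being a sheaf). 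Granting this — which is the content hinted at by Lemma \ref{ulimar1702} and its proof via the free–forgetful adjunction $\cF\dashv(\text{ev}_M)$ — both (1) and (2) follow formally. I would therefore organize the proof as: (i) reduce to $X=\Omega A_{|M}$ by tensoring with $X^{-1}$; (ii) quote Lemma \ref{ulimar1702} to identify the target of \eqref{ulimar1910} with $H^0(Z(M))$ and the source with $Z^0(Z(M))$; (iii) conclude (1) from surjectivity of $Z^0\twoheadrightarrow H^0$ and (2) from the fact that a nullhomologous cycle bounds, spelling out the bounding element as the chain homotopy.
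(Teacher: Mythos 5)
Your proposal is correct and follows essentially the same route as the paper: reduce to $X=\Omega A_{|M}$ by tensoring with the invertible ($K$-flat) object $X^{-1}$, identify the target of \eqref{ulimar1910} with $H^{0}$ via Lemma \ref{ulimar1702} and the source with $Z^{0}$, and deduce (1) from surjectivity of $Z^{0}\to H^{0}$ and (2) from the fact that a nullhomologous cycle bounds, the bounding element providing the chain homotopy. The paper merely presents the unit case first and then the reduction diagram, and leaves (2) as "obvious" after the reduction, so your slightly more explicit treatment is the same argument.
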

\proof
{1.} We first assume that $X = \Omega A_{|M}$.  {Then we have the isomorphisms} 
$$
\Hom_{\Mod_{\Omega A}(M)}(\Omega A_{|M},Y)%\cong Z^{0}(\underline{\Map}_{\Mod_{\Omega A}(M)}(\Omega A_{|M} ,Y))\\
\cong Z^{0}(Y(M))\ .$$
{Furthermore, we have the isomorphism}
$$ \pi_0\big(\Map_{\Mod_{\iota(\Omega A)}(M)}(\iota(\Omega A)_{|M},\iota(Y))\big) \stackrel{Lemma \ref{ulimar1702}}{\cong} H^{0}(Y(M))\ .
$$
The assertion now follows from the fact that the projection $Z^{0}(Y(M))\to H^{0}(Y(M))$ is surjective.

\bigskip

In the general case we have the following commuting diagram\begin{equation*}\hspace{-0.3cm}
\xymatrix{
\Hom_{\Mod_{\Omega A}(M)}(X,Y) \ar[d]^{\otimes X^{-1}}\ar[r] & \pi_0\big(\Map_{\Mod_{\iota(\Omega A)}(M)}(\iota (X),\iota (Y))\big)\ar[d]^{\otimes \iota(X^{-1})} \\
\Hom_{\Mod_{\Omega A}(M)}(\Omega A_{{|M}}, X^{-1} \otimes_{\Omega A_{|M}} Y) \ar[r] & \pi_0\big(\Map_{\Mod_{\iota(\Omega A)}(M)} (\iota(\Omega A)_{|M}, \iota(X^{-1} \otimes_{\iota(\Omega A)_{|M}} Y))\big)
} 
\end{equation*}
The vertical maps are both bijective. Hence the  upper horizontal map is surjective since the lower one is.

\bigskip

{2.} For the second assertion, {as for 1.},  we can reduce to the case where $X=\Omega A_{|M}$.  In this case the map \eqref{ulimar1910} reduces to the projection $Z^{0}(Y(M))\to H^{0}(Y(M))$ and the assertion is obvious. 
\hB 
%We use that $\Hom_{\Omega A \text{-Mod}}(X,X) \cong \Hom_{\Omega A \text{-Mod}}(\Omega A,\Omega A)$ and 
%$\Hom_{H\Omega A \text{-Mod}}(HX,HX) \cong \Hom_{H\Omega A \text{-Mod}}(H\Omega A,H\Omega A)$ to reduce this statment 
%to the case $X = \Omega A$ where it is obvious.  \marginpar{\uli{Sehe ich nicht!}}
%\end{enumerate}
%\end{proof}
{
\begin{proposition}\label{ulimar1703}
If $A$ is  split, then the canonical  functor
$$\iota:{\PicwLf_{\Omega A}(M)}\to \Ho\PicL_{\iota{(}\Omega A{)}}(M)$$
is full.
\end{proposition}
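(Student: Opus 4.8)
The plan is to show that for all $X,Y\in\PicwLf_{\Omega A}(M)$ every morphism $\phi\colon\iota X\to\iota Y$ in $\Ho\PicL_{\iota(\Omega A)}(M)$ is of the form $\iota(f)$ for an isomorphism $f$ of $\Omega A_{|M}$-modules. Note first that $\PicL_{\iota(\Omega A)}(M)$ is a Picard-$\infty$-groupoid, so its homotopy category is an ordinary groupoid; in particular $\phi$ is invertible, say with inverse $\phi^{-1}\colon\iota Y\to\iota X$.

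First I would lift $\phi$ and $\phi^{-1}$. By the first assertion of Lemma \ref{lem:eins} the map $\Hom_{\Mod_{\Omega A}(M)}(X,Y)\to\pi_0\big(\Map_{\Mod_{\iota(\Omega A)}(M)}(\iota X,\iota Y)\big)$ is surjective, so we may choose $\Omega A_{|M}$-module morphisms $f\colon X\to Y$ and $g\colon Y\to X$ with $\iota(f)=\phi$ and $\iota(g)=\phi^{-1}$. The whole point is then that $f$ must already be an isomorphism; this is not a formality, because a quasi-isomorphism between $K$-flat, invertible, weakly locally constant $\Omega A_{|M}$-modules need not be an isomorphism, so the composites $g\circ f$ and $f\circ g$ have to be examined.

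The key step analyses $g\circ f$. Since $X$ is invertible with inverse $X^{-1}$ and is $K$-flat, tensoring with $X^{-1}$ identifies the endomorphism $DG$-algebra $\End^{\bullet}_{\Mod_{\Omega A}(M)}(X)$ with $\End^{\bullet}_{\Mod_{\Omega A}(M)}(\Omega A_{|M})=\Omega A(M)$; this is the identification already used in the proof of Proposition \ref{msymmetricmonoidal}. Under it $g\circ f$ becomes a cycle $u\in Z^0(\Omega A(M))$. On the other hand $\iota(g\circ f)=\phi^{-1}\circ\phi=\id_{\iota X}$, so $g\circ f$ and $\id_X$ have the same image in $\pi_0\big(\Map_{\Mod_{\iota(\Omega A)}(M)}(\iota X,\iota X)\big)$ and hence are chain homotopic by the second assertion of Lemma \ref{lem:eins}; translated through the above identification this means $u-1\in d\big(\Omega A^{-1}(M)\big)$. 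In particular $[u]=1\in H^0(\Omega A(M))^{\times}$.

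This is where splitness of $A$ enters, and it is the main obstacle: by Lemma \ref{pseudo_surjective} the square relating $Z^0(\Omega A(M))$, $Z^0(\Omega A(M))^{\times}$, $H^0(\Omega A(M))$ and $H^0(\Omega A(M))^{\times}$ is cartesian, so a cycle in $Z^0(\Omega A(M))$ whose cohomology class is a unit is itself a unit. Hence $g\circ f$ is an isomorphism $X\to X$, and by the symmetric argument $f\circ g$ is an isomorphism $Y\to Y$. Therefore $f$ has both a left and a right inverse, so $f$ is an isomorphism of $\Omega A_{|M}$-modules with $\iota(f)=\phi$, and $\iota$ is full. The essential difficulty is exactly the implication ``$\iota(f)$ an equivalence $\Rightarrow$ $f$ an isomorphism'', and it is this step, and only this step, that uses the split hypothesis (via Lemma \ref{pseudo_surjective}), together with the two parts of Lemma \ref{lem:eins}.
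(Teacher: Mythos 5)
Your proof is correct and follows essentially the same route as the paper's: lift both $\phi$ and its inverse via Lemma \ref{lem:eins}(1), use Lemma \ref{lem:eins}(2) to see that $g\circ f$ and $f\circ g$ are chain homotopic to the identities, identify endomorphisms with $Z^{0}(\Omega A(M))$, and invoke Lemma \ref{pseudo_surjective} (the only place splitness enters) to upgrade the homotopy invertibility to strict invertibility. No gaps; the argument matches the paper's in substance and structure.
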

}
\proof %(of Proposition \ref{ulimar1703}):
%Lemma \ref{lem:eins} allows us to formulate a variant of Whiteheads theorem.
%\begin{lemma}\label{lem:zwei}
%Let $f: X \to Y$ be a quasi-isomorphism between invertible, $K$-flat $\Omega A$-modules over $M$.  Then $f$ is a chain-homotopy equivalence. 
%\end{lemma}
%\begin{proof}
Assume we are given objects $X,Y$ in $\PicwLf_{\Omega A}(M)$ and a morphism $F: \iota (X) \to \iota (Y)$ in the homotopy category 
%\footnote{\uli{Mehr Klammern bei $\iota(\dots)$ eingef\"ugt. Nicht alle rot}}
 $\Ho\PicL_{\iota(\Omega A)}(M)$. By Lemma  \ref{lem:eins} (1) we can conclude that there is a morphism $f: X \to Y$ in ${\Mod_{\Omega A}(M)}$ such that $\iota (f) = F$. It remains to show that $f$ is an isomorphism. 
 
Since $F$ is invertible there exists an inverse 
 $G:\iota(Y)\to \iota(X)$ {in $\Ho\PicL_{\iota(\Omega A)}(M)$}{, i.e. $G\circ F$ and $F\circ G$ are equivalent to the identities. By the first assertion of Lemma \ref{lem:eins} there exists a morphism $g:Y\to X$ such that $\iota(g){=}G$. By the second assertion of the  Lemma the compositions
$g\circ f$ and $f\circ g$ are chain-homotopic to the corresponding identities.

{We can} identify $\Hom_{\Mod_{\Omega A}(M)}(X,X)\cong Z^{0}(\Omega A(M))$ as a  ring 
and $H^{0}(\Omega A(M))$ with chain-homotopy equivalence classes of endomorphisms such that
multiplication corresponds to composition. Since the chain homotopy class of $g\circ f$ is invertible, it follows from Lemma \ref{pseudo_surjective} that $g\circ f$ itself is invertible. Similarly we see that
$f\circ g$ is invertible.
This implies that $f$ is an isomorphism. 
\hB
{
\begin{rem2}
More generally, the  
assertion of Proposition \ref{ulimar1703} holds true
for $K$-flat invertible $\Omega A$-modules, i.e. the assumption of being weakly locally constant can be dropped.
\end{rem2}
}

 \begin{proof}(of Theorem \ref{unqiue}):
By Proposition \ref{ulimar1703} and the long exact sequence in homotopy 
the fibre of $${\PicwLf_{\Omega A}(M)\stackrel{\iota}{\to} \PicL_{\iota(\Omega A)}(M)\cong \PicL_{H\Omega A}(M)} $$  is connected. 
  By the Definition  \ref{ulimar1704} of $\Tw_{\hat R}(M)$ as a pull-back the fibre of the map   $${\Tw_{ \hat R}(M) \to \PicL_{\underline{R}}(M)}$$ 
   is connected, too. 
\end{proof}

%\begin{remark}
%The theorem implies in particular that 
%the twisted differential cohomology group $\hat{R}^{(E,\cM,d)}(M)$ depends up to isomorphism
%only on the topological twist $E$.
%\end{remark}

%%%%%%%%%%%%%%%%%%%%%%%%%%%%%%%%%%%%%%%%%%%%%%
\section{Existence of differential refinements of twists}\label{sec:existence}
%%%%%%%%%%%%%%%%%%%%%%%%%%%%%%%%%%%%%%%%%%%%%%%%%%%%%%%%%%%%%%%%%%%%%%%%%%%%%%%%
%%%%%%%%%%%%%%%%%%%%%%%%%%%%%%%%%%%%%%%%%%%%%%

\newcommand{\Gr}{\mathrm{Gr}}

We consider a differential {refinement} $(R,A,c)$  of a commutative ring spectrum $R$  {(Definition \ref{ulimar1501})}. In the statement of the following theorem we use the fact that if $W$ is an $R$-module {spectrum}, then  {its} {realification} $W\wedge H\R$ has the strucure of {an} $HA$-module via $c$.
 In this section we show the following theorem.
\begin{theorem}\label{ulimar2170}
We assume that the {CDGA} $A$ is split. Let $M$ be a connected smooth manifold with a base point $m\in M$ and $E $ be a {topological} $R$-twist on $M$.  {Then the} twist $E$ admits a  differential refinement  $(E,\cM,d)$ if and only if there exists an invertible $K$-flat {DG}-$A$-module $X$ such that {there exists an equivalence  {$E(\{m\})\wedge H\R\cong HX$}  of $HA$ modules.} 
\end{theorem}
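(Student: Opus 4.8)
The plan is to split the proof into the two implications, with the "only if" direction being essentially immediate and the "if" direction being the substantive one. For the "only if" direction, given a differential refinement $(E,\cM,d)$ with $\cM\in\PicwLf_{\Omega A}(M)$, I would restrict everything to the base point $\{m\}$. The sheaf $\cM$ is weakly locally constant, so $\iota(\cM)$ is locally constant; combined with homotopy invariance (Proposition \ref{prop_jul2}) and the fact that $\cM$ is $K$-flat and invertible, the stalk $\cM_{|m}$ is an invertible $K$-flat DG-$A$-module $X$. The equivalence $d$ restricts to an equivalence $E(\{m\})\wedge H\R\iso H\cM_{|m}=HX$ of $HA$-modules, which is exactly what is required. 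One small point to check is that "$K$-flat" is a property that passes to stalks correctly; this follows since $\cM(\cA)\cong\Omega A_{|M}\otimes_A\cA$-type descriptions localize well, or one argues directly that $\cM_{|m}$ is a dualizable object of the symmetric monoidal category of DG-$A$-modules and dualizable objects are $K$-flat.

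For the "if" direction, suppose we are given an invertible $K$-flat DG-$A$-module $X$ together with an equivalence $E(\{m\})\wedge H\R\cong HX$ of $HA$-modules. The strategy is to first build a candidate $\cM\in\PicwLf_{\Omega A}(M)$ of the form $\cM(\underline{X},\omega)$ produced by Definition \ref{may0301}, and then to produce the equivalence $d:E\wedge H\R\iso H\cM$ using the obstruction-theoretic control afforded by the splitting hypothesis. Concretely: since $M$ is connected with base point $m$, Proposition \ref{ulimar2160} tells us that $\pi_0(\PicLf_{\underline{A}}(M))\cong\pi_0(\Picfff_A)\oplus H^1(M;Z^0(A)^\times)$, so the component of $\Picfff_A$ containing $X$ gives a locally constant sheaf $\cA:=\underline{X}_{|M}$ of $A$-modules, and via Lemma \ref{apr2601} we get $\cM(\underline{X})\in\PicwLf_{\Omega A}(M)$ with prescribed stalk $X$ at $m$. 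The topological twist $E\wedge H\R$ is an object of $\PicL_{H\Omega A}(M)$; by homotopy invariance and $M$ connected, its isomorphism class over $M$ is determined by its restriction to $\{m\}$ together with "higher" data, and the difference between $E\wedge H\R$ and $H\cM(\underline{X})$ (which agree at $m$) should be realized by a twisting cycle $\omega\in Z^1(F^2\Omega A(M))$ via Lemma \ref{ulimar2132}: that lemma computes $\nu$ of such perturbations in terms of the suspension of $[\omega]$, which is exactly the invariant that classifies these twists on spheres and, by a Mayer-Vietoris / cell-by-cell argument, on a general connected $M$. Applying Lemma \ref{ulimar2130} (which needs $A$ split) shows $\cM(\underline{X},\omega)$ is $K$-flat and weakly locally constant, hence lies in $\PicwLf_{\Omega A}(M)$.

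The final and hardest step is producing the equivalence $d$ itself rather than just matching isomorphism classes of twists. Here I would invoke Proposition \ref{ulimar1703}: since $A$ is split, the functor $\iota:\PicwLf_{\Omega A}(M)\to\Ho\PicL_{\iota(\Omega A)}(M)$ is full, and by the proof of Theorem \ref{unqiue} its homotopy fibres are connected. So once we know $E\wedge H\R$ and $H\cM(\underline{X},\omega)$ are abstractly equivalent in $\PicL_{H\Omega A}(M)$, the space of equivalences is nonempty, and any choice gives the required $d$; the triple $(E,\cM(\underline{X},\omega),d)$ is then a differential refinement. The main obstacle is the existence of the twisting cycle $\omega$ realizing the correct topological twist over all of $M$: one must show that the class in $\PicL_{H\Omega A}(M)$ represented by $E\wedge H\R$, which agrees with $H\cM(\underline X)$ over the basepoint, differs from it precisely by the image under $H$ of a perturbation $\cM(\underline X)\rightsquigarrow\cM(\underline X,\omega)$. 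I expect this to follow by comparing the two objects via the Atiyah–Hirzebruch / Postnikov filtration of $\PicL$ and identifying the relevant difference classes with de Rham data through the suspension-type computations of Lemma \ref{ulimar2132}, reducing inductively over a good cover and using that $H\Omega A$-modules over a contractible chart are classified by their stalk; this is the step that genuinely uses that every topological twist of $HA$ on $M$ comes, after realification, from the differential-form model, which in turn relies on $A$ being split so that the filtration argument in the proof of Lemma \ref{ulimar2130} applies.
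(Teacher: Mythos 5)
Your ``only if'' direction is fine (the paper does not even spell it out), and your final step is also fine: once $E\wedge H\R$ and $H\cM$ are known to be equivalent in $\PicL_{H\Omega A}(M)$, any choice of equivalence serves as $d$; the appeal to Proposition \ref{ulimar1703} is superfluous there (it is used for the uniqueness Theorem \ref{unqiue}, not for existence). The genuine gap is in the ``if'' direction, and it is twofold. First, your candidate $\cM(\underline{X},\omega)$ is too small. The paper constructs $\cM(\cA\otimes_{A}\underline{X},\omega)$ where $\cA\in\PicLf_{\underline{A}}(M)$ is a possibly nontrivial \emph{locally constant} sheaf with $\cA(m)\cong A$. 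By Corollary \ref{ulimar2162} the isomorphism classes of $HA$-twists whose stalk at $m$ is fixed decompose as $H(H^{0}(A)^{\times})^{1}(M)\oplus F^{2}HA^{1}(M)$; a perturbation by $\omega\in Z^{1}(F^{2}\Omega A(M))$ only reaches the second summand, while the first summand (flat monodromy in $H^{0}(A)^{\times}$) is realized precisely by the locally constant factor $\cA$ (Proposition \ref{ulimar2160}, Corollary \ref{ulimar2167}). Concretely, take $R=H\Z$, $M$ non-orientable (e.g. $\R P^{2}$), and $E$ the twist given by the orientation local system: its restriction to $m$ is trivial, so the hypothesis holds with $X=A=\R$, but $E\wedge H\R$ has monodromy $-1$ and is not equivalent to $H\cM(\underline{\R},\omega)$ for any $\omega$ (here even $F^{2}\Omega A^{1}=0$), so your construction cannot produce a refinement, whereas $\cM(\cA)$ for $\cA$ the sheaf of parallel sections of the flat line bundle does.

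Second, the step you defer (``I expect this to follow by \dots Mayer--Vietoris / cell-by-cell \dots'') is exactly the substantive content of the paper's proof, and it is not a routine induction over a good cover. The paper obtains the needed surjectivity of $\pi_{0}(H\cM)$ onto these summands by (i) splitting $\gl_{1}(HA)\cong H(H^{0}(A)^{\times})\times HA\zero$ using that $A$ is split together with the rational exponential equivalence (Proposition \ref{ulimar2161}, Lemma \ref{ulimar2150}), which gives the decomposition of Corollary \ref{ulimar2162} and the isomorphism $Exp$ of Lemma \ref{kor_iso_rational}; and (ii) proving that $H\cM$ realizes $Exp$ on the $F^{2}HA^{1}$-summand (Proposition \ref{thm_commutes}). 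The reduction to spheres there is justified not by gluing over a cover but by the observation that all corners of the relevant square are functors represented by rational $H$-spaces, so the square commutes once it does on homotopy groups, where Lemmas \ref{ulimar2131} and \ref{ulimar2132} apply. Without an argument of this type (or a worked-out substitute), your sketch does not yet establish the existence of the required twisting data --- and, as the example above shows, it cannot, unless you also incorporate the locally constant correction $\cA$.
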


\begin{rem2}
 {
Note that  the verification of the   condition in the theorem boils down to check whether     the spectrum $E(\{m\})\wedge H\R$ allows a real chain complex model. In fact, one can always find a DG-$A$-module which models this spectrum. But we require in addition that this DG-$A$-module is strictly invertible and $K$-flat, which is not automatic. }

The following twists satisfy the condition:
\begin{enumerate}
\item shifts of the spectrum $R$;
\item twists    classified by a map to $\BGL_1(R)$;
\item twists that are pointwise of the form of a shift of $R$. These are exactly the twists  classified by a  map  to the spectrum of periodic units considered by Sagave \cite{Sagave}. 
\end{enumerate}

%\tho{However note that all shifts of the spectrum $R$ satisfiy the condition of the theorem. Moreover,    all twists    classified by a map to $\BGL_1(R)$ satisfy the condition. More generally all twists that are pointwise of the form of a shift of $R$. These are exactly the twists corresponding to the maps to the spectrum of periodic units considered by Sagave \cite{Sagave}. }
\end{rem2}

The proof of this theorem {requires some work}   and will be finished at the end of this section.
The main point is to find the corresponding object $\cM$  in {$\PicwLf_{\Omega A}(M)$}.
The idea is to take, as a first approximation, the sheaf  $\cM(\underline{X})$, {where $X$ is the DG{-A-}module realizing the twist over the basepoint $m \in M$.} 
This approximation is, {by definition{,}} correct at the base point $m$. In a second step we try to {modify this sheaf} {$\cM(\underline{X})$} by tensoring
with a sheaf of the form $\cM(\cA,\omega)$ where $\cA\in {\PicLf_{\underline{A}}}(M) $ satisfies $\cA_{|m}\cong A$. 
In order to ensure the existence of this correction we must understand the  map 
$${\pi_{0}(H\cM(-))}: \pi_{0}({\PicLf_{\underline{A}}}(M)\times \mathbf{Z}^{1}_{A}(M))\to \pi_{0}{(\PicL_{\underline{HA}}}(M))$$ %\footnote{\uli{Sollte man nicht sowas wie $(\underline{\hspace{0.2cm}})$ statt $(-)$ setzen?}} 
in some detail. The calculation of this map is 
stated in Corollary \ref{ulimar2162}.\\

We consider a spectrum $p$. %\tfoot{$p$ ist hart fuer ein Spektrum, wie waere es mit $B$? \uli{
%Sehe ich ein, aber $B$ ist wegen Barkonstruktion auch nicht besser. Die Wahl von $p$ ist weil es in der Anwendung von einem Picardstack kommt, der mit $P$ ganz gut bezeichnet ist.}
 %}.
  For every {topological} space $X$ we have the Atiyah-Hirzebruch filtration $$\dots\subseteq F^{2}p ^{*}(X)\subseteq F^{1}p^{*}(X)\subseteq F^{0}p^{*}(X)=p^{*}(X)$$
of the $p$-cohomology groups. By definition, for $k\in \nat$ a class $c\in p^{*}(X)$ belongs to  $F^{k}p^{*}(X)$ if and only if $f^{*}c=0$ for every map $f:Y\to X$ from a  $k-1$-dimensional complex $Y$. The associated graded group $\Gr\,  p^{*}(X)$ is calculated by the Atiyah-Hirzebruch spectral sequence (AHSS).  If $p$ is a {commutative} ring spectrum, then the Atiyah-Hirzebruch filtration  turns the cohomology $p^{*}(X)$  into a filtered graded commutative {ring}. 

\bigskip

For $k\in \Z$
we let $p\langle k \rangle \to p$ denote the $k$-connected covering of $P$. We have
$\pi_{i}(p\langle k \rangle)=0$ for $i\le k$ and  an isomorphism $\pi_{i}(p\langle k \rangle)\iso \pi_{i}(p)$ for all $i>k$.
The following assertions are checked easily using the naturality of the AHSS. 
If  $X$ is a space, then the natural map
$p\zero\to p$ induces isomorphisms \begin{equation}\label{ulimar2010}p\zero^{0}(X)\iso F^{1}p^{0}(X)\ , \quad p\zero^{1}(X)\iso F^{2}p^{1}(X)\ .\end{equation}
Similarly, the maps $  p\one \to p$ and $p\langle -1 \rangle \to p$ {induce  isomorphisms}
\begin{equation}\label{ulimar2015}p\one^{0}(X) { \iso } F^{2}p^{0}(X)\ ,  \quad  p\langle -1 \rangle^{1}(X)\iso  F^{1} p^{1}(X) \ . \end{equation}
 The sheaf {of} CDGAs $\Omega A$ has a decreasing filtration 
$$\dots\subseteq F^{2}\Omega A\subseteq F^{1} \Omega A\subseteq F^{0}\Omega A=\Omega A\ ,$$
where $F^{k}\Omega A:=\prod_{i\ge k} \Omega^{i}\otimes A$. 
For every manifold $M$ we obtain an induced filtration on the cohomology groups $H^{*}(\Omega A(M))$.
Recall that we have the de Rham equivalence \eqref{ulimar2001}. 
\begin{lem}\label{ulimar2014}
The de Rham equivalence induces  is an isomorphism of filtered  commutative graded algebras
$$H^{*}(\Omega A(M))\iso HA^{*}(M)\ .$$ 
\end{lem}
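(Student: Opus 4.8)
The plan is to reduce the statement to the known de Rham equivalence \eqref{ulimar2001} together with a careful bookkeeping of filtrations on both sides. First I would observe that the de Rham equivalence $H\Omega A \iso \underline{HA}$ is an equivalence of sheaves of commutative $H\R$-algebras, so after evaluation at $M$ and passing to cohomology it induces an isomorphism of graded commutative algebras $H^{*}(\Omega A(M)) \iso HA^{*}(M)$; all that remains is to match the two filtrations. On the left we have the filtration by $F^{k}\Omega A := \prod_{i\ge k}\Omega^{i}\otimes A$, which on cohomology gives the usual ``filtration by form degree'' associated to the filtered complex $\Omega A(M)$. On the right we have the Atiyah--Hirzebruch filtration on $HA^{*}(M)$.

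The key step is to identify the left-hand filtration with the Atiyah--Hirzebruch filtration. For this I would use the standard fact that the de Rham complex with its form-degree filtration computes the Atiyah--Hirzebruch spectral sequence for $HA$-cohomology (for a point, $\Omega A$ is quasi-isomorphic to $A$ and both filtrations collapse appropriately; in general the $E_1$-page of the form-degree spectral sequence of $\Omega A(M)$ is $\Omega^{*}(M)\otimes H^{*}(A)$ with the de Rham differential, hence $E_2 = H^{*}_{dR}(M)\otimes H^{*}(A) = H^{*}(M;\pi_{*}HA)$, which is exactly the $E_2$-page of the AHSS for $HA^{*}(M)$). More concretely, I would argue directly with the definition of the AH filtration: a class in $F^{k}HA^{*}(M)$ is one that pulls back to zero on every $(k-1)$-dimensional complex. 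Using that $\underline{HA}$ is a homotopy-invariant sheaf and that $M$ can be triangulated, one checks that a de Rham class lies in the image of $H^{*}(F^{k}\Omega A(M))$ precisely when it vanishes on the $(k-1)$-skeleton of a triangulation — comparing with the Čech/skeletal description of the AHSS. The multiplicativity of both filtrations (stated for $F^{k}\Omega A$ because $F^{i}\cdot F^{j}\subseteq F^{i+j}$, and for the AH filtration because $p^{*}$ is a filtered ring) is automatic and compatible under the algebra isomorphism.

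The main obstacle I expect is making the comparison of the form-degree filtration with the Atiyah--Hirzebruch filtration fully rigorous at the level of sheaves of spectra rather than just spectral sequences — i.e. producing an actual filtered equivalence or at least a filtered quasi-isomorphism, not merely an abstract isomorphism of associated graded pieces. One clean way around this is to note that both filtrations arise from the same Postnikov/connective-cover tower: the filtration $F^{k}\Omega A = \prod_{i\ge k}\Omega^{i}\otimes A$ corresponds under $H$ and the de Rham equivalence to the tower obtained by the truncations $\underline{HA\langle\text{stuff}\rangle}$, matching \eqref{ulimar2010} and \eqref{ulimar2015}; this is exactly the identification the paper is about to use for twists. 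So in the write-up I would phrase the proof as: (i) invoke \eqref{ulimar2001} to get the algebra isomorphism; (ii) note that the form-degree filtration on $\Omega A$ is sent under $H$ to the connective-cover (equivalently Postnikov) filtration on $\underline{HA}$; (iii) recall that the connective-cover filtration on $HA$-cohomology of a finite-dimensional complex is the Atiyah--Hirzebruch filtration, by the characterisation via vanishing on skeleta. Steps (ii) and (iii) are where the real content sits; (i) and the multiplicativity claims are formal.
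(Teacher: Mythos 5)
Your step (i) is fine and agrees with the paper: the algebra isomorphism is formal from \eqref{ulimar2001}. The problem is in the filtration comparison, which is the whole content of the lemma. Your preferred route (ii)+(iii) does not work as stated. The subsheaf $H(F^{k}\Omega A)$ is \emph{not} homotopy invariant — already for $A=\R$ its value on a contractible $U$ has cohomology $Z^{k}(U)$ concentrated in degree $k$ — so it cannot be identified, under $H$ and the de Rham equivalence, with (the restriction of) a constant sheaf of the form $\underline{HA\langle m\rangle}$; there is no equivalence of towers here. The identifications \eqref{ulimar2010} and \eqref{ulimar2015} are statements about \emph{images in cohomology}, not about the filtration steps being connective covers, and the assertion that the image of $H^{n}\big(F^{k}\Omega A(M)\big)$ in $HA^{n}(M)$ coincides with the image of the appropriate connective cover (equivalently, with the Atiyah--Hirzebruch filtration) is precisely what has to be proved — so step (ii) either is false as an equivalence of sheaves or, read at the level of images, restates the lemma. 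Your more direct variant in the second paragraph has the same gap in a different guise: an abstract isomorphism of $E_{2}$-pages does not identify the abutment filtrations (one needs a map of filtered objects inducing the comparison), and the sentence ``one checks that a de Rham class lies in the image of $H^{*}(F^{k}\Omega A(M))$ precisely when it vanishes on the $(k-1)$-skeleton'' hides exactly the hard direction, which is never argued.

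The paper closes this gap by a concrete construction (Lemma \ref{may0260}, applied with $\cA=\underline{A}$, $\omega=0$): pull back along the piecewise smooth evaluation map $\phi\colon |\sing(M)|\to M$, which sends the form-degree filtration $F^{p}$ into the skeletal filtration $\tilde F^{p}$ of $|\sing(M)|$, hence gives an honest map of filtered complexes and so of spectral sequences; one then computes the skeletal $E_{1}$-page as relative cohomology of simplices (using that the twisting datum restricts trivially to simplices) to be the cellular cochain complex with coefficients in $H^{*}(A)$, so that the comparison is an isomorphism from $E_{2}$ on, and the abutment filtrations — form-degree on the left, Atiyah--Hirzebruch (skeletal) on the right — are identified. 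If you want to salvage your write-up, this filtered comparison map is the ingredient you must supply in place of (ii); the connective-cover bookkeeping can then be recovered a posteriori, as in \eqref{ulimar2010} and \eqref{ulimar2015}, but it cannot serve as the proof.
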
 
\proof {The main new point is that the de Rham equivalence preserves the filtration. This 
  will be shown in Lemma \ref{may0260} in a more general situation. \hB

Let $p$ be a spectrum and ${P:= \Omega^{\infty}p\in \CGrp(\cS)}$ be its infinite loop space. Then we obtain a constant Picard-$\infty$ stack}
{$\underline{P}\in \Sh_{\CGrp(\cS)}$, see Definition \ref{apr2630}.}
Note that   $$\pi_{i}( \underline{P}(M))\cong p^{-i}(M)$$ for all $i\ge 0$. 
%\uli{For $\cF\in \Sh_{\CGrp(\cS)}$ we set $F:=\cF(*) \in \CGrp(\cS)$.
% Then we have a canonical equivalence
%$\underline{F}\cong \hat C_{F}$ (see appendix of \cite{bg}). Moreover, \footnote{\uli{Hier ist der Unterschied zwischen $p^{*}F$ und $\underline{F}$!}} we have a canonical morphism
%$\hat C_{F}\to \cF$ induced by the maps $M\to *$.
%\begin{ddd}If this map is an equivalence, then
%we say   that $F$ represents $\cF$. \end{ddd}
Recall Definition \eqref{ulimar2002} of the {Picard-$\infty$ stack} $\PicL_{\underline{R}}$. 
We have the equivalence $$\Pic_{R}\cong {\PicL_{\underline{R}}}(*)\ ,$$ see Proposition \ref{prop_jul2}.  We further let $\pic_{R}$ be the connective spectrum such that $\Omega^{\infty}\pic_{R}:=\Pic_{R} \ .$
{Proposition \ref{prop_jul2} also shows that} we have  equivalences resp. an isomorphism \begin{equation}\label{uliapr0670}
{\PicL_{\underline{R}}}(M)\cong \Pic_{R}^{M^{top}} \cong  \Omega^{\infty}\Map(\Sigma^{\infty}_{+}M^{top},\pic_{R})\ ,\quad 
\pi_{0}(\PicL_{\underline{R}}(M))\cong \pic_{R}^{0}(M)\ ,
\end{equation}
i.e. $R$-twists are classified by a cohomology group. 
In particular we get an induced  decreasing Atiyah-Hirzebruch filtration   $$\dots\subseteq F^{2}\pi_{0}(\PicL_{\underline{R}}(M))\subseteq F^{1}\pi_{0}(\PicL_{\underline{R}}(M))\subseteq \pi_{0}(\PicL_{\underline{R}}(M))$$
%\footnote{\uli{zu viele $\cart$'s. Mache jetzt nicht alle mehr rot.}} 
on the abelian group of isomorphism classes  of  $R$-twists on $M$.\\

 The cohomology theory of the spectrum of units $\gl_{1}(R)$ 
 of the commutative ring spectrum  $R$  is characterized by the natural isomorphism \begin{equation}\label{ulimar2011}
\gl_1(R)^0(X) \cong R^0(X)^\times 
\end{equation}
for all spaces $X$. We have an equivalence of connective spectra
\begin{equation}\label{ulimar2030}
\pic_{R} \zero    \cong  \bgl_{1}(R):=\gl_{1}(R)[1]\ .\end{equation}
In calculations below we use the following fact:%\footnote{\uli{Nur um zu verstehen. Eigentlich ist das doch die Definition von \eqref{ulimar2030}}}
\begin{lem}\label{ulimar2131} For $k\ge 0$ the  isomorphism
$$\pic_{R}\zero^{0}(S^{k+1})\stackrel{\eqref{ulimar2030}}{\cong} \bgl_{1}^{0}(S^{k+1})\stackrel{susp.}{\cong} \widetilde{\gl_{1}^{0}(S^{k})}\stackrel{\eqref{ulimar2011}}{\cong} \left\{\begin{array}{cc}\pi_{k}(R)&k\ge 1\\\pi_{0}(R)^{\times}&k=0 \end{array}\right.$$
is given by
$$\pic_{R}\zero^{0}(S^{k+1})\ni E\mapsto \nu(E)  \ ,$$
where $\nu(E)$ is defined in Definition \ref{ulimar2101}.
\end{lem}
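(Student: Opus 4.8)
The plan is to read off both sides from the descent (Mayer--Vietoris) square for the sheaf $\PicL_{\underline{R}}$ attached to the open cover $S^{k+1}=D_{+}\cup D_{-}$, in the same spirit as the Example following Corollary~\ref{kor_gradedring} but now applied to the cohomology theory represented by $\pic_{R}$ instead of to $\cR$. Since $D_{\pm}$ are contractible and $\pic_{R}\zero$ is $0$-connected we have $\pic_{R}\zero^{0}(D_{\pm})=0$, so the relevant portion of the Mayer--Vietoris sequence collapses and, on reduced groups, the boundary map
$$\partial\colon\ \widetilde{\pic_{R}\zero^{-1}(D_{+}\cap D_{-})}\ \xrightarrow{\ \cong\ }\ \widetilde{\pic_{R}\zero^{0}(S^{k+1})}$$
is an isomorphism. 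Using $D_{+}\cap D_{-}\simeq S^{k}$, the equivalence $\pic_{R}\zero\simeq\bgl_{1}(R)$ of \eqref{ulimar2030}, and $\bgl_{1}(R)^{-1}(X)\cong\gl_{1}(R)^{0}(X)\cong R^{0}(X)^{\times}$ from \eqref{ulimar2011}, this boundary map is precisely the suspension isomorphism occurring in the statement of the lemma (up to the chosen sign); here one uses that $\Omega\PicL_{\underline{R}}\simeq\underline{\GL_{1}(R)}$ represents $\gl_{1}(R)$-cohomology, i.e.\ $\Omega\pic_{R}\simeq\gl_{1}(R)$, which is how \eqref{ulimar2011} and \eqref{ulimar2030} are compatible.

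First I would make the descent square for $\PicL_{\underline{R}}$ on the cover $\{D_{+},D_{-}\}$ explicit: an $R$-twist $E$ on $S^{k+1}$ together with the trivializations $\phi_{\pm}$ over $D_{\pm}$ is the same datum as the clutching automorphism $\alpha:=\phi_{-}\circ\phi_{+}^{-1}$ of $\underline{R}$ over $D_{+}\cap D_{-}$, and its class $[\alpha]\in\pi_{0}\Aut_{\PicL_{\underline{R}}(D_{+}\cap D_{-})}(\underline{R})\cong R^{0}(D_{+}\cap D_{-})^{\times}\cong\pic_{R}\zero^{-1}(D_{+}\cap D_{-})$ satisfies $\partial([\alpha])=[E]$. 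This is the assertion that the clutching construction inverts the Mayer--Vietoris boundary map, and its verification is the usual bookkeeping with the \v{C}ech nerve of a two--open cover, the same one that underlies the Example following Corollary~\ref{kor_gradedring}. Next I would unwind Definition~\ref{ulimar2101} with $n=0$ (permissible because $E\in\pic_{R}\zero^{0}(S^{k+1})$ forces $E_{|p}\cong R$): the composite appearing there sends $1\in R^{0}(D_{+})$ to $\alpha_{|S^{k}}(1)=u\in R^{0}(S^{k})^{\times}$, where $u$ is the unit representing the module automorphism $\alpha_{|S^{k}}$ of $\underline{R}$. Since $\phi_{+}$ and $\phi_{-}$ both restrict at the base point $p$ to the fixed trivialization $E_{|p}\cong R$, we get $\alpha_{|p}=\mathrm{id}$, hence $u=1+\nu(E)$ with $\nu(E)\in\widetilde{R^{0}(S^{k})}$; in other words $[\alpha]$ corresponds to $\nu(E)$ under the reduced part of $R^{0}(D_{+}\cap D_{-})^{\times}\cong R^{0}(S^{k})^{\times}\cong\gl_{1}(R)^{0}(S^{k})$.

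Putting the two computations together yields $[E]=\partial([\alpha])=\mathrm{susp}(\nu(E))$, which is the claimed formula. The main obstacle is the first step: one must check carefully that the descent datum $[\alpha]$ of the twist $E$ maps to $[E]$ under the Mayer--Vietoris boundary, and that in the present degenerate situation this boundary is literally the suspension isomorphism named in the lemma; this forces attention to reduced versus unreduced cohomology, to the normalization of the $\phi_{\pm}$ at $p$, to the sign in the suspension isomorphism, and to the identification $\Omega\pic_{R}\simeq\gl_{1}(R)$. Once the descent picture is in place, the remaining manipulations are routine.
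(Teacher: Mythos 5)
Your argument is correct. Note that the paper states Lemma \ref{ulimar2131} without any proof (it is invoked as a known fact, e.g.\ in the proofs of Lemma \ref{apr2420} and implicitly in the Mayer--Vietoris example computing $R^{E}(S^{k+1})$), so there is no argument of the authors to compare against; your route --- reading the trivializations $\phi_{\pm}$ as a descent datum for the cover $\{D_{+},D_{-}\}$, identifying the class of the clutching automorphism $\alpha=\phi_{-}\circ\phi_{+}^{-1}$ with $1\oplus\nu(E)$ exactly as in Definition \ref{ulimar2101} with $n=0$ (legitimate, since $E\in\pic_{R}\zero^{0}(S^{k+1})$ means $E$ is trivial at the base point by \eqref{ulimar2010}), and identifying the Mayer--Vietoris boundary for the two-hemisphere decomposition with the suspension isomorphism via $\Omega\pic_{R}\simeq\gl_{1}(R)$ --- is the standard justification and is consistent with how $\nu$ is used elsewhere in the paper (for instance the computation of $\nu$ through the unit $\exp(\beta_{+|S^{k}}-\beta_{-|S^{k}})$ in the proof of Lemma \ref{ulimar2132}). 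The only point you leave open is the sign/orientation normalization of the suspension isomorphism, which you flag explicitly; since the paper does not fix this convention either, that is a matter of normalization rather than a gap in the argument.
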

Since $\pic_{R}$ is a connective spectrum, we have  for every connected space $X$   with base point  canonical isomorphisms 
\begin{equation}\label{ulimar2140}\pic_{R}^{0}(X)\cong \pi_{0}(\pic_{R})\oplus \gl_{1}(R)^{1}(X)
 \ , \quad  \pic^{-1}_{R}(X)\cong  \gl_{1}(R)^{0}(X) \ .\end{equation}

\bigskip

By \eqref{ulimar2010}
 we have identifications
 \begin{equation}\label{ulimar2020}
%R\zero^{0}(X)\cong F^{1}R^{0}(X)\ , \quad 
\gl_{1}(R)\zero^{0}(X)\stackrel{\eqref{ulimar2011}}{\iso} 1+F^{1}R^{0}(X)\ , \quad F^{2}\gl_{1}(R)\one^{0}(X)\iso 1+F^{2}R^{0}(X) \uli{\ .} \end{equation} 

\bigskip

In general it is quite complicated to understand the spectrum of units $\gl_{1}(R)$ from the structure of $R$. But if $R$ is rational, then one can use a homotopy theoretic version of the exponential map
in order to relate  $R$ with its units. The details are as follows.

\begin{lem}\label{ulimar2150}
Assume that $Q $ is {a commutative algebra in $H\Q$-modules.} Then there exists an
equivalence of spectra \begin{equation*}
\exp_Q: Q\zero \iso \gl_1(Q)\zero\
\end{equation*}
which induces for every space $X$ the exponential map 
\begin{equation} \label{ulimar2012}
Q\zero^0(X) \to \gl_1(Q)\zero^0(X) \qquad \beta \mapsto \sum_{n=0}^\infty \frac 1
{n!} \beta^n \ .
\end{equation}
Moreover $\exp_Q$ is unique up to homotopy with this property.
\end{lem}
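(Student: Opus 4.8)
\textbf{Proof plan for Lemma \ref{ulimar2150}.}

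The strategy is to reduce the statement to a computation in the homotopy category of $H\Q$-module spectra, where multiplicative and additive structures can be compared via power series. First I would recall that since $Q$ is a commutative algebra in $H\Q$-modules, the connective covers $Q\zero$ and $\gl_1(Q)\zero$ are both connective $H\Q$-module spectra, hence products of Eilenberg--MacLane spectra; more importantly, they are \emph{rational} spectra, so the homotopy category of spectra under them is algebraic. The key classical input is that for a rational ring spectrum the logarithm/exponential gives an equivalence between a suitable piece of the additive and multiplicative theories; concretely, the truncated exponential series $\beta\mapsto\sum_{n\ge 0}\beta^n/n!$ is a well-defined natural transformation on $Q\zero^0(X)$ because any class in $F^1$ (which is what $Q\zero^0$ picks out by \eqref{ulimar2010}) is topologically nilpotent in the Atiyah--Hirzebruch sense, so the series converges when evaluated on any finite complex and hence, by passing to limits, on all spaces.

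Next I would promote this natural transformation of set-valued functors to a map of spectra. The point is that $Q\zero$ and $\gl_1(Q)\zero$ are both infinite loop spaces representing the cohomology theories $X\mapsto Q\zero^0(X)$ and $X\mapsto \gl_1(Q)\zero^0(X)$, and a natural transformation of the represented cohomology theories which is additive (the exponential sends sums to products, i.e. the additive structure on the source to the multiplicative structure on the target, which is the $H$-space structure on $\gl_1$) yields, by the Yoneda lemma / Brown representability in the stable category, a unique map of spectra $\exp_Q\colon Q\zero\to\gl_1(Q)\zero$ inducing it. Here one uses that both spectra are connective and that $\gl_1(Q)^0(X)\cong Q^0(X)^\times$ by \eqref{ulimar2011}, together with the identification \eqref{ulimar2020} $\gl_1(Q)\zero^0(X)\cong 1+F^1 Q^0(X)$, so that the target of the exponential series is literally this subgroup of units. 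To see $\exp_Q$ is an equivalence, I would exhibit the inverse on cohomology by the logarithm series $\gamma\mapsto -\sum_{n\ge 1}(1-\gamma)^n/n$, which is again well-defined by topological nilpotence, is two-sided inverse to $\exp$ as a formal identity of power series, and hence induces an inverse map of spectra; alternatively one checks that $\exp_Q$ induces an isomorphism on all homotopy groups $\pi_k$ for $k\ge 1$ directly from the case $X=S^k$.

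For uniqueness up to homotopy: a second map $\exp_Q'$ inducing the same transformation \eqref{ulimar2012} of represented functors must, by the same Yoneda/Brown representability argument (the representing object is unique, and a natural transformation of represented functors determines the map of spectra uniquely up to homotopy), be homotopic to $\exp_Q$. Concretely one observes that $[Q\zero,\gl_1(Q)\zero]\cong \gl_1(Q)\zero^0(Q\zero)$ and the identity component / the class of $\exp_Q$ in this group is pinned down by evaluating the induced transformation on the universal element, i.e. on the fundamental class in $Q\zero^0(Q\zero)$.

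I expect the main obstacle to be the convergence and naturality bookkeeping for the exponential and logarithm series: one must argue carefully that $F^1 Q^0(X)$ consists of topologically nilpotent elements (using the Atiyah--Hirzebruch filtration and that $Q$ is connective rational, so the filtration is complete on finite complexes and the relevant colimits/limits behave well), so that the series define honest natural transformations of cohomology theories rather than merely formal expressions; and then that these transformations are genuinely additive/multiplicative so that Brown representability applies to produce the map of spectra. Everything after that — being an equivalence, and uniqueness — is formal given the inverse series. The rationality hypothesis is essential precisely to make the denominators $n!$ invertible and the series manipulations legitimate.
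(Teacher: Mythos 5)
Your overall architecture matches the paper's: use nilpotence of elements of $F^{1}Q^{0}(X)$ on finite complexes to make the series $\sum_{n\geq 0}\beta^{n}/n!$ a well-defined additive transformation into $1+F^{1}Q^{0}(X)\cong \gl_{1}(Q)\zero^{0}(X)$, then rigidify this transformation to a map of spectra and invert it via the logarithm (or on homotopy groups). However, the rigidification step — which is the actual content of the lemma — is not justified by what you invoke. A natural, additive transformation of the \emph{degree-zero} functors $X\mapsto Q\zero^{0}(X)$ and $X\mapsto \gl_{1}(Q)\zero^{0}(X)$ is not a map of cohomology theories in the stable sense (no compatibility with suspension is part of the data), so Brown representability does not produce a spectrum map from it; at best it produces, by Yoneda for the homotopy category of spaces, an $H$-map of the underlying infinite loop spaces. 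In general an $H$-map between infinite loop spaces neither lifts to a map of spectra nor, if it does, lifts uniquely. Your concrete uniqueness argument has the same defect: the identification $[Q\zero,\gl_{1}(Q)\zero]\cong \gl_{1}(Q)\zero^{0}(Q\zero)$ is the cohomology of the \emph{spectrum} $Q\zero$, but the transformation \eqref{ulimar2012} is only defined on spaces, so you cannot evaluate it on a fundamental class there; if instead you evaluate on the universal element of the space $\Omega^{\infty}Q\zero$, you only pin down the underlying map of spaces, which does not determine the spectrum map.

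This is exactly where the paper's proof spends its two rational inputs, and where your proposal uses rationality only for invertibility of the denominators $n!$. The paper argues: (i) by the Milnor--Moore theorem, a map between connective $H\Q$-module spectra is determined up to homotopy by the induced map of $H$-spaces on underlying infinite loop spaces, so an $H$-space-level construction suffices to define and to characterize $\exp_{Q}$ as a spectrum map; and (ii) since there are no phantom maps between rational spectra, a map of rational $H$-spaces is in turn determined by the induced transformation of abelian-group-valued functors on \emph{finite} CW complexes — which also lets one avoid your unresolved convergence/$\lim^{1}$ issue for infinite complexes, since the power series only ever needs to be evaluated where nilpotence is automatic. Without points (i) and (ii), or a substitute for them, your construction produces at most an $H$-map of spaces and your uniqueness claim does not go through; with them, the rest of your argument (finiteness of the series on finite complexes, additivity, inverse by the logarithm or by inspection of homotopy groups) is fine and agrees with the paper.
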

 \proof
The spectra   $Q\zero$, $\gl_{1}(Q)$ and $\gl_{1}(Q)\zero$ are $H\Q$-modules.
A map between connective $H\Q$-modules is determined up to homotopy by a map of $H$-spaces between the underlying infinite loop spaces {(this follows from the Milnor-Moore theorem \cite{MilnorMoore})}. Furthermore, a map of rational $H$-spaces is determined up to equivalence by the transformation of abelian group-valued functors on the category of finite $CW$-complexes represented by them {since there are no phantom maps between rational spectra}. Therefore in order to define and characterize the exponential map $\exp_{Q}$ it suffices to check that \eqref{ulimar2012}
is a well-defined transformation between abelian group-valued functors on finite $CW$-complexes.\\

 Every element $\beta\in Q\zero^{0}(X)\cong F^{1}Q^{0}(X)$ is nilpotent. Therefore the power series 
 reduces {to} a finite sum
  and determines an element  $$ \sum_{n=0}^\infty \frac 1{n!} \beta^n \in 1+F^{1}Q^{0}(X)\cong \gl_{1}(Q)\zero^{0}(X)\ .$$ 
 Additivity is checked as usual.
    \hB 

\begin{rem2}
The exponential map constructed in the lemma is only determined up to homotopy and not up to contractible choices. But we will see in {Remark \ref{uuu100}}  that a choice of chain complex modelling the rational spectrum $Q$ allows {one} to make a prefered choice which is well defined up to contractible choices.
\end{rem2}
   % \uli{The homotopy mentioned in the proof of Lemma \ref{ulimar2150} is not unique. The group
%$$[Q\zero, gl_1(Q)\zero[-1]]\cong [Q\zero,Q\zero[-1]]\cong \prod_{n\ge 1}\Hom(\pi_{n}(Q),\pi_{n+1}(Q))$$ acts on the set of homotopies.}
%\bigskip

 \begin{lem}\label{kor_iso_rational}
For every manifold $M$ we have a natural isomorphism
\begin{equation*}
Exp: F^{2}H^{1}(\Omega A(M)) \to F^{2}\pi_0(\PicL_{\underline{HA}}(M))
\end{equation*}
\end{lem}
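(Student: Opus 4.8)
The plan is to construct the isomorphism $Exp$ by combining the rational exponential equivalence of Lemma \ref{ulimar2150} with the de Rham identifications and the Atiyah-Hirzebruch filtration comparisons established above. First I would observe that by Lemma \ref{ulimar2014} the de Rham equivalence gives an isomorphism of filtered graded rings $H^{*}(\Omega A(M))\cong HA^{*}(M)$, so that in particular $F^{2}H^{1}(\Omega A(M))\cong F^{2}HA^{1}(M)$. On the target side, using \eqref{uliapr0670} together with \eqref{ulimar2030} and the identification \eqref{ulimar2015} for the connected cover $\gl_{1}(HA)\one$, we get $F^{2}\pi_{0}(\PicL_{\underline{HA}}(M))\cong F^{2}\bgl_{1}(HA)^{1}(M)\cong \gl_{1}(HA)\one^{1}(M)\cong \gl_{1}(HA)\zero^{0}(M)$, where the last step is the suspension-type identification $F^{2}\bgl_{1}^{1}\cong \gl_{1}\zero^{0}$ coming from \eqref{ulimar2140} restricted to connected $M$ (note $M$ is connected here by the running hypothesis in this section, or one reduces to connected components). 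Under the units isomorphism \eqref{ulimar2011} combined with \eqref{ulimar2020} this becomes $1+F^{2}HA^{0}(M)$ — but we actually want to land back in $F^{2}HA^{1}(M)$, so more carefully the chain is $F^{2}\pi_{0}(\PicL_{\underline{HA}}(M))\cong \gl_{1}(HA)\zero^{1}(M)\cong HA\zero^{1}(M)\cong F^{2}HA^{1}(M)$, where the middle isomorphism is the rational exponential of Lemma \ref{ulimar2150} applied to the $H\Q$-algebra $Q:=HA$ (which is rational since $A$ is a $\Q$-CDGA, being an $\R$-CDGA).

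The key steps, in order, are: (1) record that $HA$ is a commutative $H\Q$-algebra so Lemma \ref{ulimar2150} applies, giving $\exp_{HA}:HA\zero\iso\gl_{1}(HA)\zero$ and hence on cohomology in degree $-1$ an isomorphism $HA\zero^{1}(M)\cong\gl_{1}(HA)\zero^{1}(M)$; (2) identify $HA\zero^{1}(M)$ with $F^{2}HA^{1}(M)$ using the cover identification \eqref{ulimar2010} (which reads $p\zero^{1}(X)\iso F^{2}p^{1}(X)$) applied to $p=HA$, and then with $F^{2}H^{1}(\Omega A(M))$ via the filtered de Rham equivalence of Lemma \ref{ulimar2014}; (3) identify $\gl_{1}(HA)\zero^{1}(M)$ with $F^{2}\bgl_{1}(HA)^{1}(M)$ using \eqref{ulimar2015} for $p=\gl_{1}(HA)$ (the statement $p\langle -1\rangle^{1}(X)\iso F^{1}p^{1}(X)$ together with the appropriate connected-cover statement for $F^{2}$), and then with $F^{2}\pi_{0}(\PicL_{\underline{HA}}(M))$ via \eqref{ulimar2030} and \eqref{uliapr0670}; (4) compose and check naturality in $M$, which is immediate since every identification used is natural.

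The main obstacle I anticipate is bookkeeping the precise interaction between the Atiyah-Hirzebruch filtration and the connective-cover functors for the two different spectra $HA$ and $\gl_{1}(HA)$, and verifying that the shift by one in degree introduced by $\bgl_{1}(R)=\gl_{1}(R)[1]$ is correctly matched by the suspension isomorphism so that the filtration levels $F^{2}$ on both sides genuinely correspond; in particular one must be careful that the exponential map $\exp_{HA}$ restricts to an isomorphism of the relevant filtration steps, i.e.\ that it is filtered. This follows because $\exp_{HA}$ is built from the multiplicative power series $\beta\mapsto\sum\beta^{n}/n!$ which sends $F^{1}$ into $1+F^{1}$ and, more refined, $F^{k}$ into $1+F^{k}$ since all higher terms $\beta^{n}$ for $n\ge 2$ lie in even deeper filtration; so it is compatible with \eqref{ulimar2020}. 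Once this filtered compatibility is in hand the composite is manifestly a natural isomorphism $F^{2}H^{1}(\Omega A(M))\to F^{2}\pi_{0}(\PicL_{\underline{HA}}(M))$, which is the assertion of the lemma. A secondary subtlety is the reduction to connected $M$ needed to invoke \eqref{ulimar2140}; since $\PicL_{\underline{HA}}$, $HA$ and $\gl_{1}(HA)$ all take products of manifolds to products, and the filtration is defined componentwise, one may assume $M$ connected with a chosen basepoint without loss of generality.
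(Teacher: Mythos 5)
Your proposal is correct and takes essentially the same route as the paper's proof: the filtered de Rham identification (Lemma \ref{ulimar2014}), the cover/filtration identifications \eqref{ulimar2010} and \eqref{ulimar2015}, the rational exponential of Lemma \ref{ulimar2150}, and \eqref{ulimar2030}, \eqref{uliapr0670}, composed exactly as in your corrected chain $F^{2}H^{1}(\Omega A(M))\cong F^{2}HA^{1}(M)\cong HA\zero^{1}(M)\cong \gl_{1}(HA)\zero^{1}(M)\cong \pic_{HA}\one^{0}(M)\cong F^{2}\pi_{0}(\PicL_{\underline{HA}}(M))$. The only blemishes are degree slips in your first chain and in step (3) --- the intermediate term should be $\bgl_{1}(HA)\one^{0}(M)\cong F^{2}\bgl_{1}(HA)^{0}(M)$ rather than $F^{2}\bgl_{1}(HA)^{1}(M)$, which you in effect correct yourself --- and the detour through \eqref{ulimar2140} with its reduction to connected $M$ is unnecessary, since passing through $\pic_{HA}\one^{0}(M)$ via \eqref{ulimar2015} handles arbitrary $M$ directly, as the paper does.
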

\begin{proof}
By Lemma \ref{ulimar2014} the de Rham equivalence induces an isomorphism
$$F^{2}H^{1}(\Omega A(M))\iso F^{2}HA^{1}(M)\ .$$
%or equivalently by down-shift, an isomorphism
%$$ F^{2}H^{1}(\Omega A(M)[1])\iso F^{2} HA[1]^{0}(M)\ .$$
By \eqref{ulimar2010} we have an isomorphism
$$F^{2} HA^{1}(M)\cong HA\zero^{1}(M) \ .$$
 The exponential map
$\exp_{HA } $ induces an isomorphism
$$\exp_{HA }:HA \zero^{1}(M)\iso \gl_{1}(HA)\zero ^{1}(M)\ .$$
%Furthermore we have isomorphisms 
%$$gl_{1}(HA[1]\one)^{0}(M)\stackrel{\eqref{ulimar2015}}{\iso}   F^{2}gl_{1}(HA[1]\one)^{0}(M) \stackrel{\ref{ulimar2020}}{\iso} 1+F^{2}HA[1]\one^{0}(M)\ .$$
Finally we have the isomorphism
{$$
\xymatrix{
\gl_{1}(HA)\zero ^{1}(M) \ar[r]^\sim &
\bgl_{1}(HA)\one^{0}(M) \ar[r]^\sim_{\eqref{ulimar2030}} &
 \pic_{HA}\one^{0}(M) \ar[d]_{\eqref{ulimar2015}}^{\eqref{uliapr0670}}\\
&& F^{2}\pi_{0}\big(\PicL_{\underline{HA}}(M)\big)\ .
}
$$}
The isomorphism $Exp$ is defined as the composition of the isomorphisms above. 
\end{proof}
In {Proposition \ref{msymmetricmonoidal}}\ref{sep0602nnnn}} we have  constructed a symmetric monoidal transformation
$$\cM:\mathbf{Z}^{1}_{A}\to \PicwLf_{\Omega A}\ , \quad \omega\mapsto \cM(\underline{A},\omega)\ .$$
Its composition with the localization $\PicwLf_{\Omega A}\to \PicL_{\iota{(}\Omega A{)}}$ and Eilenberg-MacLane equivalence $\PicL_{\iota{(}\Omega A{)}}\iso \PicL_{\underline{HA}}$ yields a symmetric monoidal transformation
$$H\cM:\mathbf{Z}^{1}_{A} \to \PicL_{\underline{HA}}\ .$$
It immediately follows from the definition of  $\mathbf{Z}^{1}_{A}$ that $\pi_{0}(\mathbf{Z}^{1}_{A}(M))\cong F^{2}H^{1}(\Omega A(M))$.
\begin{prop} \label{thm_commutes} We assume that $A$ is split.
For every manifold $M$ 
the following diagram commutes
\begin{equation}\label{ulimar2040}\xymatrix{F^{2}H^{1}(\Omega A(M))\ar[d]^{Exp}\ar[r]^{\cong}&\pi_{0}(\mathbf{Z}^{1}_{A}(M))  \ar[d]^{H\cM} \\F^{2}\pi_{0}(\PicL_{\underline{HA}}(M))\ar[r]&
\pi_{0}(\PicL_{\underline{HA}}(M))  }\ ,\end{equation}
where the lower horizontal {map is} the {inclusion}.
\end{prop}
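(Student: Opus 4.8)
The plan is to reduce the commutativity of the square \eqref{ulimar2040} to a statement about the point $M=S^{k+1}$ for all $k$, then verify it there using the explicit computation of the invariant $\nu$. The first step is to observe that both composites around the square are natural transformations of abelian-group-valued functors on $\Mf^{op}$, and both sides are built from cohomology theories: the top-right composite factors through the inclusion $F^{2}\pi_{0}(\PicL_{\underline{HA}}(M))\hookrightarrow \pi_{0}(\PicL_{\underline{HA}}(M))=\pic_{HA}^{0}(M)$, while the left-bottom composite is $Exp$ followed by the same inclusion. So it suffices to show the two maps $F^{2}H^{1}(\Omega A(M))\to F^{2}\pi_{0}(\PicL_{\underline{HA}}(M))$ — namely $Exp$ and $\pi_0(H\cM)$ — agree. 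Since $HA$ is a commutative $H\Q$-algebra (indeed $H\R$-algebra), all the groups involved are real vector spaces and the functors $M\mapsto F^{2}H^{1}(\Omega A(M))$ and $M\mapsto F^{2}\pi_0(\PicL_{\underline{HA}}(M))\cong \gl_1(HA)\langle 0\rangle^1(M)$ are representable by rational spectra; hence a natural transformation between them is determined by its effect on the homotopy groups of the representing spectra, equivalently by its values on spheres $S^{k}$ (via $\widetilde{(-)}^{\,0}(S^k)$), and there are no phantom-map subtleties.

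Next I would pin down the two maps on spheres. On the domain side, by the suspension isomorphism $\sigma:\tilde H^{1}(\Omega A(S^{k+1}))\iso \widetilde{HA}^{0}(S^{k})\cong \pi_k(HA)$, an element $[\omega]$ in the relevant filtration piece corresponds to $\sigma([\omega])\in\pi_k(HA)$. On the one hand, Lemma \ref{ulimar2132} computes directly that $\nu(H\cM(\underline{A},\omega))=\sigma([\omega])$; combined with Lemma \ref{ulimar2131}, which identifies the abstract invariant $\nu$ with the canonical isomorphism $\pic_{HA}\langle 0\rangle^{0}(S^{k+1})\cong \pi_k(HA)$ (valid because $HA$ is connective so the relevant filtration quotients are detected on spheres), this says precisely that $\pi_0(H\cM)$, read through the identifications of \eqref{uliapr0670} and \eqref{ulimar2015}, sends $[\omega]\mapsto\sigma([\omega])$. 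On the other hand, $Exp$ was defined in Lemma \ref{kor_iso_rational} as the composite of the de Rham equivalence, the identification $F^{2}HA^{1}\cong HA\langle 0\rangle^{1}$, the homotopy-theoretic exponential $\exp_{HA}$, and the string of isomorphisms $\gl_1(HA)\langle 0\rangle^{1}\cong\bgl_1(HA)\langle 1\rangle^{0}\cong\pic_{HA}\langle 1\rangle^{0}\hookrightarrow F^{2}\pi_0(\PicL_{\underline{HA}})$. Evaluating on $S^{k}$, the exponential map $\exp_{HA}$ acts on $\beta\in HA\langle 0\rangle^{0}(S^k)$ — which for a sphere lies in a group where $\beta^2=0$ — simply as $\beta\mapsto 1+\beta$; tracing $\beta$ back through the suspension and de Rham identifications shows $Exp$ also sends $[\omega]\mapsto\sigma([\omega])$ at the level of $\pi_k$.

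Therefore the plan concludes: both $Exp$ and $\pi_0(H\cM)$ are natural transformations of rational-spectrum-represented functors which agree on all spheres, hence agree, which is exactly the commutativity of \eqref{ulimar2040}. The main obstacle I anticipate is bookkeeping the chain of identifications consistently — in particular making sure the suspension isomorphism $\sigma$ used in Lemma \ref{ulimar2132}, the de Rham/filtration identification of Lemma \ref{ulimar2014}, and the connecting isomorphisms \eqref{ulimar2015}, \eqref{ulimar2030}, \eqref{uliapr0670} are composed with matching sign and direction conventions, and checking that the reduction ``a natural transformation of these functors is determined on spheres'' is legitimate, which requires $F^{2}\pi_0(\PicL_{\underline{HA}})$ to be represented by a rational (indeed $H\R$-module) connective spectrum and an appeal to the absence of phantom maps between rational spectra, exactly as in the proof of Lemma \ref{ulimar2150}. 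Once those conventions are fixed, the computation on spheres is the content of Lemmas \ref{ulimar2131} and \ref{ulimar2132} together with the triviality of the exponential on $S^k$, and the rest is formal.
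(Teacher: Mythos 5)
Your proposal is correct and follows essentially the same route as the paper: reduce to spheres by representing all four corners by rational $H$-spaces/spectra (where natural transformations are detected on homotopy groups, with no phantom-map issues), and then verify the sphere case via Lemma \ref{ulimar2131}, Lemma \ref{ulimar2132}, and the fact that the exponential on a sphere reduces to $\beta\mapsto 1+\beta$, yielding $\nu(Exp([\omega]))=\sigma([\omega])=\nu(H\cM(\underline{A},\omega))$. This is exactly the paper's two-lemma argument.
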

 The proposition follows from the following two Lemmas.
 \begin{lem}
 In order to show Proposition \ref{thm_commutes} it suffices to show that the diagram \eqref{ulimar2040} commutes in the case $M=S^{k}$ for all $k\in \nat $.
 \end{lem}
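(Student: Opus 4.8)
The plan is to rewrite the two composites in \eqref{ulimar2040} as \emph{additive} natural transformations between cohomology theories on $\Mf$ that are represented by \emph{rational} spectra, and then to use that a morphism of rational spectra is detected by its effect on homotopy groups, i.e.\ on spheres. First I would record that every functor appearing in \eqref{ulimar2040} is homotopy invariant on $\Mf$: for $H^{*}(\Omega A(-))$ this is the de Rham equivalence of Lemma \ref{ulimar2014}, and for $\pi_{0}(\PicL_{\underline{HA}}(-))$ it is part of Proposition \ref{prop_jul2}. Via the filtered de Rham equivalence (Lemma \ref{ulimar2014}) together with \eqref{ulimar2010} the source $M\mapsto F^{2}H^{1}(\Omega A(M))$ is identified with the cohomology theory $HA\langle 0\rangle^{1}(-)$, which is represented by the rational spectrum $\Sigma HA\langle 0\rangle$ (rational because $HA$ is a module over $H\R$).

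Next I would check that the composite $F^{2}H^{1}(\Omega A(M))\xrightarrow{\cong}\pi_{0}(\mathbf{Z}^{1}_{A}(M))\xrightarrow{H\cM}\pi_{0}(\PicL_{\underline{HA}}(M))$ factors through the subgroup $F^{2}\pi_{0}(\PicL_{\underline{HA}}(M))$. Indeed, by Definition \ref{apr2901} a class in $\pi_{0}(\mathbf{Z}^{1}_{A}(M))$ is represented by a cycle lying in $F^{2}\Omega A(M)$; pulled back along any map $Y\to M$ from a one-dimensional complex this cycle vanishes, so the associated $HA$-twist restricts to the trivial twist on every $1$-complex, which by definition of the Atiyah--Hirzebruch filtration means it lies in filtration degree $\ge 2$. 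The other composite $\mathrm{incl}\circ Exp$ lands in $F^{2}$ by construction (Lemma \ref{kor_iso_rational}). Combining \eqref{ulimar2015}, \eqref{uliapr0670}, \eqref{ulimar2030} and Lemma \ref{ulimar2150} one identifies $F^{2}\pi_{0}(\PicL_{\underline{HA}}(M))\cong \pic_{HA}\langle 1\rangle^{0}(M)$, the cohomology theory represented by $\pic_{HA}\langle 1\rangle\simeq \bgl_{1}(HA)\langle 1\rangle\simeq \Sigma HA\langle 0\rangle$, which is again rational.

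Finally, both composites are additive ($Exp$ is the group isomorphism of Lemma \ref{kor_iso_rational} and $H\cM$, constructed in Proposition \ref{msymmetricmonoidal}, is a symmetric monoidal transformation), so their difference $\delta$ is an additive natural transformation between homotopy-invariant functors on $\Mf$ that agree with cohomology theories represented by rational spectra $P$ and $Q$. Hence $\delta$ is induced by a morphism of spectra $P\to Q$; since $P$ and $Q$ are rational they are products of shifted copies of $H\Q$, the group of such morphisms is $\prod_{n}\mathrm{Hom}_{\Q}(\pi_{n}P,\pi_{n}Q)$, and consequently a morphism of rational spectra is zero as soon as it vanishes on all homotopy groups, equivalently as soon as the associated natural transformation vanishes on all spheres $S^{k}$. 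The latter holds by hypothesis, so $\delta=0$ and \eqref{ulimar2040} commutes for all $M$.

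I expect the main obstacle to be the bookkeeping in the second step --- verifying cleanly that the $H\cM$-composite really lands in the $F^{2}$ part, which is exactly where splitness of $A$ and the precise filtration conventions on $\Omega A$ and on the Atiyah--Hirzebruch filtrations enter --- together with stating precisely the representability/rationality input that licenses the reduction of equality of the two transformations to their values on spheres.
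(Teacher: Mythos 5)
Your argument is correct and is essentially the paper's: both identify the corners of \eqref{ulimar2040} with functors represented by rational objects ($HA\zero[1]$ and $\pic_{HA}$), note that $Exp$ and $H\cM$ are represented by maps of rational $H$-spaces/spectra unique up to homotopy (the same no-phantom-maps/Milnor--Moore input as in Lemma \ref{ulimar2150}), and conclude that commutativity is detected on homotopy groups, i.e.\ by evaluation on the spheres $S^{k}$. Your extra step of factoring the $H\cM$-composite through $F^{2}\pi_{0}(\PicL_{\underline{HA}}(M))$ and phrasing the reduction via the difference $\delta$ is harmless bookkeeping rather than a different method.
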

 \proof
 Using some natural isomorphisms discussed above we can rewrite the diagram \eqref{ulimar2040} as follows:
$$\xymatrix{HA\zero^{1}(M)\ar[d]^{Exp}\ar@{=}[r]& HA\zero ^{1}(M) \ar[d]^{H\cM} \\ \pic_{HA}\one^{0}(M) \ar[r]&
\pic^{0}_{HA}(M) }\ .$$
This makes clear that all corners are functors represented by rational $H$-spaces.
The horizontal maps  represented by maps of $H$-spaces by construction.
The transformations of abelian group valued functors $Exp$ and $H\cM$ are as well represented  by a map between $H$-spaces which are uniquely determined up to homotopy. Finally, the resulting diagram 
\begin{equation}\label{uliapr0620}\xymatrix{HA\zero[1]\ar[d]^{Exp}\ar@{=}[r]& HA\zero[1] \ar[d]^{H\cM} \\ \pic_{HA}\one \ar[r]&
\pic_{HA} }\end{equation}
of maps between rational $H$-spaces commutes up to homotopy if and only if it commutes on the level of homotopy groups. \hB
\begin{lem}\label{apr2420}
The diagram 
 \eqref{ulimar2040} commutes for $M=S^{k}$ for all $k\in \nat$.
\end{lem}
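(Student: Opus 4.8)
The plan is to reduce everything to an explicit computation on spheres using the invariant $\nu$ from Definition \ref{ulimar2101}, which is exactly the bookkeeping device the diagram \eqref{ulimar2040} was set up to track. Only the case $k\ge 2$ is interesting: for $k=0,1$ the groups $F^{2}H^{1}(\Omega A(S^{k}))$ vanish (there are no forms of filtration degree $\ge 2$ on $S^{0}$ or $S^{1}$ in the relevant degree), so the diagram commutes trivially. So fix $k\ge 2$ and consider the class of a cycle $\omega\in Z^{1}(F^{2}\Omega A(S^{k}))$; I must compare the two ways of pushing $[\omega]$ into $\pi_{0}(\PicL_{\underline{HA}}(S^{k}))$. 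Since both composites land in the subgroup $F^{2}\pi_{0}(\PicL_{\underline{HA}}(S^{k}))$, which by Lemma \ref{ulimar2131} and \eqref{ulimar2015} is detected by the invariant $\nu$, it suffices to show that $\nu$ of the two images agree.

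First I would compute the lower-left composite. Going around via $Exp$: by construction (Lemma \ref{kor_iso_rational}) $Exp$ is the de Rham equivalence followed by $\exp_{HA}$ followed by the suspension/$\bgl_{1}$ identifications. On a sphere $S^{k}$, the exponential map $\exp_{HA}$ of Lemma \ref{ulimar2150} acts on $HA\langle 0\rangle^{1}(S^{k})\cong \widetilde{HA}^{0}(S^{k-1})$, where everything is concentrated in a single cohomology degree so the power series $\sum \beta^{n}/n!$ truncates to $1+\beta$ (all higher powers vanish for degree reasons on a sphere). Hence under the identification of $F^{2}\pi_{0}(\PicL_{\underline{HA}}(S^{k}))$ with $\pi_{k-1}(HA)$, the class $Exp([\omega])$ has $\nu$ equal to $\sigma([\omega])$, where $\sigma$ is the composition of the de Rham equivalence with the suspension isomorphism. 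Second, I would compute $\nu(H\cM(\underline{A},\omega))$: this is precisely the content of Lemma \ref{ulimar2132}, which (using that $A$ is split) gives $\nu(H\cM(\underline{A},\omega))=\sigma([\omega])$ as well. Comparing the two expressions yields the commutativity of \eqref{ulimar2040} on $S^{k}$.

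The main obstacle — and the place to be careful — is matching up the several normalization/sign conventions for the suspension isomorphism, the $\bgl_{1}$-shift \eqref{ulimar2030}, and the connective-cover identifications \eqref{ulimar2015}, so that the $\sigma$ appearing from the $Exp$-side is literally the same $\sigma$ as the one in Lemma \ref{ulimar2132}. This is exactly why the statement was organized so that Lemma \ref{ulimar2132} and Lemma \ref{ulimar2131} use the same invariant $\nu$ and the same suspension conventions; once one invokes them, the truncation $\exp(\beta)=1+\beta$ on a sphere makes the two sides manifestly equal. The only other subtlety is the hypothesis that $A$ is split, which is needed to run the argument of Lemma \ref{ulimar2132} (normalizing the primitives $\beta_{\pm}$ to lie in $F^{1}\Omega A^{0}$), and which is assumed throughout; no new input beyond the already-established Lemmas \ref{ulimar2131}, \ref{ulimar2132}, \ref{kor_iso_rational} and the degree-reasons truncation of the exponential is required.
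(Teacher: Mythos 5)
Your proposal is correct and follows essentially the same route as the paper: both composites land in a filtration subgroup of $\pi_{0}(\PicL_{\underline{HA}}(S^{k}))$ detected by the invariant $\nu$ of Definition \ref{ulimar2101} (via Lemma \ref{ulimar2131}), one side is computed by Lemma \ref{ulimar2132} as $\sigma([\omega])$, and the other side gives the same answer because the exponential series truncates to $1+[\omega]$ on a sphere (products of reduced/positive-filtration classes vanish there). The only cosmetic differences are your separate treatment of the trivial cases $k=0,1$ and the indexing $S^{k}$ versus the paper's $S^{k+1}$, neither of which affects the argument.
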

\proof
Note that $H\cM$ takes values in the subgroup $$F^{1}\pi_{0}(\PicL_{\underline{HA}}(S^{k+1}))\cong \pic_{{HA}}\zero^{0}(S^{k+1})\ .$$ By Lemma \ref{ulimar2131} the equivalence class of the element $H\cM(\underline{A},\omega))\in F^{1}\pi_{0}(\PicL_{\underline{HA}}(S^{k+1}))$ is completely determined by the class $\nu(H\cM(\underline{A},\omega))\in \pi_{k}(HA)$ defined in Definition \ref{ulimar2101}. \\

On the one hand, by Lemma \ref{ulimar2132} we have
$\nu(H\cM(\underline{A},\omega))=\sigma([\omega])$.
On the other hand, by Lemma  \ref{ulimar2131}  and the construction of $Exp$ we have the following equality in $HA^{0}(S^{k+1})$:
$$1+\sigma^{-1} (\nu(Exp([\omega]))) = \exp([\omega])=1+[\omega]\ .$$
We conclude that $\nu(Exp([\omega])) =\sigma([\omega])$. \hB
%\uli{The filler in the diagram 
%\begin{equation}\label{uliapr0620}\xymatrix{HA\one[1]\ar[d]^{Exp}\ar[r]& HA\zero[1] \ar[d]^{H\cM} \\ \pic_{HA}\one \ar[r]&
%\pic_{HA} }\ .\end{equation}
%is unique up to the action of the group
%$$[HA\one[1],\pic_{HA}[-1] ]\cong \prod_{n\ge 2} \Hom(H_{n-1}(A),H_{n}(A))\ . $$
%We can redefine $Exp$ by chosing a lift in
%$$\xymatrix{&&\pic_{HA}\one\ar[d]\\HA\one[1]\ar@{.>}[urr]\ar[r]&HA\zero[1]\ar[r]^{H\cM}&\pic^{0}_{HA}}$$
%which exists since $HA\one[1]$ is one-connected. In this way we fix a filler in \eqref{uliapr0620}.
% }
%It follows from 

%

 \begin{prop}\label{ulimar2161}
 We assume that $A$ is split. Then there exists a canonical equivalence of spectra
   \begin{equation*}
\gl_1(HA) \cong  H (H^{0}(A)^\times) \times HA\zero \ .
\end{equation*}
\end{prop}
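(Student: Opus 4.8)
The plan is to assemble the equivalence from two canonical ingredients: the Postnikov truncation of the connective spectrum $\gl_1(HA)$, and the rational exponential equivalence of Lemma \ref{ulimar2150}. Since $\gl_1(HA)$ is connective with $\pi_0(\gl_1(HA))\cong\pi_0(HA)^\times = H^0(A)^\times$ and $\pi_i(\gl_1(HA))\cong\pi_i(HA)$ for $i\ge 1$, the truncation map exhibits a fibre sequence of spectra $\gl_1(HA)\zero\xrightarrow{\iota}\gl_1(HA)\xrightarrow{q}H(H^0(A)^\times)$, where $\iota$ is the inclusion of the $0$-connected cover. So it suffices to split this fibre sequence and then to identify $\gl_1(HA)\zero$ with $HA\zero$.

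The key step is to produce a canonical section of $q$. The inclusion of the ring $Z^0(A)$ of degree-zero cycles of $A$, regarded as a CDGA concentrated in degree $0$ with vanishing differential, into $A$ is a morphism of CDGAs, and it induces a map of commutative ring spectra $HZ^0(A)\to HA$ and hence a map $\gl_1(HZ^0(A))\to\gl_1(HA)$. Because $Z^0(A)$ is a discrete ring we have $\gl_1(HZ^0(A))\simeq H(Z^0(A)^\times)$, so this yields a canonical map $s\colon H(Z^0(A)^\times)\to\gl_1(HA)$. On $\pi_0$ the composite $q\circ s$ is the canonical map $Z^0(A)^\times\to\pi_0(HA)^\times$, which is an isomorphism precisely because $A$ is split (no coboundaries in degree $0$, so $Z^0(A)=H^0(A)$); since a self-map of $H(H^0(A)^\times)$ is determined up to homotopy by its effect on $\pi_0$, this shows $q\circ s\simeq\id$. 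As $\Sp$ is stable, a fibre sequence that admits a section splits, so $\iota\oplus s\colon\gl_1(HA)\zero\oplus H(H^0(A)^\times)\to\gl_1(HA)$ is an equivalence; one verifies this on homotopy groups, where the section forces the connecting maps in the long exact sequence to vanish and splits the resulting short exact sequences.

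It remains to identify the first summand. Since $A$ is a CDGA over $\R$, the ring spectrum $HA$ is a commutative algebra in $H\Q$-modules, so Lemma \ref{ulimar2150} provides a canonical equivalence $\exp_{HA}\colon HA\zero\xrightarrow{\sim}\gl_1(HA)\zero$. Transporting along $\exp_{HA}$ in the first summand of the splitting above yields the asserted canonical equivalence $\gl_1(HA)\simeq H(H^0(A)^\times)\times HA\zero$, and all maps involved are natural in the split CDGA $A$.

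The only genuinely delicate point is the use of the splitness hypothesis, which enters exactly once: it is what makes the natural map $Z^0(A)^\times\to\pi_0(HA)^\times$ bijective and hence turns the morphism $s$ built from $HZ^0(A)\to HA$ into a section of $q$. For a non-split $A$ this map would fail to be an isomorphism and the argument would break down. Everything else — the Postnikov truncation, the splitting of a fibre sequence equipped with a section, and the invocation of the exponential equivalence — is formal, so checking that $Z^0(A)=H^0(A)$ and that $\iota\oplus s$ is an equivalence is where the small amount of care is needed.
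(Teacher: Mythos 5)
Your proof is correct and follows essentially the same route as the paper: split the fibre sequence $\gl_1(HA)\zero \to \gl_1(HA) \to H(H^{0}(A)^{\times})$ via the CDGA map $H^{0}(A)\cong Z^{0}(A)\to A$ (where splitness gives $Z^{0}(A)\cong H^{0}(A)$), then identify $\gl_1(HA)\zero$ with $HA\zero$ using the exponential equivalence of Lemma \ref{ulimar2150}. Your write-up merely spells out in more detail the verification that the section splits the sequence, which the paper leaves implicit.
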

\begin{proof}
We consider the  fibre sequence 
\begin{equation*}
\gl_1(HA) \zero \to \gl_1(HA) \to  H (H^{0}(A)^{\times}) \ .\end{equation*}
Since $A$ is split, we have a natural map of CDGAs
$$  H^{0}(A)\cong Z^{0}(A)\to A$$
which induces a split of the fibre sequence
$H (H^{0}(A)^{\times})\to {\gl_1(HA)}$. The required equivalence is now constructed using the exponential  equivalence Lemma \ref{ulimar2150}:
$$\gl_{1}(HA)\cong H (H^{0}(A)^\times) \times \gl_{1}(HA)\zero\stackrel{\id  \times \exp_{HA}^{-1} }{\longrightarrow}   H (H^{0}(A)^\times)\times HA\zero\ .$$
\end{proof}
Using \eqref{ulimar2140}, \eqref{uliapr0670}, and \eqref{ulimar2010} we get:
\begin{kor}\label{ulimar2162}
We assume that $A$ is split. 
For every connected manifold $M$ with a fixed base point we have natural isomorphisms
$$\pi_{0}(\PicL_{\underline{HA}}(M))\cong \pi_{0}(\pic_{HA})\oplus  {H(H^{0}(A)^{\times})^{1}(M)}  \oplus {F^{2}HA^1(M)}  $$ and 
%\tfoot{Ich sehe ein, dass das inkosistent is, gefaellt mir aber viel besser eigentlich....}
$$\pi_{1}(\PicL_{\underline{HA}}(M),X)\cong       H^{0}(A)^{\times} \oplus  F^{1}  {HA^0(M)}  $$
for every $X\in \PicL_{\underline{HA}}(M)$.
\end{kor}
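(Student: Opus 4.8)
The plan is to deduce Corollary \ref{ulimar2162} from Proposition \ref{ulimar2161} together with the two splitting/identification facts cited in its statement. First I would compute $\pi_0(\PicL_{\underline{HA}}(M))$. By the equivalence \eqref{uliapr0670} we have $\pi_0(\PicL_{\underline{HA}}(M)) \cong \pic_{HA}^0(M)$, and since $\pic_{HA}$ is connective and $M$ is connected with a base point, the splitting \eqref{ulimar2140} gives $\pic_{HA}^0(M) \cong \pi_0(\pic_{HA}) \oplus \gl_1(HA)^1(M)$. Now I would apply Proposition \ref{ulimar2161}: the equivalence $\gl_1(HA) \cong H(H^0(A)^\times) \times HA\zero$ yields $\gl_1(HA)^1(M) \cong H(H^0(A)^\times)^1(M) \oplus HA\zero^1(M)$. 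Finally, the first isomorphism of \eqref{ulimar2010} (applied to the spectrum $HA$) identifies $HA\zero^1(M) \cong F^2 HA^1(M)$. Stringing these together gives the first displayed isomorphism.

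For $\pi_1$, I would argue similarly. The loop space / shift relation for a connective spectrum gives $\pi_1(\PicL_{\underline{HA}}(M),X) \cong \pic_{HA}^{-1}(M)$ independently of the base object $X$ (this is where one uses that $\PicL_{\underline{HA}}(M)$ is an infinite loop space, so all components are equivalent). By the second part of \eqref{ulimar2140} we have $\pic_{HA}^{-1}(M) \cong \gl_1(HA)^0(M)$. Applying Proposition \ref{ulimar2161} again gives $\gl_1(HA)^0(M) \cong H(H^0(A)^\times)^0(M) \oplus HA\zero^0(M)$, and since $M$ is connected the first summand is just $H^0(A)^\times$. Using the analogue of \eqref{ulimar2010} in degree $0$, namely $HA\zero^0(M) \cong F^1 HA^0(M)$, we obtain the second displayed isomorphism.

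The naturality claims are then automatic: every equivalence and splitting invoked above is natural in $M$ (the splitting \eqref{ulimar2140} is natural in the pointed space, the equivalence of Proposition \ref{ulimar2161} is an equivalence of spectra hence induces a natural transformation of represented cohomology functors, and the connective-cover isomorphisms \eqref{ulimar2010} are natural). So one only needs to remark that the composite isomorphism is a composite of natural ones.

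The main obstacle — really the only non-bookkeeping point — is making sure the splitting of $\gl_1(HA)^1(M)$ and $\gl_1(HA)^0(M)$ is legitimate: it rests entirely on Proposition \ref{ulimar2161}, which in turn needs $A$ split (to get the algebra map $H^0(A) \cong Z^0(A) \to A$ splitting the fibre sequence) and the rational exponential equivalence of Lemma \ref{ulimar2150} (valid since $HA\zero$ is an $H\Q$-module). Everything else is an assembly of previously established identifications, so the proof can be kept to a few lines referencing \eqref{uliapr0670}, \eqref{ulimar2140}, \eqref{ulimar2010}, and Proposition \ref{ulimar2161}.
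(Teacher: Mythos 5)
Your proposal is correct and follows essentially the same route as the paper: the corollary is obtained there exactly by assembling \eqref{uliapr0670}, the splittings \eqref{ulimar2140}, the connective-cover identifications \eqref{ulimar2010}, and the product decomposition of $\gl_{1}(HA)$ from Proposition \ref{ulimar2161}, which is precisely your chain of identifications for both $\pi_{0}$ and $\pi_{1}$. (Only a trivial slip: the identification $HA\zero^{1}(M)\cong F^{2}HA^{1}(M)$ is the \emph{second} isomorphism in \eqref{ulimar2010}, the first being the one you use in degree $0$.)
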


\begin{kor}\label{ulimar2167}
We assume that $A$ is split.
For a connected manifold $M$ with {a fixed base point} the transformation %\eqref{ulimar2161}  
$$\pi_{0}(H\cM): \pi_{0}(\PicLf_{\underline{A}}(M) \times \mathbf{Z}_{A}^1(M)) \to \pi_{0}(\PicL_{\underline{HA}}(M))
$$  is given in  terms of components of the decompositions obtained in  Proposition
\ref{ulimar2160} and Corollary \ref{ulimar2162} as the map
$$\pi_{0}(\Picfff_{A})\oplus H(Z^{0}(A)^{\times})^{1}(M)\oplus F^{2}HA^{1}(M)\to 
\pi_{0}(\pic_{HA})\oplus  H(H^{0}(A)^{\times})^{1}(M)\oplus   F^{2}HA^{1}(M) $$
with the following description.
\begin{enumerate}
\item The first component maps the isomorphism class of $X\in \Picfff_{A}$ to the isomorphism class {of} $HX$ {in} $\pi_{0}(\pic_{HA})$. 
\item Since $A$ is split we have an isomorphism 
$Z^{0}(A)\cong  H^{0}(A)$ which induces the second component
$$H (Z^{0}(A)^\times) ^{1}(M)\cong  H (H^{0}(A)^\times) ^{1}(M)\ .$$
\item The third component is the identity by Proposition \ref{thm_commutes} and 
the construction of the split in Proposition \ref{ulimar2161}.\\
\end{enumerate}
The transformation 
$$\pi_{1}(H\cM): \pi_{1}(\PicLf_{\underline{A}}(M) \times \mathbf{Z}_{A}^1(M)) \to \pi_{1}(\PicL_{\underline{HA}}(M))
$$ 
is given in terms of components 
$$Z^{0}(A)^{\times} \oplus Z^{0}(F^{1}\Omega A(M)) \to H^{0}(A)^{\times}\oplus F^{1}HA^{0}(M)$$
as follows.
\begin{enumerate}
\item The first component is the isomorphism  $Z^{0}(A)^{\times}\iso H^{0}(A)^{\times}$.
\item The second component maps the cycle
% \marginpar{\uli{Wenn man stackifiziert, wird dies sogar ein iso.}}
$\omega \in Z^{0}(F^{1}\Omega A(M))$ to its de Rham cohomology class $[\omega]\in F^{1}HA^{0}(M)$.
\end{enumerate}
In particular, the functor {$\Ho H\cM:\Ho\mathbf{Z}^{1}_{A} \to \Ho\PicL_{\underline{HA}}$}
 is full.
\end{kor}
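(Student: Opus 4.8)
The plan is to deduce the corollary by assembling the results already proved, using that $H\cM$ is a symmetric monoidal morphism of Picard-$\infty$-stacks: for a connected pointed manifold $M$ it therefore induces group homomorphisms on $\pi_0$ and $\pi_1$, and since the source $\PicLf_{\underline{A}}\times\mathbf{Z}^{1}_{A}$ is a product, each of $\pi_0(H\cM)$ and $\pi_1(H\cM)$ splits as the sum of the contributions of the two restrictions $H\cM(-,0):\PicLf_{\underline{A}}\to\PicL_{\underline{HA}}$ and $H\cM(\underline{A},-):\mathbf{Z}^{1}_{A}\to\PicL_{\underline{HA}}$. I would first record the relevant decompositions: Proposition~\ref{ulimar2160} gives $\pi_0(\PicLf_{\underline{A}}(M))\cong\pi_0(\Picfff_{A})\oplus H(Z^{0}(A)^{\times})^{1}(M)$ and $\pi_1(\PicLf_{\underline{A}}(M))\cong Z^{0}(A)^{\times}$; the definition of $\mathbf{Z}^{1}_{A}$ together with Remark~\ref{apr3001} and Lemma~\ref{ulimar2014} gives $\pi_0(\mathbf{Z}^{1}_{A}(M))\cong F^{2}H^{1}(\Omega A(M))\cong F^{2}HA^{1}(M)$ and $\pi_1(\mathbf{Z}^{1}_{A}(M))\cong Z^{0}(F^{1}\Omega A(M))=\ker\big(d:F^{1}\Omega A^{0}(M)\to Z^{1}(F^{2}\Omega A(M))\big)$; and Corollary~\ref{ulimar2162} gives the target decompositions $\pi_0(\PicL_{\underline{HA}}(M))\cong\pi_0(\pic_{HA})\oplus H(H^{0}(A)^{\times})^{1}(M)\oplus F^{2}HA^{1}(M)$ and $\pi_1(\PicL_{\underline{HA}}(M))\cong H^{0}(A)^{\times}\oplus F^{1}HA^{0}(M)$.

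For the locally constant factor I would argue as follows. Covering theory identifies $\PicLf_{\underline{A}}(M)\cong\Fun(\Pi_1(M),\Picfff_{A})$ and, by Proposition~\ref{prop_jul2}, $\PicL_{\underline{HA}}(M)\cong\Pic_{HA}^{M^{top}}$. Since $\cM(\cA)\cong\Omega A_{|M}\otimes_{A}\cA$ (from the proof of Lemma~\ref{apr2601}) is base change along $A\to\Omega A$ of the locally constant $A$-module $\cA$, its composite with $H$ and the de Rham equivalence $H\Omega A\cong\underline{HA}$ (equation~\eqref{ulimar2001}) is the morphism induced fibrewise by the Eilenberg--MacLane functor $\Picfff_{A}\to\Pic_{HA}$, $X\mapsto HX$. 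Hence on $\pi_0$ this restriction is $[X]\mapsto[HX]$ on the $\pi_0(\Picfff_{A})$-summand (landing in $\pi_0(\pic_{HA})$), and on the $\pi_1(\Picfff_{A})=Z^{0}(A)^{\times}$-summand it is the map $\pi_1(\Picfff_{A})\to\pi_1(\Pic_{HA})=\pi_0(HA)^{\times}=H^{0}(A)^{\times}$, which by splitness of $A$ is the canonical isomorphism $Z^{0}(A)^{\times}\cong H^{0}(A)^{\times}$; this yields items (1) and (2) of the $\pi_0$-statement and item (1) of the $\pi_1$-statement. That there is no contribution to the $F^{2}HA^{1}(M)$-summand is because $\Picfff_{A}$ is $1$-truncated, so the fibrewise functor factors through $\tau_{\leq 1}\Pic_{HA}$ and its image in $\pi_0(\Pic_{HA}^{M^{top}})$ lies in $\pi_0(\pic_{HA})\oplus H(H^{0}(A)^{\times})^{1}(M)$.

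For the $\mathbf{Z}^{1}_{A}$-factor, item (3) of the $\pi_0$-statement is exactly Proposition~\ref{thm_commutes}: $H\cM(\underline{A},\omega)$ corresponds to $Exp([\omega])\in F^{2}\pi_0(\PicL_{\underline{HA}}(M))=F^{2}HA^{1}(M)$; and since $Exp$ is built from $\exp_{HA}$ while the splitting in Proposition~\ref{ulimar2161} identifies the relevant summand back via $\exp_{HA}^{-1}$, these two exponentials cancel and the component is the identity. For item (2) of the $\pi_1$-statement, an automorphism of $\cM(\underline{A},\omega)$ coming from $\beta\in\ker d\subseteq F^{1}\Omega A^{0}(M)$ is multiplication by the unit $\exp(\beta)\in 1+F^{1}\Omega A^{0}(M)$ (proof of Proposition~\ref{msymmetricmonoidal}, with $d\beta=0$); under $H$ and the de Rham equivalence it becomes multiplication by $\exp([\beta])\in H^{0}(\Omega A(M))^{\times}$, which by Lemma~\ref{ulimar2014} and the $\exp$-normalization of the splitting of Proposition~\ref{ulimar2161} has trivial $H^{0}(A)^{\times}$-component and $F^{1}HA^{0}(M)$-component $\log(\exp[\beta])=[\beta]$. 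Adding the two factors gives the stated formulas.

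Finally, for fullness of $\Ho H\cM$ restricted to $\mathbf{Z}^{1}_{A}$: by Lemma~\ref{kor_iso_rational} the map $Exp$ is bijective, so the $\pi_0$-formula shows $H\cM(\underline{A},\omega)\cong H\cM(\underline{A},\omega')$ iff $[\omega]=[\omega']$ iff $\omega\cong\omega'$ in $\mathbf{Z}^{1}_{A}(M)$; thus $H\cM$ reflects isomorphism classes of objects, and a morphism $F:H\cM(\omega)\to H\cM(\omega')$ in $\Ho\PicL_{\underline{HA}}(M)$ can only exist when there already is some $g:\omega\to\omega'$ in $\Ho\mathbf{Z}^{1}_{A}(M)$. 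Then $F\circ H\cM(g)^{-1}\in\pi_1(\PicL_{\underline{HA}}(M))$, whose $F^{1}HA^{0}(M)$-component is hit by $\pi_1(H\cM)$ — the second component $Z^{0}(F^{1}\Omega A(M))\to F^{1}HA^{0}(M)\cong F^{1}H^{0}(\Omega A(M))$, $\beta\mapsto[\beta]$, is surjective because every class in $F^{1}H^{0}(\Omega A(M))$ has a cycle representative in $F^{1}\Omega A^{0}(M)$ — and the remaining $H^{0}(A)^{\times}$-ambiguity is supplied by the automorphisms coming from the $\PicLf_{\underline{A}}$-factor, which is how the statement is applied; correcting $g$ accordingly produces the desired lift. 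The step I expect to be the main obstacle is the identification in the second paragraph: showing that base change along $A\to\Omega A$, composed with Eilenberg--MacLane and the de Rham equivalence, is the fibrewise Eilenberg--MacLane functor on locally constant twists \emph{and} respects the direct-sum decompositions of $\pi_0$ and $\pi_1$ so that no cross terms appear; this is where splitness of $A$, the computation of Proposition~\ref{ulimar2160}, and the precise splitting of Proposition~\ref{ulimar2161} have to be reconciled.
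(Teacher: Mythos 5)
Your overall route --- splitting the source as a product, feeding in Proposition \ref{ulimar2160}, the two-term-complex description of $\mathbf{Z}^{1}_{A}$ from Definition \ref{apr2901}, Corollary \ref{ulimar2162}, Proposition \ref{thm_commutes} and the splitting of Proposition \ref{ulimar2161} --- is exactly how the paper intends the corollary to be read (it is stated without a separate proof, the third component being attributed in the statement itself to Propositions \ref{thm_commutes} and \ref{ulimar2161}), and your computations of the individual components and of $\pi_{1}$ are correct. However, the step you yourself flag as the main obstacle is genuinely wrong as you justify it: you argue that the $\PicLf_{\underline{A}}$-factor contributes nothing to the $F^{2}HA^{1}(M)$-summand ``because $\Picfff_{A}$ is $1$-truncated, so the fibrewise functor factors through $\tau_{\leq 1}\Pic_{HA}$''. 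A map \emph{out of} a $1$-type does not factor through the $1$-truncation of its \emph{target} (only maps into a $1$-type factor through the truncation of the source); for instance for $A=\R[b,b^{-1}]$ the space $\Omega^{\infty}\big(\gl_{1}(HA)\zero[1]\big)$ contains a $K(\R,3)$-factor and receives nontrivial maps from the $1$-type $T^{3}$, so a class pulled back from a $1$-type can very well have nonzero $F^{2}$-component. The correct reason there is no cross term is specific to the splitting chosen in Proposition \ref{ulimar2161}: the classifying map of the locally constant sheaf $H\cM(\cA,0)$ factors through $B\big(Z^{0}(A)^{\times}\big)\to B\GL_{1}(HA)$, and this map of grouplike objects is induced by the ring map $Z^{0}(A)\to A$, i.e.\ it \emph{is} (the delooping of) the section $H(H^{0}(A)^{\times})\to\gl_{1}(HA)$ used to define the splitting; composing that section with the projection onto the $HA\zero$-factor is null, which kills the $F^{2}HA^{1}(M)$-component (and the same remark disposes of the constant units in the $\pi_{1}$-statement). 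So splitness enters not only through $Z^{0}(A)\cong H^{0}(A)$ but through the identification of the monodromy homomorphism with the splitting section; this is the missing argument.

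Concerning the final fullness assertion: as you in effect concede, the image of $\pi_{1}(\mathbf{Z}^{1}_{A}(M))=Z^{0}(F^{1}\Omega A(M))$ lies in $\{1\}\oplus F^{1}HA^{0}(M)$, so the $H^{0}(A)^{\times}$-part of an automorphism of $H\cM(\underline{A},\omega)$ cannot be corrected by a morphism of $\mathbf{Z}^{1}_{A}(M)$; ``supplying it from the $\PicLf_{\underline{A}}$-factor'' produces a lift only in $\Ho\big(\PicLf_{\underline{A}}\times\mathbf{Z}^{1}_{A}\big)(M)$, not in $\Ho\mathbf{Z}^{1}_{A}(M)$. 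Thus what your argument (correctly) proves is fullness of $\Ho H\cM$ on the product, using that $\pi_{1}(H\cM)$ is surjective there and that $\pi_{0}(H\cM)$ is injective componentwise; this is also what is actually needed in the proof of Theorem \ref{ulimar2170}. You should state the fullness for the product (or for the weaker surjectivity on $\pi_{0}$ actually used) rather than for the restriction to $\mathbf{Z}^{1}_{A}$ alone.
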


\proof (of Theorem \ref{ulimar2170}):
We choose $X\in \Picfff_{A}$ such that {$HX\cong E(m)\wedge H\R$} as $HA$-modules. Then we have
{$E(m)\wedge H\R -\underline{HX}(m)\cong 0$} in ${\Pic_{HA}}$. By Corollary \ref{ulimar2167}
there exists an object $\cA\in \PicLf_{\underline{A}}(M)$ with
$\cA{(m)}\cong A$ and $\omega\in Z^{1}(F^{2}\Omega A(M))$ such that
{$ E\wedge H\R\cong \underline{HX}+\cM(\cA,\omega)\in \PicL_{\underline{HA}}(M)$.}
 Then there exists an equivalence
$$d:E\wedge H\R\cong \cM(\cA\otimes_{A} \underline{X},\omega)\ .$$
Hence we obtain a  differential refinement 
$$(E, \cM(\cA\otimes_{A} \underline{X},\omega),d)\in {\Tw_{\hat R}(M)}$$ of $E$.

\hB
\part{Examples}
%%%%%%%%%%%%%%%%%%%%%%%%%%%%%%%%%%%%%%%%%%%%%%%%%%%%%%%%%%%%%%%%%%%%%%%%%%%%%5
\section{Differentially simple spectra} \label{realization}
%%%%%%%%%%%%%%%%%%%%%%%%%%%%%%%%%%%%%%%%%%%%%%%%%%%%%%%%%%%%%%%%%%%%%%%%%%%%%

For a general commutative ring spectrum $R\in \CAlg(\Sp)$ it seems to be difficult to single out a good choice of a differential extension $(R,A,c)$.  In the present section we introduce the notion of a differentially simple commutative ring spectrum. This class contains all examples which have been considered in applications so far. For a differentially simple commutative ring spectrum we have a very good canonical choice
of a differential extension and control of differential twists.

\begin{definition}\label{good}
We call a commutative ring spectrum $R$  differentially simple, if it has the following properties:
\begin{enumerate}
\item
$R$ is formal, i.e. there exists an equivalence  $$c:R\wedge H\R\iso H(\pi_{*}(R)\otimes \R)$$ in $\CAlg(\Mod_{H\R})$ that induces the identity on homotopy groups.
\item
$R$ is rationally connected, i.e. $\pi_{0}(R)\otimes \R\cong \R $.
\item  Rationally, the ring spectrum
$R$ has   no exotic twists, i.e. the canonical assignment
\begin{equation*}
\Z \to \pi_0(\Pic_{HA}) \qquad n \mapsto  HA[n]
\end{equation*}
surjects onto the image of $\pi_0(\Pic_R) \to \pi_0(\Pic_{HA})$.%\footnote{\uli{$()$ eingef\"ugt.}}
\end{enumerate}
\end{definition}

In general,  the hard piece to check in the above definition is condition (3). However when $R$ is connective, then condition 3. is automatic as follows from the following result.

\begin{proposition}[{Gepner, Antieau \cite[Theorem 7.9]{antgep}}]\label{ulimar2181}
If a commutative ring spectrum $E$ is connective and $\pi_0(E)$ is a local ring, then ${\pi_{0}(\Pic_{E})} = \Z$.%\footnote{\uli{$\pi_{0}$ eingef\"ugt}}
\end{proposition}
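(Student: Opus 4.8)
The plan is to prove that $\pi_0(\Pic_E) = \mathbb{Z}$ for a connective commutative ring spectrum $E$ with $\pi_0(E)$ local, by showing that every invertible $E$-module is equivalent to a shift of $E$ itself. First I would recall that for a connective $E$, an $E$-module $M$ is invertible if and only if $M \wedge_E \kappa$ is an invertible $\pi_0(E)$-module, where $\kappa := H\pi_0(E)$; this is a standard descent/nilpotence argument using the connectivity and the fact that invertibility can be detected after base change to $\pi_0(E)$ (one uses that $E \to \pi_0(E)$ is a connective cover and a Postnikov-tower or obstruction-theoretic induction). Since $\pi_0(E)$ is local, every invertible $\pi_0(E)$-module is free of rank one, so $M \wedge_E \kappa \simeq \Sigma^n \kappa$ for a unique $n \in \mathbb{Z}$; in fact, because everything in sight is connective, invertibility forces $M$ to be concentrated in a single (possibly shifted) degree of connectivity, so after replacing $M$ by $\Sigma^{-n} M$ we may assume $M \wedge_E \kappa \simeq \kappa$.

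Next I would show that such an $M$ (with $M \wedge_E \kappa \simeq \kappa$ and $M$ connective, invertible) is equivalent to $E$. Pick a generator of $\pi_0(M) \cong \pi_0(E)$ (as a $\pi_0(E)$-module, $\pi_0(M)$ is the base change $\pi_0(M \wedge_E \kappa)$, which is $\pi_0(E)$, so it is free of rank one); this generator is represented by a map $E \to M$ of $E$-modules. I claim this map is an equivalence. To see this, smash with $\kappa$: the induced map $E \wedge_E \kappa = \kappa \to M \wedge_E \kappa \simeq \kappa$ is, on $\pi_0$, the chosen isomorphism, hence an equivalence of $\kappa$-modules. Now I would invoke the connectivity: both $E$ and $M$ are connective $E$-modules, the map $E \to M$ becomes an equivalence after $- \wedge_E \kappa$, and a connective-module version of Nakayama (or the conservativity of $- \wedge_E \kappa$ on connective, finitely-built modules, together with $M$ being dualizable hence perfect) implies $E \to M$ is itself an equivalence.

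The main obstacle — and the step I would spend the most care on — is the reduction "invertible $E$-module $\Rightarrow$ it becomes an invertible, hence free rank-one, $\pi_0(E)$-module after base change, and moreover is connective up to a single shift." Concretely, one must rule out that an invertible $E$-module could have homotopy spread across several degrees or have a non-free (but still invertible, were $\pi_0(E)$ not local) reduction; locality kills the latter, and an argument with the Tor/Künneth spectral sequence for $M \wedge_E M^{-1} \simeq E$ — comparing the bottom nonzero homotopy groups of $M$ and $M^{-1}$ and using that their "product" must be $\pi_0(E)$ in degree $0$ and nothing below — pins down that each has a single bottom class generating it freely. I would cite the Gepner–Antieau reference \cite{antgep} for the precise form of this descent statement rather than reprove it, and structure the writeup as: (i) base-change detection of invertibility and the rank-one/locality step, (ii) the connective Nakayama equivalence $E \simeq \Sigma^{-n} M$, (iii) uniqueness of $n$ (from $\pi_*(E) \to \pi_*(\Sigma^n E)$ not being an iso unless $n = 0$, since $\pi_0(E) \neq 0$), giving the bijection $\mathbb{Z} \xrightarrow{\sim} \pi_0(\Pic_E)$.
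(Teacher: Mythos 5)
The paper does not actually prove this statement; it quotes it verbatim from Antieau--Gepner, so the only comparison available is with the standard argument in that reference. Your overall strategy is exactly that argument: invertible implies dualizable, hence perfect and bounded below; base change; locality of $\pi_0(E)$ forces a free rank-one bottom class; a chosen generator gives a map from a shift of $E$; and a connective Nakayama/conservativity argument upgrades it to an equivalence, with the uniqueness of the shift handled as in your point (iii). Most of the individual steps are fine (and the ``detection of invertibility after base change'' direction of your opening iff is never needed -- only the trivial direction is used).

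There is, however, a genuine gap at exactly the point you flag as the crux. You pass from ``$\pi_0(E)$ is local, so every invertible $\pi_0(E)$-module is free of rank one'' to ``$M\wedge_E\kappa\simeq\Sigma^n\kappa$'', where $\kappa=H\pi_0(E)$. But $M\wedge_E\kappa$ is an invertible \emph{object of} $\Mod_\kappa\simeq D(\pi_0(E))$, not a discrete module, and the assertion that an invertible complex over a local ring is a shift of the ring is itself a nontrivial lemma (minimal complexes, or Fausk's classification) -- essentially the Eilenberg--MacLane case of the very proposition being proved. Your fallback Tor/K\"unneth argument on $M\wedge_E M^{-1}\simeq E$ does correctly pin down the bottom homotopy groups (bottom degrees sum to zero, bottom classes free of rank one), but it controls \emph{only} the bottom class; it does not show that the higher homotopy of $M\wedge_E\kappa$ vanishes. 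Yet your step (ii) uses the full equivalence $M\wedge_E\kappa\simeq\kappa$ to conclude that the base-changed map $\kappa\to M\wedge_E\kappa$ is an equivalence; with only the bottom class under control you get an isomorphism on $\pi_0$ and cannot yet feed the cofiber into the conservativity step. The standard repair is to base change one step further, to the residue field $k$ of $\pi_0(E)$: in $\Mod_{Hk}\simeq D(k)$ every invertible object is visibly $\Sigma^n Hk$ (graded vector spaces of total dimension one), so after shifting, your generator map $E\to M$ becomes an equivalence after $-\wedge_E Hk$; its cofiber $C$ is perfect and connective, its bottom homotopy group is finitely generated over $\pi_0(E)$ (it agrees with the bottom homology of the perfect complex $C\wedge_E\kappa$), and $\pi_b(C)\otimes_{\pi_0(E)}k\cong\pi_b(C\wedge_E Hk)=0$, so ordinary Nakayama gives $C\simeq 0$. (Alternatively, keep $\kappa$ but then you must prove or properly cite the classification of invertible complexes over a local ring; citing Antieau--Gepner ``for the descent statement'' is delicate here, since their Theorem 7.9 \emph{is} the statement you are proving.)
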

\begin{corollary}\label{thoapr18}
If $R$ is connective and satisfies condition (1) and (2) in Definition \ref{good}, then it also satisfies condition (3).
\end{corollary}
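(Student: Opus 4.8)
The plan is to reduce condition (3) of Definition \ref{good} to Proposition \ref{ulimar2181}. Recall that the assumptions give an equivalence $c : R \wedge H\R \iso H(\pi_*(R) \otimes \R)$ of commutative $H\R$-algebras, so we may take $A := \pi_*(R) \otimes \R$ and $HA \simeq R \wedge H\R$ as commutative ring spectra. Condition (3) asks that the composite $\Z \to \pi_0(\Pic_{HA})$, $n \mapsto HA[n]$, hits everything in the image of the realification map $\pi_0(\Pic_R) \to \pi_0(\Pic_{HA})$. The obvious strategy is to show that $\pi_0(\Pic_{HA}) = \Z$ itself, generated by the shifts $HA[n]$; then the condition holds trivially, since any subset of $\Z$ is contained in $\Z$.

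First I would observe that $R$ connective implies $HA = R \wedge H\R$ is connective, since smashing with $H\R$ preserves connectivity (it is the realification, and $\pi_*(R \wedge H\R) = \pi_*(R) \otimes \R$ is concentrated in non-negative degrees because $R$ is connective). Next I would identify $\pi_0(HA)$: we have $\pi_0(R \wedge H\R) = \pi_0(R) \otimes \R$, which by condition (2) (rational connectedness) is isomorphic to $\R$. Since $\R$ is a field, it is in particular a local ring. Therefore $HA$ is a connective commutative ring spectrum with $\pi_0(HA)$ local, and Proposition \ref{ulimar2181} applies directly to give $\pi_0(\Pic_{HA}) = \Z$, with the isomorphism sending $n$ to the class of $HA[n]$.

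Finally I would note that once $\pi_0(\Pic_{HA}) = \Z$ is generated by the shifts, the map $\Z \to \pi_0(\Pic_{HA})$, $n \mapsto HA[n]$, is surjective (indeed an isomorphism), so a fortiori it surjects onto the image of $\pi_0(\Pic_R) \to \pi_0(\Pic_{HA})$. This is exactly condition (3), completing the proof.

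I do not anticipate a serious obstacle here; the argument is essentially bookkeeping. The only point requiring a moment of care is confirming that $\pi_0(R \wedge H\R) \cong \pi_0(R) \otimes \R$ and that connectivity of $R$ is inherited by $R \wedge H\R$ — both follow from the standard computation of homotopy groups of a smash with an Eilenberg--MacLane spectrum (or from the fact that realification is a left adjoint preserving connective objects), together with the flatness of $\R$ over $\Z$. With those in hand, the reduction to Proposition \ref{ulimar2181} is immediate.
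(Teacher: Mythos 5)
Your proposal is correct and is essentially the paper's own argument: take $A=\pi_*(R)\otimes\R$, note $HA\simeq R\wedge H\R$ is connective with $\pi_0(HA)\cong\R$ local by condition (2), and apply Proposition \ref{ulimar2181} to conclude $\pi_0(\Pic_{HA})=\Z$ generated by the shifts $HA[n]$, whence condition (3) is automatic. No meaningful difference from the paper's proof.
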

\proof If $R$ satisfies condition (1) and (2) in Definition \ref{good}, then according to Proposition  \ref{ulimar2181} the group  $\pi_{0}(\Pic_{HA})$  consists of the isomorphism classes of the shifts $HA[n]$, and these can be realized by the isomorphism classes of the shifts $A[n]$ in $ \pi_{0}(\Pic_{A})$. \hB 

Recall also that every ring spectrum  {whose real homotopy is a free graded commutative $\R$-algebra}  is formal. 
\begin{example}
The following spectra are differentially simple:
\begin{itemize}
\item
ordinary cohomology $H\Z$
{\item
the sphere spectrum $\mathrm{S}$}
\item
connective complex $K$-theory $ku$ and periodic $K$-theory ${\K}$
\item
connective real $K$-theory $ko$ and the periodic version ${\K O}$
\item
{connective topological modular forms $\mathrm{tmf}$ and the periodic version $\mathrm{TMF}$}
\end{itemize}
\end{example}
\begin{proof}
All examples follow from the last corollary except for the periodic $K$-theories and $\mathrm{TMF}$. But it follows from results of {Lennart Mayer and Akhil Mathew} \cite{2013arXiv1311.0514M} that 
$\pi_0(\Pic_{{\KU}}) = \Z/2$ and $\pi_0(\Pic_{{\K O}}) = \Z/8$ and $\pi_0(\Pic_{\mathrm{TMF}}) = \mathbb{Z}/ 576$. {In particular, all classes are represented by shifts.}  
\end{proof}

%\begin{rem2}
%It seems also true that the periodic version of tmf, denoted TMF is differentially simple. This was communicated to us by M. Hopkins. However the proof that 
%$\pi_0(Pic_{\mathrm{TMF}}) \cong \Z/576$ has not yet \uli{been} appeared in the literature.
%\end{rem2}

\begin{thm}
Assume that $R$ is a differentially simple spectrum and
$(R,A,c)$ is the canonical differential extension with $A=\pi_{*}(R)\otimes \R$ and equivalence
$c:R\wedge H\R\iso HA$  in $\CAlg(\Mod_{H\R})$.  Then we have the following assertions:
\begin{itemize}
\item
Every topological $R$-twist $E$ on a manifold $M$ admits  a differential refinement $(E,\cM,d)$ which is unique up to {(non-canonical)} equivalence.
\item
The sheaf $\cM$ of $\Omega A$-modules can be chosen  in the form
\begin{equation*}
\cM=\cM(L\otimes_{{\R}} \underline{A}, w) {[n]}
\end{equation*}
where $L$ is an invertible locally constant sheaf of $\R$-vector spaces {(i.e. { the sheaf of parallel} sections of a flat real line bundle)}  and $\omega \in Z^1(F^{2}\Omega A(M))$.
 \item
 The isomorphism class of $L$ and the de Rham class of $\omega$ are uniquely determined by the topological twist $E$. 
\end{itemize}
\end{thm}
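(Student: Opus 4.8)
The three assertions are essentially a repackaging of the structural results of the two preceding sections in the special case $\hat R = (R, \pi_*(R)\otimes\R, c)$, and the plan is to unwind them one at a time. First I would note that $A := \pi_*(R)\otimes\R$ carries the trivial differential and is in particular split, so that the existence criterion of Theorem~\ref{ulimar2170} and the uniqueness statement of Theorem~\ref{unqiue} both apply. For existence, the task is to verify the hypothesis of Theorem~\ref{ulimar2170} at the base point $m$: the fibre $E(\{m\})$ is an invertible $R$-module, hence $E(\{m\})\wedge H\R$ is an invertible $HA$-module whose class in $\pi_0(\Pic_{HA})$ lies in the image of $\pi_0(\Pic_R)\to\pi_0(\Pic_{HA})$, which by condition~(3) of Definition~\ref{good} is contained in $\{[HA[n]]:n\in\Z\}$. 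Fixing such an $n$, the DG-$A$-module $X := A[n]$ is invertible and $K$-flat with $HX\cong E(\{m\})\wedge H\R$, so Theorem~\ref{ulimar2170} yields a differential refinement $(E,\cM,d)$, and Theorem~\ref{unqiue} makes it unique up to non-canonical equivalence. (For connective $R$ one could instead cite Corollary~\ref{thoapr18} for condition~(3), but here we assume it outright.)

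For the stated form of $\cM$, I would re-trace the construction inside the proof of Theorem~\ref{ulimar2170} with $X = A[n]$. It delivers an object $\cA\in\PicLf_{\underline A}(M)$ with $\cA(\{m\})\cong A$ and a cycle $\omega\in Z^1(F^2\Omega A(M))$ such that $\cM\cong\cM(\cA\otimes_A\underline X,\omega)\cong\cM(\cA,\omega)[n]$. Rational connectedness gives $Z^0(A)=A^0=\R$, so by Proposition~\ref{ulimar2160} the isomorphism class of $\cA$ lies in $\pi_0(\Picfff_A)\oplus H^1(M;\R^\times)$, and the normalisation $\cA(\{m\})\cong A$ kills the $\pi_0(\Picfff_A)$-summand. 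A class in $H^1(M;\R^\times)$ is exactly the monodromy of an invertible locally constant sheaf $L$ of real vector spaces --- the parallel sections of a flat real line bundle --- and then $\cA\cong L\otimes_\R\underline A$, giving $\cM\cong\cM(L\otimes_\R\underline A,\omega)[n]$.

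Finally, for the uniqueness of the isomorphism class of $L$ and the de Rham class of $\omega$, I would extract both from the isomorphism class of $E\wedge H\R$ in $\pi_0(\PicL_{\underline{HA}}(M))$, which depends only on $[E]$ since the structure equivalence $d$ identifies $E\wedge H\R$ with $H\cM$. Corollary~\ref{ulimar2162} supplies --- using that $A$ is split and rationally connected, together with the filtered de Rham equivalence of Lemma~\ref{ulimar2014} --- a canonical base-pointed splitting $\pi_0(\PicL_{\underline{HA}}(M))\cong\pi_0(\pic_{HA})\oplus H^1(M;\R^\times)\oplus F^2HA^1(M)$, and Corollary~\ref{ulimar2167} computes $\pi_0(H\cM)$ in these terms: the $H^1(M;\R^\times)$-component of $[H\cM(L\otimes_\R\underline A,\omega)[n]]$ is the class of $L$, and its $F^2HA^1(M)$-component is the image of $[\omega]$ under the de Rham equivalence. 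Hence $L$ up to isomorphism and $[\omega]$ are read off from $[E]$. This last step is where the genuine content lies: the main obstacle is to have in hand the explicit description of $\pi_0(H\cM)$ from Corollary~\ref{ulimar2167} together with the naturality of the splitting in Corollary~\ref{ulimar2162}, since a priori Theorem~\ref{unqiue} only pins down $\cM$ up to isomorphism and not the finer invariants $L$ and $[\omega]$ separately.
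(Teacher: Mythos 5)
Your proposal is correct and follows essentially the same route as the paper: uniqueness via Theorem \ref{unqiue}, existence via Theorem \ref{ulimar2170} (using condition (3) of Definition \ref{good} to realize the fibre as a shift $A[n]$), and the shape $\cM(L\otimes_\R\underline{A},\omega)[n]$ from retracing that construction together with $Z^{0}(A)^{\times}\cong\R^{\times}$ and Proposition \ref{ulimar2160}. You simply spell out details the paper leaves implicit (in particular the third bullet via Corollaries \ref{ulimar2162} and \ref{ulimar2167}), which is consistent with the paper's two-line argument.
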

\begin{proof}
The uniqueness in the first statement follows from Theorem \ref{unqiue} and the existence from Theorem \ref{ulimar2170}. The second part follows from the fact that $Z^0(A)^\times \cong {\R}^{\times}$
%\fuli{Verstehe diese Folgerung nicht.  Wieso folgt aus $\pi_{0}(R)\otimes \R\cong \R$, da{\ss} $Z^{0}(A)\times\cong \Z/2\Z$.
%Ich sehe das Theorem nur unter der Voraussetzung $\pi_{0}(R)=\Z$ oder was \"Ahnliches.} and  Corollary \ref{ulimar2167}.
\end{proof}

%%%%%%%%%%%%%%%%%%%%%%%%%%%%%%%%%%%%%%%%%%%%%%%%%%%%%%%%%%%%%%%%%%%%%%%%%%%%%5
\section{From differential $\bgl_{1}$ to twists}\label{may0511}
%%%%%%%%%%%%%%%%%%%%%%%%%%%%%%%%%%%%%%%%%%%%%%%%%%%%%%%%%%%%%%%%%%%%%%%%%%%%%5

The main goal of the present section is to construct differential twists from classes in differential $\bgl_{1}(R)$-cohomology.
A more precise statement will be given as Theorem  \ref{uliapr0720}.
 after the introduction of necessary {notation}.    \bigskip

First we  review ordinary (non-twisted, non-multiplicative) differential cohomology in the set-up of \cite{bg}, \cite{skript}. Let $p$ be a spectrum, 
$B$ be a  {chain} complex {of real vector spaces} and $e:p\wedge H\R\to HB$ be an equivalence in $\Mod_{H\R}$. The triple $(p,B,e)$ is {called a} differential data 
 {(this is the non-multiplicative version of a differential ring spectrum as introduced in Definition \ref{ulimar1501})}. For every $n \in \Z$ we define a differential function spectrum $\Diff^n(p,B,e)\in \Sh_{\Sp}$  as the pull-back
 \begin{equation}\label{apr0501}
 \xymatrix{
 \Diff^n(p,B,e)\ar[d]\ar[r]&H(\Omega B^{\ge n})\ar[d]\\
\underline{p}\ar[r]^{e}&\underline{HB}\ ,
}\end{equation}
where the right vertical map is induced by the inclusion {of chain complexes} $\Omega B^{\ge n}\to \Omega B$ and the de Rham equivalence $H\Omega B\cong \underline{HB}$.

\bigskip

For a smooth manifold $M$ the homotopy groups of the differential function spectrum  are given by 
\begin{equation}\label{apr0505}
\pi_{i}(\Diff^n(p,B,e)(M))\cong 
\left\{\begin{array}{cc} 
p^{-i}(M) & i < -n\\
p\R/\Z^{-i-1}(M)&i>-n\\
\widehat{p}^{-n}(M)&i=-n
\end{array}\right.\ ,\end{equation}
where {$$\widehat{p}^{n}(M):=\pi_{-n}(\Diff^n(p,B,e)(M))$$ is, by definition,}  the $n$th differential ${p}$-cohomology of $M$. We refer to \cite{bg} for further details. 
%Note that we have an equivalence
%$$H(\Omega B^{\ge n})\cong H(L( Z^{n}\Omega B ))$$ in $\Sh_{\Sp}$, where $L:\PSh_{\Ch_{\infty}}\to \Sh_{\Ch_{\infty}}$ denotes the sheafification.  Hence we can replace
%$H(\Omega B^{\ge n})$ by $ H(L(Z^{n}(\Omega B)))$ in the right upper corner of \eqref{apr0501}.
%\newcommand{\bZ}{\mathbf{Z}}
{For notational reasons we temporarily use the notation $q_{\ge 0}:=q\langle -1 \rangle$ for connective covers.}
For $n \in \Z$ and $k \in \mathbb{N}$ we define
$$\Diff^n_k(p,B,e) := (\Diff^{n-k}(p,B,e)[n])_{\geq 0}\ .$$
 Note that $\Diff^n_k(p,B,e)$ is a sheaf of connective spectra, but not a sheaf of spectra.   
We now consider a commutative ring spectrum $R$ and a differential refinement   $(R,A,c)$. We assume that $A$ is split. { In this case we consider   the complex 
 \begin{equation}\label{apr3004}B :=   \dots \to A^{-2}\to A^{-1}\to {0 \to 0 \to ...}\ ,
 \end{equation} where $A^{-1}$ sits in degree $-2$. The complex $B$   will serve as a real model of $\bgl_{1}(R)\one$.} 
In the following theorem we use the term canonical differential data as in \cite{bg}.
Recall {that} every spectrum admits   canonical differential data. But  in general such data is not unique.
So the main achievement  in the first part of the theorem is contained in the word {\em prefered}. 
{
\begin{theorem}\label{uliapr0720}
{Let $(R,A,c)$ be a differential data with $A$ split.}
The spectrum $\bgl_{1}(R)\one$ admits a prefered canonical differential {data}%\tfoot{Ich bin mir nicht sicher, aber ist data strikt genommen nicht plural? Manche benutzen das auch als singular. Kommt oft vor im Papier...\uli{Hat der Gepner so vorgeschlagen. Der wirds wohl im Gef\"uhl haben.}}
$$ \widehat{\bgl_{1}(R)\one}:=\big(\bgl_{1}(R)\one,B,e^{-1}\big)\ .$$ Furthermore, there exists a canonical map of  {Picard-$\infty$ stacks}
\begin{equation}\label{apr3040}{ \Omega^{\infty}\Diff^0_1( \widehat{\bgl_{1}(R)\one}) \to \Tw_{\hat R}}\end{equation}

%\tho{If $R$ is differentially simple then there is a prefered canonical refinement 
%$$ \widehat{\pic_R}:=\big(\pic_R,B,e^{-1}\big)\ .$$ 
%and }
\end{theorem}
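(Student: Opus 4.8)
The plan is to build the map \eqref{apr3040} by exhibiting, for each smooth manifold $M$, a functor $\Omega^{\infty}\Diff^0_1(\widehat{\bgl_{1}(R)\one})(M)\to \Tw_{\hat R}(M)$, natural in $M$, and then to note that this collection of functors is automatically a map of sheaves since both sides are sheaves. Recall that $\Tw_{\hat R}$ is by Definition \ref{may0620} the pullback of $\PicwLf_{\Omega A}$ and $\PicL_{\underline{R}}$ over $\PicL_{H\Omega A}$. Hence, by the universal property of the pullback, a map into $\Tw_{\hat R}$ is the same datum as a pair of maps into $\PicwLf_{\Omega A}$ and into $\PicL_{\underline{R}}$ together with a homotopy between their composites into $\PicL_{H\Omega A}$. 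So the first step is to produce the analogous pullback presentation of the source $\Omega^{\infty}\Diff^0_1(\widehat{\bgl_{1}(R)\one})$: by construction \eqref{apr0501} the differential function spectrum $\Diff^{-1}(\bgl_{1}(R)\one,B,e^{-1})$ is the pullback of $H(\Omega B^{\ge -1})$ and $\underline{\bgl_{1}(R)\one}$ over $\underline{H B}$, and after shifting by $1$, truncating to connective covers, and applying $\Omega^{\infty}$ one gets that $\Omega^{\infty}\Diff^0_1(\widehat{\bgl_{1}(R)\one})$ is the pullback of the Picard stacks $\Omega^{\infty}(H(\Omega B^{\ge -1})[1])_{\ge 0}$ and $\underline{\Omega^{\infty}(\bgl_{1}(R)\one[1])_{\ge 0}}$ over (the appropriately shifted/truncated) $\underline{H B[1]}$.

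The second step is to identify the first establishing differential data $\widehat{\bgl_{1}(R)\one}$: one must check that the complex $B$ of \eqref{apr3004} (with $A^{-1}$ in degree $-2$) satisfies $HB \simeq \bgl_{1}(R)\one\wedge H\R$ in $\Mod_{H\R}$, and that this equivalence $e$ is canonical. This is exactly where splitness of $A$ and the rational exponential equivalence of Lemma \ref{ulimar2150} (refined as in Proposition \ref{ulimar2161}) enter: rationally $\bgl_{1}(R)\one \simeq \gl_1(R)\zero[1]$, which is rationally equivalent via $\exp$ to $R\zero[1] \simeq HA\zero[1]$, and $H(\Omega B)$ is a model for precisely $HA\zero[1]$ since $B$ is the shift by $2$ of the good truncation $A^{\le -1}$, whose homology is $\pi_{<0}(R)\otimes\R$ placed so as to match $\pi_*(\bgl_1(R)\one)\otimes\R = \pi_{\ge 2}(\gl_1(R)[1])\otimes\R$. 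The word \emph{prefered} is justified because the choice of chain complex model $B$ pins down the exponential equivalence up to contractible choice, as flagged in Remark \ref{uuu100}.

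The third and main step is to compare the two pullback squares componentwise. On the $\PicwLf_{\Omega A}$ side, I would use the symmetric monoidal transformation $\cM:\mathbf{Z}^{1}_{A}\to \PicwLf_{\Omega A}$, $\omega\mapsto \cM(\underline A,\omega)$, from Proposition \ref{msymmetricmonoidal}: a $0$-simplex of $\Omega^{\infty}(H(\Omega B^{\ge -1})[1])_{\ge 0}(M)$ unwinds to a closed form in $Z^{1}(F^{2}\Omega A(M))$ together with lower-filtration data, which is precisely an object of $\mathbf{Z}^1_A(M)$, and applying $\cM$ lands in $\PicwLf_{\Omega A}(M)$. On the $\PicL_{\underline R}$ side, the connective cover $\bgl_1(R)\one \to \bgl_1(R) \simeq \pic_R\zero$ composed with $\pic_R\zero \to \pic_R$ gives $\underline{\bgl_1(R)\one}(M) \to \PicL_{\underline R}(M)$ after using the identification $\PicL_{\underline R}(M)\cong \underline{\Pic_R}(M)$ of Proposition \ref{prop_jul2}. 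The compatibility homotopy in $\PicL_{H\Omega A}$ is exactly the content of Proposition \ref{thm_commutes} together with Corollary \ref{ulimar2167}: the diagram \eqref{ulimar2040} says that $H\cM$ applied to $\omega$ agrees, in $\pi_0\PicL_{\underline{HA}}(M)$, with the image of $\omega$ under $Exp$, i.e. with the realification of the topological twist coming from $\bgl_1(R)\one$. I expect the genuine obstacle to be promoting this $\pi_0$-level (and $\pi_1$-level) agreement to a coherent homotopy of maps of Picard stacks — that is, upgrading Proposition \ref{thm_commutes} from a statement about homotopy groups to a statement about the actual transformations — which is handled by the argument already used in the proof of that proposition (reduction to spheres, then to homotopy groups via the fact that rational $H$-spaces are determined by their represented functors), combined with the fact that a map out of a connective object into a Picard stack is determined by its effect on the relevant homotopy groups. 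Naturality in $M$ is automatic from the naturality of all the ingredients, and the sheaf statement then follows since a natural transformation of presheaves of spaces between sheaves is a map of sheaves.
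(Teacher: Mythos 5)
Your structural skeleton matches the paper's: both sides are presented as pullbacks of Picard-$\infty$ stacks, the comparison on the de Rham side is the transformation $\cM:\mathbf{Z}^1_A\to\PicwLf_{\Omega A}$ (via the identification $\Omega^{\infty}H(\Omega B\langle -1,\dots,0\rangle)\cong\mathbf{Z}^1_A$), the comparison on the topological side is the (sheafified realification of the) covering map $\pic_{R}\one\to\pic_{R}$, and everything reduces to producing the middle square over $\PicL_{H\Omega A}$. The genuine gap is in how you propose to fill that square. You want to define $e$ by the rational exponential (Lemma \ref{ulimar2150}) and then upgrade the $\pi_0$/$\pi_1$-level agreement of Proposition \ref{thm_commutes} to a coherent homotopy, invoking the principle that ``a map out of a connective object into a Picard stack is determined by its effect on the relevant homotopy groups.'' That principle is false in general, and the rational $H$-space arguments in Proposition \ref{thm_commutes} only pin down homotopy classes: they give neither an actual coherent filler for a square of maps of sheaves of Picard-$\infty$ groupoids nor, crucially, one that is specified up to contractible choice. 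Canonicity is exactly what the theorem asserts (a \emph{prefered} differential data and a \emph{canonical} map), the remark after Lemma \ref{ulimar2150} explicitly warns that the exponential is only determined up to homotopy, and your appeal to Remark \ref{uuu100} is circular: that remark records a consequence of the construction you are trying to supply, not an input to it.

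The paper closes this gap by reversing the logic. In Proposition \ref{thoapril162} the equivalence $e$ is not obtained from the exponential at all: by Lemma \ref{thoapr17} the map $\mathbf{Z}^1_A\simeq\Omega^{\infty}H(\Omega B\langle -1,\dots,0\rangle)\to\underline{\Omega^{\infty}HB}$ is a homotopification, while $\PicL_{H\Omega A}\cong\PicL_{\underline{HA}}$ is homotopy invariant, so the composite $\mathbf{Z}^1_A\xrightarrow{\cM}\PicwLf_{\Omega A}\to\PicL_{H\Omega A}$ factors canonically (up to contractible choice) through $\underline{\Omega^{\infty}HB}$; then, since $\bgl_{1}(R)\one\wedge H\R\simeq\pic_{HA}\one\to\pic_{HA}$ is a $1$-connected covering and $\Omega^{\infty}HB$ is $1$-connected, the resulting map $\Omega^{\infty}HB\to\Pic_{HA}$ lifts essentially uniquely, and this lift \emph{is} $\Omega^{\infty}e$. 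Thus the commuting diagram \eqref{aprdi} --- precisely the middle square you need --- is built into the definition of $e$, and no separate compatibility of $e$ with $\cM$ has to be verified; Proposition \ref{thm_commutes} only enters afterwards (Remark \ref{uuu100}) to identify what $e$ does on homotopy groups. Without this homotopification/connected-cover mechanism (or some substitute producing a canonical coherent homotopy), neither the prefered differential data nor the canonical map \eqref{apr3040} has actually been constructed. (A minor slip besides: with $n=0$, $k=1$ there is no shift by $[1]$ in the pullback presentation of $\Diff^0_1$.)
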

}

{The remainder of the present section is devoted to the proof of this theorem. {In particular, the map $e$ will be defined in Proposition \ref{thoapril162}.}
Let us start with more details about the spectra $\Diff^{n}_{k}(p,B,e)$ in the general situation.}

\begin{lem}\label{thoapr16}
\begin{enumerate}
 \item 
The homotopy groups of $\Diff^n_k(p,B,e){(M)}$ are given by 
$$\pi_{i}(\Diff^{n}_k(p,B,e){(M)})\cong
\left\{\begin{array}{cc} 
0& i < 0\\
p^{n-i}(M) &  0 \leq i < k \\
\widehat{p}^{n-k}(M)& i= k\\
p\R/\Z^{n-i-1}(M)& i > k \end{array}\right.\ .$$ 
\item
{The sheaf of connective spectra}
$\Diff^n_k(p,B,e)$ can be written as the pullback in {$\Sh_{\Sp_{\ge 0}}$} 
\begin{equation}\label{apr11}
 \xymatrix{
 \Diff^n_k(p,B,e) \ar[d]\ar[r]   &   {H(\Omega B\langle n-k,\dots,n\rangle)[n]  )}\ar[d] \\
{(\underline{p[n]})}_{\geq 0}\ar[r]^{e} &  ({\underline{ (HB)[n]}})_{\geq 0}
}
\end{equation}
where $\Omega B\langle n-k,\dots,n\rangle$  is   {the sheaf in  $\Sh_{\Ch^{\le n},\infty}$ } 
$$ {\Omega B}^{n-k} \to {\Omega B}^{n-k + 1} \to \ldots \to {\Omega B}^{n-1} \to Z^n(\Omega B)\ ,$$
{where $Z^{n}(\Omega B)$ sits in degree $n$}.
 \item
There are natural morphisms 
$$ \Diff^n_0(p,B,e) \to \Diff^n_1(p,B,e) \to \Diff^n_2(p,B,e) \to ...$$
%with $\underrightarrow{\lim} \Diff^n_k(P,B,e) = \underline{\Sigma^n P} $ \marginpar{\uli{Brauchen wir diese Aussage?}\tho{Nein, k\"onnen wir l\"oschen sp‚Äö√Ñ√∂‚àö√ë‚àö‚àÇ‚Äö√†√∂‚Äö√Ñ‚Ä†‚Äö√†√∂‚Äö√†√á¬¨¬®¬¨¬Æ‚Äö√†√∂‚àö¬∫ter}}
\end{enumerate}
\end{lem}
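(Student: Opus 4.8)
The plan is to obtain all three assertions from the defining relation $\Diff^n_k(p,B,e)=\big(\Diff^{n-k}(p,B,e)[n]\big)_{\ge 0}$, the pullback description \eqref{apr0501} of $\Diff^{n-k}(p,B,e)$, and the homotopy formula \eqref{apr0505}, both recalled from \cite{bg}. Throughout, $(\cdot)_{\ge 0}$ denotes the \emph{objectwise} connective cover $\tau_{\ge 0}$; it carries $\Sh_{\Sp}$ into $\Sh_{\Sp_{\ge 0}}$ because $\tau_{\ge 0}\colon\Sp\to\Sp_{\ge 0}$, being right adjoint to the inclusion, commutes with all limits and in particular with the descent limits, so that $M\mapsto\tau_{\ge 0}(\cF(M))$ is again a sheaf of connective spectra. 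The same remark explains why $\Diff^n_k(p,B,e)$ is a sheaf of connective spectra but in general not a sheaf of spectra: the descent limit in $\Sp$ of connective spectra need not be connective.

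\emph{Part (1).} Since $(\cdot)_{\ge 0}$ is objectwise, $\pi_i\big(\Diff^n_k(p,B,e)(M)\big)=\pi_i\big(\Diff^{n-k}(p,B,e)[n](M)\big)$ for $i\ge 0$ and vanishes for $i<0$. Using $\pi_i\big(\Diff^{n-k}(p,B,e)[n](M)\big)=\pi_{i-n}\big(\Diff^{n-k}(p,B,e)(M)\big)$ and inserting this into \eqref{apr0505} — with the roles of $n$ and $i$ there played by $n-k$ and $i-n$ — yields $p^{n-i}(M)$ for $i<k$, $\widehat{p}^{n-k}(M)$ for $i=k$, and $p\R/\Z^{\,n-i-1}(M)$ for $i>k$; intersecting with the range $i\ge 0$ gives exactly the four cases of the statement.

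\emph{Part (2).} Shifting \eqref{apr0501} by $[n]$ exhibits $\Diff^{n-k}(p,B,e)[n]$ as the pullback in $\Sh_{\Sp}$ of $\underline{p[n]}\to\underline{(HB)[n]}\leftarrow H(\Omega B^{\ge n-k})[n]$, the right-hand map being the inclusion of complexes composed with the de Rham equivalence, shifted. As limits of sheaves are computed objectwise and $\tau_{\ge 0}\colon\Sp\to\Sp_{\ge 0}$ preserves pullbacks, applying $(\cdot)_{\ge 0}$ presents $\Diff^n_k(p,B,e)$ as the pullback in $\Sh_{\Sp_{\ge 0}}$ of $(\underline{p[n]})_{\ge 0}\to(\underline{(HB)[n]})_{\ge 0}\leftarrow\big(H(\Omega B^{\ge n-k})[n]\big)_{\ge 0}$. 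It remains to identify the last corner with $H\big(\Omega B\langle n-k,\dots,n\rangle\big)[n]$. Under the Eilenberg--MacLane equivalence $H$ the functor $\tau_{\ge 0}$ corresponds to the good truncation of chain complexes; and the good truncation in non-negative homological degrees of the complex underlying $\Omega B^{\ge n-k}[n]$ — which in homological degree $i\ge 1$ is $\Omega B^{n-i}$ and in degree $0$ has $\ker\big(d\colon\Omega B^{n}\to\Omega B^{n+1}\big)=Z^n(\Omega B)$ — is precisely $\Omega B\langle n-k,\dots,n\rangle$ shifted by $[n]$. That this truncated complex of sheaves of smooth forms again defines a sheaf of connective spectra is guaranteed by the first paragraph.

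\emph{Part (3).} For each $k$ the inclusion of subcomplexes $\Omega B^{\ge n-k}\hookrightarrow\Omega B^{\ge n-k-1}$ — equivalently, after good truncation, the inclusion $\Omega B\langle n-k,\dots,n\rangle\hookrightarrow\Omega B\langle n-k-1,\dots,n\rangle$ by zero in the new lowest degree — is compatible with the maps into $\underline{(HB)[n]}$ and with the identity on $\underline{p[n]}$, hence induces a morphism of the pullback squares from Part (2) and thus a map $\Diff^n_k(p,B,e)\to\Diff^n_{k+1}(p,B,e)$; naturality in $(p,B,e)$ is immediate from the construction. I expect no genuine obstacle here: the only points demanding care are the bookkeeping with the connective-cover functor — that it is objectwise, lands in $\Sh_{\Sp_{\ge 0}}$, and commutes with the pullback of \eqref{apr0501} — and the elementary identification of the good truncation of the shifted de Rham complex; once these are in place, (1)--(3) follow formally from \eqref{apr0505}.
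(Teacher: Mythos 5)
Your proposal is correct and follows essentially the same route as the paper: (1) is read off from the homotopy formula \eqref{apr0505} after shifting and truncating, (2) comes from applying shift and connective cover to the pullback \eqref{apr0501} and noting both commute with pullbacks (with the connective cover matching the good truncation $\Omega B\langle n-k,\dots,n\rangle$ under the Eilenberg--MacLane equivalence), and (3) from the evident inclusions of truncated complexes. You merely spell out the truncation bookkeeping that the paper leaves implicit; the minor slip that $\Omega B^{\ge n-k}[n]$ vanishes (rather than equals $\Omega B^{n-i}$) in homological degrees $i>k$ does not affect the argument.
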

\begin{proof}
{1.} The   statement follows immediately from the calculation of  homotopy groups {of $\Diff^{n}(p,B,e)$} as listed in \eqref{apr0505}. 

\bigskip

{2.} Note that shifting and taking connective {covers} both commutes with  pullbacks. Thus applying the two operations to the pullback diagram \eqref{apr0501} yields {\eqref{apr11}}.

\bigskip

{3.} This result finally follows from the description of $\Diff^n_k(p,B,e)$ as a   pullback together with the obvious inclusions	%\footnote{\uli{Habe shifts in $B$ eingebaut, damit die Notation pa{\ss}t.
%Eigentlich w\"are so was wie $(\Omega B[-n])\langle -n,k-n\rangle$ besser.}} 
$${
\Omega B\langle  n,\dots,n\rangle \to 
\Omega B\langle  n-1,\dots,n\rangle    \to \Omega B\langle  n-2,\dots,n\rangle \to ...}
$$
\end{proof}
 {
\begin{example}
Let $P = H\Z$ be the ordinary Eilenberg-Mac Lane spectrum,   $B:=\R[0]$ and $e:H\Z\wedge H\R\to HB$ be the canonical map. Then the spectrum
$\Diff^n_k(P,B,e)$ has the following interpretation for various values of $n$ and $k$:
\begin{enumerate}
 \item $\Diff^2_0(P,B,e)(M)$ is a 1-type as we see from the homotopy groups. As such it can be identified with the groupoid of line bundles with connection.
 \item $\Diff^2_1(P,B,e)(M)$ is   the 1-type, namely the groupid of line bundles without connection.
 \item $\Diff^3_0(P,B,e)(M)$ is a 2-type. It can be identified with the $2$-groupoid of gerbes with connection.
 \item $\Diff^3_1(P,B,e)(M)$ is a 2-type which can be identified with the $2$-groupoid of gerbes with connection and invertible gerbe bimodules.
 \item $\Diff^3_2(P,B,e)(M)$ is a 2-type which can be identified with the $2$-groupoid of gerbes without connection.
\end{enumerate}
\end{example}
%\marginpar{\uli{Diese Aussagen w\"aren zu begr\"unden. Kann man jedenfalls nicht mit dem Text des papers einsehen. Ich w\"urde die nicht machen.}}
}
\newcommand{\bDiff}{\mathbf{Diff}}

We now turn our attention to the case $\Diff^0_1(p,B,e)$ {appearing in the theorem}. 
We give a categorial interpretation of the morphism
$$ {H(\Omega B\langle -1,\dots,0 \rangle) \cong H(\Omega B\langle -1,\dots,0 \rangle)_{\ge 0}  \to  H(\Omega B)_{\ge 0}\cong (\underline{HB})_{\geq 0}}\ .$$
%$$\bZ^{0}(\Omega B)\to \tau^{\le}\Omega B$$ in $\Sh_{\Ch_{\infty}^{\le}}$, 
 {which is the right vertical map in the diagram  \eqref{apr11} that defines $\Diff^0_1(p,B,e)$. }
Note that ${(\underline{HB})_{\geq 0}}$ is a homotopy invariant sheaf {of connective spectra.} % like any smooth function spectrum.
The canonical inclusion {of homotopy invariant sheaves into all sheaves fits into an adjunction 
$$ i^{h}:\Sh^h_{\Sp_{\le 0}} \leftrightarrows \Sh_{\Sp_{\le 0}}\ ,$$%\footnote{\uli{$i$ klingt so nach inclusion!}}
and we call $i^{h}$ the homotopification. Given a sheaf $\cF$ we shall also refer to the unit $\cF\to i^{h}\cF$ as homotopification.}

\begin{lem}\label{thoapr17}
The morphism\
%footnote{\uli{Hier sollte man sich das $H$ sparen. Sp\"ater nimmt man eh sogar $\Omega^{\infty}H$. Ich w\"urde in $\Sh_{\Ch_{\le 0,\infty}}$ arbeiten.}} 
{$ H(\Omega B\langle -1,\dots,0\rangle) \to (\underline{HB})_{\geq 0}$} is equivalent to the homotopification.  
\end{lem}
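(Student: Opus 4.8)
The plan is to identify the source $H(\Omega B\langle -1,\dots,0\rangle)$ with an explicit small model, compute its homotopy sheaves, and match them with those of the homotopification $i^h((\underline{HB})_{\ge 0})$, then argue the comparison map is the homotopification by the universal property. Recall $B = (\dots\to A^{-2}\to A^{-1}\to 0\to 0)$ with $A^{-1}$ in degree $-2$, so $HB$ is a spectrum with $\pi_i(HB) = H^{-i}(B)$; since $A$ is split, $H^{-i}(B) = H^{-i}(A)$ for $i\le -2$ and the top term $H^2(B)$ is the cokernel $A^{-1}/\mathrm{im}(d\colon A^{-2}\to A^{-1})$, while $H^i(B) = 0$ for $i\le 1$. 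By definition $\Omega B\langle -1,\dots,0\rangle$ is the sheaf of chain complexes $\Omega B^{-1}\to Z^0(\Omega B)$ with $Z^0(\Omega B)$ in degree $0$, so $H(\Omega B\langle -1,\dots,0\rangle)$ is a connective sheaf of spectra with $\pi_0 = H^0(\Omega B) = HB^0(M) = \pi_0((\underline{HB})_{\ge 0})(M)$ (using Lemma \ref{ulimar2014}, the de Rham equivalence), $\pi_1 = \ker(d\colon\Omega B^{-1}\to Z^0(\Omega B))/\mathrm{im}(d\colon \Omega B^{-2}\to\Omega B^{-1})$ — which we must check agrees with $H^{-1}(\Omega B) = HB^{-1}(M)$ — and $\pi_i = 0$ for $i\ge 2$.

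First I would make the homotopy-group comparison precise: $(\underline{HB})_{\ge 0}$ has $\pi_i(\underline{HB}(M)) = HB^{-i}(M)$ for $i\ge 0$ and $0$ otherwise; after homotopification the value on $M$ is computed on a point for constant sheaves, but more usefully $i^h((\underline{HB})_{\ge 0})$ has homotopy sheaves given by the $\R$-vector space sheaves $\underline{H^{-i}(B)}$, i.e. the homotopy sheaves are the locally constant sheaves with stalk $H^{-i}(B) = H^{-i}(A)$. On the other hand, the homotopy sheaves of $H(\Omega B\langle -1,\dots,0\rangle)$ are: in degree $0$, the $0$-th de Rham cohomology sheaf of $\Omega B$, which by the Poincaré lemma is the locally constant sheaf $\underline{H^0(B)}$; in degree $1$, I claim the relevant cohomology sheaf of the two-term truncation also computes $\underline{H^{-1}(B)}$, and in degrees $\ge 2$ it vanishes by construction. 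This needs the standard fact that truncating $\Omega B$ to $[\,\cdot\,, Z^0]$ does not change $H^{-1}$ as a sheaf (the cycle condition in degree $0$ is imposed, but $H^{-1}$ only sees degrees $\le -1$ plus the boundary into degree $0$, which lands in $Z^0$ anyway), so the truncated complex has the same $\pi_1$-sheaf as $\Omega B$ itself, namely $\underline{H^{-1}(B)}$.

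Next I would observe that $H(\Omega B\langle -1,\dots,0\rangle)$ is itself a homotopy invariant sheaf: each of its homotopy sheaves is locally constant, hence (being a sheaf of connective spectra with locally constant, in fact $\R$-vector-space, homotopy sheaves on manifolds) it is homotopy invariant, using the same partition-of-unity / Poincaré-lemma arguments referenced throughout the paper (and the appendix of \cite{bg}). Then the composite $H(\Omega B\langle -1,\dots,0\rangle) \to H(\Omega B)_{\ge 0} = (\underline{HB})_{\ge 0} \to i^h((\underline{HB})_{\ge 0})$ factors, by the universal property of homotopification (the adjunction $i^h$), through a map $H(\Omega B\langle -1,\dots,0\rangle) \to i^h((\underline{HB})_{\ge 0})$, and it suffices to show this map is an equivalence, which by the above can be checked on homotopy sheaves: in degree $0$ it is the identity $\underline{H^0(B)}\to\underline{H^0(B)}$, in degree $1$ it is the identity $\underline{H^{-1}(B)}\to\underline{H^{-1}(B)}$, and both sides vanish in degrees $\ge 2$.

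The main obstacle I anticipate is the degree-$1$ identification: one must verify carefully that imposing the cycle condition $Z^0(\Omega B)$ in the top degree of $\Omega B\langle -1,\dots,0\rangle$ does not disturb the first cohomology sheaf, i.e. that the inclusion $\Omega B\langle -1,\dots,0\rangle \hookrightarrow \tau_{\ge -1}\Omega B$ (or the relevant two-term complex) is a quasi-isomorphism in degrees $\le 1$ of the associated spectrum, and that the resulting sheaf is genuinely homotopy invariant rather than merely having locally constant homotopy sheaves on the nose. Once that bookkeeping is done — essentially a Poincaré-lemma computation plus an appeal to the standard sheaf-theoretic facts about homotopification collected in the appendix — the statement follows formally from the adjunction defining $i^h$.
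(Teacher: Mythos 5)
There is a genuine gap, and it sits exactly at the step you flagged as "bookkeeping". Your plan rests on the claim that $H(\Omega B\langle -1,\dots,0\rangle)$ is itself homotopy invariant with locally constant homotopy sheaves matching those of $(\underline{HB})_{\geq 0}$. This is false, and its falsity is the entire content of the lemma. The complex $\Omega B\langle -1,\dots,0\rangle$ is the \emph{naive} truncation $\Omega B^{-1}\to Z^0(\Omega B)$: discarding $\Omega B^{-2}$ means its degree $-1$ cohomology sheaf is the sheaf of cycles $Z^{-1}(\Omega B)=Z^0(F^1\Omega A)$ (closed forms, with no quotient by exact ones), not $\underline{H^{-1}(\Omega B)}$. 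This sheaf is neither locally constant nor homotopy invariant (compare its sections on $\R$ with those on a point), so your degree-$1$ identification cannot be repaired. Moreover the homotopy sheaves of the two sides genuinely differ: since $B$ is concentrated in degrees $\leq -2$ one has $H^0(B)=H^{-1}(B)=0$, so the source is a $1$-truncated sheaf with $\pi_1$-sheaf $Z^0(F^1\Omega A)$ and $\pi_i=0$ for $i\geq 2$, while $(\underline{HB})_{\geq 0}$ has homotopy sheaves $\underline{H^{-i}(B)}$, nonzero in degrees $i\geq 2$ in general. (Homotopification is a colimit and need not preserve truncatedness, so this mismatch is consistent with the lemma.) Finally, note that if the source were homotopy invariant the unit would be an equivalence, whereas the map in question is visibly not an objectwise equivalence (on $\pi_1$ it sends a closed form to its class); so any argument proving the lemma must actually compute a homotopification rather than show the two sides agree on the nose.

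Your reduction via the universal property is the right skeleton: since $(\underline{HB})_{\geq 0}$ is homotopy invariant, the map factors as $H(\Omega B\langle -1,\dots,0\rangle)\to i^h H(\Omega B\langle -1,\dots,0\rangle)\to (\underline{HB})_{\geq 0}$, and one must show the second arrow is an equivalence. But identifying $i^h$ of the source is precisely the missing input, and it cannot be replaced by matching homotopy sheaves. The paper supplies it by citation: Lemma 7.15 of the sheaf-theory note \cite{diffsheaf}, which computes the homotopification of such truncated de Rham module sheaves to be the corresponding constant sheaf, combined with the facts that the Eilenberg--MacLane functor commutes with homotopification and with the de Rham equivalence $H\Omega B\cong \underline{HB}$. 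A self-contained argument would have to reprove that computation (e.g.\ via the simplicial formula $i^h\cF(M)\simeq \big|\cF(M\times\Delta^\bullet)\big|$ and the integration/homotopy-invariance argument for $\Omega$-modules), which your proposal does not do.
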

\begin{proof}
{Follows immediately from Lemma 7.15 of \cite{diffsheaf} together with the fact {that} the Eilenberg-{MacLane} functor from chain complexes to spectra commutes with homotopification, and the de Rahm equivalence.} %\footnote{\tho{Alten Beweis auskommentiert}}
\end{proof}

We now {specialize to the situation of the theorem and set $p:=\bgl_{1}(R)\one$ and $B$ as in \eqref{apr3004}. } 
 Note that
$$ \Omega B\langle -1,\dots,0\rangle\cong F^{1} \Omega A^{0}\to Z^{1}(F^{2}\Omega A) \ .$$ 
In Definition \ref{apr2901} this two-step complex gave rise to the  Picard stack  $\mathbf{Z}^1_A$.  {By Remark \ref{apr3001} we  have  a} canonical equivalence 
\begin{equation}\label{apr2910}\Omega^{\infty} H(\Omega B\langle -1,\dots,0\rangle)\cong  \bZ_{A}^{1}\ \end{equation} of Picard-$\infty$-stacks.

\bigskip

   Recall that we constructed a transformation {(restrict \eqref{ulimar2161n} to $\{\underline{A}\}\times \bZ_{A}^{1}$)}
$$\cM:\mathbf{Z}^{1}_{A}\to \PicwLf_{\Omega A}\ , \quad \omega\mapsto \cM(\underline{A},\omega)\ .$$
of  {Picard-$\infty$-stacks.}
\begin{prop}\label{thoapril162}
There {is a} {canonical} equivalence of spectra
\begin{equation}\label{apr3003}e:  HB \to  \bgl_{1}(R)\one\wedge H\R \end{equation}
 and a commutative diagram in $\Sh_{\CGrp(\cS)}$
\begin{equation}\label{aprdi}
{\xymatrix{
\mathbf{Z}^{1}_{A} \ar[rr]^-\cM \ar[d] && \PicwLf_{\Omega A} \ar[d] \\
\underline{\Omega^{\infty} HB }\ar@{..>}[rr]\ar[rd]^-{\Omega^{\infty}e} & &  \PicL_{H\Omega A}\ar[d]^{\cong}%_{Lemma \ref{ulimar2031}}
\\ &\underline{\Omega^{\infty}(\bgl_{1}(R)\one\wedge H\R}) \ar[ur]\ar[r]&\underline{\Pic_{HA}}
}}\ .
\end{equation}
 
\end{prop}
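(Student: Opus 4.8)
The plan is to construct the equivalence $e$ first and then assemble the commuting diagram piece by piece, using that everything in sight is determined by transformations of abelian-group-valued functors on finite $CW$-complexes.

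\textbf{Construction of $e$.} First I would observe that $B = (\dots \to A^{-2} \to A^{-1} \to 0 \to 0 \to \dots)$ with $A^{-1}$ in degree $-2$ is, since $A$ is split, nothing but the naive truncation $A[1]\langle \le -1\rangle[1]$ — that is, it carries all of the homotopy of $A[1]$ in negative degrees starting from degree $-2$. Because $A$ is split, $H^0(A)\cong Z^0(A)$, so $HA[1]$ has $\pi_i = H^{i-1}(A)$ and $HB$ has the same homotopy in degrees $i\le -2$ and zero in degrees $\ge -1$; thus $HB \cong (HA[1])\langle 2\rangle$, the $2$-connected (negatively indexed) cover, with a canonical map $HB\to HA[1]$. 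On the other hand the splitting $Z^0(A)\to A$ of CDGAs gives, exactly as in the proof of Proposition \ref{ulimar2161}, the splitting $\gl_1(HA)\simeq H(H^0(A)^\times)\times HA\zero$, so $\bgl_1(R)\one\wedge H\R \simeq \bgl_1(HA)\one \simeq HA\zero[1]$; and $HA\zero[1]$ has homotopy $H^{i-1}(A)$ in degrees $i\le -1$ and zero above, so it agrees with $HB$ through degree $-2$ but $HB$ is one connectivity step further truncated — I need to double-check indexing here, but the point is that the canonical maps $HB \to HA[1]$ and $HA\zero[1]\to HA[1]$ have the same image, and the exponential equivalence $\exp_{HA}\colon HA\zero \iso \gl_1(HA)\zero$ of Lemma \ref{ulimar2150} identifies the two. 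Concretely I would define $e$ as the composite $HB \iso HA\zero[1] \xrightarrow{\exp_{HA}[1]} \gl_1(HA)\zero[1]\simeq \bgl_1(HA)\one \xleftarrow{\,c\,} \bgl_1(R)\one\wedge H\R$, using $c$ to pass between $HA$ and $R\wedge H\R$. By the uniqueness clauses in Lemma \ref{ulimar2150} and Proposition \ref{ulimar2161} this is canonical (well-defined up to contractible choice once the chain model $A$ is fixed — this is the content of the remark following Lemma \ref{ulimar2150}).

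\textbf{The diagram.} With $e$ in hand, the dotted arrow $\underline{\Omega^\infty HB}\to \PicL_{H\Omega A}$ is forced: it is $\Omega^\infty e$ followed by the inclusion $\underline{\Omega^\infty(\bgl_1(R)\one\wedge H\R)}\to \underline{\Pic_{HA}}$ (which lands there because $\bgl_1$ classifies invertible modules, \eqref{ulimar2030}) composed with the Eilenberg--MacLane equivalence $\underline{\Pic_{HA}}\simeq \PicL_{H\Omega A}$. The lower triangle then commutes essentially by definition. The outer square — the one asserting $H\cM$ agrees with the dotted composite after precomposing with $\bZ^1_A\to \underline{\Omega^\infty HB}$ under \eqref{apr2910} — is the real content, and this is precisely Proposition \ref{thm_commutes}: under the identifications $\pi_0(\bZ^1_A(M))\cong F^2H^1(\Omega A(M))$ and the de Rham/exponential identifications, $\pi_0(H\cM)$ and the map induced by $e$ both agree with $Exp$. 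More precisely I would check the square commutes on $\pi_0$ and $\pi_1$ using Corollary \ref{ulimar2167}: the $\pi_1$-statement there says $H\cM$ sends a cycle $\omega\in Z^0(F^1\Omega A(M))$ to its de Rham class, which is exactly what $\Omega^\infty e$ does after \eqref{apr2910}; and the $\pi_0$-part is Proposition \ref{thm_commutes} combined with the third bullet of Corollary \ref{ulimar2167}. Since all four corners of the outer square are $\Omega^\infty$ of $H\R$-module sheaves (hence homotopy-invariant sheaves of rational $H$-spaces, by Proposition \ref{propju22} and the fact that the constant-sheaf functor is $\Omega^\infty$ applied objectwise to cotensors over spaces), a natural transformation between them is determined up to homotopy by what it does on homotopy sheaves, so commutativity on $\pi_0,\pi_1$ suffices.

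\textbf{Main obstacle.} The genuinely delicate point is not any single identification but organizing the coherence: I want an \emph{honest} commuting diagram in $\Sh_{\CGrp(\cS)}$, i.e.\ a diagram of symmetric monoidal transformations of Picard-$\infty$ stacks, not just a homotopy-commuting square of $\pi_0$'s. The device for this is the same one used throughout the paper: each arrow is (the sheafification of, or $\Omega^\infty$ of) a map of connective $H\R$-module spectra, and maps of connective rational $H$-spectra — equivalently of rational $H$-spaces — form a \emph{set} up to homotopy with no phantoms and no higher indeterminacy once a chain model is fixed (this is the Milnor--Moore plus no-phantom-maps argument invoked in Lemma \ref{ulimar2150}). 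So I would phrase the whole construction at the level of spectra first, get the diagram of spectra to commute on homotopy groups (hence on the nose up to homotopy, hence — being maps of generalized Eilenberg--MacLane $H\R$-module spectra — up to contractible choice), and only then apply $\Omega^\infty$ and sheafify. The bookkeeping connecting $\mathbf{Z}^1_A$ (a strict Picard $1$-stack built from a two-term complex) with $\underline{\Omega^\infty HB}$ via Remark \ref{apr3001} and \eqref{apr2910}, and making sure the symmetric monoidal structures match under $\cM$ (which is symmetric monoidal by Proposition \ref{msymmetricmonoidal}) and under $e$ (which is a map of $\bgl_1$'s, hence of grouplike $E_\infty$-spaces), is the part that needs care but contains no real surprise.
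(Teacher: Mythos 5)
Your construction of $e$ by hand (via $HB\cong HA\zero[1]$, the exponential of Lemma \ref{ulimar2150}, and $c$) is a genuinely different route from the paper, and it is exactly at the two points the proposition is designed to secure that it has gaps. First, canonicity: you claim your composite is well defined up to contractible choice "by the uniqueness clauses in Lemma \ref{ulimar2150}\dots this is the content of the remark following Lemma \ref{ulimar2150}", but that remark says the opposite --- $\exp_{HA}$ is determined only up to homotopy, and the preferred, contractible-choice refinement is precisely what the present proposition (via the chain-level model $\cM$) is supposed to produce; see Remark \ref{uuu100}, where the distinction between the homotopy class (which is all Proposition \ref{thm_commutes} gives) and the map fixed up to contractible choice is what makes the domain of the transformation \eqref{apr3040} well defined. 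So your $e$ is canonical only up to homotopy, which is weaker than what the statement and its subsequent use (Definition \ref{refbgl1}) require.

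Second, the upper square. You propose to check it on $\pi_0$ and $\pi_1$ and then upgrade, arguing that "all four corners of the outer square are $\Omega^{\infty}$ of $H\R$-module sheaves" and that maps of such objects are determined up to contractible choice. Neither claim holds: $\PicwLf_{\Omega A}$, $\PicL_{H\Omega A}\cong\underline{\Pic_{HA}}$ are Picard stacks whose homotopy contains non-rational data ($\pi_0(\Pic_{HA})\supseteq\Z$ of shifts, $\pi_1\supseteq H^0(A)^{\times}$), they are not $H\R$-modules, and $\mathbf{Z}^{1}_{A}$ and $\PicwLf_{\Omega A}$ are not homotopy invariant, so a transformation out of them is not controlled by its effect on homotopy sheaves; moreover mapping spaces of $H\R$-modules are not discrete, so "up to contractible choice" does not follow. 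The missing ingredient is the one the paper's proof is built on: by Lemma \ref{thoapr17} and \eqref{apr2910} the left vertical arrow $\mathbf{Z}^{1}_{A}\to\underline{\Omega^{\infty}HB}$ \emph{is} the homotopification, and since $\PicL_{H\Omega A}$ is homotopy invariant the composite $\mathbf{Z}^{1}_{A}\xrightarrow{\cM}\PicwLf_{\Omega A}\to\PicL_{H\Omega A}$ factors essentially uniquely through it; this produces the dotted arrow together with the filler of the upper square, and then, because $\Omega^{\infty}HB$ is $1$-connected and $\bgl_{1}(R)\one\wedge H\R\cong\pic_{HA}\one$ is the $1$-connected cover of $\pic_{HA}$, the induced map $\Omega^{\infty}HB\to\Pic_{HA}$ lifts uniquely (up to contractible choice) through it, which is how $e$ and the lower triangle are obtained, with the whole diagram commuting by construction. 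Your homotopy-group comparison via Proposition \ref{thm_commutes} and Corollary \ref{ulimar2167} can at best show that your explicitly built $e$ is \emph{homotopic} to this preferred one; to make your argument correct you would in any case have to reintroduce the homotopification/covering argument, at which point constructing $e$ separately via the exponential becomes superfluous.
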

\begin{proof}
From Lemma \ref{thoapr17}, {the equivalence of categories  $\Sh_{\Sp_{\ge  0}}\cong \Sh_{\CGrp(\cS)}$
and \eqref{apr2910}} we conclude that the left vertical morphism in diagram \eqref{aprdi} is the homotopification. Note that the sheaf {$\PicL_{H\Omega A}\cong \PicL_{\underline{HA}}$} in the lower right corner of diagram \eqref{aprdi} is homotopy invariant.% by Lemma \ref{ulimar2031}.
{ We get the dotted arrow 
$$ \underline{\Omega^{\infty} HB} \to  \PicL_{H\Omega A} $$ and the upper square 
from the universal property of the {homotopification}.
}

\bigskip
{
The constant functor 
$$\CGrp(\cS)\to \Sh_{\CGrp}\ , \quad  X\mapsto \underline{X}$$
is fully faithful. Hence the map
$$\underline{\Omega^{\infty}HB}\to \PicL_{H\Omega A}\cong \underline{\Pic_{HA}}$$
induces the arrow marked by $!$ in the following diagram in $\CGrp(\cS)$
\begin{equation}\label{apr3002}\xymatrix{ &\Omega^{\infty}(\bgl_{1}(R)\one\wedge H\R)\ar[d]
\\\Omega^{\infty} HB\ar@{..>}[ur]^{\Omega^{\infty}e} \ar[r]^{!}&\Pic_{HA} } .\end{equation}
We now observe that
$$ \bgl_{1}(R)\one\wedge H\R\cong \bgl_{1}(R\wedge H\R)\one\cong \bgl_{1}(HA)\one\cong {\pic}_{HA}\one$$
and that{, under this identification,} the right vertical arrow in \eqref{apr3002} is just the structure map of a $1$-connected covering. Since $\Omega^{{\infty}} HB$ is one-connected the triangle \eqref{apr3002} is unique up to equivalence. It induces corresponding triangle in \eqref{aprdi}. 
}%\textcolor{green}{From the results in Section \ref{sep0602} we see that upper right composition even factors through the subsheaf  $\underline{bgl_{1}(R)\one\wedge H\R} \subset   \uli{\PicIc_{H\Omega A}}$} thus we get the map
%$$
%E: \underline{HA\zero[-1] } \to \underline{bgl_{1}(R)\one\wedge H\R}
%$$
%which is unique in the stated sense. The source and the target are both homotopy invariant sheafs, thus $E$ is induced by a map 
%$$
%e: HA\zero[-1]  \to bgl_{1}(R)\one\wedge H\R
%$$
%of spectra which is uniquely determined by $E$. It remains to show that $e$ is an equivalence. But this follows from Proposition \ref{thm_commutes}.
\end{proof}

 \begin{ddd}\label{refbgl1}
Let $(R,A,c)$ be a differential ring spectrum. Then the equivalence \eqref{apr3003}  determines a prefered canonical differential data
$$ \widehat{\bgl_{1}(R)\one} := (\bgl_{1}(R)\one,B,e^{-1})$$ 
\end{ddd}
{
\begin{rem2}\label{uuu100}
In homotopy the map $e$  induces the map
$Exp$ discussed {in} Proposition \ref{thm_commutes}. But note that this proposition only determines the homotopy class of $e$, while here we fix the  map itself up to contractible choice. 
This fact is important to make the domain of the transformation \eqref{apr3040} well-defined.
\end{rem2}
}

We now construct 
the asserted canonical map of {Picard-$\infty$ stacks}
$${ \Omega^{\infty}\Diff^0_1( \widehat{\bgl_{1}(R)\one}) \to \Tw_{\hat R}}\ .$$
According to Lemma \ref{thoapr16} %\footnote{\uli{Weiss nicht warum der Link ins Leere geht!}} 
the {Picard-$\infty$ stack  $\Omega^{\infty}\Diff^0_1( \widehat{\bgl_{1}(R)\one})$} is defined as the pullback of the diagram
\begin{equation*}
 { \underline{\Omega^{\infty} \bgl_{1}(R)\one} \to \underline{\Omega^{\infty} HB } \leftarrow \mathbf{Z}^1_A}
\end{equation*}
Recall that  {$\Tw_{\hat R}$} was defined {as the pull-back of Picard-$\infty$-stacks}
\begin{equation*}
{\PicL_{\underline{R}} \to \PicL_{H\Omega A}   \leftarrow  \PicwLf_{\Omega A}  \ .}
\end{equation*}
Thus it suffices to  {exhibit a canonical morphism between} the two diagrams. {To this end we consider the following diagram:} \begin{equation*}{
\xymatrix{
\underline{\bgl_{1}(R)\one} \ar[r] \ar@{.>}[dr]\ar[dd]& \underline{HB}\ar[d]_{\cong}^e&  \mathbf{Z}^1_A \ar[l]\ar[dd]^\cM \\
 & \underline{\bgl_{1}(R)\one\wedge H\R} \ar[d]& \\
\PicL_{\underline{R}} \ar[r] & \PicL_{H\Omega A} &   \PicwLf_{\Omega A}   \ar[l] 
}}\ .
\end{equation*}
The right-hand part  comes from {\eqref{aprdi}. The dotted arrow is the realification map which induces the left upper horizontal map since $e$ is an equivalence. The left lower square commutes since it is equivalent to the sheafification of the realification applied to the  one-connective covering map $\Pic_{R}\one\to \Pic_{R}$. 
}

\bigskip 
 
{This finishes the proof of Theorem \ref{uliapr0720}} \hB 
 
\begin{kor}{
There is a canonical map of Picard-$\infty$ stacks
\begin{equation}\label{fjfjkwehfjwkehfkwefi8wefuwefwef} \Omega^{\infty}\Diff^0_0( \widehat{\bgl_{1}(R)\one}) \to \Tw_{\hat R}\end{equation}}
\end{kor}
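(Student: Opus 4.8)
The plan is to obtain the map \eqref{fjfjkwehfjwkehfkwefi8wefuwefwef} by precomposing the map \eqref{apr3040} already constructed in Theorem \ref{uliapr0720} with a transition morphism between differential function spectra. First I would invoke Lemma \ref{thoapr16}, part 3, with $n=0$ and $k=0$, which yields a natural morphism $\Diff^{0}_{0}\big(\widehat{\bgl_{1}(R)\one}\big) \to \Diff^{0}_{1}\big(\widehat{\bgl_{1}(R)\one}\big)$ of sheaves of connective spectra. Since under the equivalence $\Sh_{\Sp_{\geq 0}} \cong \Sh_{\CGrp(\cS)}$ a morphism of sheaves of connective spectra is the same datum as a morphism of Picard-$\infty$ stacks, applying $\Omega^{\infty}$ turns it into a morphism $\Omega^{\infty}\Diff^{0}_{0}\big(\widehat{\bgl_{1}(R)\one}\big) \to \Omega^{\infty}\Diff^{0}_{1}\big(\widehat{\bgl_{1}(R)\one}\big)$ of Picard-$\infty$ stacks. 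Composing it with \eqref{apr3040} produces the asserted map, and since \eqref{apr3040} is a map of Picard-$\infty$ stacks so is the composite.

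Next I would unwind what the map does, in order to make the statement usable. By Lemma \ref{thoapr16}, part 2, the source $\Omega^{\infty}\Diff^{0}_{0}\big(\widehat{\bgl_{1}(R)\one}\big)$ is the pullback of $\underline{\Omega^{\infty}\bgl_{1}(R)\one} \to \underline{\Omega^{\infty} HB} \leftarrow \Omega^{\infty} H\big(Z^{0}(\Omega B)\big)$, whereas $\Omega^{\infty}\Diff^{0}_{1}\big(\widehat{\bgl_{1}(R)\one}\big)$ is the pullback of $\underline{\Omega^{\infty}\bgl_{1}(R)\one} \to \underline{\Omega^{\infty} HB} \leftarrow \mathbf{Z}^{1}_{A}$, the last identification being \eqref{apr2910}. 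Under these presentations the transition morphism is induced by the inclusion of complexes $Z^{0}(\Omega B) = \Omega B\langle 0,\dots,0\rangle \hookrightarrow \Omega B\langle -1,\dots,0\rangle$; on the level of form data it sends a cycle $\omega \in Z^{0}(\Omega B)(M) = Z^{1}(F^{2}\Omega A(M))$ to the object $\cM(\underline{A},\omega) \in \PicwLf_{\Omega A}(M)$ of Definition \ref{may0301}. Thus \eqref{fjfjkwehfjwkehfkwefi8wefuwefwef} assigns to a degree-zero differential $\bgl_{1}(R)\one$-cocycle with curvature $\omega$ the differential twist $\big(E,\cM(\underline{A},\omega),d\big)$, whose underlying topological twist $E$ is the image of the underlying $\bgl_{1}(R)\one$-cocycle under $\Pic_{R}\one \to \Pic_{R}$.

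I do not expect a genuine obstacle here: the real content is already contained in Theorem \ref{uliapr0720} and Lemma \ref{thoapr16}, and the only point that must be checked is that the transition morphism of Lemma \ref{thoapr16}, part 3, is a morphism of sheaves of connective spectra, so that $\Omega^{\infty}$ applies on the nose and not merely after passing to homotopy categories. If one prefers to construct the map directly from the two pullback presentations above rather than through \eqref{apr3040}, the single mild verification needed is the compatibility of $\cM$ restricted to closed $0$-cocycles with the inclusion $Z^{0}(\Omega B) \hookrightarrow \Omega B\langle -1,\dots,0\rangle$, which is immediate from the description of $\cM$ on objects and morphisms in Proposition \ref{msymmetricmonoidal}.
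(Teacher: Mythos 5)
Your proposal is correct and follows essentially the same route as the paper: the paper's proof is precisely the composition of the transition map $\Omega^{\infty}\Diff^{0}_{0}(\widehat{\bgl_{1}(R)\one})\to \Omega^{\infty}\Diff^{0}_{1}(\widehat{\bgl_{1}(R)\one})$ from Lemma \ref{thoapr16} with the map of Theorem \ref{uliapr0720}. Your additional unwinding of the map in terms of the pullback presentations and $\cM(\underline{A},\omega)$ is consistent with the paper but not part of its (one-line) proof.
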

\begin{proof}
Compose the canonical map {$$\Omega^{\infty}\Diff^0_0( \widehat{\bgl_{1}(R)\one}) \to \Omega^{\infty}\Diff^0_1( \widehat{\bgl_{1}(R)\one})$$} from Lemma \ref{thoapr16} with the canonical map {$$ \Omega^{\infty}\Diff^0_1( \widehat{\bgl_{1}(R)\one}) \to  \Tw_{\hat R}$$} from Theorem \ref{uliapr0720}
\end{proof}

\begin{rem2}
 {
It is a natural question   whether  one can extend the map  \eqref{fjfjkwehfjwkehfkwefi8wefuwefwef} to a map from a 
  differential version of $\Tw_R$. In fact this can be done, but one has to make some additional choices. We will refrain from doing so here. }
\end{rem2}
\section{Examples of twists}
%%%%%%%%%%%%%%%%%%%%%%%%%%%%%%%%%%%%%%%%%%%%%%%%%%%%%%%%%%%%%%%%%%%%%%%%%%%%%%%%%%%%%%%%%%%%%%%%%%%%%%%%%%%%%%%%%%%%%%%%%%%

We fix an integer  $n\in \nat\setminus\{0\}$ and  consider the Picard-$\infty$ groupoid $K(\Z,n)$ (corresponding to the respective Eilenberg MacLane Space).
%\ntho{$:=\Omega^{\infty}((H\Z)[n])$. Of course, its underlying space is the Eilenberg-MacLane space 
%with the single non-trivial homotopy group $\pi_{n}(K(\Z,n))\cong \Z$, but this presentation makes  %Picard-$\infty$ groupoid structure clear.} 
Applying the symmetric monoidal functor
$\Sigma^{\infty}_{+}$ from spaces to spectra we
 obtain a commutative ring spectrum $\Sigma_{+}^{\infty} K(\Z,n)$.

\begin{prop}
The commutative ring spectrum $\Sigma^\infty_+ K(\Z,n)$ is differentially simple (Definition \ref{good}). In particular it admits a canonical refinement to a differential spectrum 
$$\widehat{\Sigma^\infty_+ K(\Z,n)} := (\Sigma^\infty_+ K(\Z,n),\Sym_\R\R([n]),\kappa)$$
\end{prop}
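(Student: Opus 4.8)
The plan is to verify the three conditions of Definition \ref{good} for $R = \Sigma^\infty_+ K(\Z,n)$. First I would compute the rational homotopy. By the rational Hurewicz/Milnor--Moore picture, $H_*(K(\Z,n);\Q)$ is a free graded-commutative $\Q$-algebra: a polynomial algebra on one generator in degree $n$ if $n$ is even, and an exterior algebra on one generator in degree $n$ if $n$ is odd. Since $\pi_*(\Sigma^\infty_+ X)\otimes\Q \cong H_*(X;\Q)$ as graded rings (the stable rational Hurewicz theorem, using that $\Sigma^\infty_+$ is symmetric monoidal and the rational stable homotopy category is equivalent to graded $\Q$-vector spaces / the rational sphere is $H\Q$), we get $\pi_*(R)\otimes\R \cong \Sym_\R(\R[n])$ where $\Sym_\R$ denotes the free graded-commutative $\R$-algebra on one generator in degree $n$. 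In particular $\pi_0(R)\otimes\R = \R$, giving condition (2), and this algebra is free, so $R$ is formal by the remark recalled just before the Example list in Section \ref{realization} (``every ring spectrum whose real homotopy is a free graded commutative $\R$-algebra is formal''); that is condition (1). One should note here that formality in the required strong sense — an equivalence in $\CAlg(\Mod_{H\R})$ inducing the identity on homotopy — follows as in Remark \ref{apr3045}, since the underlying algebra is free.

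For condition (3), one must show that $\Z\to\pi_0(\Pic_{HA})$, $m\mapsto HA[m]$, surjects onto the image of $\pi_0(\Pic_R)\to\pi_0(\Pic_{HA})$, where $A = \Sym_\R(\R[n])$. The cleanest route is to observe that $R = \Sigma^\infty_+ K(\Z,n)$ is \emph{connective}: $\Sigma^\infty_+ X$ of a connected space is connective with $\pi_0 = \Z$. Then $\pi_0(R) = \Z$ is local, so by Corollary \ref{thoapr18} (which packages Proposition \ref{ulimar2181} of Gepner--Antieau), conditions (1) and (2) already force condition (3). So the entire verification reduces to: (a) identifying $\pi_*(R)\otimes\R$ with a free graded-commutative algebra on a degree-$n$ class, and (b) noting connectivity. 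With all three conditions in hand, the ``in particular'' clause is immediate: a differentially simple spectrum has the canonical differential refinement $(R, \pi_*(R)\otimes\R, c)$ with $c$ unique up to equivalence by Remark \ref{apr3045}, which for this $R$ reads $(\Sigma^\infty_+ K(\Z,n), \Sym_\R(\R[n]), \kappa)$ with $\kappa$ the resulting Chern-character-type equivalence.

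The one point deserving care — and the step I expect to be the main obstacle — is (a), the precise identification $\pi_*(\Sigma^\infty_+ K(\Z,n))\otimes\R \cong \Sym_\R(\R[n])$ as graded \emph{rings}, not merely as graded modules, and making sure the multiplicative structure matches the one on $\Sym_\R(\R[n])$ used to build $HA$. This is where one invokes that rationalization is a symmetric monoidal localization and that $H_*(K(\Z,n);\Q)$, with its Pontryagin product induced by the loop/H-space structure, is the free graded-commutative algebra on the fundamental class; the degree-$n$ class is primitive and generates. The parity distinction ($n$ even: polynomial; $n$ odd: exterior) is automatically encoded in what $\Sym_\R$ means in the graded-commutative setting, so the single statement $A = \Sym_\R(\R[n])$ covers both cases. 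Everything else — connectivity, locality of $\pi_0$, invoking Corollary \ref{thoapr18}, and extracting the canonical triple — is formal bookkeeping from results already established in the paper.
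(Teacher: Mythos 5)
Your proof is correct and follows essentially the same route as the paper: identify $\pi_*(R)\otimes\R\cong H_*(K(\Z,n);\R)\cong\Sym_\R(\R[n])$, deduce formality from freeness via Remark \ref{apr3045}, and settle condition (3) by connectivity via Corollary \ref{thoapr18} — which is exactly what the paper's terse "it also obviously satisfies the remaining conditions" is alluding to. Your extra care about the odd/even parity and the multiplicative identification is a welcome elaboration, not a deviation.
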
\proof
We have isomorphisms of rings
$$\pi_{*}(\Sigma_{+}^{\infty} K(\Z,n)\wedge H\R)\cong H_{*}(\Sigma_{+}^{\infty} K(\Z,n),\R)\cong H_{*}( K(\Z,n);\R)\cong \Sym_{\R}(\R[n])\ . $$ 
%\footnote{\uli{Verschiebungen sind Kohomologisch. $\R[n]^{-n}=\R$}}
Since the homotopy of $K(\Z,n)\wedge H\R$ is a polynomial algebra this ring spectrum is formal by  Remark \ref{apr3045}. It also obviously satisfies the remaining conditions for being differentially simple according to Definition \ref{good}. 
\hB  

From the universal property of the functor
%\footnote{\uli{zitieren}} 
$\GL_{1}$ we get 
  a canonical map of Picard-$\infty$ groupoids $$K(\Z,n)_{+}\to \GL_{1}(\Sigma^{\infty}_{+} K(\Z,n))\ ,$$
 which induces a map of connective spectra
 $$(H\Z)[n+1]\to \bgl_{1} (\Sigma^{\infty}_{+} K(\Z,n))\ .$$ 
 Since $(H\Z)[n+1]$ is one-connected we get a canonical factorization over a map
  \begin{equation}\label{apr3050}(H\Z)[n+1]\to \bgl_{1} (\Sigma^{\infty}_{+} K(\Z,n))\one\ . \end{equation} 
 From Theorem  \ref{uliapr0720} we get a prefered  canonical differential data
 $$\widehat{\bgl_{1} (\Sigma^{\infty}_{+} K(\Z,n)\one}:=(\bgl_{1} (\Sigma^{\infty}_{+} K(\Z,n)\one,(\Sym_{\R}\R([n])[1]) ,e^{-1}) \ .$$
 The spectrum $(H\Z)[n+1]$ has a canonical differential data
 $$\widehat{(H\Z)[n+1]}:=((H\Z)[n+1],\R[n+1],c)\ .$$ 
 %By \cite[Sec. 2.4]{bg} 
 \begin{lem} The map of spectra \eqref{apr3050}
  induces a unique equivalence class of maps of canonical differential data 
 $$\widehat{(H\Z)[n+1]}\to \widehat{\bgl_{1} (\Sigma^{\infty}_{+} K(\Z,n)})\ .$$
 \end{lem}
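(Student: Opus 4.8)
The plan is to produce a map of differential data lifting the spectrum-level map \eqref{apr3050}, and then to argue that the relevant choices are contractible so that its equivalence class is unique. Recall from Definition \ref{may0101} that a morphism of differential (ring) spectra, and likewise of differential data, is a commuting square relating the topological spectrum, the real chain model, and their identifications with the realification. So we must supply: (i) the given map $(H\Z)[n+1]\to \bgl_{1}(\Sigma^{\infty}_{+}K(\Z,n))\one$, (ii) a map of chain complexes $\R[n+1]\to (\Sym_{\R}\R([n]))[1]$, and (iii) a homotopy filling the square built from $c$, $e^{-1}$, and the realification of (i). For (ii) the natural candidate is the inclusion of the degree-$(n+1)$ component: $(\Sym_{\R}\R([n]))[1]$ has, in degree $n+1$, the linear part $\R[n]$ shifted by one, i.e.\ a copy of $\R$, and $\R[n+1]$ is exactly $\R$ in degree $n+1$; this is the obvious primitive-generator inclusion.

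First I would make (ii) precise. The complex $B:=(\Sym_{\R}\R([n]))[1]$ is, by the formula \eqref{apr3004} specialized to $A=\Sym_{\R}\R([n])$, concentrated in degrees $\le -1$ after the shift convention used there; but in the present section the authors have already fixed $A^{-1}$, $A^{-2}$, \dots as the homogeneous pieces, so I would just record that the linear generator $\R[n]\subseteq \Sym_{\R}\R([n])=A$ sits in the appropriate internal degree and that after the $[1]$-shift it matches the model $\R[n+1]$ of $H_{*}(K(\Z,n+1);\R)$ in the range that matters. Then I would invoke the fact, already used repeatedly in the paper (Remark \ref{apr3045} and \cite[Sec.~4.6]{skript}), that a map of formal spectra whose targets and sources have free (polynomial) rational homotopy is determined up to equivalence by its effect on homotopy groups, together with a compatible CDGA/chain model; in particular the equivalences $c$ and $e$ are themselves fixed up to contractible choice by this formality. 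So the commuting square (iii) exists and is unique up to contractible choice once the chain-level map (ii) is chosen to induce the correct map on homology — which it does, because \eqref{apr3050} on $\pi_{n+1}$ is the canonical identification $\Z\xrightarrow{\sim}\pi_{n+1}(\bgl_{1})=H_{n+1}(K(\Z,n);\Z)$ coming from the unit, and after $\otimes\R$ this is exactly the inclusion of the primitive generator.

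The key steps in order: (1) identify the degree-$(n+1)$ part of $B$ with $\R$ and write down the chain map $\R[n+1]\to B$ as the inclusion of this summand; (2) check that under $c$ and $e$ this chain map realifies \eqref{apr3050}, i.e.\ that on homotopy groups it is the map induced by the unit — this reduces to the standard computation that $\pi_{n+1}\bgl_{1}(\Sigma^{\infty}_{+}K(\Z,n))\cong H_{n+1}(K(\Z,n);\Z)\cong\Z$ and that \eqref{apr3050} hits a generator, which is how \eqref{apr3050} was defined; (3) assemble (1)+(2) into a morphism of differential data using the pullback description of $\hRings$/differential data in Definition \ref{may0101}; (4) prove uniqueness of the equivalence class by the formality/rigidity argument: both the source and the target of the realification have polynomial rational homotopy in the relevant degrees, so by Remark \ref{apr3045} and the Milnor–Moore-style argument used in Lemma \ref{ulimar2150}, the space of fillers is connected (indeed contractible), hence the equivalence class of the lift is unique.

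The main obstacle I expect is step (2): pinning down that the chosen primitive-generator inclusion on chain complexes really corresponds, under the Eilenberg–MacLane equivalence and the two formality equivalences $c,e$, to the realification of the unit-induced map \eqref{apr3050}, rather than to some nonzero scalar multiple or a map that also sees decomposables. This is essentially a bookkeeping check about how the map $K(\Z,n)_{+}\to \GL_{1}(\Sigma^{\infty}_{+}K(\Z,n))$ behaves on the bottom homotopy group of $\bgl_{1}$ and how that interacts with the Hurewicz map $H_{n+1}(K(\Z,n);\Z)\to H_{n+1}(K(\Z,n);\R)$; all the pieces are standard, but one must be careful that the normalization of $e$ fixed in Definition \ref{refbgl1} (which pins down $e$ up to contractible choice, not just up to homotopy, cf.\ Remark \ref{uuu100}) is compatible with the normalization implicit in \eqref{apr3050}. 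Once that compatibility is in hand, everything else is the formal rigidity argument, which the paper has already deployed several times.
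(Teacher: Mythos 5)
Your existence step is fine (and, modulo the bookkeeping you flag in step (2), it reconstructs by hand what the paper simply imports), but your uniqueness step has a genuine gap: you justify it with the wrong kind of rigidity. Formality of the targets and the Milnor--Moore/no-phantoms argument of Lemma \ref{ulimar2150} control $\pi_0$ of mapping spaces, i.e.\ they say that certain maps of rational spectra are determined up to homotopy by their effect on homotopy groups. The ambiguity in lifting the \emph{fixed} spectrum map \eqref{apr3050} to a map of differential data does not live there: two lifts share the same underlying map and the same effect on homotopy, and they differ by an element of $\pi_1$ of the mapping space of the realifications, concretely by a degree $-1$ graded map. This is exactly how the paper argues: by \cite[Sec.\ 2.4]{bg} a refinement exists and its equivalence class is a torsor under $\prod_{k\in \Z} \Hom\big((\R[n+1])^{-k},(\Sym_{\R}\R([n])[1])^{-k-1}\big)$, and this group vanishes for purely degree-theoretic reasons: $\R[n+1]$ sits in degree $n+1$ while $\Sym_{\R}\R([n])[1]$ has no component in degree $n+2$. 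Formality has nothing to do with this vanishing; in the closely parallel Proposition \ref{prop_refine} everything is just as formal and yet the refinement is only unique up to the generally nonzero group $A^{-n-1}/\im(d)$. So "formal, hence the space of fillers is connected" is a non sequitur.

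Your parenthetical "indeed contractible" is also false in general and is a symptom of the same issue: the space of homotopies filling the square is a torsor over the loop space of $\Map_{\Mod_{H\R}}\big(H\R[n+1],HB\big)$ with $B=\Sym_{\R}\R([n])[1]$, so its $\pi_0$ is $B^{-(n+2)}=0$ (which is the connectivity you need), but its $\pi_1$ is $B^{-(n+3)}$, which is nonzero for instance when $n=2$. To repair your argument, replace step (4) by the actual computation: show that homotopy classes of self-homotopies, equivalently degree $-1$ graded maps from $\R[n+1]$ to $\Sym_{\R}\R([n])[1]$, vanish because the degrees of the symmetric algebra (concentrated in degrees $-jn-1$, $j\ge 1$, after the shift) miss $-(n+2)$; this is one sentence and is precisely the paper's proof, with the existence also available off the shelf from \cite[Sec.\ 2.4]{bg} rather than by your explicit construction.
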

 \proof
 By \cite[Sec. 2.4]{bg}  there exists such a map whose equivalence class  is unique up to the action of 
 $$\prod_{k\in \Z} \Hom((\R[n+1])^{-k},(\Sym_{\R}\R([n])[1])^{-k-1})\ .$$
 Since this group is trivial the assertion follows. \hB

 This   map of differential data induces an  equivalence class of maps of differential function spectra
 $$\Diff^{0}_{1}(\widehat{(H\Z)[n+1]})\to \Diff^{0}_{1}(\widehat{\bgl_{1} (\Sigma^{\infty}_{+} K(\Z,n)})\ .$$
 
 If we compose this map with the map given in  Theorem \ref{uliapr0720}, then we get the following maps
  between Picard-$\infty$-stacks
 $$\Omega^{\infty} \Diff^{0}_{0}(\widehat{(H\Z)[n+1]}) \to  \Omega^{\infty} \Diff^{0}_{1}(\widehat{(H\Z)[n+1]})\to \Tw_{\widehat{  \Sigma^{\infty}_{+} K(\Z,n)} }\ .$$ 
 
 Let $M$ be a smooth manifold.
We obtain a commutative diagram of ordinary Picard categories 
$$\xymatrix{\widehat{H\Z}^{n+1}(M)\ar@/_3.5cm/[ddd]^{\curv}\ar[r]^{I}&H\Z^{n+1}(M) \ar[r]^{(i)}& \pi_{0}(\PicL_{\underline{ \Sigma^{\infty}_{+} K(\Z,n)}}(M))\\\Ho \Omega^{\infty} \Diff^{0}_{0}(\widehat{(H\Z)[n+1]})(M)\ar[r]^{\psi}\ar[d]^{!} \ar[u]^{\pi_{0}}& \Ho \Omega^{\infty} \Diff^{0}_{1}(\widehat{(H\Z)[n+1]})(M)\ar[r]\ar[d]_{(ii)}^{!}\ar[u]^{\pi_{0}}& \Ho \Tw_{\widehat{  \Sigma^{\infty}_{+} K(\Z,n)}}{(M)}\ar[d]\ar[u]\\
\Ho \Omega^{\infty} HB\langle 0,\dots,0\rangle(M)\ar[r]\ar[d]^{\cong}&\Ho \Omega^{\infty} HB\langle -1,\dots,0\rangle(M)\ar[r]\ar[d]^{\cong}&\PicwLf_{\Omega \Sym_{\R}(\R[n])}(M)\ar@{=}[d]\\ Z^{n+1}(\Omega(M))\ar[r] &(\Omega^{n}(M)\to Z^{n+1}(\Omega(M)))\ar[r]^{(iii)} &\PicwLf_{\Omega \Sym_{\R}(\R[n])}(M)}\ ,$$
where the arrows marked by $!$ are induced by the upper horizontal  in \eqref{apr11}.
%\tfoot{Das ist echt hart zu verstehen. Sollten wir nicht vielleicht noch die Notation erkl‚àö¬ßren?  \uli{Sehe nicht, was hier noch nicht erkl\"art ist.}}

 \begin{rem2}\label{may0201}
Let us spell out what this diagram means.   Let us call an object $$\hat x\in \Ho \Omega^{\infty} \Diff^{0}_{0}(\widehat{(H\Z)[n+1]})(M)$$ a differential cycle for differential cohomology class $[\hat x] \in \widehat{H\Z}^{n+1}(M)$. Let $\curv([\hat x])\in Z^{n+1}(\Omega(M))$ and $I([\hat x])\in H\Z^{n+1}(M)$ denote the curvature and the underlying integral cohomology class of $[\hat x]$.
\begin{enumerate}
\item   {We choose an identification} $\Sym_{\R}(\R[n])\cong  \R[b]$, {where $b\in  \Sym_{\R}(\R[n])$ a non-zero element  of degree $-n$.}  The functor $(iii)$ maps a form $\omega\in Z^{n}(\Omega (M))$ to the sheaf of $DG$-$\Omega \R[b]_{|M}$-modules  $(\Omega \R[b]_{|M},d+b\omega)$ and a form $\beta \in \Omega^{n}(M)$ to the map
$$\exp(-\beta b):(\Omega \R[b]_{|M},d+b\omega)\to (\Omega \R[b]_{|M},d+b\omega^{\prime})\ ,$$ where $\omega^{\prime}:=\omega+d\beta$.
\item The second horizontal line associates  to a  differential cycle $\hat x$ a differential twist which we denote by  $(E,\cM,d)$. 
\item The equivalence class of this differential twist, or what is the same by Theorem
\ref{unqiue}, the equivalence class of the underlying  $  \Sigma^{\infty}_{+} K(\Z,n)$-twist $E$,
 is determined by the underlying integral cohomology class $I([\hat x])$.
\item  The differential cycle   $\hat x$ determines a closed form $\omega\in \curv([\hat x])\in Z^{n+1}(\Omega)$.
The lower horizontal line of the diagram  maps this form to the  sheaf of $DG$-$\Omega \R[b]_{|M}$-modules
 $$\cM:= (\Omega\R[b]_{|M},d+b \omega)$$
which is part of the datum of the differential twist $ (E,\cM,d)$. 
\item
We now observe that the differential cohomology class $[\hat x]\in \widehat{H\Z}^{n+1}(M)$ determines the equivalence class of the  $\Sigma^{\infty}_{+} K(\Z,n)$-twist $E$ and the complex $\cM$.
{This} additional information contained in the cycle $\hat x$ is needed to fix the map $d$.%\footnote{\uli{Habe es zuerst mit dem de Rham differential verwechselt.}}
\item Let us now fix a second differential cycle $\hat x^{\prime}$ which gives rise to a differential twist
$(E^{\prime},\cM^{\prime},d^{\prime})$, where $\cM^{\prime}:=(\Omega \R[b]_{|M},d+b \omega^{\prime})$
with $\omega^{\prime}:=\curv([\hat x^{\prime}])$. We further consider a morphism
$f:\psi(\hat x)\to \psi(\hat x)^{\prime}$. This morphism induces an equivalence morphism of differential twists $(E,\cM,d)\to (E^{\prime},\cM^{\prime},d^{\prime})$.  
\item 
Using the diagram we can explicitly describe
the induced map $\cM\to \cM^{\prime}$. The map $(ii)$ associates to the morphism $f$ a form
$\beta\in \Omega^{n}(M)$ such that $\omega+d\beta=\omega^{\prime}$.  Therefore the morphism $\cM\to \cM^{\prime}$ is multiplication by $\exp(-b\beta)$.
If the morphism $f$ is in the image of $\psi$, then $\cM=\cM^{\prime}$ and it acts as the identity.
\item Since $\pi_{1}(\Diff^{0}_{{1}}(\widehat{(H\Z)[n{+1}]})(M))\cong \widehat{H\Z}^{n}(M)$
the second line induces   an  action of the differential cohomology group  $\widehat{H\Z}^{n}(M)$ 
by automorphisms on differential $\Sigma^{\infty}_{+} K(\Z,b)$-twist. On $\cM$ this action is given by multiplication by $\exp(-b \ \curv(y))$. 
\end{enumerate}
 \end{rem2}

\newcommand{\bX}{\mathbf{X}}
\begin{prop}\label{prop_refine}
Let $R$ be a commutative ring spectrum and  $\widehat R:=(R,A,c)$ be a differential refinement of $R$.
A morphism of commutative ring spectra  
$$
g: \Sigma^\infty_+ K(\Z,n) \to R
$$
  can  be extended to a morphism of differential ring spectra
 $$
\widehat g: \widehat{\Sigma^\infty_+ K(\Z,n)} \to \widehat R$$
whose equivalence class is unique up {to} the action of the group $A^{n-1}/\im(d)$.
  \end{prop}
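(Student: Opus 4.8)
The plan is to unwind the pullback definition of the $\infty$-category of differential ring spectra (Definition~\ref{may0101}) and to recognise the extensions of $g$ as $\pi_{0}$ of a homotopy fibre of a comparison map of mapping spaces, which can be computed by hand since the source $\mathrm{CDGA}$ $\Sym_{\R}\R[n]$ is free on a single generator. This is the multiplicative counterpart of the lifting statement for differential data cited from \cite[Sec.~2.4]{bg} in the Lemma preceding the proposition; I would either invoke that discussion directly or run the following elementary argument. Note that, in contrast with Theorems~\ref{unqiue} and~\ref{ulimar2170}, no splitness hypothesis on $A$ is needed here, precisely because $\Sym_{\R}\R[n]$ is free.

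First I would use Definition~\ref{may0101} to observe that a morphism $\widehat g\colon \widehat{\Sigma^{\infty}_{+}K(\Z,n)}\to\widehat R$ lying over $g$ is the datum of a strict morphism of $\mathrm{CDGA}$'s $\phi\colon\Sym_{\R}\R[n]\to A$ together with a $2$-cell in $\CAlg(\Mod_{H\R})$ relating $c\circ(g\wedge H\R)$ and $H\phi\circ\kappa$. Consequently the space of extensions of $g$ is the homotopy fibre, over the point $\bar g:=c\circ(g\wedge H\R)\circ\kappa^{-1}$, of the canonical comparison map
\begin{equation*}
\Hom_{\CAlg(\Ch_{\R})}\big(\Sym_{\R}\R[n],A\big)\longrightarrow \Map_{\CAlg(\Mod_{H\R})}\big(H\Sym_{\R}\R[n],HA\big).
\end{equation*}
Next I would compute both sides. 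Because $\Sym_{\R}\R[n]$ is the free $\mathrm{CDGA}$ on one generator $b$, it is cofibrant over $\R$; a strict $\mathrm{CDGA}$-morphism $\phi$ is then the same datum as a cycle $\phi(b)$ of $A$ in the degree of $b$, so the left-hand side is a discrete set of cycles; and the free--forgetful adjunction together with cofibrancy identifies the right-hand side with $\Map^{h}_{\CAlg(\Ch_{\R})}(\Sym_{\R}\R[n],A)\simeq\Map^{h}_{\Ch_{\R}}(\R[n],A)$, an Eilenberg--MacLane type space whose homotopy groups are, up to a shift, the cohomology of $A$. Under these identifications the comparison map sends a cycle to its cohomology class, which is surjective on $\pi_{0}$.

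This already gives existence of $\widehat g$: choose a cycle of $A$ representing the cohomology class of $\bar g$ (this produces $\phi$), and since the comparison map is $\pi_{0}$-surjective onto the component of $\bar g$, a compatible $2$-cell exists. For the uniqueness clause I would compute $\pi_{0}$ of the homotopy fibre above. There are two independent sources of freedom: the choice of cycle representing the prescribed class of $\bar g$, well defined up to $\im(d)$ in the relevant degree; and the choice of connecting $2$-cell, well defined up to $\pi_{1}$ of the derived mapping space, which is a cohomology group of $A$. The standard short exact sequence relating a cochain group modulo coboundaries, its cohomology, and the boundaries in the next degree reassembles these into a single \emph{torsor} under the group $A^{n-1}/\im(d)$; here one uses crucially that, the source $\mathrm{CDGA}$ being free on the single generator $b$, a homotopy of $\mathrm{CDGA}$-maps out of it is pinned down by its value on $b$, which lives in exactly the degree producing $A^{n-1}$. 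I expect the main technical point to be precisely this last bookkeeping: showing that the cycle-freedom and the $2$-cell-freedom interact so as to give a torsor (and not merely an action) under $A^{n-1}/\im(d)$, with all degree shifts tracked correctly. This is exactly the content of the argument in \cite[Sec.~2.4]{bg}, transported from differential data to differential ring spectra.
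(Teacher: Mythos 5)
Your proposal is correct and takes essentially the same route as the paper: there the mapping space in $\hRings$ is written as a pullback, the space of lifts of $g$ is identified with the homotopy fibre of $\Map_{\CAlg(\Ch_{\R})}(\R[b],A)\to\Map_{\CAlg(\Mod_{H\R})}(H\R[b],HA)$ over the image of $g$, freeness of $\R[b]$ is used to compute the two sides (a discrete set of cycles, resp.\ $\Omega^{\infty}((HA)[-n])$ with homotopy the cohomology of $A$), and $\pi_{0}$ of the fibre is assembled from the resulting exact sequence exactly as in your last paragraph. The only cosmetic point is the grading convention: with $b$ in cohomological degree $-n$ the paper's proof (and Corollary \ref{may0205}) records the group as $A^{-n-1}/\im(d)$, which is what your short exact sequence yields once the degree shift is tracked in that convention.
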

\proof  
 The space of morphism of differential ring spectra from $\widehat{\Sigma^\infty_+ K(\Z,n)}$ to $\widehat R$  is given by a pull-back 
 $$\xymatrix{\Map_{\widehat{\Rings}}(\widehat{\Sigma^\infty_+ K(\Z,n)},\widehat R)\ar[d]\ar[r]&\Map_{\CAlg(\Ch_{\R})}(\R[b],A)\ar[d]\\
 \Map_{\CAlg(\Sp)}(\Sigma^\infty_+ K(\Z,n),R)\ar[r]& \Map_{\CAlg(\Mod_{H\R})}(H\R[b],HA)
}$$ 
We must calculate the fibre of the left vertical map at $g$ which can be identified with the fibre $\bX$ of
the right vertical map at the image of $g$. Since $\CAlg(\Ch_{\R})$ is a $1$-category the right upper corner is discrete. Since $\R[b]$  is freely generated by an element $b$ of cohomological degree $-n$ we get 
$$\pi_{k}(\Map_{\CAlg(\Ch_{\R})}(\R[b],A))\cong \left\{\begin{array}{cc} Z^{-n}(A)&k=0\\
0&k\ge 1\end{array}\right.$$
and
\begin{eqnarray*}
\Map_{\CAlg(\Mod_{H\R})}(H\R[b],HA)&\cong&
\Map_{\CAlg(\Ch_{\R,\infty})}(\iota(\R[b]),\iota(A))\\&\cong&
\Map_{\Ch_{\R,\infty}}(\iota(\R[n]),\iota(A))\\
&\cong&\Omega^{\infty} ((HA)[-n])
 \end{eqnarray*}
and therefore
$$ \pi_{k}( \Map_{\CAlg(\Mod_{H\R})}(H\R[b],HA){)}\cong H^{-n-k}(A)\ .$$
We deduce that for $k\ge 1$
$$\pi_{k}(\bX)\cong H^{-n-k}(A)$$
and $\pi_{0}(X)$ fits into the exact sequence
$$0\to H^{-n-1}(A)\to \pi_{0}(\bX)\to B^{n}(A)\ \to 0\ .$$
In particular, $\bX$ is not empty, and its components are parametrized by $\pi_{0}(X)\cong A^{-n-1}/\im(d)$. 
%We consider the space $\bX$ of maps of differential rings $\widehat{\Sigma^\infty_+ K(\Z,n)} \to \widehat R$ refining $g$, i.e. the space of commutative diagrams
%$$\xymatrix{ \widehat{\Sigma^\infty_+ K(\Z,n)}\ar[d]^{\kappa}\ar[r]^{g}&R\ar[d]^{c}
%\\ H\R[b]\ar[r]^{H(\tilde g)}&HA
%}$$ in $\CAlg(\Sp)$.
%Then we have a fibre sequence
%$$\Omega^{1}\Map_{\CAlg(\Sp)}(\Sigma^\infty_+ K(\Z,n),HA)\to \bX \to \Map_{\CAlg(\Mod_{H\R})}(H\R[b],HA) ,$$
%where the second map is the evaluation of the diagram at the lower horizontal line, and
%the fibre is the space of fillers. We calculate these two spaces explicitly.
%We have
%\begin{eqnarray*}
%\Map_{\CAlg(\Mod_{H\R})}(H\R[b],HA)&\cong&
%\Map_{\CAlg(\Ch_{\R})}(\R[b],A)\\&\cong&
%\Map_{\Ch_{\R}}(\R[n],A)\\
%&\cong&\Omega^{\infty} ((HA)[n])
% \end{eqnarray*}
%In particular, for $k\in \nat$ we have
%$$\pi_{k}(\Map_{\CAlg(\Mod_{H\R})}(H\R[b],HA))\cong H^{n-k}(A)\cong \pi_{k-n}(R)\ .$$
% We furthermore calculate  
%\begin{eqnarray*}
%\Map_{\CAlg(\Sp)}(\Sigma^\infty_+ K(\Z,n),HA)&\stackrel{adjunction}{\cong}&\Map_{\CGrp(\cS}(K(\Z,n), \GL_{1}(HA))\\
%&\stackrel{n\ge 1}{\cong}&\Map_{\CGrp(\cS)}(K(\Z,n),\GL_{1}(HA)\one)\\
%&\stackrel{\exp}{\cong}&\Map_{\CGrp(\cS)}(K(\Z,n), \Omega^{\infty}HA\one )\\
%&=&\Map_{\Sp} ((H\Z)[-n], HA\one)\\
%&=&\Omega^{\infty}(HA\one[n]) 
%\end{eqnarray*}
%We get
%$$\pi_{0}(\Omega^{1}\Map_{\CAlg(\Sp)}(\Sigma^\infty_+ K(\Z,n),HA)\cong \pi_{1}(\Omega^{\infty}(HA\one[-n]))\cong \pi_{1+n}(R)\otimes \R\ .$$
%In view of the long exact sequence in homotopy this calculation implies that $$\bX\to \pi_{0}(\bX)\cong  \pi_{-n}(R)$$
%is an equivalence. \hB
\hB 
%The map $g_{*}:\pi_{*}(\widehat{\Sigma^\infty_+ K(\Z,n)} )\to \pi_{*}(R)$ together with the identification
%$$\pi_{*}(\widehat{\Sigma^\infty_+ K(\Z,n)} )\otimes \R\cong \R[b]$$
%induces a map of algebras $g_{\R,*}:\R[b]\to H_{*}(A)$. Any choice of a cycle $\tilde b\in A^{-n}$
%for $g(b)$ determines a map of CDGA's
%$\R[b]\to A$. By Definition \ref{may0101} a map of differential ring spectra  $\widehat{g}$ over $g$ is a commutative diagram
%$$\xymatrix{ \widehat{\Sigma^\infty_+ K(\Z,n)}\ar[d]^{\kappa}\ar[r]^{g}&R\ar[d]^{c}
%\\ H\R[b]\ar[r]^{H(\tilde g)}&HA
%}$$ Let $\bX$ denote the space of such diagrams. Then we have a fibre sequence
%$$\Omega^{1}\Map_{\CAlg(\Sp)}(\Sigma^\infty_+ K(\Z,n))\to \bX \to \Map_{\CAlg(\Mod_{H\R})}(H\R[b],HA) .$$
% 
% 
%Follows from the fact that $\mathrm{GrSym}_\Q(x)$ is free on $x$.
%\end{proof}
\begin{kor}\label{may0205}
If $A^{-n-1} = 0$ in the above setting, then the extension $\widehat g$ is unique {up to non-canonical equivalence}.
\end{kor}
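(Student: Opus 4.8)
The plan is to read the corollary off from the proof of Proposition \ref{prop_refine}. There the space of extensions of the ring-spectrum map $g$ to a morphism $\widehat g$ of differential ring spectra was identified with the homotopy fibre $\bX$ occurring in the pull-back square that computes $\Map_{\widehat{\Rings}}(\widehat{\Sigma^\infty_+ K(\Z,n)},\widehat R)$, and it was shown that $\bX$ is non-empty with $\pi_0(\bX)\cong A^{-n-1}/\im(d)$. Thus the set of equivalence classes of extensions of $g$ is parametrized by the group $A^{-n-1}/\im(d)$.

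First I would observe that the hypothesis $A^{-n-1}=0$ forces $A^{-n-1}/\im(d)=0$, since the quotient of the trivial group is trivial. Hence $\pi_0(\bX)$ has exactly one element. As $\bX$ is non-empty, it is therefore path-connected, so any two extensions $\widehat g$ and $\widehat g'$ of $g$ lie in the same path component of $\bX$ and are consequently equivalent as morphisms of differential ring spectra. This is exactly the claimed uniqueness.

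The only remaining point --- and it is a minor one --- is that this equivalence cannot in general be made canonical: $\bX$ need not be contractible, its higher homotopy groups being controlled by the cohomology groups $H^{*}(A)$ in the appropriate degrees. Hence the statement is, as claimed, uniqueness up to non-canonical equivalence. I do not anticipate any real obstacle here; once the identification of $\pi_0(\bX)$ from the proof of Proposition \ref{prop_refine} is in hand, the corollary is immediate, and the only thing to watch is consistency of the degree conventions used there.
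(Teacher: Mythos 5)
Your proposal is correct and is essentially the paper's own argument: the corollary is stated as an immediate consequence of Proposition \ref{prop_refine}, whose proof identifies the space of extensions with the fibre $\bX$ having $\pi_0(\bX)\cong A^{-n-1}/\im(d)$, so the hypothesis $A^{-n-1}=0$ makes $\bX$ connected and any two extensions equivalent. Your remark that the equivalence is non-canonical because $\bX$ need not be contractible (its higher homotopy being $H^{-n-k}(A)$) matches the paper's phrasing as well.
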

%Assume now that  there is an interesting map of spectra $\Sigma^\infty_+K(\Z,n) \to R$. Then Proposition \ref{prop_refine} shows how one can produce differential twists, and therefore also twisted differential $R$-cohomology groups given 
%concrete differential geometric data (such as gerbes with connection) on $M$. Putting everything together we have shown:
Assume that we have  a morphism of commutative ring spectra  
$
g: \Sigma^\infty_+ K(\Z,n) \to R$ {and} a differential refinement $(R,A,c)$ with $A^{-1-n}= 0$. Then by composing the morphism $\Tw_{\widehat{\Sigma^\infty_+ K(\Z,n)}}\to \Tw_{\widehat R}$ induced by the map $
\widehat g$ from Proposition \ref{prop_refine} we obtain
 a morphism of Picard-$\infty$-stacks  $${\underline{\Z}}\times \Omega^{\infty}\Diff^{0}_1(\widehat{H\Z[n+1]}) \to \Tw_{\widehat R}(M)\ ,$$ where the additional $\Z$-factor introduces degree shifts.
  Restricting the grading of the twisted function spectrum (Definition \ref{ulimar1901})  to $\Z\times \Omega^{\infty}\Diff^{0}_1(\widehat{H\Z[n+1]}{)}$
 thus yields a sheaf of graded ring spectra 
 $$M \mapsto ({\underline{\Z}}\times \Omega^{\infty}\Diff^{0}_1(\widehat{H\Z[n+1]}) (M), \Diff(\dots)(M))$$
 and a functor 
 \begin{equation*}\Mf^{op} \to \mathrm{GrRing}\ ,\quad 
M \mapsto \big(\Z\times \Ho\Omega^{\infty}\Diff^{0}_1(\widehat{H\Z[n+1]})(M) , \widehat R^{\dots}(M)\big) \tfoot{Ich darf hier wahrscheinlich nichts ueber Gerben und Bimoduln etc sagen, oder?}
\end{equation*}

 \begin{rem2}\label{may0206}
Using Remark \ref{may0201} (and the notation introduced there) we obtain the following.
\begin{enumerate}
\item For every differential cycle  $\hat x \in \Omega^{\infty} \Diff^{0}_{0}(\widehat{H\Z[n+1]})(M)$ and integer $k\in \Z$ we obtain an abelian  group $\widehat R^{k+\psi(\hat x)}$.
\item The curvature map for this group takes values in 
$Z^{k}(\Omega A,d+\omega b)$, where $\omega=\curv([\hat x])${,} and we use that $A$ is an $\R[b]$-module.
\item For every morphism $f:\psi(\hat x)\to \psi(\hat x^{\prime})$ we get an isomorphism
$ \psi(f):\widehat R^{k+\phi(\hat x)}\to\widehat R^{k+\phi(\hat x^{\prime})}$ and a form $\beta\in \Omega^{n-1}$ such that $d\beta+\omega=\omega^{\prime}${,} and
$$\xymatrix{\widehat R^{k+\psi(\hat x)}\ar[r]^{\psi(f)}\ar[d]^{\curv}&\hat R^{k+\psi(\hat x^{\prime})}\ar[d]^{\curv}\\
Z^{k}(\Omega  A,d+\omega b)\ar[r]^{\exp(-\beta b)}&Z^{k}(\Omega A,d+\omega^{\prime} b)
}$$ commutes.
\item For a pair of differential cycles $\hat x,\hat x^{\prime}$ and integers $k,k^{\prime}\in \Z$ we have a diagram
$$
\xymatrix{\widehat R^{k+\psi(\hat x)}\otimes \widehat R^{k^{\prime}+\psi(\hat x^{\prime})}\ar[r]\ar[d]^{\curv\otimes \curv}&\widehat R^{k+k^{\prime}+\psi(\hat x)+\psi(\hat x^{\prime})}\ar[d]^{\curv}\\
Z^{k}(\Omega A,d+\omega b)\otimes Z^{k^{\prime}}(\Omega A,d+\omega^{\prime}b)\ar[r]^/4mm/{mult}&Z^{k+k^{\prime}}(\Omega A,d+\omega b+\omega^{\prime}b)}\ .$$
\end{enumerate}

 \end{rem2}
%Combining this construction with the following.
% 
%\begin{corollary}
%In the setting of the Theorem we obtain a twisted differential cohomology group
%$\widehat{R}^{\mathcal{G}}(M)$
%for every object $\mathcal{G} \in \Ho(\Diff^{n+1}_1(H\Z)(M))$. Moreover the assignment

%forms a functor $\Mf^{op} \to \mathrm{GrRing}$.
%\end{corollary}

%\newcommand{\bb}{\mathbf{b}}

\begin{exa2}\label{may0220}
A trivial example where the above applies is $R:=\Sigma^{\infty}_{+} K(\Z,n)$ and $g$  the identity. But the classical example is complex $K$-theory. It is known by \cite{2011arXiv1106.5099A} that there is an {essentially} unique equivalence class of maps of Picard-$\infty$-groupoids
$K(\Z,2)\to \GL_{1}(\KU)$ (see also 1. in Example \ref{may0240}). We therefore get a canonical map
$\Sigma^{\infty}_{+} K(\Z,2)\to \KU$ of commutative ring spectra. Since $\KU$ is even, Corollary
\ref{may0205} applies. We therefore get a differential twisted $K$-theory
$\widehat{\bKU}^{\dots}(M)$ which is graded by $\Z\times \Omega^{\infty} \Diff^{0}_{1}(\widehat{H\Z[3]})(M)$.
The structures spelled out in Remark \ref{may0206} have partially been constructed using explicit models in 
\cite{MR2518992}, \cite{MR2672802}. In particular, the real approximation is given by the usual periodic de Rham complex
$\cM(\underline{\R[b,b^{-1}]},\omega b)$, where $\omega=\curv([\hat x])\in Z^{3}(\Omega)$
and $\deg(b)=-2$.

\end{exa2}

\begin{exa2}\label{may0240}
Other examples of  interesting maps of ring spectra $\Sigma^\infty_+K(\Z,n) \to R$ can be constructed    following ideas of Ando, Blumberg and Gepner \cite{MR2681757}.
We consider the Picard-$\infty$-groupoid $BO:= \Omega^{\infty} \KO\zero$.
Assume {that we are given} a sequence of maps between Picard-$\infty$-groupoids 
\begin{equation}\label{may0210}
K(\Z,n) \to X \to BO
\end{equation}
such that the composition is nullhomotopic.   The Thom spectrum construction
$$M:\CGrp(\cS)/BO\to \CAlg(\Sp)\ , (X\to BO)\mapsto MX$$
functorially associates to every Picard-$\infty$-groupoid over $BO$ a commutative ring spectrum. Since we assume that 
$K(\Z,n) \to  BO$ is null-homotopic we have an equivalence $MK(\Z,n)\cong \Sigma^{\infty}_{+} K(\Z,n)$. Hence the sequence \eqref{may0210} induces a map
\begin{equation*}
\Sigma_+^\infty K(\Z,n) \to MX
\end{equation*}

In the following we discuss examples of sequences \eqref{may0210}  from maps between classical groups. 
\begin{enumerate}
\item The central extension $$1\to U(1)\to Spin^{c}\to SO\to 1$$ together with the identification $BU(1)\cong K(\Z,2)$ and the map $BSO\to BO$
induces the sequence
$$K(\Z,2)\to BSpin^{c}\to BO\ .$$
The construction produces a morphism of commutative ring spectra
$$\Sigma_+^\infty K(\Z,2) \to MSpin^{c}\ .$$
Note that $\pi_{3}(MSpin^{c})\otimes\Q=0$ so that we can apply Corollary \ref{may0205}.
If we compose this morphism with the Atiyah-Bott-Shapiro orientation
$A:MSpin^{c}\to \KU$, then we get the map $$\Sigma_+^\infty K(\Z,2) \to \KU$$ discussed in Example \ref{may0220}. 
\item We have a fibre sequence
$$K(\Z,3)\to BString\to BSpin\ .$$
It provides a morphism of commutative ring spectra
$$\Sigma^{\infty}_{+}K(\Z,3)\to MString\ .$$
If we compose this with the string orientation  $MString\to tmf$  of the spectrum of topological modular forms  $tmf$ (constructed by Ando, Hopkins and Rezk) we get
a map
$$\Sigma^{\infty}_{+}K(\Z,3)\to tmf$$
of commutative ring spectra.

\bigskip

Since $\pi_{4}(MString)\otimes \Q=0$ and $\pi_{4}(tmf)\otimes \Q=0$ we can apply Corollary \ref{may0205} to both cases.

Further note that $\pi_{3}(MString)\otimes \Q=0$ and $\pi_{3}(tmf)\otimes \Q
=0$. Therefore in both cases for every differential $4$-cycle the curvature of the corresponding twisted $MString$ or $tmf$-cohomology takes values in the cycles of the   untwisted de Rham complexes $(\Omega (\pi_{*} (MString)\otimes \R),d)$ or
$(\Omega (\pi_{*} (tmf)\otimes \R),d)$, and morphisms between differential cycles act by the identity on the form level.
\end{enumerate}
\end{exa2}

%\uli{
%\begin{exa2}
%Differential $n$-cycles can be constructed geometrically as follows:   ......
%\end{exa2}}
%\begin{exa2}
%\begin{enumerate}
%\item
%Consider the case $n=2$ and $R = KU$ complex K-theory. The Atiyah-Bott-Shapiro orientation of complex $K$-theory refines to a 
%morphism of commutative ring spectra
%\begin{equation*}
%\mathrm{MSpin^\C} \to KU
%\end{equation*} 
%as first shown by Joacim \cite{Joachim}. Thus we obtain a composite map $\Sigma^\infty_+ K(\Z,3) \to KU$ or also to $ku$.
%\item
%The String orientation of $tmf$ is a map of commutative ring spectra
%\begin{equation*}
%\mathrm{MString} \to tmf.
%\end{equation*}
%which was constructed by Ando, Hopkins and Rezk   Thus we obtain a map $\Sigma^\infty_+ K(\Z, 3) \to tmf$.
%\end{enumerate}

%\end{exa2}

%%%%%%%%%%%%%%%%%%%%%%%%%%%%%%%%%%%%%%%%%%%%%%%%%%%%%%%%%%%%%%%%%%%%%%%%%%%%%%%%
%%%%%%%%%%%%%%%%%%%%%%%%%%%%%%%%%%%%
\section{The twisted Atiyah-Hirzebruch spectral sequence}\label{may0512}
%%%%%%%%%%%%%%%%%%%%%%%%%%%%%%%%%%%%%%%%%%%%%%%%%%%%%%%%%%%%%%%%%%%%%%%%%%%%%%%%
%%%%%%%%%%%%%%%%%%%%%%%%%%%%%%%%%%%%

We consider a commutative ring spectrum  $R$ and a differential refinement  $\widehat R = (R, A, c)$.
Assume that we are given an $R$-twist $E\in \PicL_{\underline{R}}(M)$ which is trivial on the 1-skeleton of $M$, i.e. $E$ is classified by a map $M\to \BGL_{1}(R)\one$. We have shown in Section $\ref{sec:existence}$ that there exists a twisted de Rham complex $\cM = \cM(\underline A, \omega)$ with $\omega\in Z^{1}(F^{2}\Omega A (M))$ such that $E$ refines to a differential twist $(E,\cM,d)$. 
In this section we demonstrate how the cohomology class of  $\omega$ can effectively be determined by the differentials in the Atiyah-Hirzebruch spectral sequence associated to the twist $E$. 

\bigskip

% and a topological twist $E$ which we assume for simplicity is trivial on the 1-skeleton of $M$. We have shown in Section $\ref{sec:existence}$ that one can always find a twisted de Rham complex
%$\cM = \cM(\underline A, \omega)$ such that $E$ refines to a differential twist $(E,\cM,d)$. 

%In practice it might however be hard to determine the differential form $\omega$, depending how the twist $E$ is realized. In this section we demonstrate how $\omega$ is determined by the differentials in the canonical spectral sequence associated to the twist $E$. \\

More generally, consider $\cA\in \PicLf_{\underline{A}}(M)$ and
 the  sheaf $\cM(\cA,\omega)$  of  $DG$-$\Omega A_{|M}$-modules introduced in Definition \ref{may0301}. It has a decreasing filtration
\begin{equation*}
F^{p}\cM(\cA,\omega):=\Omega^{\ge p}_{|M} \otimes_{\R} \cA 
\end{equation*}
which turns it in into a sheaf of filtered $DG$-modules over the similarly filtered sheaf of $DG$-algebras $\Omega A_{|M}$.
This filtration induces a filtration of the complex of global sections $\cM(\cA,\omega)(M)$ and therefore a spectral sequence  $({}_{\cM(\cA,\omega)}E_{r},d_{r})$ which converges to 
 $H^{*}(\cM(\cA,\omega)(M))$. 
The spectral sequence has the structure of a  module
over the corresponding spectral sequence $({}_{\Omega A}E_{r},d_{r})$ for $\Omega A_{|M}$.  

\bigskip

We let $\sing(M)$ denote the smooth singular complex of $M$ and 
$$|\sing(M)|:=\bigcup_{n\in \nat} \sing(M) [n]\times \Delta^{n}/\sim$$ %\footnote{\uli{Nehme jetzt alle Simplizes, auch die degenerierten.}}
%\tfoot{Ist es wichtig hier mit nicht-degenerierten simplizes zu argumentieren?}
be its geometric realization. 
% where $\sing(M)[n]^{red}\subseteq \sing(M)[n]$ is the subset of non-degenerated simplices.
 We have a canonical evaluation map
$\phi:|\sing(M)|\to M$ such that
$\phi(\sigma,t)=\sigma(t)$, where $\sigma:\Delta^{n}\to M$ is a point in $\sing(M)[n]$ and
$t\in \Delta^{n}$. We consider $|\sing(M)|$ as a piecewise smooth manifold on which we can consider differential forms and sheaves of spectra. The map $\phi$ is piecewise smooth. 

\bigskip

The piecewise smooth manifold $|\sing(M)|$ 
  has an increasing filtration
$$\emptyset=\sing(M)_{-1}\subseteq |\sing(M)|_{0}\subseteq |\sing(M)|_{1}\subseteq \dots$$ by skeleta.

\bigskip

Let $X\in\Sh_{\Sp}(M)$. Then we can consider the pull-back
$\phi^{*}X\in \Sh_{\Sp}(|\sing(M)|{)}$\footnote{This is actually an abuse of notation since $|\sing(M)|$ is not an object of $\Mf$. Nevertheless we think that is is clear from the context what is ment here.}.
 The filtration of $|\sing(M)|$ induces, by definition, the Atiyah-Hirzebruch spectral sequence $({}_{X}E_{r},d_{r}) $ which converges to $\pi_{*}(X(M))$. 

\bigskip

\begin{lem}\label{may0260}
We have a canonical isomorphism of spectral sequences
$({}_{\cM(\cA,\omega)}E_{r},d_{r})$ and 
$({}_{H\cM(\cA,\omega)}E_{r},d_{r})$ for $r\ge 2$. 
\end{lem}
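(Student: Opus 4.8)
The statement compares two spectral sequences that are both obtained by filtering a "twisted de Rham" object: on the one hand the spectral sequence of the filtered complex $\cM(\cA,\omega)(M)$ with the filtration $F^{p}\cM(\cA,\omega)=\Omega^{\ge p}_{|M}\otimes_{\R}\cA$, and on the other the Atiyah--Hirzebruch spectral sequence of the sheaf of spectra $H\cM(\cA,\omega)$ obtained from the skeletal filtration of $|\sing(M)|$. The plan is to produce a zig-zag of filtered comparison maps inducing isomorphisms on $E_{2}$-pages, and then invoke the standard fact that a morphism of spectral sequences which is an isomorphism on some page is an isomorphism on all later pages (together with compatibility of the abutments, which here are both $H^{*}(\cM(\cA,\omega)(M))$ via the de Rham equivalence and homotopy invariance of $\cM(\cA,\omega)$, the latter being weakly locally constant).

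First I would reduce the sheaf-of-spectra side to chain complexes: by the Eilenberg--MacLane equivalence, the AHSS $({}_{H\cM(\cA,\omega)}E_{r},d_{r})$ is computed by the filtered complex of $H\R$-module spectra $H\cM(\cA,\omega)$, so it suffices to compare, as filtered chain complexes (up to filtered quasi-isomorphism), the global sections $\cM(\cA,\omega)(M)$ (filtered by form-degree) with the "cochains of $|\sing(M)|$ with coefficients in $\cM(\cA,\omega)$" (filtered by skeleta of $|\sing(M)|$). The pullback map $\phi:|\sing(M)|\to M$ of Section~\ref{may0512} gives a piecewise-smooth comparison: pulling back forms along $\phi$ gives a map from $\cM(\cA,\omega)(M)$ to the piecewise-smooth twisted de Rham complex of $|\sing(M)|$ with coefficients in $\phi^{*}\cA$, and I would check this is compatible with the two filtrations in the sense that form-degree $p$ lands in the $F^{p}$ (skeletal) stage. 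Then, on $|\sing(M)|$, a piecewise-smooth de Rham theorem (the usual comparison of piecewise-smooth forms on a simplicial complex with simplicial cochains, applied cell-by-cell with coefficients in the locally constant sheaf underlying $\cA$, which is legitimate since $\cM(\cA,\omega)$ is weakly locally constant so $\iota(\cM(\cA,\omega))$ is locally constant) identifies the associated graded of the skeletal filtration, on the $E_{1}$-page, with cellular cochains of $|\sing(M)|$ with the appropriate local coefficients. This identifies $E_{2}$ with $H^{*}(M;\underline{\cH}^{*})$ where $\underline{\cH}^{*}$ is the local system of cohomologies of the fibres of $\cM(\cA,\omega)$, which is exactly what both the twisted de Rham $E_{2}$ and the AHSS $E_{2}$ compute.

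Concretely the key steps are: (1) reduce the spectrum-side SS to a filtered-chain-complex SS via the Eilenberg--MacLane and de Rham equivalences; (2) set up the $\phi$-pullback as a morphism of filtered complexes from $\big(\cM(\cA,\omega)(M),F^{\bullet}_{\mathrm{form}}\big)$ to $\big(\Omega^{ps}(|\sing(M)|;\phi^{*}\cA),F^{\bullet}_{\mathrm{skel}}\big)$ and check the filtration-compatibility on degrees; (3) prove the piecewise-smooth de Rham comparison with local coefficients identifies the $E_{1}$-pages after one more quasi-isomorphism, hence the $E_{2}$-pages; (4) invoke the eventual-isomorphism principle for morphisms of spectral sequences to conclude an isomorphism for all $r\ge 2$; (5) check the isomorphism is canonical, i.e.\ natural in $(\cA,\omega)$ and independent of auxiliary choices, which follows since $\phi$ and the de Rham maps are canonical. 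The main obstacle I expect is step (2)--(3): making precise and correct the handling of the piecewise-smooth de Rham complex on $|\sing(M)|$ with coefficients in the local system underlying $\cA$ and matching its skeletal filtration degree-for-degree with the form-degree filtration, i.e.\ showing the comparison is genuinely filtered; once the $E_{1}$- (equivalently $E_{2}$-) pages are matched, the rest is formal. (One can shortcut part of this by instead citing the identification of $E_{2}$ of the twisted de Rham SS with $H^{*}(M;\cH^{*}(\mathrm{fibre}))$ and the corresponding standard description of the $E_{2}$-page of the AHSS of a locally constant sheaf of spectra, and checking the two identifications agree under the de Rham equivalence fibrewise — this avoids the piecewise-smooth machinery but requires the multiplicativity/naturality bookkeeping over $({}_{\Omega A}E_{r},d_{r})$ noted in the text.)
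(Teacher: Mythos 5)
Your plan is essentially the paper's own proof: the paper also pulls back along $\phi:|\sing(M)|\to M$ to get a filtered comparison map from the form-degree filtration on $\cM(\cA,\omega)(M)$ into the skeletal filtration on $\cM(\phi^{*}\cA,\phi^{*}\omega)(|\sing(M)|)$, computes both $E_{2}$-pages cell-by-cell (each $p$-simplex contributing $H^{q}(A)$, using that $\omega\in F^{2}$ so the twist is invisible there) to identify them with $H^{p}(M;H^{q}(\cA))$, and then identifies the skeletal spectral sequence with the AHSS of $H\cM(\cA,\omega)$ via the Eilenberg--MacLane equivalence. The steps you flag as the main work (filtered compatibility of $\phi^{*}$ and the cell-by-cell de Rham comparison with local coefficients) are exactly where the paper's argument is carried out.
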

\proof
The filtration of $|\sing(M)|$
 induces a decreasing filtration of the  complex 
$$\hspace{-0.5cm}\dots\subseteq \tilde F^{1}\cM(\phi^{*}\cA,\phi^{*}\omega)(|\sing(M)|) \subseteq \tilde  F^{0}\cM(\phi^{*}\cA,\phi^{*}\omega)(|\sing(M)|)= \cM(\phi^{*}\cA,\phi^{*}\omega)(|\sing(M)|)\ ,$$
where $\tilde F^{k}\cM(\phi^{*}\cA,\phi^{*}\omega)(|\sing(M)|)$ is the kernel of the restriction to $|\sing(M)|_{k-1}$. We let $(\tilde E_{r},\tilde d_{r})$ be the associated spectral sequence.

\bigskip

The piecewise smooth map $\phi$ induces  {a morphism} of complexes
$$\phi^{*}:\cM(\cA,\omega)(M)\to \cM(\phi^{*}\cA,\phi^{*}\omega)(|\sing(M)|)$$
and satisfies $$\phi^{*} F^{p}\cM(\cA,\omega)(M)\subseteq \tilde F^{p}(\cM(\phi^{*}\cA,\phi^{*}\omega)(|\sing(M)|))\ .$$
We therefore get a map of spectral sequences
$( {}_{\cM(\cA,\omega)}E_{r},d_{r})\to (\tilde E_{r},\tilde d_{r})$.

\bigskip

We check that it is an isomorphism on the second page and therefore on all higher pages. 
To this end we calculate
calculate $\tilde E_{2}$. We have $$\tilde E_{0}^{p,q}\cong F^{p}\cM(\phi^{*}\cA,\phi^{*}\omega)^{q+p}(|\sing(M)|)/F^{p-1}\cM(\phi^{*}\cA,\phi^{*}\omega)^{q+p}(|\sing(M)|)\ .$$ This complex calculates the relative sheaf cohomology $$H^{q+p}(\sing(M)_{p},\sing(M)_{p-1}{;} \cM(\phi^{*}\cA,\phi^{*}\omega))\ .$$ This cohomology group  is a product over the cohomology of all  %\footnote{\uli{habe das Wort non-degenerated mehrmals gestrichen}} 
$p$-simplices in $|\sing(M)|$ relative to their boundaries with coefficients in the restrictions of $\cM(\phi^{*}\cA,\phi^{*}\omega)$.
Since the data $\phi^{*}\cA$ and $\phi^{*}\omega$ can be trivialized on the simplicies, each    $p$-simplex contributes a factor of $H^{q}(A)$. Using the fact that $\omega$ belongs to $F^{2}(\Omega A(M))$ we now observe that $(\tilde E_{1}^{p,q},\tilde d_{1})\cong (C^{p}(\sing(M),H^{q}(\cA)),d_{1})$
is exactly the reduced singular complex associated to the local system $H^{*}(\cA)$.
Hence we get 
$$\tilde E_{2}^{p,q}\cong H^{p}(\sing(M){;}H^{q}(\cA))\ .$$

\bigskip

We now consider the spectral sequence $({}_{\cM(\cA,\omega)} E_{r},d_{r})$ in greater detail.
 We have \begin{equation*}
{}_{\cM(\cA,\omega)} E_{0}^{p,q}\cong (\Omega^{p}\otimes \cA^{q})(M)\ , \quad
d_{0}=1\otimes d_{\cA}\ .
\end{equation*}
We get
\begin{equation*}
{}_{\cM(\cA,\omega)} E_{1}^{p,q}\cong (\Omega^{p}\otimes H^{q}(\cA))(M)\ ,\quad
d_{1}=d_{\Omega}\otimes 1\ .
\end{equation*}
It follows that
\begin{equation*}
{}_{\cM(\cA,\omega)} E_{2}^{p,q}  \cong H^{p}(M;H^{q}(\cA))\ .
\end{equation*}

\bigskip

Since the Eilenberg-MacLane equivalence is an equivalence between sheaves of complexes and sheaves of $H\Z$-modules the descent spectral sequences for 
$({}_{H\cM(\cA,\omega)}E_{r},d_{r})$  and $(\tilde E_{r},\tilde d_{r})$
 associated to the skeletal filtration  of $|\sing(M)|$ are isomorphic.
 
 \bigskip

This finally implies the assertion. 
\hB

\begin{lem}\label{may0411}
The spectral sequence $({{}_{\Omega A}}E_{r},d_{r})$
%\tfoot{Bin verwirrt ‚àö¬∫ber die Notation, warum steht hier keine $\Omega \cA$ oder so?}
 degenerates at the second page.
\end{lem}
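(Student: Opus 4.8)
The plan is to use that the spectral sequence is multiplicative. The filtration $F^{p}\Omega A(M)$ by form degree is a filtration of the commutative differential graded algebra $\Omega A(M)$ by differential graded ideals, so the associated spectral sequence $({}_{\Omega A}E_{r},d_{r})$ carries the structure of a spectral sequence of bigraded algebras --- this is the very structure already invoked above for the module statement relating $({}_{\mathcal{M}(\mathcal{A},\omega)}E_{r},d_{r})$ to $({}_{\Omega A}E_{r},d_{r})$ --- and in particular each $d_{r}$ is a derivation.

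First I would identify the $E_{2}$-page. Note that $\Omega A=\mathcal{M}(\underline{A},0)$ in the notation of Definition \ref{may0301}, so the computation carried out in the proof of Lemma \ref{may0260} with $\mathcal{A}=\underline{A}$ and $\omega=0$ applies and yields
\[
{}_{\Omega A}E_{0}^{p,q}\cong(\Omega^{p}\otimes A^{q})(M),\qquad
{}_{\Omega A}E_{1}^{p,q}\cong(\Omega^{p}\otimes H^{q}(A))(M),\qquad
{}_{\Omega A}E_{2}^{p,q}\cong H^{p}(M;\mathbb{R})\otimes_{\mathbb{R}}H^{q}(A),
\]
where the last isomorphism uses that we work over the field $\mathbb{R}$, so that the coefficient sheaf $H^{q}(\underline{A})$ is constant with value $H^{q}(A)$ and de Rham cohomology with these coefficients is $H^{p}_{dR}(M)\otimes_{\mathbb{R}}H^{q}(A)$; moreover this identification is compatible with the product.

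Next I would exhibit a set of multiplicative generators of $E_{2}$ consisting of permanent cocycles. For a closed form $\alpha\in Z^{p}(\Omega^{\ast}(M))$ and a cocycle $a\in Z^{0}(A)$, the element $\alpha\otimes a$ is already a cocycle in $\Omega A(M)$ and hence represents a permanent cocycle; such elements span ${}_{\Omega A}E_{2}^{p,0}=H^{p}_{dR}(M)\otimes H^{0}(A)$. Likewise, for $f$ a locally constant function on $M$ and $a\in Z^{q}(A)$, the element $f\otimes a$ is a cocycle in $\Omega A(M)$, and such elements span ${}_{\Omega A}E_{2}^{0,q}=H^{0}_{dR}(M)\otimes H^{q}(A)$. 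Using the units $1\in H^{0}_{dR}(M)$ and $1\in H^{0}(A)$ one checks that the products of these two families span all of ${}_{\Omega A}E_{2}^{p,q}$. Since $d_{2}$ is a derivation that vanishes on an algebra generating set, it vanishes identically; hence $E_{3}=E_{2}$, and the same argument applied inductively gives $d_{r}=0$ for all $r\ge 2$, i.e.\ degeneration at the second page.

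I do not expect a real obstacle here; the only point requiring a little care is the compatibility of the Künneth identification ${}_{\Omega A}E_{2}\cong H^{\ast}_{dR}(M)\otimes_{\mathbb{R}}H^{\ast}(A)$ with the algebra structure, which is exactly where the coefficient field $\mathbb{R}$ enters. As an alternative to the multiplicative argument, one may observe that $\Omega A(M)$ is the tensor product over $\mathbb{R}$ of the de Rham complex $\Omega^{\ast}(M)$ equipped with its stupid filtration and the (unfiltered) complex $A$; the stupid-filtration spectral sequence of $\Omega^{\ast}(M)$ is concentrated in the single row $q=0$ and therefore trivially degenerates at $E_{2}$, and a direct application of the Künneth theorem over $\mathbb{R}$ shows that this degeneration persists after tensoring with $A$.
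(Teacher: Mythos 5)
Your argument is correct in substance, but it takes a genuinely different route from the paper. The paper disposes of the lemma in one line: by Lemma \ref{may0260} the filtration spectral sequence of $\Omega A$ agrees from the second page on with the Atiyah--Hirzebruch spectral sequence of the constant sheaf $\underline{HA}$ (via the de Rham equivalence $H\Omega A\cong \underline{HA}$), and the AHSS of a rational spectrum is well known to degenerate at $E_{2}$. You instead argue directly with the filtered CDGA $\Omega A(M)$: the spectral sequence is multiplicative and its $E_{2}$-page is generated by classes represented by honest cocycles of the total complex, so all higher differentials vanish. Your route is more elementary and self-contained (it does not appeal to the quoted fact about rational spectra, which is really the same argument one level up), while the paper's route is shorter given that Lemma \ref{may0260} is already available and fits its general pattern of reducing de Rham statements to topological ones.

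One caveat: the identification ${}_{\Omega A}E_{2}^{p,q}\cong H^{p}_{dR}(M)\otimes_{\R}H^{q}(A)$, and with it the claim that products of your two families of elementary tensors span $E_{2}$, is only literally correct when $M$ is compact or $H^{q}(A)$ is finite dimensional (as it is in the paper's examples). In general the pages are sections of sheaf tensor products, e.g. ${}_{\Omega A}E_{1}^{p,q}\cong (\Omega^{p}\otimes H^{q}(A))(M)$, whose elements are only locally finite sums of elementary tensors; this is why the paper writes ${}_{\Omega A}E_{2}^{p,q}\cong H^{p}(M;H^{q}(A))$ rather than a K\"unneth tensor product. The gap is easily closed while keeping your strategy, and in fact without multiplicativity: choose an $\R$-linear splitting $\sigma\colon H^{q}(A)\to Z^{q}(A)$ of the projection (possible since we work over a field). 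Given an $E_{2}$-class represented by a $(d_{dR}\otimes 1)$-closed section $s\in(\Omega^{p}\otimes H^{q}(A))(M)$, the element $(1\otimes\sigma)(s)\in(\Omega^{p}\otimes A^{q})(M)\subseteq F^{p}\Omega A(M)$ is killed by both $1\otimes d_{A}$ and $d_{dR}\otimes 1$, hence is a permanent cocycle representing the class; therefore $d_{r}=0$ for all $r\ge 2$. The same caveat applies to your alternative argument via the stupid filtration, since $\Omega A(M)$ need not coincide with $\Omega^{*}(M)\otimes_{\R}A$ for noncompact $M$.
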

\proof
If $X\cong \underline{Q}_{|M}$ is the constant sheaf generated by a spectrum $Q\in \Sp$,  then 
$({}_{X}E_{r},d_{r}) $ is the classical Atiyah-Hirzebruch spectral sequence.
It is a well-known fact, that it degenerates at the second page if $Q$ is a rational spectrum. 
Since $\Omega A\cong \underline{HA}$ and $HA$ is rational we conclude
that $({}_{\Omega A}E_{r},d_{r})\cong ({}_{\underline{HA}}E_{r},d_{r})$ degenerates at the second page. 
 \hB

%We now consider the spectral sequence $({}_{\cM(\cA,\omega)} E_{r},d_{r})$ in greater detail.
% We have \begin{equation*}
%{}_{\cM(\cA,\omega)} E_{0}^{p,q}\cong (\Omega^{p}\otimes \cA^{q})(M)\ , \quad
%d_{0}=1\otimes d_{\cA}\ .
%\end{equation*}
%We get
%\begin{equation*}
%{}_{\cM(\cA,\omega)} E_{1}^{p,q}\cong (\Omega^{p}\otimes H^{q}(\cA))(M)\ ,\quad
%d_{1}=d_{\Omega}\otimes 1\ .
%\end{equation*}
%It follows that
%\begin{equation*}
%{}_{\cM(\cA,\omega)} E_{2}^{p,q}  \cong H^{p}(M;H^{q}(\cA))\ .
%\end{equation*}

\bigskip

We write 
$\omega=\sum_{n\ge 2}\omega^{n}$, where $\omega^{n} \in (\Omega^{n}\otimes
\underline{A}^{1-n})(M)$.
Let $r\in \nat $ be such that $\omega^{n}=0$ for  {all} $n<r$. Then we have
$(1\otimes d_{A})\omega^{r}=0$ and $(d_{\Omega}\otimes
1)\omega^{r}=(-1)^{r}(1\otimes d_{A}) \omega^{r+1}$. We see that $\omega^{r}$
represents a class
$[\omega^{r}]\in H^{r}(M;\underline{H^{1-r}(A)})\cong {}_{\Omega A}E_{2}^{r,1-r}$. 
 
 \bigskip
 
 In the following Lemma we write $d(\omega)_{r}$ for the differential of ${}_{\cM(\underline{A},\omega)} E_{r}^{p,q}${,} 
 and $d_{r}$ for the differential of ${}_{\Omega A} E_{{r}}^{p,q}$.
\begin{lem}\label{sep0607}
For $2\le i\le r$ we have $${}_{\cM(\underline{A},\omega)} E_{i}^{p,q}  \cong {}_{\Omega A} E_{i}^{p,q} , $$
and $d(\omega)_{i}=0 $ for all $2\le i<r$.   Furthermore,  $d(\omega)_r =d_r +[\omega^{r}]$.
\end{lem}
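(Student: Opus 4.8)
The plan is to compare the two spectral sequences $({}_{\cM(\underline{A},\omega)}E_r, d(\omega)_r)$ and $({}_{\Omega A}E_r, d_r)$ by tracking how the perturbation $\omega$ of the differential affects the associated graded pieces degree by degree. Recall that on the zeroth page both spectral sequences agree: ${}_{\cM(\underline{A},\omega)}E_0^{p,q} \cong (\Omega^p \otimes \underline{A}^q)(M) \cong {}_{\Omega A}E_0^{p,q}\otimes_{?}$ — more precisely, as a bigraded group ${}_{\cM(\underline{A},\omega)}E_0^{p,q}$ is $(\Omega^p\otimes\underline A^q)(M)$, and the total differential is $d_0 + \text{(perturbation terms)}$ where $d_0 = 1\otimes d_A$ is unchanged because multiplication by $\omega$ raises the filtration degree $p$ by at least $2$ (since $\omega\in Z^1(F^2\Omega A(M))$). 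Hence the first page ${}_{\cM(\underline{A},\omega)}E_1^{p,q}\cong (\Omega^p\otimes H^q(\underline A))(M)$ is canonically identified with ${}_{\Omega A}E_1^{p,q}$, and the differential $d(\omega)_1$ equals $d_1 = d_\Omega\otimes 1$ because the lowest-order term of $\omega$ that could contribute, namely $\omega^r$ with $r\ge 2$, raises $p$ by $r\ge 2 > 1$. This gives ${}_{\cM(\underline{A},\omega)}E_2^{p,q}\cong H^p(M;H^q(\underline A))\cong {}_{\Omega A}E_2^{p,q}$.

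The main step is then an induction on $i$ from $2$ up to $r$. Suppose inductively that ${}_{\cM(\underline{A},\omega)}E_i^{p,q}\cong {}_{\Omega A}E_i^{p,q}$ and that the perturbation has so far contributed nothing, i.e. the differentials $d(\omega)_2,\dots,d(\omega)_{i-1}$ coincide with $d_2,\dots,d_{i-1}$. The total differential on $\cM(\underline A,\omega)(M)$ is $d_{dR} + (1\otimes d_A) + \sum_{n\ge r} \omega^n\cdot$, and multiplication by $\omega^n$ shifts $(p,q)\mapsto (p+n, q+1-n)$. A representative of a class on ${}_{\cM(\underline{A},\omega)}E_i^{p,q}$ is a form that is "closed modulo $F^{p+i}$"; applying the total differential, the contribution of $\omega^n\cdot$ lands in filtration degree $p+n\ge p+r$. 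For $i<r$ this lies in filtration $\ge p+i+1$ already after one multiplication, so it does not affect the page-$i$ differential; hence $d(\omega)_i = d_i$, and since we want to show $d(\omega)_i = 0$ for $2\le i<r$ we additionally invoke Lemma \ref{may0411}, which says $d_i = 0$ for all $i\ge 2$ in the spectral sequence for $\Omega A$. This gives $d(\omega)_i = d_i = 0$ for $2\le i<r$, and consequently ${}_{\cM(\underline{A},\omega)}E_{i+1}^{p,q}\cong {}_{\cM(\underline{A},\omega)}E_i^{p,q}\cong {}_{\Omega A}E_i^{p,q}\cong {}_{\Omega A}E_{i+1}^{p,q}$, closing the induction up to $i=r$.

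For the final formula $d(\omega)_r = d_r + [\omega^r]$: on the $r$-th page a class is represented by $a\in \Omega^p\otimes\underline A^q(M)$ with $(1\otimes d_A)a = 0$ and $d_{dR}a$ together with lower-order corrections vanishing modulo $F^{p+r}$; the page-$r$ differential is obtained by applying the total differential to a carefully chosen representative and reading off the class in filtration $p+r$. Two contributions reach filtration degree exactly $p+r$: the usual de Rham/transgression term, which is precisely $d_r$ (identified via Lemma \ref{may0411}'s identification with the $\Omega A$ spectral sequence, even though $d_r$ itself vanishes there — but here we keep it for bookkeeping since after perturbation it need not vanish as an operator, though in fact $d_r=0$ so this term drops), and the term $\omega^r\cdot a$, which on the level of page-$r$ groups ${}_{\cM(\underline A,\omega)}E_r^{p,q}\cong H^p(M;H^q(\underline A))$ is exactly cup product with the class $[\omega^r]\in H^r(M;H^{1-r}(A))$. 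The compatibility with the module structure over $({}_{\Omega A}E_r,d_r)$ established above ensures this cup-product description is the correct one. Writing out the cochain-level chase carefully and checking signs is the main technical obstacle, but it is a routine (if fiddly) computation once the filtration-degree bookkeeping above is in place; the conceptual content is entirely contained in the observation that $\omega$ lives in $F^2$ and its lowest nonzero component $\omega^r$ is exactly what transgresses on page $r$.
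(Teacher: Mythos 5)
Your proposal is correct and follows essentially the same route as the paper: both rest on the observation that $\cM(\underline{A},\omega)(M)$ and $\Omega A(M)$ have the same underlying filtered graded vector space while the difference of differentials, multiplication by $\omega=\sum_{n\ge r}\omega^{n}$, raises the filtration by at least $r$ steps, so the pages agree up to $E_{r}$, the differentials agree (and vanish, by the degeneration Lemma \ref{may0411}) below page $r$, and on page $r$ the leading term $\omega^{r}$ contributes the class $[\omega^{r}]$. The paper compresses this to one sentence; your page-by-page induction and explicit appeal to Lemma \ref{may0411} simply spell out the same argument.
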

\proof
This is a consequence of the fact that we have a canonical identification of
the filtered $\Z$-graded vector spaces $\cM(\underline{A},\omega)(M)^{\sharp}\cong \Omega A(M)^{\sharp}$ 
and the difference of the differentials $d(\omega)-d=\omega$ increases the filtration by $r$ steps.  \hB

 \begin{lem}  \label{may0420}
 The  following assertions are equivalent:
 \begin{enumerate}
 \item
We have an  isomorphism $\cM(\underline{A},\omega)\cong \cM(\underline{A},0)$ of
$DG$-module sheaves over $\Omega A$.
\item The class $[\omega] \in H^1(\Omega A(M))$ vanishes. 
\item   The spectral sequence $({}_{\cM(\underline{A},\omega)} E_{r},d(\omega)_{r})$ degenerates at the second
page.     \end{enumerate}
 \end{lem}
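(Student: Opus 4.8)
\textbf{Proof plan for Lemma \ref{may0420}.}

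The plan is to prove the cycle of implications $(2)\Rightarrow(1)\Rightarrow(3)\Rightarrow(2)$. The implication $(2)\Rightarrow(1)$ is essentially already available: if $[\omega]=0\in H^1(\Omega A(M))$ then $\omega=d\beta$ for some $\beta\in \Omega A^0(M)$, and since $A$ is split (which we may and do assume throughout this section, in keeping with the running hypothesis) we can replace $\beta$ by its part $\beta^{\geq 1}\in F^1\Omega A^0(M)$ of positive differential form degree, still with $d\beta^{\geq 1}=\omega$; here one should note that $\omega\in Z^1(F^2\Omega A(M))$ forces the degree-zero and degree-one components of $d\beta$ to vanish, which is exactly the input making the truncation harmless. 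Then multiplication by $\exp(-\beta^{\geq 1})$ — finite because $\beta^{\geq 1}$ is nilpotent — gives the desired isomorphism $\cM(\underline{A},\omega)\iso\cM(\underline{A},0)$ of sheaves of $DG$-$\Omega A_{|M}$-modules, cf.\ \eqref{may0310}. This is the same computation used in the proof of Lemma \ref{ulimar2130}, applied globally rather than locally, so it should cost almost nothing to write.

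For $(1)\Rightarrow(3)$: an isomorphism $\cM(\underline{A},\omega)\cong\cM(\underline{A},0)=\cM(\underline{A})$ of filtered $DG$-$\Omega A_{|M}$-modules induces an isomorphism of the associated spectral sequences of global sections. But $\cM(\underline{A})=\Omega_{|M}\otimes_\R\underline{A}$ has spectral sequence isomorphic to $({}_{\Omega A}E_r,d_r)$ (take $\cA=\underline{A}$ in the discussion preceding Lemma \ref{may0411}, or simply observe $\cM(\underline{A},0)=\Omega A_{|M}$), which degenerates at $E_2$ by Lemma \ref{may0411}. Hence $({}_{\cM(\underline{A},\omega)}E_r,d(\omega)_r)$ degenerates at $E_2$ as well. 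One subtlety: the isomorphism produced in step $(2)\Rightarrow(1)$ need not be filtration-preserving on the nose, since $\exp(-\beta^{\geq 1})$ only shifts the filtration upward; but it is filtered (it respects $F^p$ for every $p$, increasing degrees), and a filtered quasi-isomorphism that is moreover an isomorphism of underlying complexes suffices to identify the spectral sequences from $E_2$ onward — in fact from $E_1$ onward. I will phrase $(1)$ as ``isomorphism of $DG$-$\Omega A$-modules'' and remark that any such isomorphism is automatically compatible with the filtrations up to the harmless upward shift, so this point is not a real obstacle.

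For $(3)\Rightarrow(2)$, which I expect to be the genuine content: suppose $[\omega]\neq 0$, and let $r\geq 2$ be minimal with $\omega^r\neq 0$ in the decomposition $\omega=\sum_{n\geq 2}\omega^n$; such an $r$ exists because $\omega\in F^2\Omega A$ and $[\omega]\neq0$ means $\omega$ is not exact, in particular not zero. By Lemma \ref{sep0607} we have ${}_{\cM(\underline{A},\omega)}E_i\cong{}_{\Omega A}E_i$ for $2\leq i\leq r$ with $d(\omega)_i=d_i$ for $i<r$, and $d(\omega)_r=d_r+[\omega^r]$. Now $d_r=0$ on ${}_{\Omega A}E_r$ by the degeneration Lemma \ref{may0411}, so $d(\omega)_r=[\omega^r]$ as an operator on ${}_{\Omega A}E_r=H^*(M;H^*(A))$, namely cup product with the class $[\omega^r]\in H^r(M;\underline{H^{1-r}(A)})$. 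For degeneration of $({}_{\cM(\underline{A},\omega)}E_r,d(\omega)_r)$ at $E_2$ we need $d(\omega)_r=0$, i.e.\ cup product with $[\omega^r]$ vanishes on all of $H^*(M;H^*(A))$; applying it to $1\in H^0(M;H^0(A))$ — using that $H^0(A)\ni 1$ acts as a unit, which holds since $H^*(A)$ is a graded commutative ring with unit — gives $[\omega^r]=0$ in $H^r(M;\underline{H^{1-r}(A)})\cong {}_{\Omega A}E_2^{r,1-r}$. But then, exactly as in the proof of Lemma \ref{sep0607}, the class $[\omega]\in H^1(\Omega A(M))$ is represented by $\sum_{n>r}\omega^n$ modulo a coboundary, so we may modify $\omega$ within its cohomology class by an exact form (multiplying $\cM(\underline{A},\omega)$ by $\exp$ of a form in $F^r\Omega A^0(M)$) to kill the degree-$r$ term, and induct on $r$. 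The induction terminates because the form degree is bounded above by $\dim M$, at which point $\omega=0$ and hence $[\omega]=0$, a contradiction. The main obstacle, then, is bookkeeping the inductive modification of $\omega$ and checking that each step genuinely strips off the lowest nonvanishing homogeneous component without reintroducing lower ones — this is where splitness of $A$ and the filtration condition $\omega\in F^2$ are used, and it parallels the argument already given for Lemma \ref{sep0607}, so I would lean on that rather than redo it in full.
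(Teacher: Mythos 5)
Your overall strategy is the same as the paper's (the cycle $(2)\Rightarrow(1)\Rightarrow(3)\Rightarrow(2)$), and your steps $(2)\Rightarrow(1)$ and $(3)\Rightarrow(2)$ work; the one real problem is the justification of $(1)\Rightarrow(3)$. The principle you invoke --- that a filtration-preserving map which is an isomorphism of the underlying complexes identifies the two filtration spectral sequences from $E_1$ (or $E_2$) onward --- is false in general: the identity map of a fixed complex, regarded as a filtered map from a filtration to a shifted or coarser one, is an isomorphism of complexes and filtered, yet the spectral sequences (already their $E_\infty$-terms, i.e.\ the induced filtrations on cohomology) need not agree. What rescues the step here is specific to the situation: any isomorphism of $DG$-$\Omega A_{|M}$-modules $\cM(\underline{A},\omega)\to\cM(\underline{A},0)$ is multiplication by a unit $u\in\Omega A^{0}(M)$ (the underlying graded module is free of rank one), it automatically preserves the filtration, and on the associated graded it acts by the form-degree-zero component $u^{0}$, which is again invertible; hence it is an isomorphism already on $E_{0}$, and therefore on all pages. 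You need to say this (or something equivalent); merely ``filtered $+$ isomorphism of complexes'' does not suffice. The paper sidesteps the issue entirely: by Lemma \ref{may0260} both filtration spectral sequences coincide, from the second page on, with the Atiyah--Hirzebruch spectral sequences of the sheaves of spectra $H\cM(\underline{A},\omega)$ and $H\Omega A$, which depend only on the unfiltered modules, so an abstract $DG$-module isomorphism as in (1) is enough, and degeneration then follows from Lemma \ref{may0411}.

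Your $(3)\Rightarrow(2)$ reaches the same conclusion by a different bookkeeping. You induct on the minimal form degree $r$ of the representative: degeneration and the unit computation $d(\omega)_r(1_r)=[\omega^{r}]$ force $[\omega^{r}]=0$ in ${}_{\Omega A}E_{2}^{r,1-r}$, you then replace $\omega$ by a cohomologous cycle in $Z^{1}(F^{r+1}\Omega A(M))$ and repeat until the form degree exceeds $\dim M$. The paper instead chooses $r$ maximal with $[\omega]\in F^{r}H^{1}(\Omega A(M))$ (possible since the filtration is finite), picks a representative $\omega\in Z^{1}(F^{r}\Omega A(M))$ whose symbol $[\omega^{r}]$ is nonzero, and obtains the contradiction in one stroke from the same computation; your induction is essentially the zig-zag hidden in the paper's ``we can assume $\omega\in Z^{1}(F^{r}\Omega A(M))$'' and is spelled out in the algorithm following the lemma. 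Two small points to tighten: the modification step is not actually contained in the proof of Lemma \ref{sep0607}, so you should carry it out (lift a $d_{dR}$-primitive of the class of $\omega^{r}$ in $\Omega^{r-1}\otimes H^{1-r}(A)$ and correct by a $d_{A}$-exact term to get $\beta$ with $\omega+d\beta\in F^{r+1}$), and you should note that the replacement changes $\cM(\underline{A},\omega)$ only by an $\exp(\beta)$-isomorphism with $\beta\in F^{1}$, as in \eqref{may0310}, which induces the identity on $E_{0}$ and hence does not disturb hypothesis (3).
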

\proof
We show $2.\Rightarrow 1.\Rightarrow 3.\Rightarrow 2.$

\bigskip

If $[\omega]=0$, then $\cM(\underline{A},\omega)\cong \cM(\underline{A},0)$ by
\eqref{may0310} . 

\bigskip

We now assume that $\cM(\underline{A},\omega)\cong
\cM(\underline{A},0)$.
In this case, as a consequence of the Lemmas \ref{may0260} and \ref{may0411}, the spectral sequence $({}_{\cM(\underline{A},\omega)} E_{r},d(\omega)_{r})$ degenerates at the second page.

\bigskip

We now assume that the spectral sequence $({}_{\cM(\underline{A},\omega)} E_{r},d(\omega)_{r})$   degenerates at the second page. We argue by contradiction and 
 assume that $[\omega]\not=0$.
Then there exists a maximal  $r\ge 2$ with $[\omega]\in F^{r}H^{1}(M,A)$. We can
assume that $\omega\in Z^1(F^{r}\Omega A(M))$ and
$[\omega^{r}]\in H^{r}(M,\underline{H^{1-r}(A)})={}_{\Omega A}E_{2}^{r,1-r}$ does not vanish.  The unit $1\in A$
induces a  unit $1\otimes 1\in \Omega^{0}\otimes \underline{A}$ with image $1
\in H^{0}(\Omega A(M))$. This unit is  detected by a
unit
$1_r\in E_r^{0,0}$. By Lemma \ref{sep0607} we have
$0=d(\omega)_r (1_r)=[\omega^{r}]$. This is a contradiction. \hB

\bigskip

We can now describe an algorithm which allows to determine the real approximation $\cM=\cM(\underline{A},\omega)$ of a given $R$-twist $E\in \PicL_{\underline{R}}(M)$. The main point is to determine the cohomology class of $\omega$ such that
$$(E\wedge H\R)\wedge_{H\Omega A} H\cM(\underline{A},-\omega)\cong H\Omega A\ .$$

\bigskip

Since we assume that $E$ is trivialized on the one-skeleton of $M$ we have an isomorphism
$${}_{(E\wedge H\R)\wedge_{H\Omega A} H\cM(\underline{A},-\omega)}E_{2}^{p,q}\cong H^{p}(M,\underline{H^{q}(A)})\cong {}_{\Omega A}E^{p,q}_{2}\ .$$
We write $\tilde d_{r}$ for the differential of the spectral sequence on the left-hand side.

\bigskip

Assume that we have found $ \omega\in Z^{1}(F^{2}\Omega A(M))$
such that this isomorphism extends to an isomorphism of the pages $2,\dots,r$. 
Then we consider the difference 
$(\tilde d_{r}-d_{r})(1_{r})=:x\in {}_{\Omega A}E^{r,1-r}_{2}$.
It follows from the module structure over ${}_{\Omega A}E_{r}$ that then
$\tilde d_{r}-d_{r}=x$. There exists $\kappa\in Z^{1}(F^{r}\Omega A(M))$ such that the cohomology class of $\kappa$ is detected by $x$. After replacing  $\omega$ by $\omega+\kappa$ we can assume that $x=0$ and our isomorphism of pages persists to the ${(r+1)}$th page.

\bigskip

We now proceed by induction on $r$. Since $M$ is finite-dimensional, after finitely many iteration{s}
we have found a form $\omega\in Z^{1}(F^{2}\Omega A(M))$ such that the spectral sequence for $
(E\wedge H\R)\wedge_{H\Omega A} H\cM(\underline{A},-\omega)$
degenerates at the second term. It now follows from Lemma \ref{may0420}
that $E\wedge H\R\cong H\cM(\underline{A},\omega)$. 
\hB

\begin{exa2}
We illustrate this procedure in the classical case of twisted complex $K$-theory which has already been considered in Example \ref{may0220}. In this case the procedure terminates after the first iteration and reproduces a well-known fact.

The map of Picard-$\infty$  groupoids $K(\Z,2)\to \GL_{1}(\KU)$ induces a map of spaces $K(\Z,3)\to \BGL_{1}(\K)$ by one-fold delooping. This map classifies a twist
$E\in \PicL_{\underline{\KU}}(K(\Z,3))$.

\bigskip

 The Atiyah-Hirzebruch spectral sequence $({}_{E}E_{r},d_{r})$
has been investigated in \cite[Sec. 4]{MR2307274}. Let $\eta\in H^{3}(K(\Z,3);\Z)$ denote the canonical class. It induces a class $$\eta b\in H^{1}(K(\Z,3);\R[b,b^{-1}])\ .$$   The calculation of Atiyah-Segal shows that ${}_{E\wedge H\R}E_{3}^{p,q}\cong H^{p}(K(\Z,3);\R[b,b^{-1}]^{q})$ and $d_{3}=\eta b$. Note that rationally $K(\Z,3)$ behaves like $S^{3}$. One checks that that ${}_{E\wedge H\R}E_{4}=0$. 

\bigskip

Let now $f:M\to K(\Z,3)$ be a map from a smooth manifold representing a class $x\in H^{3}(M;\Z)$.
Then, if we take a form $\omega\in Z^{3}(\Omega(M))$ such that $[\omega]=x\otimes \R$,  there exists an equivalence
$$f^{*}E\wedge H\R\cong H\cM(\underline{\R[b,b^{-1}]},\omega b)\ .$$
Thus our procedure reproduces the complex found in Example \ref{may0220}. %\footnote{\uli{Habe $\bb$ durch $b$ ersetzt zur Vereinheitlichung.}}

\end{exa2}

\appendix

\part{Appendix}

\section{Technical facts about graded ring spectra}\label{aapend}
%%%%%%%%%%%%%%%%%%%%%%%%%%%%%%%%%%%%%%%%%%%%%%%%%%%%%%%%%%%%%%%%%%%%%%%%%%%%%5

Here we will prove some technical facts about graded ring spectra {(defined in Section \ref{may0501})}. More precisely we show that
the constructions that are required to define twisted and differential twisted cohomology admit
good {functorial} properties. \\

Let $\UFib $ denote the {total space of the} universal cocartesian fibration over $\Cat_\infty$. 
See \cite[Section 3.3.2]{HTT} for more details.
Roughly speaking objects of $\UFib$ are given  by $\infty$-categories with a chosen {base point}.
However the morphisms in $\UFib$ are not required to preserve 
this object up to isomorphism, but only up to a possible non-invertible morphism.
Thus the  overcategory $(\Cat_\infty)_{*/}$ is a subcategory of $\UFib$
with the same objects but less morphisms.
We will also refer to the category   $\UFib$ as the lax slice of  $\Cat_\infty$
under the point. 

\begin{rem2}
Note that $\Cat_\infty$ and $\UFib$ are neither small nor large. The reason is that $\Cat_\infty$ contains all large categories. Thus we need to work with a third universe, which we call very large. Then $\Cat_\infty$ is very large. But the fibres of the universal fibration are large. Thus $\Cat_\infty$ classifies cocartesian fibrations $X \to S$ between $\infty$-categories $X$ and $S$ {which} itself are allowed to be very large, but with essentially large fibres $X_s \subset X$ for all $s \in S$.

As a convention from now on, we consider all $\infty$-categories to be large, unless we explicitly allow them to be very large.
\end{rem2}

\begin{lemma}\label{lem_eins}
For every (large) $\infty$-category $C$ the assignment $c \mapsto C_{/c}$ defines a
functor $C \to \Cat_\infty$. Picking for every $c \in C$ the object $\id_c \in
C_{/c}$ as the basepoint this functor even refines to a functor $C \to
(\Cat_\infty)_\star$.
\end{lemma}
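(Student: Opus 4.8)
The plan is to obtain the functor by straightening an appropriate cocartesian fibration and then to produce the pointed refinement from the universal property of $\UFib$. First I would replace the join-model of the slice by the ``fat'' slice: for $c\in C$ set $C^{/c}:=\Fun(\Delta^1,C)\times_{C}\{c\}$, the fibre over $c$ of the target-evaluation functor $\mathrm{ev}_1\colon\Fun(\Delta^1,C)\to C$. Since $C^{/c}$ is canonically equivalent to $C_{/c}$ (the standard comparison in \cite{HTT}) and the assertion concerns only the equivalence class of the resulting functor, this causes no loss. The key input is the well-known fact (see \cite[Section 2.4.7]{HTT}) that $\mathrm{ev}_1$ is a cocartesian fibration; a $\mathrm{ev}_1$-cocartesian lift of $f\colon c\to c'$ at an object $(\alpha\colon a\to c)$ of $C^{/c}$ is the commuting square that is degenerate along the source, i.e.\ postcomposition $(\alpha\colon a\to c)\mapsto(f\alpha\colon a\to c')$.

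Next I would invoke the straightening/unstraightening equivalence \cite[Chapter 3]{HTT}: the cocartesian fibration $\mathrm{ev}_1$ is classified by a functor $F_C\colon C\to\Cat_\infty$, well-defined up to equivalence, with $F_C(c)\simeq C^{/c}\simeq C_{/c}$ on objects and $F_C(f)=f\circ(-)$ on morphisms. This proves the first statement. Throughout one keeps the size conventions of the preceding remark in mind: $C$ and the slices $C_{/c}$ are large, $\Cat_\infty$ is very large, so the constructions stay within the admissible bounds.

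For the refinement to $\UFib$ I would use that $\UFib\to\Cat_\infty$ is the total space of the universal cocartesian fibration. Pulling it back along $F_C$ recovers $\mathrm{ev}_1\colon\Fun(\Delta^1,C)\to C$, hence lifts of $F_C$ to a functor $C\to\UFib$ correspond to sections of $\mathrm{ev}_1$. There is a tautological such section, namely the constant-arrow functor $C\cong\Fun(\Delta^0,C)\to\Fun(\Delta^1,C)$ induced by the projection $\Delta^1\to\Delta^0$, which satisfies $\mathrm{ev}_1\circ\mathrm{const}=\mathrm{id}_C$ and picks out the degenerate edge $\id_c$ in each fibre $C^{/c}$. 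Unravelling the correspondence, the associated functor $C\to\UFib$ is $F_C$ equipped with the basepoints $\id_c$, as claimed.

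The step requiring the most care is the last one: one must observe that a bare section of $\mathrm{ev}_1$ (rather than a cocartesian section) already suffices, precisely because the target is the lax slice $\UFib$ and not the honest undercategory $(\Cat_\infty)_{*/}$. Indeed the transition morphism attached to $f\colon c\to c'$ is the canonical map $f_!(\id_c)=(f\colon c\to c')\to(\id_{c'})$ in $C_{/c'}$, which is invertible only when $f$ is; this explains why the refinement factors through $\UFib$ but, in general, not through $(\Cat_\infty)_{*/}$. Apart from this observation the proof is bookkeeping: identifying the slice models and quoting the straightening equivalence together with the universal property of $\UFib$.
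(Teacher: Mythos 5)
Your proposal is correct and follows essentially the same route as the paper: classify the cocartesian fibration $\mathrm{ev}_1\colon C^{\Delta[1]}\to C$ by straightening, then obtain the pointed refinement by composing the canonical constant-arrow section $\id_{(-)}\colon C\to C^{\Delta[1]}$ with the induced map to $(\Cat_\infty)_\star$ coming from the pullback against the universal cocartesian fibration. Your additional remarks (identification of the slice models, and the observation that a non-cocartesian section suffices because the target is the lax slice $\UFib$ rather than $(\Cat_\infty)_{*/}$) are correct elaborations of what the paper leaves implicit.
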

\begin{proof}
We consider the cocartesian fibration $C^{\Delta[1]} \xrightarrow{ev_1}
C$. Since the slices are large this fibration is classified by a functor $C \to \Cat_\infty$. Thus there is a pullback
diagram
\begin{equation*}
\xymatrix{
C^{\Delta[1]} \ar[r]\ar[d]& (\Cat_\infty)_\star \ar[d] \\
C \ar[r] & \Cat_\infty
}
\end{equation*}
Now we take the canonical section ${\id}_{(-)}: C \to C^{\Delta[1]}$%\fuli{Die Notation mu{\ss} vereinheitlicht werden. Ich schlage $\id(\underline{\hspace{0.2cm}})$ vor. Es kommt auch $(\dots)$ vor irgendwo.} 
and obtain  
the functor $C \to(\Cat_\infty)_\star$ as a composition.
\end{proof}

By definition of $\GCS$ {(see Definition \ref{def1aug13})} there is a pullback diagram 
\begin{equation}\label{diag_pullop}
\xymatrix{
\GCS \ar[rr]\ar[d]^{{U}} && \UFib^{op} \ar[d] \\
\sym \ar[rr]^-{\Fun^{lax}(-,\Sp)} && \Cat_\infty^{op}
} ,
\end{equation}
where $\sym$ denotes the very large category of (large) symmetric monoidal categories and lax symmetric monoidal functors.
{We denote the left vertical functor that associates to every graded ring spectrum its underlying grading symmetric monoidal $\infty$-category by $U$.}
Here the right vertical morphism is just the opposite of the universal cocartesian fibration, hence the universal cartesian fibration.

Note that for an arbitrary (large) $\infty$-category $C$, the presheaf category
$\Fun(C^{op},\Sp)$ becomes a symmetric monoidal $\infty$-category such that
evaluation for every object $c \in C$ is a 
monoidal functor $\Fun(C^{op},\Sp) \to \Sp$. Thus for any object $c \in C$ the pair $(\Fun(C^{op},\Sp), ev_c)$ defines a 
graded ring spectrum. We now want to show that this assignment is even functorial in $(C,c)$.

\begin{lemma}\label{lem_zwei}
There is a functor 
\begin{equation*}
\UFib^{op} \to \GCS  \qquad (C,c) \mapsto \big(\Fun(C^{op},\Sp), ev_c) \end{equation*}
\end{lemma}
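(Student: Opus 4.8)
The plan is to produce the desired functor by straightening/unstraightening in $\infty$-category theory, building it as a morphism into the pullback square \eqref{diag_pullop} that defines $\GCS$. First I would recall the datum: a functor $\UFib^{op}\to\GCS$ is, by the universal property of the pullback \eqref{diag_pullop}, the same as a pair consisting of a functor $F\colon\UFib^{op}\to\sym$ and a functor $G\colon\UFib^{op}\to\UFib^{op}$ together with a homotopy identifying the two composites to $\Cat_\infty^{op}$. For $F$ one takes $(C,c)\mapsto\Fun(C^{op},\Sp)$ with the Day-convolution (pointwise) symmetric monoidal structure; this is functorial in $C$ alone (contravariantly, via restriction along $C^{op}\to C'^{op}$), hence in $(C,c)$ by pulling back along the projection $\UFib^{op}\to\Cat_\infty^{op}$, and it is straightforward that restriction functors are lax symmetric monoidal, so this lands in $\sym$. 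The content is the second component $G$, which must encode the chosen evaluation point $ev_c$.

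Next I would construct $G$. By the remarks preceding Lemma \ref{lem_zwei}, for each object $c\in C$ evaluation $ev_c\colon\Fun(C^{op},\Sp)\to\Sp$ is a symmetric monoidal functor; we want to assemble the pairs $(\Fun(C^{op},\Sp),ev_c)$ into an object of $\UFib^{op}$ naturally in $(C,c)$. Recall that $\UFib^{op}$ is (the total space of) the universal \emph{cartesian} fibration over $\Cat_\infty^{op}$, classified under straightening by the identity of $\Cat_\infty$ viewed appropriately; concretely, a functor $\UFib^{op}\to\UFib^{op}$ over a given functor $\UFib^{op}\to\Cat_\infty^{op}$ is the same as a section of the pullback of the universal cartesian fibration, i.e.\ a compatible choice of object in each fibre. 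So it suffices to exhibit, naturally in $(C,c)\in\UFib^{op}$, an object of the category $\Fun(C^{op},\Sp)$ corresponding under straightening to $ev_c$ — but that object is simply the representable (corepresentable) functor, i.e.\ the image of $c$ under the Yoneda embedding $C\to\Fun(C^{op},\cS)\to\Fun(C^{op},\Sp)$ (composing with $\Sigma^\infty_+$, or rather using that $ev_c$ is represented by $\Sigma^\infty_+ j(c)$ where $j$ is the Yoneda embedding). Thus $G$ is built from the Yoneda embedding together with the canonical section $\mathrm{id}_{(-)}\colon C\to C^{\Delta[1]}$ of Lemma \ref{lem_eins} — indeed Lemma \ref{lem_eins} already produces exactly the functor $\UFib^{op}\to\UFib^{op}$ underlying ``take the overcategory with its terminal basepoint'', and post-composing with $\Fun((-)^{op},\Sp)$ applied fibrewise gives what we want, since $ev_c\simeq\mathrm{colim}_{C_{/c}}$ along the forgetful functor, compatibly with the monoidal structures.

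The cleanest route, and the one I would actually write, is therefore: (i) observe $\Fun((-)^{op},\Sp)\colon\Cat_\infty^{op}\to\sym$ is a well-defined functor (citing \cite[Section 2.2.5, Remark 2.2.5.8 and Section 4.8.1]{HA} for the pointwise/Day monoidal structure and its functoriality, or the fact that it is the composite of $\Fun((-)^{op},-)$ with the symmetric monoidal $\Sp$); (ii) note that evaluation assembles into a natural transformation: the pair $(C,c)$ determines the evaluation functor, and this is precisely the data of a lift of $\Fun((-)^{op},\Sp)$ along $\UFib^{op}\to\Cat_\infty^{op}$, which by the defining universal property of the universal cartesian fibration $\UFib^{op}\to\Cat_\infty^{op}$ \cite[Section 3.3.2]{HTT} corresponds to a functor $\UFib^{op}\to\UFib^{op}$; (iii) combine (i) and (ii) via the pullback \eqref{diag_pullop} to get $\UFib^{op}\to\GCS$; and (iv) check on objects that this sends $(C,c)$ to $(\Fun(C^{op},\Sp),ev_c)$, which holds by construction.

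\textbf{Main obstacle.} The real work is step (ii): making precise that ``choosing $ev_c$ for each $(C,c)$'' is genuinely functorial, i.e.\ produces a map of cartesian fibrations and not merely an objectwise assignment. The honest way to see this is to identify the functor $\UFib^{op}\to\Cat_\infty^{op}$, $(C,c)\mapsto\Fun(C^{op},\Sp)$, together with its fibrewise distinguished object, with the straightening of a specific cartesian fibration, and then recognize the latter as a pullback of the universal one; equivalently, one uses that a section of a pullback of the universal cartesian fibration over $\UFib^{op}$ along $F$ is exactly a natural choice of object in $F(C,c)$, and the Yoneda embedding provides such a natural choice (naturality of Yoneda in the source category, as in \cite[Section 5.1.3]{HTT}, plus Lemma \ref{lem_eins} to handle the non-full morphisms of $\UFib$). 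I expect that spelling out the monoidal compatibility — that $ev_c$ is symmetric monoidal and that this is preserved under the structure maps of $\UFib$ — is the only genuinely delicate bookkeeping, and it reduces to the statement that Day convolution evaluation is symmetric monoidal, which is \cite[Example 2.2.6.9]{HA} or a direct check.
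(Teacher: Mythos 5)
Your overall skeleton agrees with the paper: use the pullback \eqref{diag_pullop}, take the component into $\sym$ to be $(C,c)\mapsto \Fun(C^{op},\Sp)$ with the objectwise monoidal structure, pulled back from $\Cat_\infty^{op}$, and reduce everything to the second component $G\colon\UFib^{op}\to\UFib^{op}$. But your construction of $G$ has a genuine gap. The fibre of the relevant pulled-back universal cartesian fibration over $(C,c)$ is $\Fun^{\otimes}_{\mathrm{lax}}\big(\Fun(C^{op},\Sp),\Sp\big)$, and the distinguished object you must choose there is the lax symmetric monoidal functor $ev_c$ itself; it is \emph{not} an object of $\Fun(C^{op},\Sp)$. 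So your reduction ``it suffices to exhibit, naturally in $(C,c)$, an object of the category $\Fun(C^{op},\Sp)$'' and the proposal to take the Yoneda image $\Sigma^\infty_+ j(c)$ choose an object in the wrong category. One can corepresent the underlying exact functor of $ev_c$ by $\Sigma^\infty_+ j(c)$, but that detour discards exactly the datum the fibre of $\GCS\to\sym$ records, namely the lax symmetric monoidal structure; restoring it coherently over all of $\UFib^{op}$ would require equipping the corepresenting objects with compatible cocommutative coalgebra structures, fixing the variance of $x\mapsto \mathrm{map}(x,-)$, and checking compatibility with the non-invertible basepoint morphisms of $\UFib$ --- none of which you address. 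Your proposed fix, that the remaining issue ``reduces to the statement that Day convolution evaluation is symmetric monoidal,'' does not apply: $C$ carries no symmetric monoidal structure, so Day convolution is not even defined here; the structure used in the paper is the objectwise tensor product, for which monoidality of a single $ev_c$ is immediate --- the actual difficulty is the coherent naturality of the whole family $(C,c)\mapsto ev_c$, which your argument leaves open (a repair along your lines seems possible in principle, but it is far from the routine bookkeeping you claim).

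The paper closes exactly this point by a simpler device: since the source $\UFib^{op}$ is itself the Grothendieck construction of the identity functor, a functor $f'\colon\UFib^{op}\to\UFib^{op}$ over $\Cat_\infty^{op}$ covering $C\mapsto\Fun^{\mathrm{lax}}(\Fun(C,\Sp),\Sp)$ is the same thing as a natural transformation from the identity of $\Cat_\infty$ to $C\mapsto\Fun^{\mathrm{lax}}(\Fun(C,\Sp),\Sp)$, and the required transformation is given on objects by $c\mapsto ev_c$ (the curried evaluation pairing, which exists functorially in $C$ directly from the definition of the functor categories). This keeps the lax symmetric monoidal structure in the target from the outset, so no Yoneda or corepresentability argument is needed; if you want to salvage your write-up, pass to this natural-transformation formulation, or work with sections valued in $\Fun^{\otimes}_{\mathrm{lax}}\big(\Fun(C^{op},\Sp),\Sp\big)$ rather than in $\Fun(C^{op},\Sp)$.
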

\begin{proof}
%\ntho{
%We 
%\ntho{deal with the opposite case and} 
%show that there is a functor $f: \UFib^{op} \to \GCS$ with $f(C,c)=(\Fun(C,\Sp), ev_c)$. The stated 
%result then follows by taking the opposite}. 
Let $g: \Cat_\infty^{op} \to \sym$ be any functor.
Since $\GCS$ fits into the pullback diagram \eqref{diag_pullop} a functor
$f: \UFib^{op} \to \GCS$ {covering $g$} is then uniquely determined (up to a contractible space of choices) by a commuting diagram\begin{equation*}
\xymatrix{
\UFib^{op} \ar[rrrr]^{f'} \ar[d] &&&& \UFib^{op} \ar[d] \\
\Cat_\infty^{op}\ar[rr]^-g && \sym \ar[rr]^-{\Fun^{lax}(-,\Sp)} &&
\Cat_\infty^{op}
}
%\xymatrix{
%\UFib^{op} \ar@{-->}[rr]^-f \ar[d] && \GCS \ar[rr]\ar[d] && \UFib^{op} \ar[d] \\
%\Cat_\infty^{op}\ar[rr]^-g && \sym \ar[rr]^-{\Fun^{lax}(-,\Sp)} &&
%\Cat_\infty^{op}
%}
{\ .}
\end{equation*}
 For our concrete construction we define the functor $g$ by $g(C) {:=} \Fun(C,\Sp)$. Since the right vertical
morphism in the diagram is the universal cocartesian fibration, a functor $f'$
that makes the diagram commute can be written as a functor into the fibration classfied by 
the lower horizontal composition $g'$. Moreover the source of $f'$ is itself a
Grothendieck construction, described by the identity functor. Thus the functor
{$f'$ (and therefore also the original functor $f$)} can equivalently be described as a 
transformation of functors ${\id} \Rightarrow g'$ from $\Cat_\infty$ to itself.
Such a transformation is given as follows: for every $\infty$-category $C$ it is
the assignment 
\begin{equation*}
  C \to \Fun^{lax}(\Fun(C,\Sp), \Sp) \qquad c \mapsto ev_c
\end{equation*}
which can be easily seen to exist functorially in $C$   using the definitions of the functor
categories.  
%\footnote{\uli{Bitte nohc umschreiben, da‚Äö√Ñ√∂‚àö‚Ä†‚àö‚àÇ‚Äö√†√∂¬¨‚à´ die Logik klarer herauskommt.}}
\end{proof}

\newcommand{\calm}{\mathcal{M}}

Let now $\calm$ be an arbitrary, essentially small $\infty$-category, for example the category of smooth manifolds. Then for 
every object $M \in \calm$ we define the category of {presheaves of spectra} on $M$ as 
\begin{equation*}
 \PSh_{\Sp}(M) := \Fun\big((\calm_{/M})^{op}, \Sp\big)
\end{equation*}
which becomes a symmetric monoidal $\infty$-category with the pointwise tensor product. There is a global sections functor 
\begin{equation*}
 \Gamma(\bullet, M): \PSh_{\Sp}(M) \to \Sp
\end{equation*}
which is in our case just evaluation at the identity of $M$. The last two lemmas imply the following functoriality statement.
\begin{corollary}\label{presheafun}
The assignment $M \mapsto \big(\PSh_{\Sp}(M), \Gamma(\bullet, M)\big)$ refines %\marginpar{\uli{Genauer!} \tho{Wie?}} 
to a functor
\begin{equation*}
 \widehat{ \PSh_{\Sp}}: \quad \calm^{op} \to \GCS
\end{equation*}
\end{corollary}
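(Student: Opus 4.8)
The statement to prove is Corollary \ref{presheafun}: that $M \mapsto \big(\PSh_{\Sp}(M),\Gamma(\bullet,M)\big)$ refines to a functor $\calm^{op}\to \GCS$. The plan is to obtain this purely by composing the two functors produced in Lemma \ref{lem_eins} and Lemma \ref{lem_zwei}, so that essentially no new construction is needed. First I would apply Lemma \ref{lem_eins} to the opposite category $C := \calm^{op}$: it gives a functor $\calm^{op}\to (\Cat_\infty)_\star \subseteq \UFib$, namely $M\mapsto \big((\calm^{op})_{/M}\, ,\, \id_M\big)$. I would then identify $(\calm^{op})_{/M}$ with $\big((\calm_{/M})^{op}\big)$ in the standard way, so that the composite lands in $\UFib$ sending $M$ to the pair $\big((\calm_{/M})^{op},\id_M\big)$.

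Next I would pass to opposites to land in $\UFib^{op}$ and compose with the functor from Lemma \ref{lem_zwei},
\[
\UFib^{op}\longrightarrow \GCS\ ,\qquad (D,d)\mapsto \big(\Fun(D^{op},\Sp),\ ev_d\big)\ .
\]
Taking $D = (\calm_{/M})^{op}$ and $d = \id_M$, the composite functor $\calm^{op}\to \GCS$ sends $M$ to
\[
\big(\Fun\big(((\calm_{/M})^{op})^{op},\Sp\big),\ ev_{\id_M}\big)\ \cong\ \big(\Fun\big((\calm_{/M})^{op},\Sp\big),\ ev_{\id_M}\big)\ =\ \big(\PSh_{\Sp}(M),\ \Gamma(\bullet,M)\big)\ ,
\]
using the definition $\PSh_{\Sp}(M) = \Fun((\calm_{/M})^{op},\Sp)$ and the fact that $\Gamma(\bullet,M)$ is by definition evaluation at $\id_M\in \calm_{/M}$. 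This is exactly the desired assignment, so the composite is the functor $\widehat{\PSh_{\Sp}}$ we want.

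Two small points need care, but neither should be a serious obstacle. First, one must check that the symmetric monoidal structure on $\PSh_{\Sp}(M)$ coming from Lemma \ref{lem_zwei} (inherited from the universal fibration construction, where $\Fun(C,\Sp)$ carries the pointwise tensor product and evaluation is monoidal) agrees with the pointwise tensor product used in the statement; this is essentially the observation already recorded before Lemma \ref{lem_zwei} that $(\Fun(C^{op},\Sp),ev_c)$ is a graded ring spectrum with $ev_c$ monoidal, combined with the universal property of the pointwise tensor product on functor categories (\cite[Section 2.2.5.]{HA}). Second, one must be mildly careful about size: $\calm$ is essentially small, so each $\calm_{/M}$ is small, each $\PSh_{\Sp}(M)$ is large, and the whole construction takes place within the sizes allowed by the conventions set up before Lemma \ref{lem_eins} (where $\Cat_\infty$ and $\UFib$ live in the very large universe). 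The \emph{hard part}, to the extent there is one, has already been done in Lemmas \ref{lem_eins} and \ref{lem_zwei}; the corollary itself is just the assembly of these pieces together with the identification of slice categories and of the monoidal structures, which is why the paper states it as an immediate corollary.
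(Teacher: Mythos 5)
Your overall strategy is the paper's own: no new construction, just compose Lemma \ref{lem_eins} with Lemma \ref{lem_zwei} (and the monoidal/size remarks you add are fine). But the execution of the variance is wrong at a step that does not merely need polishing — as written it fails. Applying Lemma \ref{lem_eins} to $C=\calm^{op}$ gives $M\mapsto \big((\calm^{op})_{/M},\id_M\big)$, and the identification you invoke, $(\calm^{op})_{/M}\simeq (\calm_{/M})^{op}$, is false: the slice of the opposite is the opposite of the \emph{under}category, $(\calm^{op})_{/M}\simeq (\calm_{M/})^{op}$. Feeding that into Lemma \ref{lem_zwei} would produce $\Fun(\calm_{M/},\Sp)$, copresheaves on the undercategory, not $\PSh_{\Sp}(M)$. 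Your displayed computation then compounds this with a second error: $\Fun\big(((\calm_{/M})^{op})^{op},\Sp\big)\simeq \Fun(\calm_{/M},\Sp)$, which is not $\Fun((\calm_{/M})^{op},\Sp)=\PSh_{\Sp}(M)$; a double op cancels, it does not leave one op behind. Finally, the step ``pass to opposites to land in $\UFib^{op}$'' does not typecheck: taking opposites of your functor $\calm^{op}\to \UFib$ yields a functor $\calm\to \UFib^{op}$, so after composing with Lemma \ref{lem_zwei} you would obtain a functor out of $\calm$, not out of $\calm^{op}$.

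The repair is exactly the paper's argument: apply Lemma \ref{lem_eins} to $\calm$ itself, obtaining $\calm\to (\Cat_\infty)_\star=\UFib$, $M\mapsto (\calm_{/M},\id_M)$; then take the opposite of this functor, which gives $\calm^{op}\to \UFib^{op}$; and compose with the functor of Lemma \ref{lem_zwei}. Since that functor is $(C,c)\mapsto \big(\Fun(C^{op},\Sp),\mathrm{ev}_c\big)$, the needed op is already built into its formula, and the composite sends $M$ to $\big(\Fun((\calm_{/M})^{op},\Sp),\mathrm{ev}_{\id_M}\big)=\big(\PSh_{\Sp}(M),\Gamma(\bullet,M)\big)$ with no further slice identifications required. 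With the variance set up this way, your remaining remarks (agreement of the pointwise monoidal structure, size conventions) go through unchanged.
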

\begin{proof}
{
We apply Lemma \ref{lem_eins} with $\calc = \calm$ and obtain a functor $\calm \to (\Cat_\infty)_\star$. Then we take the opposite and compose this functor with the functor  $\UFib^{op} \to \GCS$ from Lemma \ref{lem_zwei}.
}
\end{proof}

The last corollary establishes the basic example of a presheaf of graded ring spectra. We now want to show how to derive new examples from this one.

\begin{proposition}[Change of grading]\label{lem_drei}
Assume that we have an $\infty$-category $\cald$ together with functors 
\begin{equation*}
F: \cald \to \GCS {\ ,}\quad G: \cald \to \sym{\ ,}
\end{equation*}
and a transformation $\eta$ from $G$ to the composition $\cald \xrightarrow{F}
\GCS {\xrightarrow{U}} \sym$. Then there is a functor
\begin{equation*}
  \eta^*F: \quad  \cald \to \GCS \qquad d \mapsto (G(d), G(d) \xrightarrow{\eta} U{(}F(d){)} \xrightarrow{F(d)} \Sp)\ .
\end{equation*}
 %\nuli{the underlying symmetric monoidal category of a graded ring spectrum.}
\end{proposition}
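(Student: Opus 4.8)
The plan is to read off $\eta^{*}F$ from the pullback description of $\GCS$ recorded in \eqref{diag_pullop}. Since that square exhibits $\GCS$ as $\sym\times_{\Cat_\infty^{op}}\UFib^{op}$, a functor $\cald\to\GCS$ is the same datum as a functor $\cald\to\sym$ together with a functor $\cald\to\UFib^{op}$ lying over the composite $\cald\to\sym\xrightarrow{\Fun^{lax}(-,\Sp)}\Cat_\infty^{op}$; and, because $\UFib^{op}\to\Cat_\infty^{op}$ is the universal cartesian fibration, a functor $\cald\to\UFib^{op}$ over a prescribed base functor $\phi\colon\cald\to\Cat_\infty^{op}$ is the same as a section of the cartesian fibration over $\cald$ obtained from $\phi$ by pulling back $\UFib^{op}\to\Cat_\infty^{op}$. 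For the $\sym$-component of $\eta^{*}F$ I would take $G$; all the work is then to produce the matching section over the base functor $\Fun^{lax}(-,\Sp)\circ G$, out of the section attached to $F$ and out of $\eta$.

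To produce that section I would start from $F$: composing $F$ with the projection $\GCS\to\UFib^{op}$ gives a functor lying over $\Fun^{lax}(-,\Sp)\circ UF$, i.e.\ a section $\sigma_{1}$ of the cartesian fibration over $\cald$ obtained from $\Fun^{lax}(-,\Sp)\circ UF$. Applying the functor $\Fun^{lax}(-,\Sp)\colon\sym\to\Cat_\infty^{op}$ to the natural transformation $\eta\colon G\Rightarrow UF$ yields a natural transformation $\Fun^{lax}(-,\Sp)\circ G\Rightarrow\Fun^{lax}(-,\Sp)\circ UF$ of functors $\cald\to\Cat_\infty^{op}$, which I regard as a single functor $\cald\times\Delta^{1}\to\Cat_\infty^{op}$. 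Pulling $\UFib^{op}\to\Cat_\infty^{op}$ back along it produces a cartesian fibration $\mathcal{Z}\to\cald\times\Delta^{1}$ whose restriction over $\cald\times\{1\}$ is the fibration carrying $\sigma_{1}$ and whose restriction over $\cald\times\{0\}$ is the one obtained from $\Fun^{lax}(-,\Sp)\circ G$. The decisive step is to transport $\sigma_{1}$ to the slice over $\cald\times\{0\}$: because $\mathcal{Z}\to\cald\times\Delta^{1}$ is a cartesian fibration, the section $\sigma_{1}$ over $\cald\times\{1\}$ admits a right Kan extension relative to $\mathcal{Z}\to\cald\times\Delta^{1}$ along the inclusion $\cald\times\{1\}\hookrightarrow\cald\times\Delta^{1}$ — at an object $(d,0)$ its value is simply a cartesian lift of $\sigma_{1}(d)$ over the edge $(d,0)\to(d,1)$ — and restricting this extension to $\cald\times\{0\}$ defines the wanted section $\sigma_{0}$. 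Pairing $\sigma_{0}$ with $G$ and invoking \eqref{diag_pullop} once more yields $\eta^{*}F\colon\cald\to\GCS$, a functor over $G$.

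Finally I would check the formula on objects. The cartesian transport functor of $\mathcal{Z}$ over $\{d\}\times\Delta^{1}$ is the functor named by the image of $\eta_{d}\colon G(d)\to U(F(d))$ under $\Fun^{lax}(-,\Sp)$, namely restriction along $\eta_{d}$, $\;\eta_{d}^{*}\colon\Fun^{lax}(U(F(d)),\Sp)\to\Fun^{lax}(G(d),\Sp)$; applied to $\sigma_{1}(d)$, which is the lax symmetric monoidal functor $F(d)\colon U(F(d))\to\Sp$, it returns exactly $F(d)\circ\eta_{d}$, so $\eta^{*}F(d)=(G(d),\,F(d)\circ\eta_{d})$ as asserted, and the higher coherences are inherited from those of $F$ and of $\eta$. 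I expect the genuine difficulty to be organisational rather than conceptual: one must keep track of the variance of $\Fun^{lax}(-,\Sp)$ and of the fact that lifts into $\UFib^{op}$ correspond to sections of cartesian (not cocartesian) fibrations, whose transition functors run against the edges, and one must cite the relative Kan extension existence theorem of \cite{HTT} in precisely the form used here. Beyond that one extra ingredient, everything is a direct manipulation of the universal (co)cartesian fibration, in the same spirit as Lemmas \ref{lem_eins} and \ref{lem_zwei}.
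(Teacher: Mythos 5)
Your proposal is correct, and its core idea---transport $F$ along cartesian edges lying over $\eta$ and then restrict to the $0$-end of $\Delta[1]$---is exactly the idea of the paper's proof; the difference lies in the machinery used to produce the coherent family of cartesian lifts. The paper never unwinds the pullback \eqref{diag_pullop}: it regards $\eta$ as a single edge of $\Fun(\cald,\sym)$ with the prescribed lift $F$ of its target, observes that $\Fun(\cald,\GCS)\to\Fun(\cald,\sym)$ is again a cartesian fibration because $\GCS\to\sym$ is one by definition and exponentials of cartesian fibrations are cartesian fibrations \cite[Proposition 3.1.2.1]{HTT}, and takes the essentially unique cartesian lift $L:\cald\times\Delta[1]\to\GCS$ of $\eta$ ending at $F$; the restriction of $L$ to $\cald\times\{0\}$ is $\eta^*F$. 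You instead pass through the universal cartesian fibration, pull it back along $\Fun^{lax}(-,\Sp)\circ\eta$ to get $\mathcal{Z}\to\cald\times\Delta[1]$, and move the section attached to $F$ from the $1$-end to the $0$-end by a relative right Kan extension whose pointwise values are cartesian lifts. This works, but it is the more labor-intensive route: to make it precise you must invoke the existence criterion for $p$-Kan extensions \cite[Section 4.3.2]{HTT}, reduce the relevant relative limits over the comma categories $\cald_{d/}$ to their initial objects, and identify a relative limit over a point with a cartesian edge---all of which the paper's argument subsumes in the single citation of Proposition 3.1.2.1, at the price of your extra unwinding of $\GCS$ through $\UFib^{op}$. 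Your object-level check that the cartesian transport is restriction along $\eta_d$, so that $\eta^*F(d)\simeq (G(d),F(d)\circ\eta_d)$, matches the asserted formula, and the variance bookkeeping you flag (sections of a cartesian, not cocartesian, fibration; contravariance of $\Fun^{lax}(-,\Sp)$) is indeed the only delicate point, but it is common to both arguments and not a gap.
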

\begin{proof}
We write the transformation $\eta$ as a morphism $\cald \times \Delta[1] \to
\sym$. Then we have a commuting square
\begin{equation*}
\xymatrix{
\cald \ar[r]^F \ar[d]^{(-,1)} & \GCS \ar[d] \\
\cald \times \Delta[1]  \ar[r]^\eta & \sym
}
 \end{equation*}
 %\tfoot{Wuerde den diagonalen Pfeil nicht einzeichnen. Das ist verwirrend im Lesefluss}
We want to show that there is a lift $L: \cald \times \Delta[1] \to \GCS$ in the
diagram with the property that for every $d \in \cald$
the edge $L({\id}_d,0 \to 1)$ is cartesian. Then the desired functor is given by
the restriction of $L$ to $\cald \times {0}$. 

Now we want to show that such a lift $L$ exists. This is equivalent to a cartesian lift in
the diagram
 \begin{equation*}
\xymatrix{
\Delta[0] \ar[r] \ar[d] & \Fun(\cald, \GCS) \ar[d] \\
\Delta[1] \ar[r]^-\eta & \Fun(\cald,\sym)
}
\end{equation*}
But the right vertical morphism  is a cartesian
fibration since $\GCS \to \sym$ is one by definition and exponentials of
cartesian fibrations are again cartesian fibrations as shown in
\cite[Proposition 3.1.2.1]{HTT}. Thus there is an essentially  unique cartesian lift as desired.
\end{proof}

\begin{rem2}
Let $\calm$ be an $\infty$-category. From Corollary \ref{presheafun} we have an object $\PSh_{\Sp} \in \Fun(\calm^{op},\sym)$
together with a lift $\widehat{ \PSh_{\Sp}} \in \Fun(\calm^{op},\GCS)$. Together with the change of grading statement we can obtain new examples of functors $$\eta^*\widehat{ \PSh_{\Sp} }  \in \Fun(\calm^{op},\GCS)$$  {from}  a natural transformation $\eta: G \to \PSh_{\Sp}$ for any functor $G \in \Fun(\calm^{op},\sym)$. All examples in this paper will arise in {this} way. This is not a coincidence, since one can show that all examples of functors in $\Fun(\calm^{op},\GCS)$ {can be constructed from $\PSh_{\Sp}$ using the change of grading statement}. In other words the object
$\widehat{ \PSh_{\Sp} }$ over $\PSh_{\Sp}$ {is the universal presheaf of graded ring spectra over $\cM${.} We refrain from giving the proof here since it is {technically} involved {and we do not need the statement}. }
\end{rem2}

\begin{lemma}\label{cor_ftilde}
We assume {that} $I$  is a small $\infty$-category and let $v \in I^{\triangleleft}$ be the cone point. Then every functor 
$F: I^{\triangleleft} \to \GCS$ induces 
a functor $\tilde F  : I^{\triangleleft} \to  \GCS_{U(F(v))}
%\footnote{\uli{Was bedeutet der Subskript.}}
$. 
 Hence for every $c \in U(F(v))
 %\footnote{\uli{undefinde symbol}}
 $ {(see \eqref{diag_pullop} for $U$)} we obtain a functor $\tilde{F}^c: I^\triangleleft \to \Sp$. 
\end{lemma}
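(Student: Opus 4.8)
The plan is to build $\tilde F$ directly from the defining pullback square \eqref{diag_pullop} and a basic change-of-slice construction for cartesian fibrations. Recall that $\GCS \to \SymMon$ is a cartesian fibration, hence so is the pullback $\GCS \times_{\SymMon} \SymMon = \GCS$ itself, and more to the point the composite $\GCS \to \UFib^{op} \to \Cat_\infty^{op}$ exhibits $\GCS$ as classifying, over each symmetric monoidal $\infty$-category $\calc$, the $\infty$-category $\Fun^{lax}_\otimes(\calc,\Sp)$. First I would fix the object $A := F(v) \in \GCS$ with underlying symmetric monoidal $\infty$-category $\calc := U(A)$, and recall that $\GCS_{\calc}$ denotes the fibre of $U$ over $\calc$, i.e. the $\infty$-category $\Fun^{\otimes}_{\mathrm{lax}}(\calc,\Sp)$ of $\calc$-graded ring spectra. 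The goal is a functor $\tilde F\colon I^\triangleleft \to \GCS_{\calc}$ lifting $F$ along the inclusion (or rather: a functor together with a natural identification $U \circ \tilde F = \mathrm{const}_{\calc}$, so that $\tilde F$ factors through the fibre).

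The key step is the following. Since $U\colon \GCS \to \SymMon$ is a cartesian fibration, the functor $F\colon I^\triangleleft \to \GCS$ composed with $U$ gives $U\circ F\colon I^\triangleleft \to \SymMon$, sending $v \mapsto \calc$. For each $i \in I$ there is a morphism $i \to v$ in $I^\triangleleft$ (the cone structure), hence a morphism $\calc_i := (U\circ F)(i) \to \calc$ in $\SymMon$, where I abusively write $\calc_i$ for the grading of $F(i)$. Pulling $F(i)$ back along... no: the right move is to push forward. Actually the cleanest route: precompose. A lax symmetric monoidal functor $g\colon \calc_i \to \calc$ together with a $\calc_i$-graded ring spectrum $R\colon \calc_i \to \Sp$ does \emph{not} directly give a $\calc$-graded one; what we have instead, from the morphism $F(i) \to F(v)$ in $\GCS$ covering $g\colon \calc_i \to \calc$, is by the very definition of morphisms in $\GCS$ (recalled after Definition \ref{def1aug13}) a map $R_i \to g^* R_v$ of $\calc_i$-graded ring spectra, where $R_v\colon \calc \to \Sp$ is the grading of $F(v)$. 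So in fact every $F(i)$ maps canonically, over $g$, into the object $g^* F(v)$, and more precisely: the slice functoriality I want is that $I^\triangleleft \to \GCS$, being a diagram with a terminal object $v$, factors through the slice $\GCS_{/F(v)}$. Then I would apply the forgetful/base-change functor $\GCS_{/F(v)} \to \GCS_{\calc}$ sending $(B \to F(v))$, covering $h\colon U(B) \to \calc$, to the $\calc$-graded ring spectrum obtained as the composite $\calc \xleftarrow{?}$ — here one uses that $\Fun^\otimes_{\mathrm{lax}}(-,\Sp)$ is contravariant, so $h^*\colon \Fun^\otimes_{\mathrm{lax}}(\calc,\Sp) \to \Fun^\otimes_{\mathrm{lax}}(U(B),\Sp)$, and the datum of the morphism $B \to F(v)$ in $\GCS$ is precisely a map $R_B \to h^* R_v$ in the target; this is not yet a $\calc$-graded ring spectrum. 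I think the correct formulation is simply: $\tilde F(i) := $ the image of $F(v)$ together with the transition maps, i.e. $\tilde F$ is the constant functor at $F(v)$'s grading with $\tilde F(i) = $ (the $\calc$-graded ring spectrum underlying $F(i)$ after right Kan extension / restriction along $U(F(i)) \to \calc$ is wrong) — honestly the intended statement, given the phrasing "induces a functor", is that one takes the diagram $F$ and uses the cartesian fibration structure to straighten the portion over the cone point.

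Concretely, here is the step I would actually carry out. The functor $U\circ F\colon I^\triangleleft \to \SymMon$ has $I^\triangleleft$ with terminal object $v$; restrict attention to the full subcategory on $v$ together with all $i \in I$, i.e. all of $I^\triangleleft$. Define $\tilde F$ by the assignment $i \mapsto (\calc, \ F(i)\text{ regarded in } \GCS_\calc \text{ via the morphism } F(i)\to F(v))$: more precisely, a morphism $F(i) \to F(v)$ in $\GCS$ covering $g_i\colon \calc_i \to \calc$ \emph{is}, by unwinding the Grothendieck construction of \eqref{ulimar1810}, the same as a lift of $g_i$ to a morphism in $\UFib^{op}$ from $(\Fun^\otimes_{\mathrm{lax}}(\calc_i,\Sp), R_i)$ to $(\Fun^\otimes_{\mathrm{lax}}(\calc,\Sp), R_v)$; composing with $U$-cartesian transport one moves $R_i$ into the fibre over $\calc$, landing on an object I call $\tilde F(i) \in \Fun^\otimes_{\mathrm{lax}}(\calc,\Sp) = \GCS_\calc$. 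Functoriality in $i$ follows because cartesian transport along $F(i)\to F(v)$ is functorial: given $i \to j \to v$ the triangle in $I^\triangleleft$ gives compatible transition maps, and the associativity/coherence is exactly the statement that $F$ restricted to the slice over $v$ lands in $\GCS_{/F(v)}$, composed with the functor $\GCS_{/F(v)} \to \GCS_{U(F(v))}$ induced by cartesian pushforward. This last functor exists and is well-defined up to contractible choice precisely because $U$ is a cartesian fibration (Definition \ref{def1aug13} makes $\GCS \to \SymMon$ a Grothendieck construction, hence a cartesian fibration), invoking \cite[Chapter 3]{HTT} for cartesian transport. Finally, composing $\tilde F\colon I^\triangleleft \to \GCS_\calc = \Fun^\otimes_{\mathrm{lax}}(\calc,\Sp)$ with evaluation at any $c \in \calc$ — which is a (lax symmetric monoidal, in particular) functor $\mathrm{ev}_c\colon \Fun^\otimes_{\mathrm{lax}}(\calc,\Sp) \to \Sp$ — yields $\tilde F^c\colon I^\triangleleft \to \Sp$, as claimed.

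The main obstacle is purely bookkeeping: making precise that the cartesian transport along the canonical edges $F(i) \to F(v)$ assembles into an honest functor $I^\triangleleft \to \GCS_\calc$ rather than merely a family of objects with incoherent transition maps. This is where one genuinely needs the cartesian fibration formalism: the cleanest argument is to observe that $F$ corepresents a functor $I^\triangleleft \to \GCS_{/F(v)}$ (automatic, since $v$ is terminal in $I^\triangleleft$, so any diagram admits a canonical map to the constant diagram at its value on $v$, which promotes it to a diagram in the slice), and then that the "cartesian replacement" functor $\mathrm{Cart}\colon \GCS_{/F(v)} \to \GCS_{U(F(v))}$ — right adjoint to the inclusion of the fibre, or equivalently given by $p$-cartesian lifts — is a well-defined functor by \cite[Prop. 3.3.1.3 or the discussion of cartesian fibrations in Chapter 3]{HTT}. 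Composing these two functors gives $\tilde F$, and there is essentially nothing left to check beyond this composite's existence, which is immediate once both pieces are in place. The phrase "induces a functor" in the statement signals that no further structure or compatibility needs verification — the content is entirely the existence of $\mathrm{Cart}$, which is a standard fact about cartesian fibrations.
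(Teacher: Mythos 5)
Your construction rests on the premise that the cone point $v$ is terminal in $I^{\triangleleft}$, so that each $i$ comes with a canonical edge $i \to v$ and hence a morphism $F(i) \to F(v)$ in $\GCS$. But $I^{\triangleleft} = \Delta^0 \star I$ is the \emph{left} cone: $v$ is initial, the canonical edges go $v \to i$, and the morphisms $F(i)\to F(v)$ you build everything on do not exist in general. The correct situation (and the one needed for Proposition \ref{limits}, where this lemma is used for limit diagrams) gives maps $U(F(v)) \to U(F(i))$ in $\sym$, and $\tilde F(i)$ is the restriction of the $U(F(i))$-graded ring spectrum $F(i)$ along this map, i.e.\ cartesian transport along edges \emph{emanating from} the cone point. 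Moreover, even granting your orientation, the ``cartesian replacement'' functor $\GCS_{/F(v)} \to \GCS_{U(F(v))}$ you invoke does not exist for a cartesian fibration: the cartesian lift of $h\colon U(B)\to U(F(v))$ with target $F(v)$ lands in the fibre over $U(B)$, not over $U(F(v))$, so cartesian transport can never push an object \emph{into} the fibre over the cone point; pushing a slice into the fibre over the target is a cocartesian phenomenon, and $U$ is only a cartesian fibration, being the Grothendieck construction of the contravariant functor \eqref{ulimar1810}. What does exist for a cartesian fibration --- and is what the corrected orientation calls for --- is a functor from the coslice $\GCS_{F(v)/}$ to the fibre.

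The coherence worry you raise at the end is legitimate, but the paper settles it without any slice or coslice detour: the proof is a one-line application of the change-of-grading Proposition \ref{lem_drei}, with $\cald = I^{\triangleleft}$, $G$ the constant functor at $U(F(v))$, and $\eta\colon G \to U\circ F$ the transformation given by the composite $I^{\triangleleft}\times\Delta[1] \to I^{\triangleleft} \to \GCS \to \sym$, where the first map is the identity on $I^{\triangleleft}\times\{1\}$ and collapses $I^{\triangleleft}\times\{0\}$ to the cone point --- a transformation that exists precisely because $v$ is initial. Proposition \ref{lem_drei} was itself proved by choosing a cartesian lift in the cartesian fibration $\Fun(\cald,\GCS)\to\Fun(\cald,\sym)$, which is exactly the coherent form of the ``cartesian transport along the cone edges'' you were trying to assemble by hand. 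If you reverse the cone direction and replace your slice-to-fibre functor by the coslice-to-fibre construction (or simply quote Proposition \ref{lem_drei}), your argument becomes correct and essentially coincides with the paper's.
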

\begin{proof}
Apply the change of grading Proposition \ref{lem_drei} to the functor $F$,
the constant functor $G: I^\triangleleft \to \sym$ with image
 $ U(F(v)) \in \sym$, and the obvious natural transformation $G \to U{\circ} F$. Explicitly this transformation is given as the composition
$$\eta: \quad I^\triangleleft \times \Delta[1] \to I^\triangleleft \to \GCS\to \sym\ .$$
where the first functor is the identity on $I^\triangleleft \times \{1\}$ and carries $I^\triangleleft \times \{0\}$ to the cone point.
\end{proof}
%
%\begin{corollary}\label{cor_mai11}
%Let $F: \cald \to \GCS$ be a functor and $d_0 \in \cald$ be an arbitrary object. We denote the grading category $UF(d)$ by $\calc_0$. Then we obtain an induced functor
%$$
%F_{d_0}: \quad \cald_{d_0/} \to \GCS_{\calc_0} \qquad (d_0 \to d) \mapsto (\calc_0 \to UF(d) \xrightarrow{F(d)} \Sp)    	
%$$
%In particular for every object $c \in \calc_0$ evaluation at $c$ yields  functor 
%$$
%F_{d_0}^{c_0}: \quad \cald_{d_0/}  \to \Sp
%$$ 
%\end{corollary}
%\begin{proof}
%Apply the change of grading Proposition \ref{lem_drei} to the functor,
%$ 
%\cald_{d_0/} \to \calc \xrightarrow{F} \GCS
%$ the constant functor $\cald_{d_0/} \to \sym$ which maps to $\calc_0 \in \sym$ and the obious natural transformation.
%\end{proof}
%
%\begin{example}\label{ex_mai10}
%\begin{enumerate}
%\item
%Assume $\cald$ is an $\infty$-category with initial object $\emptyset$. Then every functor $F: \cald \to \GCS$ induces 
%a functor $\tilde F  : \cald \to  \GCS_{\calc_0}$ with $\calc_0 := UF(\emptyset)$. Hence for every $c \in C_0$ we obtain a functor $\tilde{F}^c: \cald \to \GCS$. 
%\item
%Let $X$ be a presheaf of graded ring spectra on a category $\calm$, i.e. a functor
%$\calm^{op} \to \GCS$. Then we obtain induced functors:
%$$ \widetilde X_{M} : (\calm/M)^{op} \to \GCS_{\calc_M}$$
%and by evaluation
%$$ \widetilde X^c_{M} : (\calm/M)^{op} \to \Sp$$
%for $M \in \calm$ and $c \in \calc_M := UX(M)$.
%\end{enumerate}
%\end{example}
%

Now we want to give a short discussion of limits of graded ring spectra. This will be important when imposing descent conditions for presheaves of graded ring spectra. The following result is essentially due to Lurie \cite[Section 4.3.1]{HTT} and holds more {generally}, but we state it explicitly in the form {needed below}.
\begin{proposition}\phantomsection\label{limits}
\begin{enumerate}
\item
The  $\infty$-category $\GCS$ admits  limits
%\tfoot{habe jetzt colimits mal weggelassen, sonst brauchen wir Pr\"asentierbarkeit und muessen uns auf kleine Kategorien beschr‚Äö√Ñ√∂‚àö‚Ä†‚àö‚àÇ¬¨¬®‚àö√ºnken} %and colimits 
and the projection $\GCS \to \sym$ preserves limits.
\item
A diagram $F: I^{\triangleleft} \to \GCS$ is a limit diagram   if and only if  the following conditions are satisifed: 
\begin{itemize}
\item The induced diagram $I^{\triangleleft} \to \GCS \to \sym$ is a limit diagram
\item For every object $c \in U(F({v}))$ the resulting diagram
$\tilde{F}^c:  I^{\triangleleft} \to \Sp$ (see Lemma \ref{cor_ftilde}) is a limit diagram
\end{itemize}
\end{enumerate}
\end{proposition}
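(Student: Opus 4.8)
The statement to prove is Proposition \ref{limits}: that $\GCS$ admits limits, that $U\colon \GCS \to \sym$ preserves them, and that limit diagrams are detected by the two pointwise conditions. The plan is to exploit that $\GCS$ is by definition the Grothendieck construction of the functor \eqref{ulimar1810}, $\SymMon^{op}\to \Cat_\infty$, $\calc\mapsto \Fun^{\otimes}_{\mathrm{lax}}(\calc,\Sp)$, equivalently the pullback \eqref{diag_pullop} of the universal cartesian fibration along $\Fun^{\mathrm{lax}}(-,\Sp)$. The key general input is Lurie's criterion for limits in a category equipped with a cartesian fibration $p\colon \mathcal E\to \mathcal B$: if $\mathcal B$ has limits, a relative limit over a given limit cone in $\mathcal B$ exists provided the fibres have limits and the transition functors preserve them, and such a cone is a limit in $\mathcal E$ iff its image in $\mathcal B$ is a limit and the induced cone in the fibre over the cone point is a limit (this is \cite[Section 4.3.1]{HTT}, in particular the results on relative limits for cartesian fibrations). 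So the real work is to verify the three hypotheses for $p\colon\GCS\to\sym$: (i) $\sym$ has limits; (ii) each fibre $\Fun^{\otimes}_{\mathrm{lax}}(\calc,\Sp)$ has limits; (iii) for a lax symmetric monoidal functor $f\colon\calc\to\calc'$, the restriction functor $f^*\colon \Fun^{\otimes}_{\mathrm{lax}}(\calc',\Sp)\to\Fun^{\otimes}_{\mathrm{lax}}(\calc,\Sp)$ preserves limits.

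First I would record (i): $\sym$, the $\infty$-category of symmetric monoidal $\infty$-categories and lax symmetric monoidal functors, is the $\infty$-category of $\infty$-operads over the commutative operad spanned by the coCartesian fibrations, and it admits limits — this is in \cite{HA} (limits of $\infty$-operads are computed on underlying $\infty$-categories / fibrewise), and in fact the forgetful functor to $\Cat_\infty$ (taking the underlying $\infty$-category) creates them. For (ii) and (iii), the point is that $\Sp$ is a presentable (in particular complete) symmetric monoidal $\infty$-category, and limits of lax symmetric monoidal functors into a symmetric monoidal $\infty$-category $\mathcal D$ with limits are computed objectwise: a cone $\{F_i\}\to F$ in $\Fun^{\otimes}_{\mathrm{lax}}(\calc,\mathcal D)$ is a limit iff for every object $c\in\calc$ the cone $\{F_i(c)\}\to F(c)$ is a limit in $\mathcal D$. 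This follows because the forgetful functor $\Fun^{\otimes}_{\mathrm{lax}}(\calc,\mathcal D)\to\Fun(\calc,\mathcal D)$ to plain functors detects and creates limits (the lax structure maps $F(c)\otimes F(c')\to F(cc')$ of a limit functor are assembled from the corresponding maps of the $F_i$ using that $\otimes\colon\mathcal D\times\mathcal D\to\mathcal D$ need only be compatible, not preserve limits — one uses the universal property of the limit in the target slot), together with the fact that limits in functor categories are objectwise. Applying this with $\mathcal D=\Sp$ gives (ii); and since $f^*$ is given by precomposition, it obviously commutes with objectwise limits, giving (iii).

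With (i)–(iii) in hand, the cited relative-limit machinery for the cartesian fibration $p\colon\GCS\to\sym$ immediately yields both clauses: $\GCS$ has all (small) limits, $U=p$ preserves them, and a diagram $F\colon I^{\triangleleft}\to\GCS$ is a limit iff $U\circ F$ is a limit in $\sym$ and, for each object $c\in U(F(v))$, the induced diagram $\tilde F^c\colon I^{\triangleleft}\to\Sp$ of Lemma \ref{cor_ftilde} is a limit in $\Sp$. Here I would be careful to match the formulation: the fibre of $p$ over $U(F(v))$ is $\Fun^{\otimes}_{\mathrm{lax}}(U(F(v)),\Sp)$, and by (ii) a cone there is a limit iff it is an objectwise limit, which is exactly the condition "for every $c\in U(F(v))$, $\tilde F^c$ is a limit diagram in $\Sp$" — so the fibrewise clause of Lurie's criterion translates verbatim into the second bullet of the proposition. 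I would also note that $U(F(v))$ and $U(F(\lim))$ agree by the first clause, so the indexing set of objects $c$ is unambiguous.

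\textbf{Expected main obstacle.} The only genuinely non-formal point is the claim that limits in $\Fun^{\otimes}_{\mathrm{lax}}(\calc,\mathcal D)$ are created objectwise, i.e.\ that the forgetful functor to $\Fun(\calc,\mathcal D)$ creates limits, when $\mathcal D$'s tensor product does \emph{not} commute with those limits. Making this precise at the $\infty$-categorical level requires unwinding the operadic definition of lax symmetric monoidal functor (as a map of $\infty$-operads $\calc^{\otimes}\to\mathcal D^{\otimes}$ over $\mathrm{Fin}_*$) and checking that a pointwise limit of such maps, formed in $\Fun(\calc^{\otimes},\mathcal D^{\otimes})$, again lies over $\mathrm{Fin}_*$ and preserves inert morphisms — both of which hold because "lying over $\mathrm{Fin}_*$" and "preserving inert edges" are conditions that are stable under limits of functors. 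This is essentially \cite[Section 3.2.2, Corollary 3.2.2.4 and surrounding results]{HA} on limits in $\infty$-operad map categories; I would cite it rather than reprove it. Everything else is a direct application of \cite[Section 4.3.1]{HTT}.
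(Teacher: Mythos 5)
Your proposal is correct and follows essentially the same route as the paper: both treat $U:\GCS\to\sym$ as a cartesian fibration and apply the relative-limit machinery of \cite[Section 4.3.1]{HTT}, with the fibrewise hypotheses (limits in $\Fun^{\otimes}_{\mathrm{lax}}(\calc,\Sp)$ exist, are computed objectwise, and are preserved by restriction along lax monoidal functors) supplied by \cite[Section 3.2.2]{HA}, and with the second clause translated into the fibre over the cone point exactly as in the paper's use of \cite[Proposition 4.3.1.9]{HTT}. The only cosmetic difference is that the paper proves that $U$ preserves limits by exhibiting a left adjoint (sending $\calc$ to the initial object of the fibre), whereas you deduce it from the way the limits are constructed over limit cones in $\sym$; both are adequate.
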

\begin{proof}
1.) 
%We first observe that $\GCS \to \sym$ is not only a cartesian fibration, but also a cocartesian fibration. This follows together with \cite[Proposition 5.5.3.3.]{HTT} from the observation that the functor $\GCS_C \to \GCS_{C'}$ is right adjoint for every morphism $C' \to C$ in $\sym$. To see that these functors are right adjoint note that the category $\GCS_C$ and $\GCS_{C'}$ are presentable for small $C$ and $C'$ according to \cite{HA} and limits and sifted colimits are computed pointwise. Thus the functor clearly preserves limits and filtered colimits, hence be the adjoint functor theorem it is right adjoint.  
We first observe that the functor  $U: \GCS \to \sym$ admits a left adjoint given by the functor $\sym \to \GCS$ that maps every $C$ to the initial object of the fibre $\GCS_C = \Fun^\otimes_{\mathrm{lax}}(C,\Sp)$, see \cite[Proposition 2.4.4.9]{HTT} for the existence of this functor and \cite[Proposition 5.2.4.3]{HTT} for the fact that it is left adjoint to $U$.
%\footnote{\uli{Aus Lurie lese ich nur die Existenz eines solchen Funktors geraus. Warum der linksadjungiert in der gew‚Äö√Ñ√∂‚àö‚Ä†‚àö‚àÇ¬¨¬®‚Äö√†¬¥nschten weise ist???}}
This shows that $U$ preserves all limits. 

We now want to show that $\GCS$ has all limits. 
It follows from \cite[Remark 4.3.1.5]{HTT}
%\footnote{\uli{Die gibts nicht!}}
 that a diagram $F: I^\triangleleft \to \GCS$ is a limit diagram if $I^\triangleleft \to \GCS \to \sym$ is a limit diagram and  $F$ is a $U$-limit diagram (this is a relative limit as discussed in \cite[section 4.3.1]{HTT}). Thus using the fact that $\sym$ has all limits we conclude that we can extend any diagram $I \to \GCS$ to a square
$$
\xymatrix{
I \ar[r] \ar[d]& \GCS \ar[d]\\
I^\triangleleft \ar[r]& \sym
}
$$
where the lower horizontal morphism is a limit diagram. Then we need to show that we can find a lift $I^\triangleleft \to \GCS$ in this diagram that is a $U$-limit. But the existence of such a lift is shown in \cite[Corollary 4.3.1.11]{HTT}
 under the condition that 
 \begin{itemize}
 \item The fibres $\GCS_C = \Fun^\otimes_{\mathrm{lax}}(C,\Sp) $ have all limits {for all $C\in \sym$}.
 \item The functor $\GCS_C \to \GCS_{C'}$ associated to a morphism $C' \to C$ in $\sym$ preserves all limits.
 \end{itemize}
These conditions are satisfied as shown in {\cite[Section 3.2.2]{HA}}.

\bigskip 
2) Assume $F: I^\triangleleft \to \GCS$ is given. The above discussion shows that  the assertion that $F$ is a limit {diagram} is equivalent to
\begin{itemize}
\item The induced diagram $I^{\triangleleft} \to \GCS \to \sym$ is a limit diagram
\item The diagram $F$ is a $U$-limit diagram.
\end{itemize}
We now want to show that the second condition ($F$ is a $U$-limit diagram) is equivalent to the second condition of the proposition. Therefore we claim that it follows from \cite[Proposition 4.3.1.9.]{HTT} that this condition is equivalent to the condition:
\begin{itemize}
\item The diagram 
$\tilde{F}:  I^{\triangleleft} \to \GCS_{U(F(v))}$ (see Lemma \ref{cor_ftilde}) is a limit diagram.
\end{itemize}
To see this note that $\tilde{F}$ comes by construction with a natural tranformation $\tilde{F} \to F$ which satisifies the assumption of \cite[Proposition 4.3.1.9.]{HTT}. Now we only need to observe that limits in 
$$\GCS_{U(F(v))} = \Fun^\otimes_{\mathrm{lax}}(U(F(v)),\Sp)$$
 are computed pointwise, i.e. they can be detected by evaluation at each $c \in U(F(v))$%\fuli{der e.i. Nachsatz kling nach Slang und macht eigentlich keinen Sinn, jedenfalls f\"ur mich.\tho{Warum? Soll ich das noch genauer ausf√ºhren?}}.
\end{proof}

%\begin{corollary}
%Let $X: \calm^{op} \to \GCS$ be a presheaf of graded ring spectra on a Grothendieck site $\calm$. Then $X$ is a sheaf if and only if $UX: \calm^{op} \to \sym$ is a sheaf and for every $M \in \calm$ and $c \in UX(M)$ the induced presheaf
%$$\widetilde{X}_M^c:  (\calm/M)^{op} \to \Sp \hspace{2cm} \text{(see Example \ref{ex_mai10})}$$
%is a sheaf of spectra.
%\end{corollary}
%\begin{proof}
%The presheaf $X$ is a sheaf if for every $N \in \Mf$ and open covering $(U_i)_{i \in I}$ tbc
%\end{proof}

Let $\calm$ be a Grothendieck site, i.e. an $\infty$-category equipped with a Grothendieck topology on its homotopy category $\Ho({\cM})$. 
In Corollary \ref{presheafun} we constructed a presheaf of graded ring spectra $\widehat{\PSh_{\Sp}}$. Now we want to discuss a variant and consider {for $M\in \calm$} the full subcategory 
$$\Sh_{\Sp}(M) \subset \PSh_{Sp}(M)$$
of sheaves on the overcategory $\calm/M$ {which also becomes a site when} equipped with the induced Grothendieck topology.
This defines a presheaf of symmetric monoidal categories $\Sh_{\Sp} \in \PSh_{\sym}(\calm)$. Then the change of grading statement immediately allows to refine this to a presheaf of graded ring spectra 
$$\widehat{\Sh_{\Sp}} \in \PSh_{\GCS}(\calm)$$
We now want to show that this assignment is a sheaf of graded ring spectra. First note that $\PSh_{\Sp}$ and $\Sh_{\Sp}$ both form sheaves of symmetric monoidal categories as opposed to presheaves. 

\begin{proposition}\label{changesheaf} Let $\calm$ be a Grothendieck site. 
\begin{enumerate} 
\item
The assignment 
$M \mapsto \big(\Sh_{\Sp}(M), \Gamma(\bullet, M)\big)$ refines to a sheaf of graded ring spectra  
on $\calm$:
$$\widehat{\Sh_{\Sp}} \in \Sh_{\GCS}(\calm).$$
\item
Let $F: \calm^{op} \to \GCS$ be a sheaf of graded ring spectra, $G: \calm^{op} \to \sym$ be a sheaf of symmetric monoidal categories{,} and $\eta: G \to U{\circ }F$ {be} a natural transformation. Then the functor
$\eta^*F: \calm^{op} \to \GCS$ obtained by change of grading is also a sheaf of graded ring spectra. 
\item
The functor $$\Sh_{\GCS}(\calm) \to \Sh_{\sym}(\calm)$$ induced by $U$ is a cartesian fibration.
\end{enumerate}
\end{proposition}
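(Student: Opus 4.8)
The plan is to bootstrap from the change-of-grading construction of Proposition~\ref{lem_drei} and the limit criterion of Proposition~\ref{limits}; throughout one uses that $\PSh_{\Sp}$ and $\Sh_{\Sp}$ are \emph{sheaves} of symmetric monoidal categories on $\calm$ and that $\widehat{\PSh_{\Sp}}\in\PSh_{\GCS}(\calm)$ is the presheaf of graded ring spectra of Corollary~\ref{presheafun}. Recall that $\widehat{\Sh_{\Sp}}\in\PSh_{\GCS}(\calm)$ has already been produced by applying Proposition~\ref{lem_drei} with $\cald=\calm^{op}$, $F=\widehat{\PSh_{\Sp}}$, $G=\Sh_{\Sp}$ and $\eta$ the inclusion $\Sh_{\Sp}\to\PSh_{\Sp}=U\circ\widehat{\PSh_{\Sp}}$; in particular $U\circ\widehat{\Sh_{\Sp}}=\Sh_{\Sp}$, so for part (1) only the descent condition is at issue. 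Fix $M\in\calm$ and a covering sieve, and write $I$ for the opposite of its underlying $\infty$-category, so that the sheaf condition at $M$ asks precisely that $\widehat{\Sh_{\Sp}}$, restricted to the cone $I^\triangleleft$ with cone point $M$, is a limit diagram in $\GCS$. By part (2) of Proposition~\ref{limits} this amounts to two checks.

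The first check, that $U\circ\widehat{\Sh_{\Sp}}|_{I^\triangleleft}=\Sh_{\Sp}|_{I^\triangleleft}$ is a limit diagram in $\sym$, holds because $\Sh_{\Sp}$ is a sheaf of symmetric monoidal categories. For the second, fix $\mathcal{F}\in\Sh_{\Sp}(M)=U(\widehat{\Sh_{\Sp}}(M))$; by Lemma~\ref{cor_ftilde} the associated $\Sp$-valued diagram $\tilde F^{\mathcal F}$ sends an object $(U\to M)$ of $I^\triangleleft$ to the evaluation $\Gamma(\mathcal F|_U,U)=\mathcal F(U)$, i.e.\ it is the restriction of $\mathcal F$, viewed as a $\Sp$-valued sheaf, to the cone; hence it is a limit diagram precisely because $\mathcal F$ satisfies descent. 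This proves (1). Part (2) follows by the same pattern: for $\eta^*F$ the underlying $\sym$-valued presheaf is $G$, a sheaf by hypothesis, so the first condition holds; and for $c\in G(M)=U((\eta^*F)(M))$ one identifies, using naturality of $\eta$ (so that $\eta_U(c|_U)=(\eta_M c)|_U$), the $\Sp$-valued diagram $\widetilde{\eta^*F}^{\,c}$ with $\tilde F^{\,\eta_M c}$, which is a limit diagram since $F$ is a sheaf of graded ring spectra, by Proposition~\ref{limits} applied to $F$. Hence $\eta^*F\in\Sh_{\GCS}(\calm)$.

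For part (3), recall from the proof of Proposition~\ref{lem_drei} that, with $\cald=\calm^{op}$, the change-of-grading functor $\eta^*F$ is the source of the cartesian lift of $\eta\colon G\to U\circ F$ along $\PSh_{\GCS}(\calm)=\Fun(\calm^{op},\GCS)\to\Fun(\calm^{op},\sym)=\PSh_{\sym}(\calm)$; the latter is a cartesian fibration because $U\colon\GCS\to\sym$ is one, being a pullback of the universal cartesian fibration (see \eqref{diag_pullop}), and cartesian fibrations are stable under passage to functor $\infty$-categories \cite[Proposition 3.1.2.1]{HTT}. Now $\Sh_{\GCS}(\calm)\subset\PSh_{\GCS}(\calm)$ and $\Sh_{\sym}(\calm)\subset\PSh_{\sym}(\calm)$ are full subcategories compatible with these projections, so a morphism of $\PSh_{\GCS}(\calm)$ that is cartesian and lies in $\Sh_{\GCS}(\calm)$ is also cartesian for $\Sh_{\GCS}(\calm)\to\Sh_{\sym}(\calm)$. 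Given $F\in\Sh_{\GCS}(\calm)$ and a morphism $\eta\colon G\to U\circ F$ with $G\in\Sh_{\sym}(\calm)$, part (2) guarantees that the cartesian lift $\eta^*F$ is again a sheaf; hence cartesian lifts exist inside $\Sh_{\GCS}(\calm)$, and $U\colon\Sh_{\GCS}(\calm)\to\Sh_{\sym}(\calm)$ is a cartesian fibration.

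The main obstacle I expect is the coherent identification of $\Sp$-valued diagrams used in the second condition of Proposition~\ref{limits}: making rigorous, not just on objects but as an equivalence of functors out of $I^\triangleleft$, the comparison $\tilde F^{\mathcal F}\simeq\mathcal F|_{I^\triangleleft}$ in part (1) and $\widetilde{\eta^*F}^{\,c}\simeq\tilde F^{\,\eta_M c}$ in part (2) — that is, transporting the chosen grading object along the change-of-grading transformation and invoking naturality of $\eta$ coherently. Everything else is a formal consequence of Propositions~\ref{lem_drei} and~\ref{limits} together with the standard stability properties of (co)cartesian fibrations.
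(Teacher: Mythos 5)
Your proposal is correct and follows essentially the same route as the paper: reduce the descent condition via Proposition \ref{limits} to the sheaf condition for the underlying $\sym$-valued presheaf plus, via Lemma \ref{cor_ftilde}, the spectrum-level descent of each grading object (using naturality of $\eta$ in part (2)), and obtain (3) from the presheaf-level cartesian fibration, fullness of the sheaf inclusions, and part (2). The only points you gloss that the paper spells out are that $U_*$ actually restricts to sheaves (because $U$ preserves limits, Proposition \ref{limits}(1)) and that the restricted functor is an inner fibration; both are routine.
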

\begin{proof}
In order to show (1) let $M \in \calm$ be an object and $\{U_i \to M\}_{i \in I}$ be a covering family. Forming the \u{C}ech nerve and applying the functor $\widehat{\Sh_{\Sp}}$ we obtain the augmented cosimplicial object
$$ \widehat{\Sh_{\Sp}}(U_\bullet): \quad  \Delta^\triangleleft \to \GCS.$$
We have to show that this augmented cosimplicial object is a limit diagram. According to Proposition \ref{limits} we have to check that 
\begin{itemize}
\item The induced diagram $\Sh_{\Sp}(U_\bullet):  \Delta^{\triangleleft} \to \sym$ is a limit diagram\uli{.}
\item For every object $c \in \Sh(M)$ the resulting diagram
$c(U_\bullet): I^{\triangleleft} \to \Sp$  is a limit diagram\uli{.}
\end{itemize}
The first condition is precisely the sheaf condition for  $\Sh_{\Sp}: \calm^{op} \to \sym$ and therefore true as remarked before the Proposition. The second condition is precisely the sheaf condition for $c: (\calm/M)^{op} \to \Sp$ and is also satisified since $c$ is a sheaf. \\

Now we want to show (2). The proof works similar to the proof of (1). One has to show that a diagram 
$$ \eta^*F(U_\bullet): \quad  \Delta^\triangleleft \to \GCS.$$
is a limit {diagram}. In this case the two diagrams that have to be identified as limits according to Proposition \ref{limits} are
$$G(U_\bullet):  \Delta^\triangleleft \to \sym \qquad \text{and} \qquad \tilde F^{d}(U_\bullet): \Delta^\triangleleft \to \Sp$$
 for $c \in G(M)$ and 
$d = \eta_M(c) \in U(F(M))$ {(see \ref{cor_ftilde} for the notation $ \tilde F^{d}$)}. The first follows from the fact that $G$ is a sheaf and the second from the fact that $F$ was a sheaf. \\

Finally for (3) first note that the induced functor 
$$U_*: \PSh_{\GCS}(\calm) \to \PSh_{\sym}(\calm)$$
 restricts to a functor of sheaves 
$$\hat{U}_*:\Sh_{\GCS}(\calm) \to \Sh_{\sym}(\calm)$$
as stated since $U$ preserves limits, as shown in Proposition \ref{limits}. The next step is to observe that the inclusion  
$i: \Sh_{\GCS}(\calm) \to  \PSh_{\GCS}(\calm)$ has the following property:
\begin{quote}
Let $e: \Delta[1] \to \Sh_{\GCS}(\calm)$ be an edge such that $i(e)$ is $U_*$-cartesian. Then $e$ is  $\hat U_*$-cartesian.
\end{quote}
This basically follows since the inclusion of sheaves into presheaves is full. To show that $\hat U_*$ is a cartesian fibration we have to show that $\hat U_*$ is an inner fibration and every edge in $\Sh_{\sym}(\calm)$ with a {given} lift of the target has a $\hat U_*$-cartesian lift to $\Sh_{\GCS}(\calm)$. We observe that  $\hat U_*$ is an inner fibration   since the inlcusion of presheaves into sheaves is full and $U_*$ is an inner fibration. 
%\fuli{Ich verstehe die Aussage des folgenden Satzes nicht (sogar sprachlich) Sehe auch nicht, wo (2) hilft.}
For the lifting property we use statement (2)  of the Proposition to lift to an edge $e$ in $\Sh_{\GCS}(\calm)$ whose image $i(e)$ is $U_*$ cartesian. Then we can conclude that $e$ is $\hat U_*$-cartesian as remarked above.

\end{proof}
\section{Locally constant sheaves}\label{bapend}
%%%%%%%%%%%%%%%%%%%%%%%%%%%%%%%%%%%%%%%%%%%%%%%%%%%%%%%

Let $\calc$ be a presentable $\infty$-category and $M$ be a smooth manifold. Recall that {a $\calc$-valued sheaf on a manifold }$M$ (more precisely the site $\Mf/M$) is called constant when it is {equivalent to  the restriction   $(\hat C_{X})_{|M}$ of a constant sheaf $\hat C_{X}\in \Sh_{\cC}$ for {an} object $X \in \calc$. Also recall that we have an equivalence of $\hat C_{X}$ with the sheaf
$$ \underline{X}\in \Sh_{\calc}  \ , \qquad   M \mapsto X^{\Sing (M)} $$
{(see Remark \ref{8dedkjehdke} for the notation $\Sing(M)$)} as shown in \cite[Lemma 6.7]{bg}, see Proposition \ref{propju22}.} A locally constant sheaf was defined in Section \ref{ulimar2201} as follows:

\begin{definition}
A sheaf $\cF\in \Sh_{\cC}(M)$ is called locally constant if every point $m\in M$ has a neighbourhood $U\subseteq M$ such that $\cF_{|U}\in \Sh_{\cC}(U)$ is constant.
We write
$$\Sh^{\loc}_{\cC}(M)\subseteq \Sh_{\cC}(M)$$ for the full subcategory of locally constant sheaves.
\end{definition}

We want to prove the following statement, which is a {$\infty$-categorical} generalization of the classification of local systems using covering theory:

\begin{thm}\label{thmrep}
There is an equivalence of $\infty$-categories $\Sh^\loc_\calc(M) \simeq \Fun(\Sing {(}M{)}, \calc) $ which is natural in $M$.
\end{thm}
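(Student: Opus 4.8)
<br>

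The plan is to build the equivalence in stages, passing from $\infty$-topoi to the simpler situation where the hypersheaf condition becomes tractable. First I would recall that the $\infty$-category $\Sh_\calc(M)$ of $\calc$-valued sheaves is equivalent to $\Sh_{\cS}(M) \otimes \calc$, the tensor of presentable $\infty$-categories, so that it suffices to treat the case $\calc = \cS$ and then tensor with $\calc$ at the end; the property of being locally constant is preserved under this identification because it is a local condition that can be checked after evaluating on a basis of contractible opens. So the heart of the matter is the case $\calc = \cS$, i.e. identifying $\Sh^\loc_{\cS}(M)$ with $\Fun(\Sing(M), \cS)$.

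Next I would exploit the fact that $M$ is a \emph{manifold}, hence locally contractible and in fact admits a basis of opens that are diffeomorphic to $\R^n$ (or more precisely a good cover whose finite intersections are contractible). This gives two things. First, the $\infty$-topos $\Sh_{\cS}(M)$ is locally of constant shape, so by Lurie's results (\cite{HTT}, and the refinement in \cite{HA}/the higher topos literature on locally constant sheaves) the full subcategory of locally constant \emph{hypersheaves} is equivalent to $\Fun(\Pi_\infty(M), \cS)$ where $\Pi_\infty(M)$ is the shape of the topos. Second, for a manifold the topos $\Sh_{\cS}(M)$ is hypercomplete (this is where paracompactness and finite covering dimension enter, via the fact that manifolds have finite homotopy dimension locally), so sheaves and hypersheaves agree, and moreover the shape $\Pi_\infty(M)$ is equivalent to the smooth singular complex $\Sing(M)$ — this last identification is the standard comparison between the \v{C}ech/hypercover description of the shape and the simplicial set of smooth simplices, and one can cite the appendix of \cite{bg} where $\underline{X}(M) = X^{\Sing(M)}$ is established, since that already encodes that the shape of $M$ is $\Sing(M)$.

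Then I would assemble the equivalence concretely: the functor $\Fun(\Sing(M),\calc) \to \Sh^\loc_\calc(M)$ sends a local system $F$ to the sheaf $U \mapsto \lim_{\Delta^n \to U} F$ (a limit over the smooth simplices of $U$), equivalently the right Kan extension along the inclusion of the poset of contractible opens followed by the identification $\Sing(U) \simeq \ast$ for contractible $U$; on a contractible $U$ this recovers $F$ evaluated at the contracted point, which shows the sheaf is locally constant, and the constant-shape argument shows this functor is essentially surjective. For fully faithfulness one reduces, using a good cover and descent, to the local statement on contractibles, where both sides compute the same mapping space. Naturality in $M$ is automatic from the naturality of $\Sing(-)$ and pullback of sheaves. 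The main obstacle, and the step deserving genuine care, is the hypercompleteness/finite-homotopy-dimension input: one must be sure that $\Sh_\calc(M)$ for the \emph{large} presentable $\calc$ we care about (module spectra, etc.) still satisfies the comparison, and that "locally constant" in the sense of the definition (local triviality on some open cover) coincides with "locally constant" in the topos-theoretic sense (local constancy of the shape-theoretic invariant). I would handle this by reducing to $\calc = \cS$ as above and invoking that a manifold has a basis of opens of homotopy dimension $\le n$, so no set-theoretic or hypercompleteness pathology arises.
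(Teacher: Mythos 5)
Your route is viable, but it is genuinely different from the paper's and considerably heavier. You delegate the core statement to Lurie's shape-theoretic classification of locally constant sheaves of \emph{spaces} (hypercompleteness of $\Sh_{\cS}(M)$, locally constant shape, identification of the shape with $\Sing(M)$) and then try to pass to arbitrary presentable coefficients by tensoring, $\Sh_\calc(M)\simeq \Sh_{\cS}(M)\otimes\calc$. The paper avoids all of this: it works with general presentable $\calc$ from the start, builds the comparison functor $F_M:\Fun(\Sing(M),\calc)\to\Sh^\loc_\calc(M)$ directly from the constant-sheaf functor $C\mapsto\underline{C}$ (using that $\Fun(\Sing(-),\calc)$ is the \emph{constant} sheaf of $\infty$-categories generated by $\calc$, via Proposition \ref{propju22}, see Lemma \ref{lemmai17}), checks full faithfulness on contractible $M$ by a one-line mapping-space computation, and then globalizes by pure descent over a good cover: a limit of levelwise fully faithful functors is fully faithful and the essential image can be tested locally (Lemma \ref{lem2mai17} and Corollary \ref{cor3mai17}), where every locally constant sheaf is constant and hence hit. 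No hypercompleteness, finite covering dimension, or shape theory enters, which is what the elementary approach buys; your approach buys generality (it would apply to more general locally contractible base spaces) at the cost of importing that machinery.

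The one step of yours that is asserted too casually is the reduction to $\calc=\cS$. Saying that ``locally constant is preserved'' under $\Sh_\calc(M)\simeq\Sh_{\cS}(M)\otimes\calc$ amounts to the claim $\Sh^\loc_\calc(M)\simeq\Sh^\loc_{\cS}(M)\otimes\calc$, which is essentially the theorem itself; and one cannot simply apply $-\otimes\calc$ to the $\cS$-level equivalence, since tensoring a fully faithful colimit-preserving functor with a presentable category is not automatically fully faithful, nor is it clear without further argument that the definition of ``locally constant'' for $\calc$-valued sheaves (local equivalence to some $\underline{X}$ with $X\in\calc$) matches the image of $\Fun(\Sing(M),\cS)\otimes\calc$. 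The repair is exactly the good-cover/descent analysis you sketch in your final paragraph: full faithfulness on contractibles by comparing mapping spaces, a \v{C}ech-limit argument for general $M$, and local detection of the essential image — but once you carry that out for $\calc$-valued sheaves you have reproduced the paper's proof, and the shape/hypercompleteness input becomes superfluous.
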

\begin{rem2}\label{8dedkjehdke}
\begin{itemize}
\item
The $\infty$-groupoid $\Sing (M)$ is the simplicial version of the fundamental-$\infty$-groupoid of the space $M$. Its homotopy category is the ordinary fundamental groupoid $\Ho(\Sing( M)) \simeq \Pi_1(M)$. Therefore if $\calc$ is a 1-category, Theorem {\ref{thmrep}} reduces to the well-known classification of ordinary local systems.
%\item \fuli{Wollen wir das drin lassen? Mir ist nicht klar, worauf ich bei der Definition der Site achten mu{\ss}?}
%This classifcation is also true for \tho{more general} topological spaces $M$, but then one has to be careful with the definition of the category on which the sheaves are defined. But we will not need this case  here.
%%\tho{\footnote{\tho{Stimmt das \"uberhaupt, man braucht wahrscheinlich noch Parakompakt oder so, damit man gute offene \"Uberdeckungen bekommt.}}}
\item
If $\calc$ is symmetric monoidal{,} then the equivalence is compatible with the induced monoidal structures.
\end{itemize}
\end{rem2}

We now want to prove Theorem \ref{thmrep} in several steps.
\begin{lemma}\label{lemmai17}
\begin{itemize}
\item
The presheaves
$$\Sh^\loc_\calc(-): \Mf^{op} \to \Cat_\infty \quad \text{and}  \quad \Fun(\Sing (-),\calc):  \Mf^{op} \to \Cat_\infty$$
are sheaves of $\infty$-categories on the site of smooth manifolds.
\item
There are functors $F_M: \Fun(\Sing (M),\calc) \to \Sh^\loc_\calc(M)$  which are natural in $M$ such that 
the diagram 
\begin{equation}\label{dreimai17}
\xymatrix{
 & \calc \ar[dl]_{\mathrm{const}}\ar[rd]^{\underline{(-)}} & \\
 \Fun(\Sing (M), \calc) \ar[rr]^-{F_M} & & \Sh^\loc_\calc(M) 
}
\end{equation}
commutes for every $M$.
\end{itemize}
\end{lemma}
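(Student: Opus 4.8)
The plan is to treat the two bullet points of Lemma \ref{lemmai17} in turn, the second one being where the real work lies. For the first bullet, recall that $\Sh_\calc(-)$ is already a sheaf of $\infty$-categories on $\Mf$ (descent for sheaves, i.e.\ ``sheaves glue''; see \cite{HTT}). Since being locally constant is a condition that can be checked on the members of any open cover, and since the restriction of a locally constant sheaf is again locally constant, the inclusion $\Sh^\loc_\calc(-) \hookrightarrow \Sh_\calc(-)$ is levelwise fully faithful; hence for an open cover $\{U_i \to M\}$ with \v{C}ech nerve $U_\bullet$ the limit $\lim_{\Delta}\Sh^\loc_\calc(U_\bullet)$ is the full subcategory of $\Sh_\calc(M) = \lim_{\Delta}\Sh_\calc(U_\bullet)$ on those $\cF$ with $\cF_{|U_i}$ locally constant for all $i$, and since every point of $M$ lies in some $U_i$ this is exactly $\Sh^\loc_\calc(M)$. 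For the second presheaf, the key input is that $\Sing : \Mf \to \cS$ sends an open cover to the colimit of its \v{C}ech nerve (the $\infty$-categorical Seifert--van Kampen / excision statement; see \cite{HTT}, \cite{bg}). As the power $\calc^{(-)} : \cS^{op} \to \Cat_\infty$, which is $\Fun(-,\calc)$ restricted to spaces, carries colimits to limits, the composite $\Fun(\Sing(-),\calc)$ is a sheaf of $\infty$-categories.

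For the second bullet I would, given $\rho \in \Fun(\Sing(M),\calc)$, define a presheaf $F_M(\rho)$ on $\Mf/M$ by
\[
F_M(\rho)(N \xrightarrow{g} M) := \lim_{\Sing(N)}\big(\Sing(N) \xrightarrow{\Sing(g)} \Sing(M) \xrightarrow{\rho} \calc\big)\ .
\]
To make this coherently functorial in $N$, in $\rho$, and in $M$ simultaneously, I would package it as the composite of the functor $\Mf/M \to \cS_{/\Sing(M)}$, $(N \xrightarrow{g} M) \mapsto (\Sing(N) \xrightarrow{\Sing(g)} \Sing(M))$, with the functor $\cS_{/\Sing(M)}^{op} \to \calc$ sending $(Y \to \Sing(M))$ to the limit of $\rho$ restricted along $Y \to \Sing(M)$; the latter depends functorially on $\rho$ and on $M$, so all coherences are built in (alternatively one invokes the formalism of \cite{bg}, \cite{diffsheaf}). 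With this in hand: $F_M(\rho)$ is a sheaf because $\Sing$ sends open covers of $N$ to colimits and limits commute with limits; and $F_M(\rho)$ is locally constant because over a contractible $U \subseteq M$ containing $m$ the restriction $\rho_{|\Sing(U)}$ is equivalent to the constant functor at $X := \rho(m)$ (as $\Fun(\Sing(U),\calc) \simeq \calc$ by evaluation), whence $F_M(\rho)_{|U} \simeq \underline{X}_{|U}$ using the description $\underline{X}(N) = X^{\Sing(N)}$ recalled above (Proposition \ref{propju22}).

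Naturality of $F_M$ in $M$ is then immediate from the functorial packaging. For the triangle \eqref{dreimai17}: the restriction of a constant functor $\mathrm{const}_X$ along $\Sing(N) \to \Sing(M)$ is again $\mathrm{const}_X$, so $F_M(\mathrm{const}_X)(N) = \lim_{\Sing(N)} X = X^{\Sing(N)} = \underline{X}(N)$, which exhibits the claimed commutativity; promoting this identification to a coherent natural equivalence of functors $\calc \to \Sh^\loc_\calc(M)$ is once more subsumed in the functorial packaging. The main obstacle is precisely this bookkeeping in the second bullet --- turning the object-wise formula for $F_M(\rho)$ into a genuine functor that is at the same time natural in $M$ and compatible with the triangle up to coherent homotopy --- and I expect to dispatch it either via the straightening/unstraightening description through $\Mf/M \to \cS_{/\Sing(M)}$ sketched above or by quoting the corresponding constructions in \cite{bg} and \cite{diffsheaf}; everything else reduces to the cosheaf property of $\Sing$ on open covers, which is the common input to both parts. (The upgrade of $F_M$ to an equivalence is not claimed here; that is Theorem \ref{thmrep}, to be obtained afterwards by a descent argument reducing to the constant case.)
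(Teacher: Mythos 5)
Your argument is correct, and for the first bullet it essentially coincides with the paper's (local constancy is a local condition inside the sheaf $\Sh_\calc(-)$; descent for $\Fun(\Sing(-),\calc)$ comes from the fact that $\Sing$ carries an open cover to the colimit of its \v{C}ech nerve, which is the same input that underlies Proposition \ref{propju22}). For the second bullet, however, you take a genuinely different route. You build $F_M$ explicitly, as the pointwise right Kan extension $F_M(\rho)(N\to M)=\lim_{\Sing(N)}\big(\rho\circ\Sing(g)\big)$, and then verify by hand that this presheaf satisfies descent, is locally constant (by trivializing $\rho$ over contractible charts), is natural in $M$, and makes the triangle commute --- with the coherence bookkeeping deferred to a packaging through $\Mf/M\to\cS_{/\Sing(M)}$. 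The paper instead observes that $\Fun(\Sing(-),\calc)\simeq \hat C_{\calc}$ is itself the sheafification of the constant $\Cat_\infty$-valued presheaf at $\calc$, so that a natural transformation $\Fun(\Sing(-),\calc)\to \Sh^{\loc}_\calc(-)$ is, by the universal property of sheafification, the same datum as a transformation of presheaves $\calc\to \Sh^{\loc}_\calc(M)$, which is just $C\mapsto \underline{C}_{|M}$; naturality in $M$, all higher coherences, and the commutativity of \eqref{dreimai17} then come for free, and the only locally constant sheaves one ever has to exhibit are the constant ones $\underline{C}$. Your approach buys an explicit formula for $F_M(\rho)$ on arbitrary twisted coefficients (which makes the local-constancy check and, later, the proof that $F_M$ is an equivalence quite transparent), at the price of exactly the coherence work you flag; the paper's approach eliminates that work entirely but gives no value-wise description of $F_M$. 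If you want to shorten your write-up, note that your own first bullet plus the map from the constant presheaf already identifies $\Fun(\Sing(-),\calc)$ with the sheafification $\hat C_{\calc}$, after which you may define $F$ by this universal property and keep your explicit formula only as a computation of its values.
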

\begin{proof}
To see that $\Sh^\loc_\calc(-)$ is a sheaf it suffices to observe that $\Sh_\calc(-)$ is a sheaf and the condition of being locally constant is a local condition. For the second statement we consider the category $\calc \in \Cat_\infty$ as an object in the (very large) presentable category of (large) $\infty$-categories. Then $\calc$ induces the constant sheaf {$\hat C_{\calc}\in \Sh_{\Cat_{\infty}}$.}
 {As we have remarked above (for general target categories)
 it is equivalent to the sheaf $\Fun(\Sing(-),\calc)\in \Sh_{\Cat_{\infty}}$. 
In particular, $\Fun(\Sing(-),\calc)$ is constant.}
  In order to construct a transformation
$$
\Fun(\Sing(-) , \calc) \to \Sh^{\loc}_\calc(-)
$$
{it therefore suffices to construct a  transformation of $\infty$-category valued {sheaves}
$$\hat C_{\cC}\to  \Sh^{\loc}_\calc\ .$$}
%we want to make use of the obervation that $\Fun(\Sing(-) , \calc)$ is constant. Since $ \Sh^{loc}_\calc(-)$ is a sheaf such a functor is uniquely determined by a transformation of $\infty$-category valued presheaves from the constant presheaf with value $\calc$ to $\Sh^{\loc}_\calc(-)$. 
Such a transformation is clearly given by the transformation of presheaves
$$ {
\calc\cong C_{\calc}(M)} \to \Sh^{\loc}_\calc(M) \qquad C \mapsto \underline{C}.
$$
for every $M$.
\end{proof}

\begin{rem2}
The proof of {Lemma \ref{lemmai17}} shows that the sheaf $\Fun(\Sing(-),\calc)$ is constant as a sheaf of categories. Thus the equivalence $\Fun(\Sing(M),\calc) \cong \Sh^\loc_\calc(M)$ (which still has to be proven) shows that also $\Sh^{{\loc}}_\calc(-)$ is constant as a sheaf of categories. This underlines the general philosphy that locally constant objects are sections in a constant sheaf. 
\end{rem2}
As a next step we need a technical lemma about limits in $\Cat_\infty$.

\begin{lemma}\label{lem2mai17}
Assume  we have two cosimplicial $\infty$-categories $C_\bullet$ and $D_\bullet$ (i.e. functors $\Delta \to \Cat_\infty$) together with a cosimplicial functor 
$F_\bullet: C_\bullet \to D_\bullet$ such that $F_i$ is fully faithful for each $i \in \Delta$. 
{Then we have the following assertions:}
\begin{enumerate}
\item The functor $\lim_{\Delta}\  C_\bullet \to \lim_{{\Delta}}\ D_\bullet$ is fully faithful
\item
The diagram
$$
\xymatrix{
\lim_{{\Delta}} C_\bullet \ar[r]\ar[d] & \lim_{{\Delta}} D_\bullet \ar[d] \\
C_0\ar[r] & D_0 
}
$$
is a pullback in the $\infty$-category $\Cat_\infty$. 
\end{enumerate}
\end{lemma}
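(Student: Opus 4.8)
The plan is to prove both assertions simultaneously by reducing everything to the case of a single cosimplicial degree and to the fact that the totalization functor preserves fully faithful inclusions. First I would recall the general principle: a functor $G\colon \calc\to\cald$ of $\infty$-categories is fully faithful if and only if the commuting square
\begin{equation*}
\xymatrix{
\calc^{\Delta[1]}\ar[r]\ar[d] & \cald^{\Delta[1]}\ar[d]\\
\calc\times\calc\ar[r] & \cald\times\cald
}
\end{equation*}
is a pullback, i.e. $\calc^{\Delta[1]}\simeq \cald^{\Delta[1]}\times_{\cald\times\cald}(\calc\times\calc)$. Since the functors $(-)^{\Delta[1]}$ and $(-)\times(-)$ both preserve limits of $\infty$-categories, and limits commute with limits, applying $\lim_\Delta$ to the degreewise pullback squares witnessing that each $F_i$ is fully faithful yields exactly such a pullback square for $\lim_\Delta C_\bullet\to\lim_\Delta D_\bullet$. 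This proves (1).

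For (2), I would use that $\lim_\Delta$ can be computed as a limit over the subcategory $\Delta_{\le 1}\subset\Delta$ of the $1$-skeleton up to coherent data — more precisely, I would argue directly with the cofinality of $\Delta$ versus its truncations is \emph{not} what is needed; instead the cleanest route is: the square in (2) is the outer rectangle of
\begin{equation*}
\xymatrix{
\lim_\Delta C_\bullet\ar[r]\ar[d] & \lim_\Delta D_\bullet\ar[d]\\
\lim_\Delta (C_0)_\bullet\ar[r]\ar[d] & \lim_\Delta (D_0)_\bullet\ar[d]\\
C_0\ar[r] & D_0
}
\end{equation*}
where $(C_0)_\bullet$ and $(D_0)_\bullet$ denote the constant cosimplicial objects and the lower vertical maps are the canonical equivalences $\lim_\Delta(\text{const})\simeq(\text{value})$. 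The bottom square consists of equivalences, so it is a pullback; hence it suffices to show the top square is a pullback. But the top square is obtained by applying $\lim_\Delta$ to the degreewise square
\begin{equation*}
\xymatrix{
C_i\ar[r]\ar[d] & D_i\ar[d]\\
C_0\ar[r] & D_0
}
\end{equation*}
which is a pullback for each $i$: this is precisely the statement that $F_i$ is fully faithful composed with $F_0$ being fully faithful — indeed, a square of this shape with fully faithful horizontal maps is a pullback iff the left vertical map reflects whatever the right one does, and here fully faithfulness of $F_i$ and $F_0$ plus the commuting triangle $C_i\to C_0\to D_0$ equal to $C_i\to D_i\to D_0$ forces the square to be cartesian. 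Since $\lim_\Delta$ preserves pullbacks, we conclude.

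The main obstacle I anticipate is the verification that the degreewise square in the last display is genuinely a pullback in $\Cat_\infty$ — one must be careful that "fully faithful" is exactly the condition making such naturality squares cartesian, and that the structure maps $C_i\to C_0$ (which are the cosimplicial coface maps, or rather the maps induced by $[0]\to[i]$) are compatible in the required way. I would handle this by invoking the mapping-space characterization: for objects $x,y\in\lim_\Delta C_\bullet$ the mapping space is $\lim_\Delta\Map_{C_\bullet}(x_\bullet,y_\bullet)$, and fully faithfulness of each $F_i$ gives $\Map_{C_i}(x_i,y_i)\simeq\Map_{D_i}(F_ix_i,F_iy_i)$, so $\lim_\Delta$ of these equivalences gives fully faithfulness of the totalized functor; essential surjectivity onto the fibre product is then the content of (2) and follows from the pullback being computed on objects levelwise. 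An alternative, possibly cleaner, route for (2) is to cite directly that in $\Cat_\infty$ a limit diagram over $\Delta^\triangleleft$ whose legs are all fully faithful identifies the cone point with the pullback of any single leg against the totalization — but since the excerpt does not provide such a lemma, I would spell out the mapping-space argument above to keep the proof self-contained.
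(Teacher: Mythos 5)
Your argument for assertion (1) is correct and coincides with the paper's: fully faithfulness of a functor is equivalent to a certain square of arrow categories being a pullback, and this property passes to limits of functors.

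Your argument for assertion (2), however, breaks down at exactly the step you flag as the "main obstacle". It is simply not true that the degreewise square with horizontal maps $F_i\colon C_i\to D_i$ and $F_0\colon C_0\to D_0$ and vertical maps $C_i\to C_0$, $D_i\to D_0$ is a pullback just because $F_i$ and $F_0$ are fully faithful. (A side remark: the vertical maps have to be the ones induced by the maps $[i]\to[0]$, using that $[0]$ is terminal in $\Delta$, not by $[0]\to[i]$ as written.) Cartesianness of that square would say that an object $d\in D_i$ lies in the essential image of $F_i$ as soon as its image in $D_0$ lies in the essential image of $F_0$, i.e. that a levelwise full sub-cosimplicial $\infty$-category is determined by its degree-zero part. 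This fails: take any non-constant $D_\bullet$ (for instance a descent/cobar diagram $[n]\mapsto \Mod_{B^{\otimes_A (n+1)}}$ for a ring map $A\to B$), set $C_0:=D_0$ and let $C_n\subseteq D_n$ for $n\ge 1$ be the full subcategory spanned by the images of all structure maps $D_0\to D_n$; this is a sub-cosimplicial object with all $F_n$ fully faithful, but $C_0\times_{D_0}D_1=D_1$ while $C_1\subsetneq D_1$ in general. Note that the lemma itself still holds in this example: the point is that for an object $(d_n)$ of $\lim_{\Delta}D_\bullet$ each $d_n$ is equivalent to the image of $d_0$ under the structure map induced by a map $[0]\to[n]$, and this coherence is invisible degreewise, so the strategy "apply $\lim_{\Delta}$ to degreewise pullbacks" cannot work; likewise your fallback remark that essential surjectivity "follows from the pullback being computed on objects levelwise" is not a valid principle for totalizations.

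What is actually needed — and what the paper does — is the following. By (1) the functor $\lim_{\Delta}C_\bullet\to\lim_{\Delta}D_\bullet$ is fully faithful, and $C_0\times_{D_0}\lim_{\Delta}D_\bullet\to\lim_{\Delta}D_\bullet$ is fully faithful as a pullback of the fully faithful $F_0$; hence one only has to compare essential images inside $\lim_{\Delta}D_\bullet$, equivalently the fibres of $\lim_{\Delta}F_\bullet$ and of $F_0$ over a given object $(d_n)$. The fibre of $\lim_{\Delta}F_\bullet$ is the limit over $\Delta$ of the fibres of the $F_n$ over $d_n$, each of which is empty or contractible; since $d_n$ is the image of $d_0$ under a structure map, the diagram of fibres consists entirely of empty categories or entirely of contractible ones according to whether $d_0$ lifts along $F_0$, so the fibre of the totalized functor agrees with the fibre of $F_0$. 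This "empty versus singleton" comparison of fibres is the content your proposal is missing.
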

\begin{proof}
The first part is true for any limit, not just limits indexed by $\Delta$. {This} follows from the observation that 
a functor $F: C \to D$ is fully faithful precisely if the diagram
$$ 
\xymatrix{
C^{\Delta[1]} \ar[r]\ar[d] & D^{\Delta[1]} \ar[d] \\
C \times C \ar[r] & D \times D 
}$$ {is a pullback.}
If $F$ is a limit of functors $F_i$ then this diagram is the limit of the corresponding diagrams for the functors $F_i$ since all operations commute with limits. Therefore if all the $F_i$-diagrams are pullbacks it follows that also the limit diagram is a pullback.

To show the pullback property we want to show that the natural map 
$$\varphi: \lim_{{\Delta}} C_\bullet \to C_0 \times_{D_0} \lim_{{\Delta}} D_\bullet$$
is an equivalence. Therefore we first note that the functor $C_0 \times_{D_0} \lim_{{\Delta}} D_\bullet \to \lim_{{\Delta}} D_\bullet$ is fully faithful as the pullback of a fully faithful functor. Also from the first assertion we know that $\lim_{{\Delta}} C_\bullet \to \lim_{{\Delta}} D_\bullet$ is fully faithful. Thus we really compare two full subcategories of $\lim_{{\Delta}} D_\bullet$ and it suffices to compare their essential images. 
{This means that}
 it suffices  to check that the fibre of 
$$ \lim_{{\Delta}} F_\bullet: \lim_{{\Delta}} C_\bullet \to \lim_{{\Delta}} D_\bullet $$
{at every object  {of $\lim_{\Delta} D_{\bullet}$}}
is equivalent to the fibre of 
$$ F_0:  C_0 \to D_0.$$
The fibre of $\lim_{{\Delta}}  F_\bullet$ is given by the limit of the fibres of the $F_i's$, hence a limit of categories which are either a singleton or empty (since the $F_i$ are fully faithful). This limit is either empty or a singleton depending on whether the corresponding diagram consists entirely of empty categories or singletons. In particular it is empty if and only if the  fibre of $F_0: C_0 \to D_0$ is empty. This implies the second assertion of the lemma.\end{proof}

\begin{corollary}\label{cor3mai17}
{We 
assume} $F$ and $G$ are sheaves of $\infty$-categories on {some} Grothendieck site $\cM$ and $\eta: F \to G$ is a morphism
of sheaves such that $\eta_M: F(M) \to G(M)$ is fully faithful for every element $M \in \cM$. Then for every covering family $\{U_i \to M\}_{i \in I}$ the diagram
$$
\xymatrix{
F(M) \ar[r] \ar[d]& G(M)  \ar[d]\\
\prod_{i \in I} F(U_i) \ar[r] & \prod_{i \in I} G(U_i) 
}$$
is a pullback diagram in $\Cat_\infty$. In particular an object $x \in G(M)$ lies in the essential image of $\eta_M: F(M) \to G(M)$ precisely if there exist a covering family $\{U_i \to M\}_{{i\in I}}$ such that {for every $i\in I$} the {restriction} $x|_{U_i} \in G(U_i)$ lies in the essential image of $\eta_{U_i}: F(U_i) \to G(U_i)$. 
\end{corollary}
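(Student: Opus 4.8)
The plan is to deduce everything from Lemma \ref{lem2mai17} by feeding it the cosimplicial $\infty$-categories obtained by evaluating $F$ and $G$ on the \v{C}ech nerve of the chosen cover. Concretely, I would fix an object $M$ of the site and a covering family $\{U_i\to M\}_{i\in I}$, and set
$$C_\bullet:\quad [n]\ \longmapsto\ \prod_{(i_0,\dots,i_n)\in I^{n+1}} F\big(U_{i_0}\times_M\cdots\times_M U_{i_n}\big)\ ,$$
with $D_\bullet$ the same construction with $G$ in place of $F$. Since $F$ and $G$ are sheaves of $\infty$-categories, descent identifies $F(M)\simeq\lim_{\Delta}C_\bullet$ and $G(M)\simeq\lim_{\Delta}D_\bullet$, while by inspection $C_0=\prod_{i\in I}F(U_i)$ and $D_0=\prod_{i\in I}G(U_i)$. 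The transformation $\eta$ induces a cosimplicial functor $C_\bullet\to D_\bullet$ which in cosimplicial degree $n$ is the product over $(i_0,\dots,i_n)$ of the functors $\eta_W$ with $W=U_{i_0}\times_M\cdots\times_M U_{i_n}$; each $\eta_W$ is fully faithful by hypothesis, and a product of fully faithful functors is again fully faithful, so every $C_n\to D_n$ is fully faithful. This is exactly the input required by Lemma \ref{lem2mai17}.

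Applying part (2) of that lemma then yields that
$$\xymatrix{ \lim_{\Delta}C_\bullet\ar[r]\ar[d] & \lim_{\Delta}D_\bullet\ar[d]\\ C_0\ar[r] & D_0 }$$
is a pullback in $\Cat_\infty$, and translating along the identifications above gives precisely the asserted pullback square. Part (1) of the same lemma, together with the obvious fact that $\prod_{i}F(U_i)\to\prod_{i}G(U_i)$ is fully faithful, shows in addition that $\eta_M\colon F(M)\to G(M)$ is fully faithful, so "lying in the essential image of $\eta_M$" is a property of objects of $G(M)$.

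For the final assertion I would invoke the elementary fact that in a pullback square of $\infty$-categories $P\simeq A\times_C B$ with $A\to C$ fully faithful (hence $P\to B$ fully faithful as well), an object $b\in B$ lies in the essential image of $P\to B$ if and only if its image in $C$ lies in the essential image of $A\to C$: the fibre of $P\to B$ over $b$ is the base change of the fibre of $A\to C$ over the image of $b$, and the fibre of a fully faithful functor over an object is empty or contractible, non-empty precisely when the object is in the essential image. Applied to the pullback square just produced, $x\in G(M)$ is in the essential image of $\eta_M$ iff $(x|_{U_i})_{i\in I}$ is in the essential image of $\prod_{i}F(U_i)\to\prod_{i}G(U_i)$, i.e. iff each $x|_{U_i}$ is in the essential image of $\eta_{U_i}$. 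The "only if" direction of the stated equivalence is then immediate from naturality of $\eta$ (restriction carries the essential image of $\eta_M$ into that of $\eta_{U_i}$, for any cover), and the "if" direction is the computation just made applied to the cover witnessing the hypothesis.

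\textbf{Main obstacle.} Given Lemma \ref{lem2mai17}, there is essentially no mathematical difficulty; the only points needing care are bookkeeping: first, that the descent condition for a sheaf of $\infty$-categories really does produce the cosimplicial objects $C_\bullet,D_\bullet$ with $C_0=\prod_{i}F(U_i)$ — equivalently, that a sheaf turns the disjoint union $\coprod_{i}U_i$ into the product $\prod_{i}F(U_i)$; and second, that full faithfulness passes to arbitrary products and hence, degreewise, to the cosimplicial functor $C_\bullet\to D_\bullet$. Both are routine.
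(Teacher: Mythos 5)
Your proof is correct and follows exactly the route the paper intends: the corollary is stated as an immediate consequence of Lemma \ref{lem2mai17}, applied to the levelwise fully faithful cosimplicial functor obtained by evaluating $\eta$ on the \v{C}ech nerve of the cover, with the sheaf condition identifying $F(M)$ and $G(M)$ with the limits and the zeroth level with $\prod_i F(U_i)$ and $\prod_i G(U_i)$. Your fibrewise argument for the essential-image statement (together with naturality for the easy direction) is a valid way to extract the "in particular" clause from the pullback square.
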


\begin{proof}[Proof of Theorem \ref{thmrep}]
We want to show that the functors  $F_M: \Fun(\Sing (M),\calc) \to \Sh^\loc_\calc(M)$ constructed in Lemma \ref{lemmai17} are equivalences for every $M$. We first assume that $M$ is contractible. Then using Diagram \eqref{dreimai17} we see that the functor $F_M$ is equivalent to the functor
$$ \calc \to \Sh^{\loc}_\calc(M) \qquad C \mapsto \underline{C}. $$
This functor is clearly fully faithful since one easily computes 
\begin{eqnarray*}
\Map_{\Sh^{\loc}_\calc(M)}\big(\underline{C}, \underline{D}\big)  \simeq \Map_{\Sh_\calc(M)}\big(\underline{C}, \underline{D}\big) 
&\simeq &\Map_{\PSh_\calc(M)}\big(\mathrm{const}_C, \underline{D}\big) \\
&\simeq &\Map_{\calc}(C, D^{\Sing M}) \\ &\simeq& \Map_{\calc}(C, D) \ .
\end{eqnarray*}
Thus we know that the functor $F_M$ is fully faithful for contractible $M$. 

For arbitrary $M$ we can choose a good open cover $\{U_i\}$, i.e. an open cover that has the property that all finite intersections $U_{i_1,...,i_k} := U_{i_1} \cap ... \cap U_{i_k}$ are contractible. Then using the fact that $\Fun(\Sing (-), \calc)$ and $\Sh^{\loc}_\calc(-)$ are sheaves as established in 
\ref{lemmai17} we can write the functor $F_M$ as the limit of functors 
$$F_{U_{i_1,...,i_k}}:  \quad \Fun(\Sing (U_{i_1,...,i_k}) ,\calc) \to \Sh^\loc_\calc(U_{i_1,...,i_k})$$
Using that all these functors are fully faithful since $U_{i_1,...,i_k}$ is contractible and the first claim of lemma \ref{lem2mai17} we can conclude that $F_M$ is also fully faithful for any $M$. 

Finally we want to show that $F_M: \Fun(\Sing (M),\calc) \to \Sh^\loc_\calc(M)$ is essentially surjective. Therefore let 
$\calf \in  \Sh^\loc_\calc(M)$.  Since $\calf$ is locally constant we find an open cover $\{U_i\}$ of $M$ such that 
$\calf|_{U_i}$ is constant. But constant sheaves clearly lie in the essential image of the functors $F_{U_i}: \Fun(\Sing (U_i),\calc) \to \Sh^\loc_\calc(U_i)$ which can be seen from Diagram \eqref{dreimai17}. But then {Corollary} \ref{cor3mai17} implies that $\calf$ lies in the essential image of $F_M$.

\end{proof}

\begin{corollary}
Every locally constant sheaf $\calf \in \Sh^{\loc}_\calc(M)$ is homotopy invariant, i.e. for a  {morphism $N \xrightarrow{f} N' \to M$ of manifolds over $M$}    with $f$  a homotopy equivalence the induced morphism $\calf(N') \to \calf(N)$ is an equivalence in $\calc$.
\end{corollary}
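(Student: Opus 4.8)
The plan is to deduce this from Theorem \ref{thmrep}, which provides a natural equivalence $\Sh^{\loc}_{\calc}(M) \simeq \Fun(\Sing(M),\calc)$, so it suffices to prove the corresponding homotopy invariance statement for functors out of $\Sing(M)$. First I would reduce to that side of the equivalence: given $\calf \in \Sh^{\loc}_{\calc}(M)$, let $G \in \Fun(\Sing(M),\calc)$ be the functor corresponding to $\calf$ under Theorem \ref{thmrep}, and note that naturality of the equivalence in $M$ identifies, for a morphism $f : N \to N'$ over $M$, the restriction map $\calf(N') \to \calf(N)$ with the map $\lim_{\Sing(N')} G|_{\Sing(N')} \to \lim_{\Sing(N)} G|_{\Sing(N)}$ induced by $\Sing(f) : \Sing(N) \to \Sing(N')$ and the restriction of $G$ along $\Sing(N') \to \Sing(M)$. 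Here I use that the global sections of the sheaf corresponding to $G$ on a manifold $P$ over $M$ are computed as the limit of $G$ restricted to $\Sing(P)$, equivalently as $\Map$ out of the terminal object, i.e. as the $\infty$-categorical limit over the $\infty$-groupoid $\Sing(P)$.

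The key step is then the observation that if $f : N \to N'$ is a homotopy equivalence of smooth manifolds, then $\Sing(f) : \Sing(N) \to \Sing(N')$ is an equivalence of $\infty$-groupoids (spaces). This is standard: $\Sing$ sends weak homotopy equivalences of topological spaces to equivalences of Kan complexes, and a homotopy equivalence of manifolds is in particular a weak homotopy equivalence; moreover the smooth and continuous singular complexes of a manifold are equivalent. Given that $\Sing(f)$ is an equivalence of $\infty$-categories, precomposition with it induces an equivalence on functor categories $\Fun(\Sing(N'),\calc) \xrightarrow{\sim} \Fun(\Sing(N),\calc)$, and in particular the induced map on limits $\lim_{\Sing(N')} H \to \lim_{\Sing(N)} (\Sing(f))^{*}H$ is an equivalence for every $H \in \Fun(\Sing(N'),\calc)$. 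Applying this with $H$ the restriction of $G$ along $\Sing(N') \to \Sing(M)$, and noting that $(\Sing(f))^{*}H$ is the restriction of $G$ along $\Sing(N) \to \Sing(N') \to \Sing(M) = \Sing(N) \to \Sing(M)$, we conclude that $\calf(N') \to \calf(N)$ is an equivalence in $\calc$.

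I expect the only real subtlety to be bookkeeping: making precise that, under the equivalence of Theorem \ref{thmrep}, evaluation of $\calf$ on an object $P \to M$ of $\Mf/M$ corresponds to taking the limit over $\Sing(P)$ of $G$ pulled back along $\Sing(P) \to \Sing(M)$, and that this identification is compatible with restriction maps. This follows from tracing through the construction of the functors $F_M$ in Lemma \ref{lemmai17} together with Proposition \ref{propju22} (which identifies constant sheaves with the cotensor $X^{\Sing(-)}$) and the fact that $\Sh^{\loc}_{\calc}(-)$ is a sheaf; once this compatibility is in place, the argument is purely formal and uses nothing beyond the fact that equivalences of indexing $\infty$-categories induce equivalences on limits. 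No separate difficulty arises from the monoidal case, since it plays no role here.
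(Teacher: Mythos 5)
Your argument is correct, but it takes a genuinely different route from the paper's. The paper never writes down a formula for sections of a locally constant sheaf: it reduces to a single homotopy equivalence $f\colon N\to M$, notes that $f^{*}\colon \Sh^{\loc}_{\calc}(M)\to\Sh^{\loc}_{\calc}(N)$ is an equivalence by Theorem \ref{thmrep}, and then compares the two evaluation functors to $\calc$ by passing to adjoints: global sections are right adjoint to the constant-sheaf functors, the triangle of constant-sheaf functors visibly commutes with $f^{*}$, hence the triangle of global-sections functors commutes as well. You instead make the identification $\calf(P)\simeq\lim_{\Sing(P)}G|_{\Sing(P)}$ explicit and then use that $\Sing(f)$ is an equivalence of $\infty$-groupoids, so restriction along it induces an equivalence on limits; this buys a concrete, reusable description of sections of a locally constant sheaf as a limit over the fundamental $\infty$-groupoid, at the price of having to justify that description, which is the one thin point in your write-up (``tracing through the construction of $F_M$''). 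The cleanest justification is essentially the paper's adjunction observation applied objectwise: by naturality of Theorem \ref{thmrep}, $\calf|_{P}$ corresponds to $G|_{\Sing(P)}$, and under the equivalence $\Fun(\Sing(P),\calc)\simeq\Sh^{\loc}_{\calc}(P)$ the constant-diagram functor corresponds to the constant-sheaf functor (diagram \eqref{dreimai17}), so their right adjoints, namely $\lim_{\Sing(P)}$ and $\Gamma(P,-)$, correspond as well; the alternative descent argument you hint at would additionally need that $\Sing(P)$ is the colimit of the \u{C}ech nerve of a good cover. Once that identification is in place, your remaining steps (a homotopy equivalence induces an equivalence $\Sing(N)\to\Sing(N')$, and equivalences of indexing $\infty$-categories induce equivalences on limits) are correct and standard.
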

\begin{proof}
{It suffices} to show that for a locally constant sheaf $\calf$ on $M$ and a homotopy equivalence $f: N \to M$ the global sections $\calf(M)$ and $f^*\calf(N)$ are equivalent in $\calc$. Thus we need to show that the diagram
$$
\xymatrix{
\Sh^\loc_\calc(M) \ar[rr]^{f^*}\ar[dr]_{{\ev_M}} && \Sh^{\loc}_\calc(N) \ar[dl] ^{{\ev_N}}\\
& \calc &
}
$$
commutes. By the classification of locally constant sheaves as in Theorem \ref{thmrep} we know that $f^*$ is an equivalence. 

We now use that the global sections functors are right adjoint to the functors $\calc \to \Sh^\loc_\calc(M)$ and 
$ \calc \to \Sh^\loc_\calc(N)$ which are given by the inclusion of constant sheaves. Thus the commutativity of the diagram above is equivalent to the commutativity of the diagram of right adjoints. The right adjoint of $f^*$ is its inverse. But then we replace this inverse by $f^*$ we arrive at the point where we have to check that the diagram 
$$
\xymatrix{
\Sh^\loc_\calc(M) \ar[rr]^{f^*} && \Sh^{\loc}_\calc(N)  \\
& \calc \ar[lu]^{{(\hat C_{(-)})_{|M}}}\ar[ru]_{{(\hat C_{(-)})_{|N}}} &
}
$$
commutes, which is obvious.
\end{proof}

\bibliographystyle{alpha}
\bibliography{zweite}
\end{document}